\def\ess{\scalerel*{\includegraphics{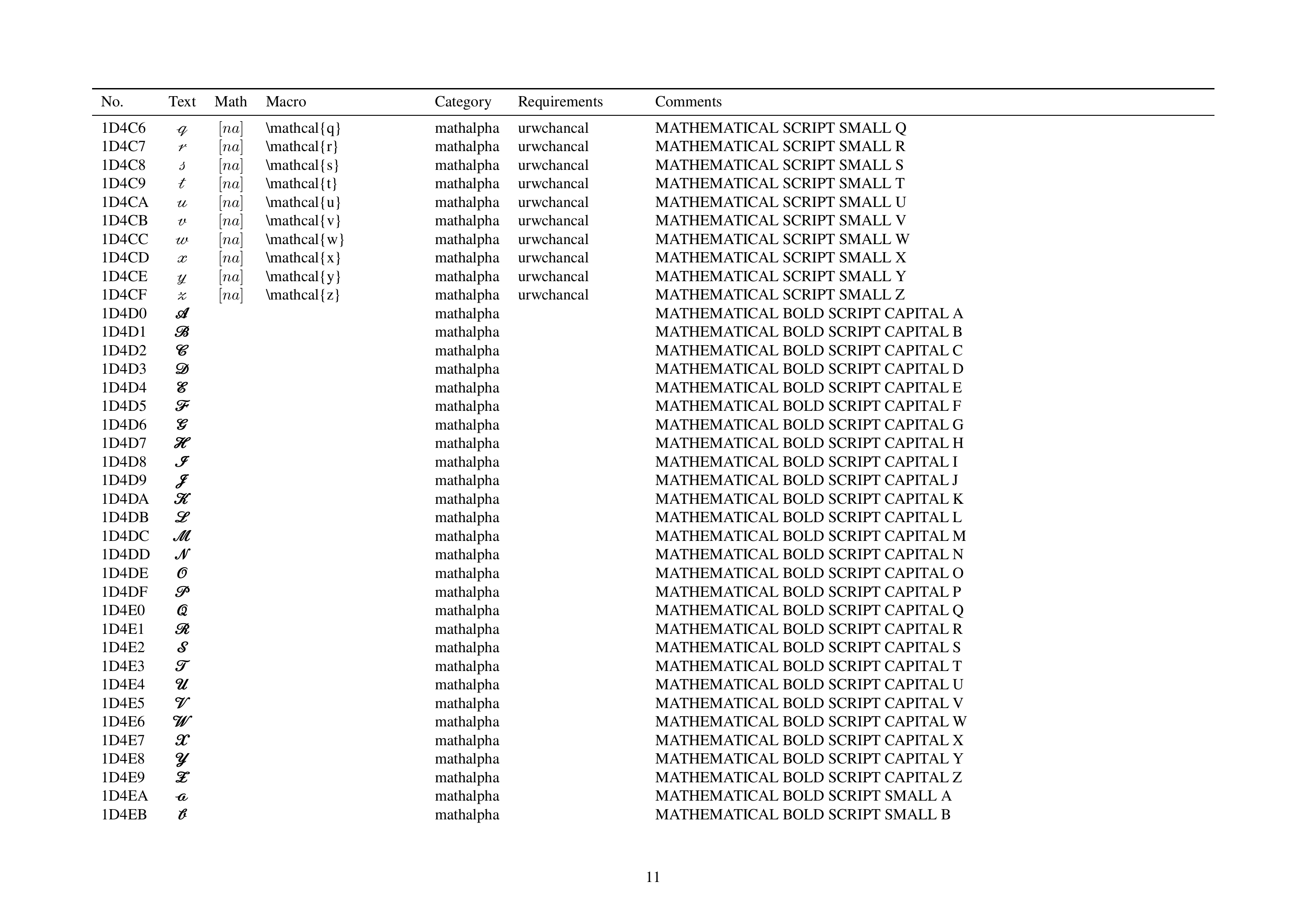}}{m}}
\def\boxslash{\scalerel*{\includegraphics{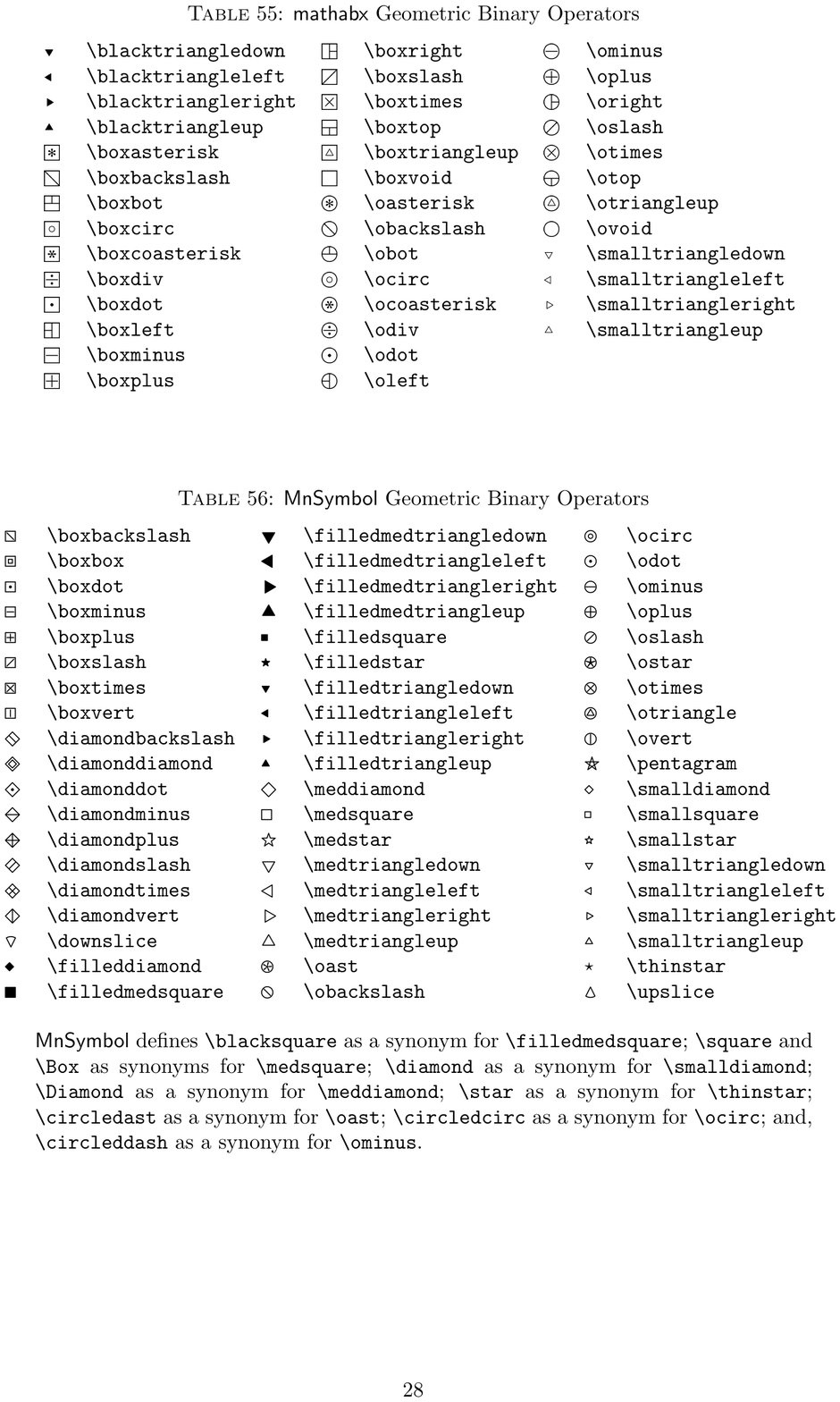}}{B}}
\renewcommand{\parallel}[0]{%
\begin{tikzpicture}%
\draw (0.4ex,0.0ex) -- (0.8ex,1.7ex);%
\draw (0.85ex,0.0ex) -- (1.25ex,1.7ex);%
\end{tikzpicture}%
}
\numberwithin{equation}{section}
\DeclareMathOperator{\vol}{vol}
\DeclareMathOperator{\lub}{l.u.b}
\newtheorem{theorem}{Theorem}
\newtheorem{lemma}{Lemma}
\newtheorem{proposition}{Proposition}[section]
\newtheorem{corollary}{Corollary}[subsection]
\theoremstyle{definition}
\newtheorem*{definition}{Definition}
\newtheorem*{RelDens}{Definition \textnormal {(relative density)}}
\newtheorem*{IntDens}{Definition \textnormal {(intrinsic density)}}
\newtheorem*{GlobDens}{Definition \textnormal {(global density of infinite packings)}}
\newtheorem*{FundProb}{Fundamental Problem of Sphere Packings}
\newtheorem*{Kep2}{\textit{Kepler's Conjecture} \textnormal {(2nd version)}}
\newtheorem{example}{Example}[section]
\newtheorem*{remark}{Remark}
\newtheorem*{remarks}{Remarks}
\newtheoremstyle{named}{}{}{\itshape}{}{\bfseries}{:}{.5em}{\thmnote{#3}}
\theoremstyle{named}
\newtheorem*{namedformula}{}
\newcommand{\slfrac}[2]{\left.#1\middle/#2\right.}
\newcommand{\pet}{\frac{\pi}{\sqrt {18}}}
\newcommand{\spet}{\slfrac{\pi}{\sqrt {18}}}
\newcommand{\sconf}{\mathcal{S}(\Sigma)}
\newcommand{\stfive}{St_5(\theta_1, \theta_2)}
\newcommand{\rhostar}{\tilde{\rho}(St(\cdot))}
\newcommand{\nm}[1]{|#1|}
\newcommand{\verteq}[0]{\begin{turn}{90} $=$\end{turn}}
\newcommand{\dotp}[2]{\mathbf{#1}\cdot\mathbf{#2}}
\newcommand{\crossp}[2]{\mathbf{#1}\times\mathbf{#2}}
\newcommand\sabc[0]{\sigma(ABC)}
\newcommand\tabc[0]{\Delta ABC}
\newcommand{\scr}[0]{r}
\newcommand{\sconfp}{\mathcal{S}^{\prime}(\Sigma)}
\newcommand{\rhobarred}{\bar{\rho}(\cdot)}
\newcommand{\stard}{St(\cdot)}
\newcommand{\sumf}{\sum_{i=1}^{5}}
\newcommand{\lof}{\mathcal{L}(\cdot)}
\newcommand{\pots}{\slfrac{\pi}{3}}
\title{A new local invariant and simpler proof of Kepler's conjecture
and the least action principle on the crystal formation of dense type}
\author[1,2]{Wu-Yi Hsiang}
\affil[1]{Department of Mathematics, University of California, Berkeley, CA 94720}
\affil[2]{Department of Mathematics, HK University of Science and Technology, Clear Water Bay, Kowloon, Hong Kong}
\begin{document}

\maketitle

\section{Introduction}

\subsection{Three kinds of sphere packings, various kinds of densities and problems of their optimalities}

Basically, there are three different kinds of packings of spheres of identical size, namely

\begin{enumerate}[(i)]
\item packings with containers; $\mathcal{P} \subset \Gamma$
\item finite packings without container, e.g. crystals;
\item infinite packings with the whole space as the container.
\end{enumerate}

For example, putting marbles into a jar, oranges into a box or soybeans into a silo are daily-life examples of the first kind; while pieces of crystals of gold, silver, lead etc. are Nature-created examples of the second kind.
On the other hand, those infinite packings such as the f.c.c. packing, hexagonal close packings \cite{barrow} and lattice packings are, in fact, just some mathematical models serving as the limiting situations of the first and the second kinds as their sizes tend to infinity.

In the study of sphere packings, the central problems are the \textit{problems of optimalities on various kinds of densities.}  Of course, it is necessary to first give a \textit{precise definition of the kind of density} before studying the \textit{problem of its optimality}.

Let $\mathcal{P} \subset \Gamma$ be a packing into a given container. It is quite obvious that the density $\rho(\mathcal{P} \subset \Gamma)$ of such a packing should be defined as follows, namely

\begin{equation}
  \rho(\mathcal{P} \subset \Gamma) \colonequals \vol \mathcal{P} / \vol \Gamma
\end{equation}
while the \textit{optimal density of packing $r$-spheres into $\Gamma$} is given by
\begin{equation}
\rho ( r, \Gamma ) \colonequals \lub \{ \rho ( \mathcal{P} , \Gamma ) \}
\end{equation}
where $\mathcal{P}$ runs through all packings of $r$-spheres into $\Gamma$.
Note that $\rho ( r, \Gamma )$ will certainly depend on the shape of $\Gamma$ and the relative size between $\Gamma$ and $r$-sphere. Anyhow, this motivates us to study
\begin{equation}
\displaystyle\hat{\rho}(\Gamma)=\limsup_{k\to\infty} \rho (1,k\Gamma)
\end{equation}
where $k\Gamma$ denotes the $k$-times magnification of $\Gamma$, and the following formulation of Kepler's conjecture on sphere packings:

\begin{Kep2}
For a large class of $\Gamma$, e.g. those with piecewise smooth $\partial\Gamma$, $\hat{\rho}(\Gamma)$ should always be equal to $\spet$.
\end{Kep2}

\subsubsection*{Problem of least action principle on the crystal formation of dense type (cf. \cite{hsiang})}

The physical shape of atoms can be regarded as microscopic spheres while a small piece of crystal of a monatomic element, such as gold and silver etc., often consists of billions of trillions of such microscopic spheres aggregated into a specific type of regular arrangement, exhibiting fascinating geometric regularity and remarkable precision.
For example, the crystal structures of forty-eight chemical elements are of hexagonal close packing type which have the highest known density of $\spet$.  Thus, it is quite natural to pose the following type of ``uniqueness'' problem, namely

``How to \textit{properly define} the density of packings of the second kind so that the above \textit{geometric regularity} is actually \textit{the consequence of density optimality}, which will be referred to as the least action principle of crystal formation of the dense type.''

The \textit{new local invariant} of \S1.2 will play the key-role of providing such a proper definition as well as a far-reaching localization for the proof of such a theorem (cf. Theorem III, \S2.1).

\subsection{A simple local invariant and the definition of global densities of packings of the second and third kinds}

In his booklet of 1611 \cite{kepler}, Kepler had already introduced the concept of \textit{local cell}, nowadays often referred to as Voronoi cell, which assigns a surrounding convex polyhedron to each given sphere $S_i$ in $\mathcal{P}$, consisting of those points that are as close to the center of $S_i$, say $O_i$, as to that of others, say $\{O_j\}$, namely
\begin{equation}
\displaystyle C(S_i,\mathcal{P}) = \bigcap_{j \ne i} H_{ij}
\end{equation}
where $H_{ij}$ is the $O_i$-side of the perpendicular bisector of $\overline{O_iO_j}$.
We shall always assume that $C(S_i,\mathcal{P})$ are bounded for every $S_i$ in $\mathcal{P}$, thus the above intersection can always be reduced to that of a finite, irreducible intersection, namely
\begin{equation}
C(S_i,\mathcal{P}) = H_{ij_1} \cap \ldots \cap H_{ij_a}
\end{equation}
Note that an index $j$ belongs to the above minimal set $\{j_1,j_2,\ldots,j_a\}$ when and only when $C(S_i,\mathcal{P})$ and $C(S_j,\mathcal{P})$ share a \textit{common face} and such a pair $\{S_i,S_j\}$ are \textit{defined to be neighbors of each other}.
Anyhow, \textit{local cell} and \textit{neighbor} are the most basic concepts on the geometry of sphere packings that all the other important ones are based upon.

\subsubsection{Local cell decomposition and its dual decomposition}

Let $\mathcal{P} = \{ S_i, i \in I \}$ be a given infinite packing with all of its local cells $C(S_i,\mathcal{P})$ of bounded type, each of them containing a single sphere of $\mathcal{P}$.
We shall denote them simply by $\{ C_i, i \in I \}$ and call it the \textit{local cell decomposition} of $\mathcal{P}$.
Moreover, the above decomposition has a natural dual decomposition into convex polyhedra with centers of spheres of $\mathcal{P}$ as their vertices, namely, the \textit{Delaunay decomposition}.
The duality between such a pair of fundamental decompositions associated to a given $\mathcal{P}$ can be summarized as follows:

\begin{enumerate}[(i)]
  \item The set of vertices of the D-decomposition are $\{ O_i \} \leftrightarrow \{ C_i \}$.
  \item The set of edges of the D-decomposition are those $\overline{O_iO_j}$ linking the centers of neighboring pairs $\{S_i,S_j\} \leftrightarrow$ those common faces of $\{C_i,C_j\}$ situated on the perpendicular bisector of $\overline{O_iO_j}$.
  \item The set of faces of the D-decomposition are those polygons spanned by the centers of those local cells with a \textit{common edge} $\leftrightarrow$ \{ their common edge, perpendicular to the face at its circumcenter \}.
  \item The set of convex polyhedra spanned by the centers of those local cells with a \textit{common vertex} $\leftrightarrow$ \{ their common vertex which is exactly the circumcenter of its corresponding D-cell \} .
\end{enumerate}

\subsubsection{A new kind of locally averaged density}

(cf. \S 1.4 of \cite{hsiang} for another kind.)

Set $I$ (resp. $J$) to be the indices sets of the L-cells (resp. D-cells) of the above dual pair of decompositions associated to a given $\mathcal{P}$, and set

\begin{equation}
w_i = \vol C_i, \quad
w^j = \vol \Omega_j, \quad
w_i^j = \vol C_i \cap \Omega_j
\end{equation}

\begin{definition}

  To each D-cell $\Omega_j$, set
  \begin{equation}
    \rho(\Omega_j) = \vol(\Omega_j \cap \mathcal{P}) / \vol \Omega_j
    \label{eqn:dcell}
  \end{equation}
  and call it the density of the D-cell $\Omega_j$.
  To each $S_i \in \mathcal{P}$, the {\it locally averaged density} of $\mathcal{P}$ at $S_i$ is defined to be
    \begin{equation}
    \displaystyle \bar{\rho}(S_i,\mathcal{P}) \colonequals \sum_{j \in J}w_i^j \rho(\Omega_j) / \sum_{j \in J} w_i^j
    \label{eqn:rhobar}
  \end{equation}
\end{definition}

Note that, for each given $i$, there are only a rather small number of $j$ with non-zero $w_i^j$ and $\sum_j w_i^j = w_i$.

\begin{remarks}
  \begin{enumerate}[(i)]
  \item In retrospect, the cluster of spheres centered at the vertices of a given $\Omega_j$ can be regarded as the subcluster of $\mathcal{P}$ of the \textit{most localized kind}.
    Thus, $\rho(\Omega_j)$ can be regarded as a kind of \textit{ultimate localization} of the concept of densities associated to a given $\mathcal{P}$.
  \item Note that $\bar{\rho}(S_i,\mathcal{P})$ is, itself, a weighted average of $\{ \rho(\Omega_j) \}$ that makes use of the dual pair of decompositions.
    Of course, its usefulness will only be determined by the ultimate test of whether it can provide a better result in the study of global optimalities of sphere packings, (cf. \S 2).
  \end{enumerate}
\end{remarks}

\subsubsection{Relative density and global densities of the second and third kind}\label{subsubsec:relativedensity}

Let $\mathcal{P}$ be an infinite packing and $\mathcal{P}^\prime = \{ S_i : i \in I^\prime\}$ be one of its finite subpackings.

\begin{RelDens}
  The \textit{relative density} of $\mathcal{P}^\prime$ in $\mathcal{P}$, denoted by $\bar{\rho}(\mathcal{P}^\prime,\mathcal{P})$, is defined to be
  \begin{equation}
    \displaystyle\bar{\rho}(\mathcal{P}^\prime,\mathcal{P}) \colonequals \frac
      {\sum_{i \in I^\prime}w_i \bar{\rho}(S_i,\mathcal{P})}
      {\sum_{i \in I^\prime}w_i}, \quad
      \mathcal{P}^\prime = \{ S_i, i \in I^\prime \}
      \label{eqn:reldens}
  \end{equation}
\end{RelDens}

\begin{IntDens}
  Let $\mathcal{P}^\prime$ be a given finite packing without container.
  The intrinsic density of $\mathcal{P}^\prime$, denoted by $\bar{\rho}(\mathcal{P}^\prime)$, is defined to be
  \begin{equation}
    \bar{\rho}(\mathcal{P}^\prime) \colonequals \lub \{ \bar{\rho} (\mathcal{P}^\prime, \bar{\mathcal{P}}) \}
  \end{equation}
  where $\{\bar{\mathcal{P}}\}$ runs through all possible extensions of $\mathcal{P}^\prime$.
\end{IntDens}

\begin{GlobDens}
  \begin{equation}
    \displaystyle \rho(\mathcal{P}) \colonequals \lub \{ \limsup_{n\to\infty}
    \bar{\rho}(\mathcal{P}_n,\mathcal{P}) \}
  \end{equation}
  where $\{\mathcal{P}_n\}$ runs through all possible exhaustion sequences of $\mathcal{P}$.
\end{GlobDens}

\begin{example}
  Let $\mathcal{P}$ be a hexagonal close packing.  Then
  \begin{equation}
    \bar{\rho}(S_i,\mathcal{P}) = \spet \quad \forall S_i \in \mathcal{P}
  \end{equation}
  and hence
  \begin{equation}
    \bar{\rho} (\mathcal{P}^\prime, \mathcal{P}) = \spet
  \end{equation}
  for all finite subpackings $\mathcal{P}^\prime$ in $\mathcal{P}$, and $\rho(\mathcal{P})$ is also equal to $\spet$.
  \begin{proof}
    The local cluster of D-cells that occur in (\ref{eqn:rhobar}) consists of octuple regular 2-tetrahedra and sextuple regular 2-octahedra.
    Therefore their volumes (resp. total solid angles) are given by
    \begin{equation}
     \Bigg\{ \begin{array}{l}
        \slfrac{\sqrt{8}}{3} \\
        \slfrac{8\sqrt{2}}{3}
      \end{array} 
      \left( \textrm{resp.}
      \Bigg\{ \begin{array}{c}
        4(3 \arccos \frac{1}{3}-\pi) \\
        6(4 \arccos (-\frac{1}{3})-2\pi)
      \end{array}
      \right)
    \end{equation}
and hence
    \begin{equation}
      \rho(\Omega_j) =
      \Bigg\{\begin{array}{ll}
        \sqrt{2}(3\alpha_0-\pi) & \sim 0.7796356 \\
        \frac{1}{\sqrt{8}}(3\pi-6\alpha_0) & \sim 0.7209029
      \end{array}
      \end{equation}
where $\alpha_0=\arccos \frac{1}{3}$. Thus
    \begin{equation}
      \begin{aligned}
        \bar{\rho}(S_i,\mathcal{P}) & = \frac{1}{4\sqrt{2}}\left \{ \frac{4}{3}\sqrt{2}\cdot\sqrt{2}(3\alpha_0-\pi) +
        \frac{8}{3}\sqrt{2}\cdot \frac{1}{\sqrt{8}}(3\pi-6\alpha_0) \right \} \\
        & = \spet
      \end{aligned}
    \end{equation}
\end{proof}
\end{example}

\subsection{Fundamental problem and fundamental theorem of sphere packings}

Note that a pair of spheres with their center distance less than $2\sqrt{2}r$ are automatically neighbors of each other (i.e. their local cells must share a common face whatever the arrangements of the others).
Thus, it is natural to introduce the following definition of \textit{clusters} of spheres:

\begin{definition}
  A finite packing of $r$-spheres is called a \textit{cluster} if any pair of them can always be linked by a chain with consecutive center distances less than $2\sqrt{2}r$.
\end{definition}

\begin{remark}
  A single sphere is, of course, regarded as a special case of cluster.
\end{remark}

\begin{FundProb}
  Set $\rho_N$ to be the {\it optimal intrinsic density} of all possible $N$-clusters, namely
  \begin{equation}
    \rho_N \colonequals \lub \{ \bar{\rho} (\mathcal{C}) \}
  \end{equation}
  where $\mathcal{C}$ runs through all possible clusters of $N$ spheres.
  What is $\rho_N$ equal to?
  and what are the geometric structures of those $N$-clusters together with their tightest surroundings with 
$\bar{\rho}(\mathcal{C},\mathcal{C}^*) = \rho_N$?
\end{FundProb}

In the beginning case of $N = 1$, $\rho_1$ is just the \textit{optimal locally averaged density}.
The above fundamental problem naturally leads to the proof of the following fundamental theorem, namely

\begin{theorem}
  The optimal locally averaged density is equal to $\spet$ and $\bar{\rho}(S_0,\mathcal{L}) = \spet$ when and only when the local packing $\mathcal{L}(S_0)$ is isometric to either that of the f.c.c. or the h.c.p. packing, which surrounds $S_0$ with twelve touching neighbors with their touching points as indicated in Figure~\ref{fig:fcchcp}.
\end{theorem}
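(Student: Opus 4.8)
The plan is to prove Theorem~I by \emph{localizing} the problem to the star of a single sphere and then estimating a density contribution cell by cell, so that the contributions sum exactly to $\spet$. Fix $S_0\in\mathcal{L}$, let $C_0$ be its local cell, and let $\{\Omega_j : j\in J_0\}$ be the D-cells having $O_0$ as a vertex; by the L/D-duality recalled above these are precisely the D-cells with $w_0^j=\vol(C_0\cap\Omega_j)>0$, so that
\[
\bar{\rho}(S_0,\mathcal{L})\cdot w_0=\sum_{j\in J_0}w_0^j\,\rho(\Omega_j),\qquad \sum_{j\in J_0}w_0^j=w_0 .
\]
It is convenient to record the reformulation $\rho(\Omega_j)=\dfrac{r^3\,\sigma(\Omega_j)/3}{\vol\Omega_j}$, where $\sigma(\Omega_j)$ is the sum of the solid angles of $\Omega_j$ at its vertices, since at each vertex the relevant spherical cap has volume $\tfrac13 r^3$ times that solid angle. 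The set $St(S_0)=\bigcup_{j\in J_0}\Omega_j$ is a bounded neighbourhood of $O_0$ determined by finitely many parameters (the nearby sphere-centres), so after a routine truncation the target inequality $\bar{\rho}(S_0,\mathcal{L})\le\spet$ becomes a finite-dimensional extremal problem, continuous in those parameters, hence attaining its supremum.

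Second, I would reduce to simplices: subdivide each non-simplicial D-cell into simplices with vertices among the sphere-centres and all incident to $O_0$, and note that $\rho(\Omega_j)$ is then the $\vol$-weighted average of the densities of these sub-simplices while $w_0^j$ splits accordingly. It therefore suffices to prove a \emph{per-simplex estimate}: for every Delaunay simplex $T$ with $O_0$ as a vertex, writing $\omega(T)$ for its solid angle at $O_0$ and $u(T)=\vol(C_0\cap T)$,
\[
u(T)\,(\rho(T)-\spet)\ \le\ \Phi(T),
\]
where one seeks a fixed, explicitly computable ``truncation''-type functional $\Phi$ that is negative for simplices close to an octahedral quarter, positive only on a narrow neighbourhood of the regular minimal simplex, and has the decisive global property that $\sum_{j\in J_0}\Phi(\Omega_j)\le 0$ whenever $\sum_{j\in J_0}\omega(\Omega_j)=4\pi$. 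Summing the per-simplex estimate over $St(S_0)$ and using that the solid angles at $O_0$ exhaust $4\pi$ then gives $\bar{\rho}(S_0,\mathcal{L})\le\spet$. The two ingredients are (a) monotonicity/convexity of $\rho(T)$ in the edge-lengths of $T$, which confines the sign-positive case to near-regular, near-touching simplices; and (b) the spherical-geometry fact that the solid angles of regular minimal simplices cannot tile $4\pi$ — any attempt to do so forces either distorted simplices (density falling below $\spet$) or complementary cells of octahedral-or-larger type (carrying a density deficit) — which is exactly the information $\Phi$ must encode.

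The main obstacle is precisely ingredient (b) together with the construction of the sharp $\Phi$: controlling configurations in which \emph{many nearly-regular tetrahedra} crowd around $O_0$. A regular minimal tetrahedron has density $\sqrt2\,(3\arccos\tfrac{1}{3}-\pi)\approx 0.7796$, well above $\spet\approx 0.7405$, so the estimate must show quantitatively that this local excess is always repaid by the deficit of whatever geometry fills the remaining solid angle. The delicate situations are those in which a cap of the unit sphere about $O_0$ is nearly tiled by spherical triangles close to regular — the pentagonal and dodecahedral-type arrangements — since there the accumulated tetrahedral excess is largest and the compensating deficit smallest; pinning down the tight form of $\Phi$ in these cases, i.e. the exact trade-off between tetrahedral excess and the forced octahedral/distortion deficit, is the heart of the argument and will occupy the bulk of \S2.

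Finally, the equality discussion. Equality $\bar{\rho}(S_0,\mathcal{L})=\spet$ forces equality in every per-simplex estimate, so each D-cell of $St(S_0)$ must be extremal for $\Phi$; by (a)–(b) this can happen only if every such D-cell is a regular $2$-tetrahedron or a regular $2$-octahedron and the $4\pi$ solid-angle budget is met with no distortion at all. Matching the faces of these rigid cells around $O_0$ forces $S_0$ to carry exactly twelve mutually consistent touching neighbours, and the classification of twelve-neighbour kissing configurations in which \emph{all} Delaunay cells are regular tetrahedra or octahedra returns precisely the f.c.c. and h.c.p. local patterns, with the touching points as in Figure~\ref{fig:fcchcp}. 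The converse — that these two patterns realise $\bar{\rho}(S_0,\mathcal{L})=\spet$ — is the content of the Example above, so the proof is complete.
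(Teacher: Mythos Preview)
Your proposal is a plan, not a proof. The entire difficulty has been packed into the sentence ``one seeks a fixed, explicitly computable `truncation'-type functional $\Phi$ \ldots'' together with the admission that ``pinning down the tight form of $\Phi$ \ldots is the heart of the argument.'' You never construct $\Phi$, never verify the per-simplex inequality, and never prove that $\sum_j\Phi(\Omega_j)\le 0$. Since a regular tetrahedron carries density $\approx 0.7796>\spet$ and up to five of them can meet at $O_0$ around a common edge, the existence of any such $\Phi$ is highly non-obvious; indeed, this is precisely the obstruction that makes the problem hard. Vague appeals to ``monotonicity/convexity of $\rho(T)$ in the edge-lengths'' do not confine the positive region as claimed, because $\rho(T)$ is not globally convex in the edge data and the near-regular regime is multidimensional. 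Your reduction to simplices is also too casual: for non-simplicial D-cells the sub-simplices you produce are not Delaunay simplices of the packing, so whatever estimate you prove for genuine Delaunay tetrahedra does not automatically transfer.

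The paper does not attempt a per-simplex correction at all. Instead it first isolates the critical case of \emph{Type-I} local packings (twelve touching neighbours), encodes such a packing by its spherical configuration $\mathcal S(\Sigma)$ on $S^2(1)$, and rewrites $\bar\rho$ as a weighted average of star densities $\tilde\rho(St(A_i))$ over the twelve $5\triangle$-stars. The core technical work is then an \emph{area-wise} upper bound for $\tilde\rho(St(\cdot))$ as a function of $|St(\cdot)|$ (Lemma~\ref{lem:fivestar} and its extension), obtained via the $(k,\delta)$-parametrisation of area-preserving deformations of spherical triangles. Combined with structural lemmas on Type-I icosahedra and the $6\boxslash$/$5\boxslash$ families, this yields $\bar\rho\le\spet$ with equality only at f.c.c./h.c.p.; the non-Type-I case is handled separately by a reduction argument. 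If you want your simplex-score route to succeed, you must actually exhibit $\Phi$ and carry out the delicate case analysis you allude to; as written, the proposal contains no argument beyond the trivial reformulation of $\bar\rho$ as a weighted average.
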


\begin{figure}
  \begin{center}
    \includegraphics[width = 6in]{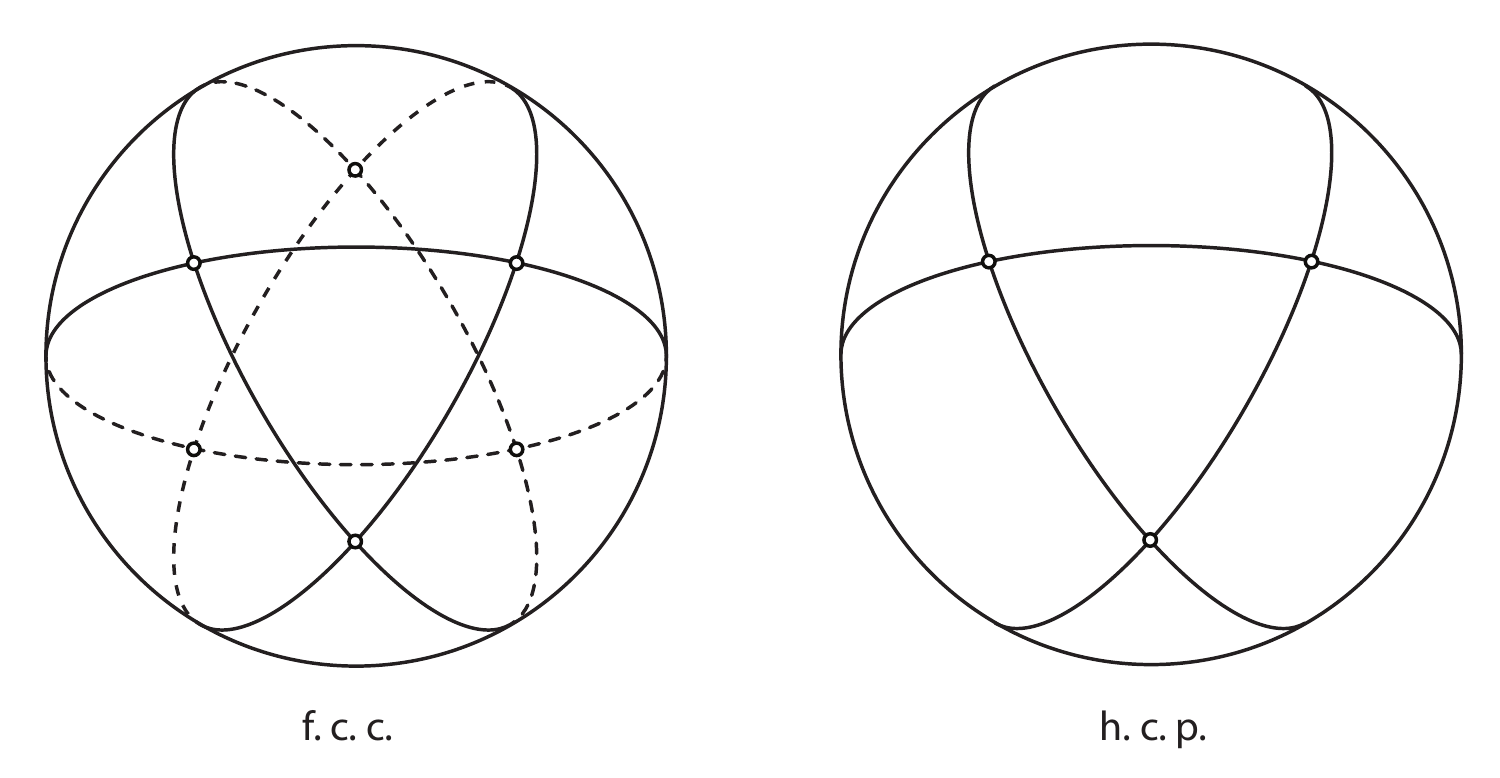}
    \caption{\label{fig:fcchcp} }
  \end{center}
\end{figure}

\section{Major theorems on global optimalities of sphere packings and their proofs via Theorem I}\label{sec:majortheorems}

In this section, we shall state and deduce the major results on the optimalities of global densities of sphere packings as consequences of Theorem I.

\subsection{Statements on the optimalities of the global densities of sphere packings (i.e. of the third, second, and first kinds)}

\begin{theorem}[Kepler's conjecture, 1st version]\label{keplerv1}
  The optimal global density of infinite sphere packings is equal to $\spet$, namely
  \begin{equation}
    \rho(\mathcal{P}) \le \spet = \rho(\textrm{\rm hexagonal close packings})
  \end{equation}
\end{theorem}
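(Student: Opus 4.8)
The plan is to derive Theorem~\ref{keplerv1} from Theorem~I essentially by a convexity (weighted-average) argument, so that all of the geometric difficulty is absorbed into Theorem~I and what remains is bookkeeping about the definitions of \S\ref{subsubsec:relativedensity}.

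First I would observe that the locally averaged density $\bar\rho(S_i,\mathcal{P})$ of (\ref{eqn:rhobar}) is a genuinely \emph{local} quantity. By the remark following that definition, $w_i^j = \vol(C_i\cap\Omega_j)$ is nonzero only for the small number of D-cells $\Omega_j$ meeting the L-cell $C_i$, and each such $\Omega_j$ is spanned by centers of spheres that are neighbors, or neighbors of neighbors, of $S_i$. Hence $\bar\rho(S_i,\mathcal{P})$ depends only on the local packing $\mathcal{L}(S_i)$, i.e.\ it is exactly the invariant $\bar\rho(S_0,\mathcal{L})$ to which Theorem~I applies. Theorem~I then gives
\begin{equation}
  \bar\rho(S_i,\mathcal{P}) \le \spet \qquad \text{for every } S_i \in \mathcal{P},
\end{equation}
with equality only for the f.c.c./h.c.p.\ local configurations.

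Next, for any finite subpacking $\mathcal{P}^\prime = \{S_i : i \in I^\prime\}$, the relative density (\ref{eqn:reldens}) is, by construction, a convex combination of the numbers $\bar\rho(S_i,\mathcal{P})$, $i \in I^\prime$, with positive weights $w_i / \sum_{i\in I^\prime} w_i$; hence $\bar\rho(\mathcal{P}^\prime,\mathcal{P}) \le \spet$ as well. Applying this to every term $\mathcal{P}_n$ of an exhaustion sequence of $\mathcal{P}$, passing to the $\limsup$ (which preserves the bound), and then taking the l.u.b.\ over all exhaustion sequences, we obtain $\rho(\mathcal{P}) \le \spet$ straight from the definition of the global density of an infinite packing. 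Finally, the computation in the Example of \S\ref{subsubsec:relativedensity} shows $\bar\rho(S_i,\mathcal{P}) = \spet$ for every sphere of a hexagonal close packing, so $\bar\rho(\mathcal{P}_n,\mathcal{P}) = \spet$ for every exhausting $\mathcal{P}_n$ and therefore $\rho(\text{h.c.p.}) = \spet$; this yields the displayed equality and shows the bound is attained.

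The one point that needs genuine care — and where I would expect to do real work rather than bookkeeping — is the locality reduction in the first step: one must ensure that every sphere of $\mathcal{P}$ has a bounded local cell, so that the decomposition of \S1.2.1 and the quantity $\bar\rho(S_i,\mathcal{P})$ are even defined (this is the standing assumption of the paper), and that the finite set of D-cells contributing to (\ref{eqn:rhobar}) is controlled by $\mathcal{L}(S_i)$ alone. For packings possessing some unbounded local cells, one would argue that such configurations are either irrelevant to the value of $\rho(\mathcal{P})$ or can be treated by an approximation/perturbation argument reducing to the bounded case. Beyond that, the whole statement is a formal consequence of Theorem~I together with the definitions of relative, intrinsic, and global density.
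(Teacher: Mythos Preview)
Your proposal is correct and follows essentially the same route as the paper's own proof (Proposition~2.1): apply Theorem~I to bound each $\bar\rho(S_i,\mathcal{P})$ by $\spet$, pass to the weighted average defining $\bar\rho(\mathcal{P}_n,\mathcal{P})$, then take $\limsup$ and l.u.b.\ over exhaustion sequences. You are simply more explicit than the paper about the convex-combination step and the attainment for h.c.p., and your caveat about bounded local cells matches the paper's standing assumption in \S1.2.
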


\begin{theorem}[Least action principle of crystal formation of dense type]\label{leastaction}
  \begin{equation}
    \rho_N = \spet \quad \forall N
  \end{equation}
  and $\bar{\rho}(\mathcal{C},\mathcal{C}^*) = \rho_N$ when and only when the $N$-cluster $\mathcal{C}$ together with its tightest surrounding $\mathcal{C}^*$ is a piecewise hexagonal close packing, namely, an assemblage of pieces of subclusters of hexagonal close packings.
\end{theorem}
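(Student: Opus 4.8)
The plan is to deduce Theorem~III from Theorem~I by exploiting that every density of \S\ref{subsubsec:relativedensity} is ultimately a weighted average of the locally averaged densities $\bar{\rho}(S_i,\cdot)$ with strictly positive weights $w_i=\vol C_i$. For the upper bound, let $\mathcal{C}=\{S_i:i\in I'\}$ be an arbitrary $N$-cluster and $\bar{\mathcal{P}}$ any extension of it. By \eqref{eqn:reldens}, $\bar{\rho}(\mathcal{C},\bar{\mathcal{P}})$ is a convex combination of the numbers $\bar{\rho}(S_i,\bar{\mathcal{P}})$, $i\in I'$; Theorem~I gives $\bar{\rho}(S_i,\bar{\mathcal{P}})\le\spet$ for each $i$, so $\bar{\rho}(\mathcal{C},\bar{\mathcal{P}})\le\spet$, and taking the least upper bound over all extensions, and then over all $N$-clusters, yields $\rho_N\le\spet$. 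For the reverse inequality, take $N$ spheres forming a connected chunk $\mathcal{C}$ of a hexagonal close packing $\mathcal{P}$ (which is a cluster, since consecutive spheres touch). By the Example in \S\ref{subsubsec:relativedensity}, $\bar{\rho}(S_i,\mathcal{P})=\spet$ for every $S_i$, hence $\bar{\rho}(\mathcal{C},\mathcal{P})=\spet$ and $\bar{\rho}(\mathcal{C})\ge\spet$. Thus $\rho_N=\spet$ for all $N$.

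For the equality characterization, suppose a cluster $\mathcal{C}$ and a surrounding $\mathcal{C}^*$ satisfy $\bar{\rho}(\mathcal{C},\mathcal{C}^*)=\rho_N=\spet$. Then $\spet$ is a convex combination, with positive weights $w_i$, of the numbers $\bar{\rho}(S_i,\mathcal{C}^*)\le\spet$, so each must equal $\spet$. By the equality clause of Theorem~I applied to each $S_i$, $i\in I'$, the local packing $\mathcal{L}(S_i)$ is isometric to that of the f.c.c.\ or the h.c.p.\ packing; in particular $S_i$ has exactly twelve neighbors in $\mathcal{C}^*$, all touching, positioned as in Figure~\ref{fig:fcchcp}. (That an honest maximizing surrounding $\mathcal{C}^*$ exists, rather than merely a maximizing sequence, I would settle by a routine compactness argument, using that the relevant conditions are local and closed.)

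It remains to pass from ``every $S_i$, $i\in I'$, carries one of the two admissible twelve-neighbor configurations'' to ``$\mathcal{C}$ together with $\mathcal{C}^*$ is an assemblage of pieces of subclusters of hexagonal close packings.'' Each admissible configuration is a stacking of three consecutive hexagonal layers --- the $ABA$ pattern for h.c.p., the $ABC$ pattern for f.c.c.\ --- so $S_i$ and its twelve neighbors determine a hexagonal patch of the layer through $O_i$ together with the two flanking layers. Because $\mathcal{C}$ is connected by the definition of cluster and any neighbor of $S_i$ lying in $\mathcal{C}$ again carries an admissible configuration, the patches associated to adjacent spheres overlap and are forced to coincide on the overlap; the spheres of $\mathcal{C}^*$ lying outside $\mathcal{C}$, though not themselves constrained by Theorem~I, belong to these twelve-neighbor shells and so are pinned down as well. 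Assembling, one finds that $\mathcal{C}\cup\mathcal{C}^*$ is a stack of hexagonal layers in which consecutive layers are mutually offset, the stacking order being free to change from triple to triple --- which is exactly a piecewise hexagonal close packing --- and ``tightest'' forces $\mathcal{C}^*$ to be no larger than the union of these shells. Conversely, any piecewise h.c.p.\ assemblage has all of its spheres in f.c.c./h.c.p.\ local position, so $\bar{\rho}(S_i,\mathcal{C}^*)=\spet$ for each and $\bar{\rho}(\mathcal{C},\mathcal{C}^*)=\spet=\rho_N$, which gives the ``if'' direction.

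The first two paragraphs are bookkeeping on top of Theorem~I; the substance, and the step I expect to be the main obstacle, is the rigidity statement of the third paragraph. Its delicate points are (a) that a single sphere's twelve-neighbor configuration already determines two \emph{full} adjacent hexagonal layers, not merely isolated neighbor positions, so that the layer patches of distinct spheres can be forced to agree where they meet; and (b) that the propagation along the cluster meets no obstruction --- here it matters that one works with $\mathcal{C}^*$, which supplies the complete neighbor shells, rather than with the bare cluster $\mathcal{C}$. Everything else reduces to the elementary geometry of hexagonal layer stackings and to the convexity argument already used for the bounds.
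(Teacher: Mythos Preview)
Your proposal is correct and follows essentially the same approach as the paper's own deduction (Proposition~2.2): apply Theorem~I termwise to the weighted average~\eqref{eqn:reldens} to get the upper bound, invoke the equality clause to force each $\mathcal{L}(S_i)$ to be of f.c.c.\ or h.c.p.\ type, and then use the cluster condition to glue the local cells along common faces into a piecewise hexagonal close packing. You are in fact more explicit than the paper on two points it treats tersely: the lower bound $\rho_N\ge\spet$ (which the paper leaves implicit via Example~1.1) and the rigidity/propagation step, which the paper dispatches in a single sentence while you correctly flag it as the substantive content.
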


\begin{theorem}[Kepler's conjecture, second version]\label{keplerv2}
  For all kinds of containers $\Gamma$ with piecewise smooth boundaries $\partial \Gamma$,
  \begin{equation}
    \hat{\rho}(\Gamma) = \spet
  \end{equation}
\end{theorem}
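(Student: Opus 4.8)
The plan is to establish $\hat\rho(\Gamma)\ge\spet$ and $\hat\rho(\Gamma)\le\spet$ separately, reducing the upper bound to Theorem~I by means of the local invariant $\bar\rho(\cdot)$ of \S1.2. I assume throughout that $\Gamma$ is compact with non-empty interior, which is the only substantive case, so that $\vol(k\Gamma)=k^{3}\vol\Gamma\to\infty$; the sole use of the hypothesis that $\partial\Gamma$ is piecewise smooth is the elementary estimate that, for each fixed $t$, the $t$-neighbourhood of $\partial(k\Gamma)$ has volume $O(t\,k^2)$, so that surface effects are negligible against $\vol(k\Gamma)$.

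\emph{Lower bound.} Fix a fundamental f.c.c.\ packing $\mathcal F_0$ of unit spheres in $\mathbb R^3$ and let $\mathcal Q_k\subset k\Gamma$ consist of those spheres of $\mathcal F_0$ lying entirely inside $k\Gamma$. Since the centres of $\mathcal F_0$ form a lattice of covolume $4\sqrt2$, and all of $k\Gamma$ except an $O(k^2)$-volume boundary shell consists of points whose unit ball is contained in $k\Gamma$, a lattice-point count gives $\vol\mathcal Q_k=\tfrac{4\pi}{3}\cdot\tfrac{\vol(k\Gamma)}{4\sqrt2}+O(k^2)=\spet\,\vol(k\Gamma)+O(k^2)$. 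Hence $\rho(1,k\Gamma)\ge\rho(\mathcal Q_k\subset k\Gamma)\ge\spet-O(1/k)$, so $\liminf_{k\to\infty}\rho(1,k\Gamma)\ge\spet$.

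\emph{Upper bound.} Fix $k$ and an arbitrary packing $\mathcal P_k\subset k\Gamma$ of unit spheres. Extend $\mathcal P_k$ to a saturated packing $\hat{\mathcal P}_k$ of $\mathbb R^3$ (possible by Zorn's lemma): saturation forces every point of $\mathbb R^3$ to lie within distance $2$ of some centre, so each local cell satisfies $C_i\subseteq\bar B(O_i,2)$ and each D-cell $\Omega_j$ has diameter $\le4$, \emph{with these bounds independent of $k$}. Introduce the step function $f(x)\colonequals\rho(\Omega_j)$ for $x\in\Omega_j$; unwinding (\ref{eqn:rhobar}) gives $w_i\,\bar\rho(S_i,\hat{\mathcal P}_k)=\sum_j w_i^{j}\rho(\Omega_j)=\int_{C_i}f$, so Theorem~I, in the form $\bar\rho(S_i,\hat{\mathcal P}_k)\le\spet$ for every $i$, says exactly that $\int_{C_i}f\le\spet\,\vol C_i$ for each local cell. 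Since $f\ge0$, the $\{C_i\}$ tile $\mathbb R^3$, and $\int_B f$ differs from $\vol(\hat{\mathcal P}_k\cap B)$ only by the contribution of those $\Omega_j$ straddling $\partial B$, we obtain
\begin{equation}
\begin{aligned}
  \vol\mathcal P_k\;\le\;\vol(\hat{\mathcal P}_k\cap k\Gamma)&\;\le\;\int_{k\Gamma}f+O(k^2)\;\le\;\sum_{C_i\cap k\Gamma\ne\emptyset}\int_{C_i}f+O(k^2)\\
  &\;\le\;\spet\sum_{C_i\cap k\Gamma\ne\emptyset}\vol C_i+O(k^2)\;\le\;\spet\,\vol(k\Gamma)+O(k^2),
\end{aligned}
\end{equation}
the last step because every $C_i$ meeting $k\Gamma$ lies in the $4$-neighbourhood of $k\Gamma$. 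Dividing by $\vol(k\Gamma)=k^3\vol\Gamma$ gives $\rho(\mathcal P_k\subset k\Gamma)\le\spet+O(1/k)$ uniformly in $\mathcal P_k$, whence $\limsup_{k\to\infty}\rho(1,k\Gamma)\le\spet$. Together with the lower bound this yields $\hat\rho(\Gamma)=\spet$. (Theorem~\ref{leastaction} could be used in place of Theorem~I, applied to the clusters of $\mathcal P_k$, but only the inequality $\bar\rho(S_i,\cdot)\le\spet$ is needed.)

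Granting Theorem~I, the argument is essentially bookkeeping, and the main point requiring care is the handling of the two $O(k^2)$ error terms --- the D-cells of $\hat{\mathcal P}_k$ straddling $\partial(k\Gamma)$, and the overshoot of $\bigcup\{C_i:C_i\cap k\Gamma\ne\emptyset\}$ past $k\Gamma$. For both it is essential that the \emph{saturated} extension confines all local cells and D-cells to bounded diameter, so that the exceptional cells lie in a tube of bounded width about $\partial(k\Gamma)$; this is exactly where piecewise smoothness enters, via $\vol\big((\partial(k\Gamma))_t\big)=O(t\,k^2)$. The same estimate, with $k\Gamma$ replaced by any bounded region of controlled surface-to-volume ratio, gives the robust form $\rho(1,\Gamma')\le\spet+O\big(\mathrm{diam}(\Gamma')^{-1}\big)$ underlying Theorem~\ref{keplerv2}.
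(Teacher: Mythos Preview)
Your proof is correct and, like the paper's, reduces the upper bound to Theorem~I; but the treatment of the boundary is genuinely different. The paper works entirely inside $k\Gamma$: it extends the local-cell decomposition to the container, separates interior from boundary spheres, and modifies the Delaunay decomposition by lumping the boundary layer into a single ``collar'' cell $\Omega_0$ lying between $\partial R$ and $\partial(k\Gamma)$; it then proves directly that $\rho(\Omega_0)\le\pi/\sqrt{27}$ via an explicit reflection argument (spheres resting on a locally flat table, triangular-prism D-cells), after which Theorem~I handles the interior. You instead extend $\mathcal P_k$ to a saturated packing of all of $\mathbb R^3$, so that Theorem~I applies uniformly to \emph{every} sphere, and push all boundary effects into two $O(k^2)$ error terms using only the uniform bound on cell diameters in a saturated packing. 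Your route is shorter and sidesteps the $\pi/\sqrt{27}$ lemma entirely; the paper's approach yields more --- a quantitative density bound on the boundary layer itself --- but at the cost of the extra prism computation. Both lower-bound arguments (your f.c.c.\ lattice-point count, the paper's h.c.p.\ restricted to $k\Gamma^{(4)}$) are essentially the same.
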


\subsection{Deductions of Theorems II, III, and IV via Theorem I: A far-reaching localization on global optimalities of sphere packings}

\begin{proposition}
  Theorem I implies Theorem II.
\end{proposition}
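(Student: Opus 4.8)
The plan is to exploit the fact that, by construction, the locally averaged density $\bar{\rho}(S_i,\mathcal{P})$ is a \emph{local invariant}. For a fixed $i$, the only D-cells $\Omega_j$ with $w_i^j \ne 0$ are those having $O_i$ as a vertex, and the density $\rho(\Omega_j)$ of each such cell is determined by the (finitely many) spheres of $\mathcal{P}$ meeting the bounded set $\Omega_j$. Hence $\bar{\rho}(S_i,\mathcal{P})$ depends only on the local packing $\mathcal{L}(S_i)$ formed by $S_i$ together with this bounded surrounding cluster of spheres. Theorem~I then applies verbatim and gives
\begin{equation}
  \bar{\rho}(S_i,\mathcal{P}) \le \spet \qquad \text{for every } i \in I,
  \label{eqn:localbound}
\end{equation}
with equality only in the f.c.c./h.c.p. cases.

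Next I would note, using $\sum_{j} w_i^j = w_i > 0$ (the cell $C_i$ contains the ball $S_i$, so $w_i$ is finite and positive under the standing boundedness hypothesis), that for any finite subpacking $\mathcal{P}' = \{S_i : i \in I'\}$ the quantity
\begin{equation}
  \bar{\rho}(\mathcal{P}',\mathcal{P}) = \frac{\sum_{i \in I'} w_i\,\bar{\rho}(S_i,\mathcal{P})}{\sum_{i \in I'} w_i}
\end{equation}
is a convex combination of the numbers $\{\bar{\rho}(S_i,\mathcal{P})\}_{i \in I'}$ with strictly positive weights $w_i$. Combining this with \eqref{eqn:localbound} yields $\bar{\rho}(\mathcal{P}',\mathcal{P}) \le \spet$ for \emph{every} finite $\mathcal{P}' \subset \mathcal{P}$. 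In particular, for any exhaustion sequence $\{\mathcal{P}_n\}$ of $\mathcal{P}$ one has $\bar{\rho}(\mathcal{P}_n,\mathcal{P}) \le \spet$ for all $n$, hence $\limsup_{n\to\infty}\bar{\rho}(\mathcal{P}_n,\mathcal{P}) \le \spet$; taking the least upper bound over all exhaustion sequences gives $\rho(\mathcal{P}) \le \spet$, which is the inequality asserted in Theorem~II.

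To see that the bound is attained — so that the displayed equality $\rho(\text{hexagonal close packings}) = \spet$ in Theorem~II holds — I would invoke the Example of \S\ref{subsubsec:relativedensity}: for a hexagonal close packing $\mathcal{P}$ one has $\bar{\rho}(S_i,\mathcal{P}) = \spet$ for all $i$, whence $\bar{\rho}(\mathcal{P}',\mathcal{P}) = \spet$ for every finite $\mathcal{P}'$, and therefore $\rho(\mathcal{P}) = \spet$. Thus $\spet$ is simultaneously an upper bound for $\rho(\mathcal{P})$ over all infinite packings and a value actually realized.

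The deduction is therefore essentially formal once Theorem~I is in hand — which is precisely the design goal of this particular locally averaged density — so there is no genuine ``hard part'' here; the main difficulty of Kepler's conjecture has been entirely absorbed into Theorem~I. The only steps requiring care are bookkeeping ones: confirming that the standing hypothesis that every $C(S_i,\mathcal{P})$ is bounded is in force (so all $w_i$ are finite and positive and $\bar{\rho}(S_i,\mathcal{P})$ is well defined), and making the identification of the data $\mathcal{L}(S_i)$ on which $\bar{\rho}(S_i,\mathcal{P})$ depends with the ``local packing'' to which Theorem~I refers — i.e. checking that the D-cells incident to $O_i$, together with the spheres of $\mathcal{P}$ meeting them, constitute a bona fide local packing in the sense of Theorem~I. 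Once that identification is made, the convexity argument and the passage to the $\limsup$ and the $\lub$ are immediate.
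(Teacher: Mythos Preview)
Your proof is correct and follows essentially the same approach as the paper: apply Theorem~I to get $\bar{\rho}(S_i,\mathcal{P})\le\spet$ for each $i$, pass to the weighted average $\bar{\rho}(\mathcal{P}_n,\mathcal{P})$, then take $\limsup$ and $\lub$. The paper's version is terser (it jumps straight to $\bar{\rho}(\mathcal{P}_n,\mathcal{P})\le\spet$ ``by Theorem~I'' without spelling out the convex-combination step), but the logic is identical; your additional remarks on locality and on the equality case via Example~1.1 are accurate elaborations.
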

\begin{proof}
  Let $\mathcal{P}$ be an infinite packing and $\{ \mathcal{P}_n \}$ be one of its exhaustion sequences of finite subpackings.  Then, by Theorem I
  \begin{equation}
    \rho(\mathcal{P}_n,\mathcal{P}) \le \spet \quad \forall n
  \end{equation}
  Therefore
  \begin{equation}
    \displaystyle \limsup_{n \to \infty} \rho(\mathcal{P}_n,\mathcal{P}) \le \spet
  \end{equation}
  and hence
  \begin{equation}
    \displaystyle \rho(\mathcal{P}) \colonequals \lub \left \{ \limsup_{n \to \infty} \rho(\mathcal{P}_n,\mathcal{P}) \right \} \le \spet
  \end{equation}
\end{proof}

\begin{proposition}
  Theorem I implies Theorem III.
\end{proposition}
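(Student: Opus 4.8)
The plan is to derive Theorem III from Theorem I by the same convexity/averaging device that proved Theorem II, supplemented by the rigidity (``only when'') half of Theorem I and one local-to-global patching argument. First I would dispose of the value $\rho_N=\spet$. For the upper bound, let $\mathcal{C}$ be any $N$-cluster and $\bar{\mathcal{P}}$ any extension. By the definition of relative density (\ref{eqn:reldens}), $\bar{\rho}(\mathcal{C},\bar{\mathcal{P}})=\sum_{i\in I'}\lambda_i\,\bar{\rho}(S_i,\bar{\mathcal{P}})$ is a convex combination with strictly positive weights $\lambda_i=w_i/\sum_{k\in I'}w_k$; since Theorem I gives $\bar{\rho}(S_i,\bar{\mathcal{P}})\le\spet$ for every $i$, we get $\bar{\rho}(\mathcal{C},\bar{\mathcal{P}})\le\spet$, hence $\bar{\rho}(\mathcal{C})\le\spet$ after taking the l.u.b.\ over extensions, and $\rho_N\le\spet$ after the l.u.b.\ over $N$-clusters.

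For the matching lower bound, fix $N$ and choose $N$ spheres of a hexagonal close packing $\mathcal{P}$ forming a connected patch. Consecutive centres are then at distance $2r<2\sqrt2\,r$, so this patch is a cluster $\mathcal{C}$ and $\mathcal{P}$ is one of its extensions; by the Example, $\bar{\rho}(S_i,\mathcal{P})=\spet$ for every $i$, so $\bar{\rho}(\mathcal{C},\mathcal{P})=\spet$, giving $\bar{\rho}(\mathcal{C})\ge\spet$ and $\rho_N\ge\spet$. Hence $\rho_N=\spet$ for all $N$.

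Next the characterization of equality. If $(\mathcal{C},\mathcal{C}^*)$ is a piecewise hexagonal close packing, then inside $\mathcal{C}^*$ each sphere $S_i$ of $\mathcal{C}$ has twelve touching neighbours arranged in the f.c.c.\ or h.c.p.\ pattern of Figure~\ref{fig:fcchcp}, so Theorem I yields $\bar{\rho}(S_i,\mathcal{C}^*)=\spet$ for every $i\in I'$, whence $\bar{\rho}(\mathcal{C},\mathcal{C}^*)=\spet=\rho_N$. Conversely, suppose $\bar{\rho}(\mathcal{C},\mathcal{C}^*)=\rho_N=\spet$. Since this quantity is a convex combination of the numbers $\bar{\rho}(S_i,\mathcal{C}^*)\le\spet$ with strictly positive weights $w_i$, equality forces $\bar{\rho}(S_i,\mathcal{C}^*)=\spet$ for \emph{every} $S_i\in\mathcal{C}$; by the ``only when'' clause of Theorem I, for each such $i$ the local packing $\mathcal{L}(S_i)$ induced by $\mathcal{C}^*$ is isometric to the f.c.c.\ or h.c.p.\ local configuration.

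The remaining task — and the step I expect to be the genuine obstacle — is to pass from ``every sphere of $\mathcal{C}$ carries an f.c.c./h.c.p.\ kissing shell'' to ``$\mathcal{C}$ together with the necessary part of $\mathcal{C}^*$ is an assemblage of pieces of subclusters of hexagonal close packings''. The mechanism is that both shells decompose into three consecutive hexagonal layers through $S_i$, and two such shells on neighbouring spheres can be glued only along a common hexagonal layer, which propagates a layered structure across $\mathcal{C}$; the maximal sub-patches on which the stacking sequence of successive layers is fixed are then each a subcluster of a genuine hexagonal close packing, and their union is the desired piecewise hexagonal close packing. Care is needed at $\partial\mathcal{C}$: a boundary sphere of $\mathcal{C}$ may have some of its twelve forced neighbours in $\mathcal{C}^*\setminus\mathcal{C}$, and those neighbours are themselves unconstrained, so the patching must be run using only the shells of spheres of $\mathcal{C}$, and ``assemblage of pieces of subclusters of hexagonal close packings'' must be read with that in mind. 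Modulo this purely combinatorial-geometric rigidity lemma about unions of f.c.c./h.c.p.\ kissing shells, Theorem III follows from Theorem I.
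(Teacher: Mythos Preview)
Your proof is correct and follows essentially the same route as the paper: a convex-combination upper bound from Theorem~I, the matching lower bound from the hexagonal close packing example, the ``only when'' clause of Theorem~I to force every $\mathcal{L}(S_i)$ to be f.c.c.\ or h.c.p., and a final patching step. The paper phrases that last step slightly differently---it invokes the cluster condition to conclude that the local cells $C(S_i,\mathcal{P})$ must be glued along common faces, and asserts that such gluing of f.c.c./h.c.p.\ cells is only possible when the union $\mathcal{C}^*=\bigcup\mathcal{L}(S_i,\mathcal{P})$ sits inside hexagonal close packings---whereas you argue via propagation of common hexagonal layers; these are two descriptions of the same rigidity, and the paper is no more detailed about it than you are.
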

\begin{proof}
  Let $\mathcal{C}$ be a given $N$-cluster and $\mathcal{P}$ be one of those extensions of $\mathcal{C}$.  Then, by Theorem I, 
  \begin{equation}
    \bar{\rho}(S_i,\mathcal{P}) \le \spet \quad \forall S_i \in \mathcal{C}
  \end{equation}
  and equality holds when and only when $\mathcal{L}(S_i,\mathcal{P})$ (resp. ${C}(S_i,\mathcal{P})$) is the same as that of the f.c.c. or the h.c.p. Therefore
  \begin{equation}
    \displaystyle \bar{\rho}(\mathcal{C},\mathcal{P}) \colonequals
    \sum_{S_i \in \mathcal{C}} w_i \bar{\rho}(S_i,\mathcal{P}) /
    \sum_{S_i \in \mathcal{C}} w_i \le \pet
  \end{equation}
  and the above equality holds when and only when {\it all} of $\{ {C}(S_i,\mathcal{P}), S_i \in \mathcal{C} \}$ are either that of the f.c.c. or that of the h.c.p.
  Hence $\rho_N = \spet$ and $\bar{\rho}(\mathcal{C},\mathcal{C}^*) = \spet$ when and only when all of $\{ C(S_i,\mathcal{P}),S_i \in \mathcal{C} \}$ are either that of the f.c.c. or that of the h.c.p.;
  it follows from the cluster condition that such a collection of local cells are glued together along their common faces, and such a gluing is possible only when all the local pieces of
  \begin{equation}
    \mathcal{C}^* = \cup \{ \mathcal{L}(S_i,\mathcal{P}), S_i \in \mathcal{C} \}
  \end{equation}
  constitute subclusters of certain hexagonal close packings.
\end{proof}

\begin{proposition}
  Theorem I implies Theorem IV.
\end{proposition}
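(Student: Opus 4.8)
The statement to prove is that Theorem~I forces $\hat\rho(\Gamma)=\spet$ for every container $\Gamma$ with piecewise smooth boundary. The plan is to establish the two inequalities $\hat\rho(\Gamma)\ge\spet$ and $\hat\rho(\Gamma)\le\spet$ separately; only the upper bound invokes Theorem~I, the lower bound being the elementary remark that a large block of the f.c.c.\ packing fits inside $k\Gamma$.

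\emph{Lower bound.} Since $\Gamma$ has nonempty interior, $k\Gamma$ contains a ball of radius proportional to $k$. Intersecting the f.c.c.\ lattice packing of unit spheres with the set of points of $k\Gamma$ at distance at least $1$ from $\partial(k\Gamma)$ yields a legitimate packing $\mathcal{P}_k\subset k\Gamma$, and the standard ``bulk density'' estimate for a lattice packing gives $\vol\mathcal{P}_k\ge\spet\,\vol(k\Gamma)-O(k^2)$, the error accounting for the boundary collar, whose volume is $O(k^2)$ by piecewise smoothness of $\partial\Gamma$. Dividing by $\vol(k\Gamma)=k^3\vol\Gamma$ gives $\rho(\mathcal{P}_k,k\Gamma)\ge\spet-O(1/k)$, hence $\liminf_{k\to\infty}\rho(1,k\Gamma)\ge\spet$.

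\emph{Upper bound.} Fix $k$ and an arbitrary packing $\mathcal{P}$ of unit spheres in $k\Gamma$, with index set $I'$. Extend $\mathcal{P}$ to a saturated packing $\bar{\mathcal{P}}$ of all of $\mathbb{R}^3$ (a greedy/Zorn argument; adding spheres elsewhere does not disturb $\mathcal{P}$). Saturation forces every point of $\mathbb{R}^3$ to lie within distance $2$ of some center, so each local cell $C_i$ of $\bar{\mathcal{P}}$ lies in $B(O_i,2)$ and, dually, each D-cell $\Omega_j$ lies in a ball of radius $2$ about its circumcenter; thus all L-cells and D-cells have diameter $<R$ for an absolute constant $R$. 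Using that the $\Omega_j$ tile space and that $S_i\subseteq C_i$, write $\vol\mathcal{P}=\sum_{i\in I'}\vol S_i=\sum_j\big(\sum_{i\in I'}\vol(S_i\cap\Omega_j)\big)$; for each $j$, $\sum_{i\in I'}\vol(S_i\cap\Omega_j)\le\vol(\bar{\mathcal{P}}\cap\Omega_j)=\rho(\Omega_j)\,w^j$, and the summand vanishes unless $\Omega_j$ meets a sphere of $\mathcal{P}$, which (as $\mathrm{diam}\,\Omega_j<R$) forces $\Omega_j$ into the $R$-neighborhood $N_R(k\Gamma)$. Split the contributing cells into the interior family $J_{\mathrm{int}}=\{\,j:\Omega_j\subset(k\Gamma)_{-2R}\,\}$ (points at distance $>2R$ from $\partial(k\Gamma)$) and the boundary family $J_{\mathrm{bd}}$, the rest.

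For $j\in J_{\mathrm{int}}$, every vertex $O_i$ of $\Omega_j$ and every L-cell $C_i$ meeting $\Omega_j$ has $O_i\in k\Gamma$, i.e.\ $i\in I'$; hence $\sum_{i\in I'}w_i^j=w^j$ and $\rho(\Omega_j)\,w^j=\rho(\Omega_j)\sum_{i\in I'}w_i^j$. Summing over $J_{\mathrm{int}}$, enlarging the inner index set to all $j$, and then using the definition \eqref{eqn:rhobar} followed by Theorem~I,
\[
\sum_{j\in J_{\mathrm{int}}}\rho(\Omega_j)\,w^j
\;\le\;\sum_{i\in I'}\sum_j\rho(\Omega_j)\,w_i^j
\;=\;\sum_{i\in I'}w_i\,\bar\rho(S_i,\bar{\mathcal{P}})
\;\le\;\spet\sum_{i\in I'}w_i
\;\le\;\spet\,\vol\!\big(N_R(k\Gamma)\big),
\]
the last step because the disjoint cells $C_i$, $i\in I'$, all lie in $N_R(k\Gamma)$. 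For $j\in J_{\mathrm{bd}}$ use only $\rho(\Omega_j)\le1$, so $\sum_{j\in J_{\mathrm{bd}}}\rho(\Omega_j)w^j\le\vol\big(\bigcup_{j\in J_{\mathrm{bd}}}\Omega_j\big)$, and each such $\Omega_j$ lies within distance $3R$ of $\partial(k\Gamma)$, so this is at most the volume of a fixed-width collar of $\partial(k\Gamma)$, which is $O(k^2)$ by piecewise smoothness. Combining, $\vol\mathcal{P}\le\spet\,\vol(k\Gamma)+O(k^2)$ uniformly in $\mathcal{P}$, whence $\rho(1,k\Gamma)\le\spet+O(1/k)$ and $\limsup_{k\to\infty}\rho(1,k\Gamma)\le\spet$. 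Together with the lower bound, $\hat\rho(\Gamma)=\spet$.

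The main obstacle is not Theorem~I itself but the transfer from its purely local conclusion to a global volume count: one must (i) arrange that all L-cells and D-cells of the ambient packing are uniformly bounded — handled by passing to a saturated extension $\bar{\mathcal{P}}$ — and (ii) control the discrepancy between the finite block $\mathcal{P}$ and the ambient $\bar{\mathcal{P}}$, which is concentrated in a bounded-width collar of $\partial(k\Gamma)$; it is exactly here that the hypothesis ``$\partial\Gamma$ piecewise smooth'' enters, guaranteeing that this collar has volume $O(k^2)=o(k^3)$ and hence washes out after dividing by $\vol(k\Gamma)$. The interior estimate, once the combinatorics are set up, is just the weighted-average identity \eqref{eqn:rhobar} together with the nonnegativity of the weights $w_i^j$ and Theorem~I.
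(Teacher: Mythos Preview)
Your proof is correct but takes a genuinely different route from the paper's. You extend $\mathcal{P}$ to a saturated packing of all of $\mathbb{R}^3$, use the standard L/D decompositions there, and absorb the boundary discrepancy into an $O(k^2)$ collar with the crude bound $\rho(\Omega_j)\le1$. The paper instead works entirely inside $k\Gamma$: it sets up a modified D-decomposition in which the whole boundary collar is declared a single D-cell $\Omega_0$, and then controls $\rho(\Omega_0)$ by a reflection trick---since $\partial(k\Gamma)$ is locally almost flat, the arrangement of boundary spheres along it can be mirrored across the tangent plane, and the resulting D-cells are upright triangular prisms whose density is at most $\pi/\sqrt{27}<\spet$. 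This yields $\bar\rho(S_i,\mathcal{P})\le\spet$ for \emph{every} sphere, boundary ones included, hence $\rho(\mathcal{P}\subset k\Gamma)\le\spet$ already for each finite $k$, with no $O(1/k)$ error term. Your argument is more elementary---no prism-density calculation, no reflection, only nonnegativity of the weights and Theorem~I---at the cost of the $O(1/k)$ slack; since Theorem~IV is a statement about the limit $\hat\rho(\Gamma)$, either approach suffices, but the paper's gives the sharper finite-$k$ conclusion.
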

\begin{proof}
  Let $\Gamma$ be a given container with piecewise smooth boundary $\partial \Gamma$.
  Therefore, for sufficiently large $k$, $\partial(k \Gamma)$ is \textit{locally almost flat} everywhere except those corner or edge points.
  Let $\mathcal{P}$ be a packing of unit spheres into $k \Gamma$.
  The same kind of local cell decomposition can be generalized to such a setting of $\mathcal{P} \subset k \Gamma$.
  We shall call a sphere $S_i \in \mathcal{P}$ an \textit{interior} (resp. boundary) sphere if ${C}(S_i,\mathcal{P})$ has no face belonging to $\partial(k \Gamma)$ (resp. otherwise).
  Set $\mathcal{P}^\circ$ (resp. $\partial \mathcal{P}$) to be the subset of interior (resp. boundary)
  spheres of $\mathcal{P}$.
  However, the corresponding dual decomposition of $k \Gamma$ into D-cells certainly needs some kind of modification.
  Note that the same kind of D-cells construction still applies \textit{up until} those D-cells containing some faces solely spanned by centers of spheres in $\partial \mathcal{P}$.
  Set $R$ to be the union of such D-cells and $\Omega_0$ to be the \textit{complementary} region of $R$ in $k \Gamma$, namely
  \begin{equation}
    \Omega_0 = k \Gamma \setminus R, \quad 
    \partial \Omega_0 = \partial R + \partial (k \Gamma)
  \end{equation}
  which constitute a collar region lying between $\partial R$ and $\partial (k \Gamma)$.  For the sake of simplicity, we shall regard the whole $\Omega_0$ as a {\it single} D-cell and setting
  \begin{equation}
    \rho(\Omega_0) = \frac{\vol \mathcal{P} \cap \Omega_0}{\vol \Omega_0} \quad
    (\mathcal{P} \cap \Omega_0 = \partial \mathcal{P} \cap \Omega_0)
  \end{equation}
  thus completing the D-decomposition of $k\Gamma$ with respect to the given $\mathcal{P} \subset k\Gamma$.
  Using such a pair of L-decomposition and D-decomposition, we shall again define $\bar{\rho}(S_i,\mathcal{P})$, $\bar{\rho}(\mathcal{P}^\circ,\mathcal{P})$, and $\bar{\rho}(\partial \mathcal{P},\mathcal{P})$ by the same kind of weighted average as that of~(\ref{eqn:dcell}) and~(\ref{eqn:rhobar}).
  
  Now, let us proceed to analyze and then to estimate $\rho(\Omega_0)$ which is geometrically a ``total measurement'' of the ``boundary effect'' for $\mathcal{P} \subset k\Gamma$.
  It is quite natural to make use of the almost local flatness of $\partial(k \Gamma)$ to provide the following upper bound estimate on $\rho(\Omega_0)$ via the method of localization.
  
  For the purpose of such a localized upper bound estimate, one may assume without loss of generality that $\partial(k \Gamma)$ is, actually, \textit{locally flat} instead of just locally almost flat.
  Thus, the local geometry of arrangement of $\partial \mathcal{P}$ along $\partial(k \Gamma)$ can be represented by that of arranging spheres on top of a ``table'' (i.e. a piece of plane), and moreover, such local arrangements can also be regarded as the half of their corresponding \textit{reflectionally symmetric} local arrangements, thus enabling us to analyze the localized densities of those D-cells of the latter.
  
  \begin{example} \label{ex:avedense}
    The average density of a star cluster of such D-cells is at most equal to $\slfrac{\pi}{\sqrt {27}} \approx 0.6046$, and it is equal to $\slfrac{\pi}{\sqrt {27}}$ when and only when $S_i$ and its surrounding sextuple
    neighbors forms a close hexagonal cluster of spheres touching the table.
    
    \begin{proof}
      Each D-cell of such a star cluster is an upright triangular prism as indicated in Figure 2.
      
      \begin{figure}
        \begin{center}
          \includegraphics[width = 3.5in]{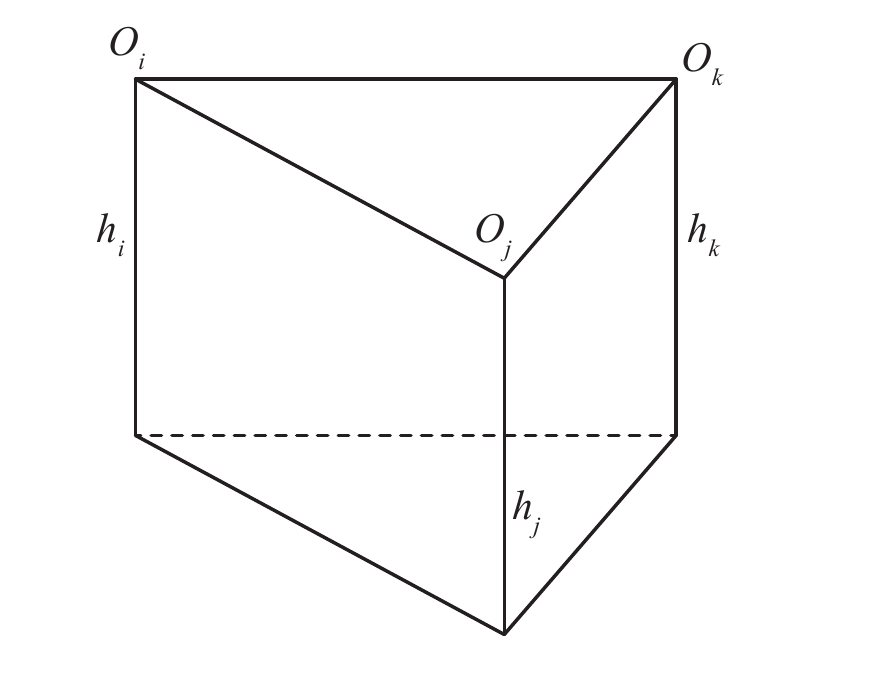}
          \caption{\label{fig:dcell}}
        \end{center}
      \end{figure}
      
      It is easy to show that the density of such a D-cell is at most equal to $\slfrac{\pi}{\sqrt {27}}$ and it is equal to $\slfrac{\pi}{\sqrt {27}}$ when and only when $\{ h_i,h_j,h_k \}$ are all equal to 1 and $\triangle O_i O_j O_k$ is a regular triangle of side length 2.
      
      Therefore, it follows from the above localized density estimate that $\rho(\Omega_0)$ is at most equal to $\slfrac{\pi}{\sqrt {27}}$ and hence, by Theorem I
      \begin{equation}
        \begin{split}
          \bar\rho(S_i,\mathcal{P}) \le \spet \quad \forall S_i \in \mathcal{P} \\
          \rho(\mathcal{P} \subset k \Gamma) \le \spet
        \end{split}
      \end{equation}
      On the other hand, to any $\epsilon > 0$, there exists sufficiently large $k$ such that the volume of the following subregion of $k\Gamma$, namely
      \begin{equation}
        k\Gamma^{(4)} \colonequals \{ x \in k\Gamma, d(x, \partial (k \Gamma)) > 4 \}
      \end{equation}
      already exceeds $(1-\frac{1}{2}\epsilon)$-times of $\vol k \Gamma$.
      Let $\mathcal{P}^\prime$ be the subpacking of hexagonal close packing consisting of those spheres with their centers lying inside of $k\Gamma^{(4)}$.
      Then, it is easy to show that
      \begin{equation}
        \rho(\mathcal{P}^\prime,k\Gamma) > \spet - \epsilon
      \end{equation}
      This proves that $\hat{\rho}(\Gamma)=\spet$.
    \end{proof}
  \end{example}
\end{proof}

\section{A concise summary on basics of solid geometry - the geometric invariant theory of the physical space}\label{sec:summary}

Solid geometry studies the properties of the physical space, the space that we and everything else of the universe are situated inside.
In its modern setting and the most effective and advantageous formulation, it is the \textit{geometric invariant theory of the physical space}.
In this section, we shall provide a concise summary on the basics of geometric invariant theory of the physical space (often referred to as the Euclidean 3-space in mathematical terminology), which will supply the fundamental geometric ideas as well as basic useful techniques along our journey of proving Theorem I.

\subsection{Vector algebra, the basic part of linearizable geometric invariant theory of the space}

The most basic symmetric property of the space is that it is reflectionally symmetric with respect to any given plane $\Pi \subset V$.
The totality of all such reflectional symmetries generates a fundamental transformation group on the space $V$, say denoted by $G(V)$, which is the group of isometries of $V$ (often referred to as the Euclidean group), while the solid geometry studies the invariant theory of this fundamental transformation group.

\begin{enumerate}[(1)]
\item \textbf{The translation subgroup of $G(V)$:} Let $\mathcal{R}_{\Pi}$ be the reflectional symmetry with respect to a given plane $\Pi$ in $V$.
  If $\Pi_1 \parallel \Pi_2$, then the composition $\mathcal{R}_{\Pi_2} \circ \mathcal{R}_{\Pi_1}$ leaves every \textit{common perpendicular} line $\ell$ \textit{invariant} (i.e. mapping onto itself) and pushes its points along $\ell$ by a distance twice of the distance between $\Pi_1$ and $\Pi_2$.
  Thus, it is called a translation.
  It is well-known that the subset of all translations form a commutative subgroup of $G(V)$, say denoted by $T$, which is algebraically isomorphic to $(\mathbb{R}^3, +)$, and moreover, it is an invariant (i.e. normal) subgroup of $G(V)$.
\item \textbf{The orthogonal subgroups of $G(V)$:} Let $p_0$ be a given point in $V$ (or rather, a chosen base point in $V$).
  Then, the subgroup of $G(V)$ generated by the collection of reflectional symmetries with respect to those planes containing $p_0$, namely, $\{\mathcal{R}_\Pi ; p_0 \in \Pi \}$, is exactly the isotropy (i.e. stability) subgroup of $G(V)$ fixing $p_0$, i.e.,
  \begin{equation}
    G_{p_0} \colonequals \{g \in G(V), gp_0=p_0\}
  \end{equation}
  which will be, henceforth, referred to as the \textit{orthogonal subgroup fixing} $p_0$.
\item From the viewpoint of the geometric transformation group of $G(V)$ acting on $V$ as isometries, one has the following generalities, namely:
  \begin{enumerate}[(i)]
  \item The translation group $T \subset G(V)$ acts simple transitively on $V$, while the group $G(V)$, itself, acts simple transitively on the set of all orthonormal frames, say denoted by $\mathcal{F}(V)$, in particular, the subgroup $G_{p_0}$ acts simple transitively on the subset $\mathcal{F}_{p_0}$ of orthonormal frames based at $p_0$. Therefore, $V$ is geometrically a flat homogeneous Riemannian space with $G(V)$ as the isometry group,
    \begin{equation}
      V = \slfrac{G(V)}{G_{p_0}} \quad G_{p_0} \cong O(3)
    \end{equation}
    Algebraically, one has the following diagram of homomorphisms,
    \begin{center}
      \begin{tikzpicture} [
          singlearrow/.style={->, black,fill=none},
          doublearrow/.style={<->,black,fill=none}]
        \node (C) at (0,0) {$G(V)$};
        \node (E) at (2.2,0) {$\slfrac{G(V)}{T}$};
        \node (S) at (0,-2) {$G_{p_{0}}$};
        \node (W) at (-2,0) {$T$};
        \node[font=\LARGE, rotate=90] (S_C) at (-0.1,-1) {$\subset$};
        \node[font=\LARGE] (SE) at (1.4,-1.4) {$\cong$};
        \draw[singlearrow] (W) -- (C);
        \draw[singlearrow] (S) -- (E);
        \draw[singlearrow] (C) -- (E);
        \node[font=\LARGE] at (3.5,0) {$\cong$};
        \node (EE) at (4.5,0) {$O(3)$};
      \end{tikzpicture}
    \end{center}
    and moreover, in terms of modern concept of principal bundles, $\mathcal{F}(V)$ is an important example of principal bundles, namely
    \begin{center}
      \begin{tikzpicture} [
          singlearrow/.style={->, black,fill=none},
          doublearrow/.style={<->,black,fill=none}]
        \node (C) at (0,0) {$G_{p_0}$};
        \node (E) at (2.2,0) {$G(V)$};
        \node (SE) at (2.2,-2) {$V$};
        \node (W) at (-2,0) {$\mathcal{F}_{p_0}$};
        \node[font=\LARGE] at (-1,0) {$\cong$};
        \draw[singlearrow] (C) -- (E);
        \draw[singlearrow] (E) -- (SE);
        \node[font=\LARGE] at (3.5,0) {$\cong$};
        \node (EE) at (4.5,0) {$\mathcal{F}(V)$};
      \end{tikzpicture}
    \end{center}
  \end{enumerate}
\item \textbf{The vector algebra:} Geometrically, a translation $\tau$ is a motion of the whole space $V$ in a given \textit{direction}
  (i.e. the common perpendicular direction of $\Pi_1 \parallel \Pi_2$) by a given \textit{distance} (i.e. $2d(\Pi_1, \Pi_2)$),
  which can be visualized by the equivalence class of directional intervals $\{\overrightarrow{A\tau(A)}\}$,
  thus will be called a vector and denoted by a bold-face lower case latin letter such as $\mathbf{a}, \mathbf{b}$ etc.
  or by exhibiting just one of its equivalence classes of directional intervals such as $\overrightarrow{AB}$.
  Such quantities with both directions and distances will be the most basic kind of geometric quantity which will be, henceforth,
  referred to as \textit{vectors}, and furthermore, we shall change the notations to denote the translation group by $V$, instead of $T$,
  and the commutative group operation as addition ``+''.
  Algebraically, $(V, +)$ is \textit{naturally endowed} with the following three kinds of multiplications, namely
  \begin{enumerate}[(i)]
  \item the scalar multiplication, $\lambda \cdot \mathbf{a}$, which is the algebraic representation of homothety magnification;
  \item the inner product, $\mathbf{a} \cdot \mathbf{b} = \frac{1}{2}\{|\mathbf{a}+\mathbf{b}|^2-|\mathbf{a}|^2-|\mathbf{b}|^2\}$,
    which is a remarkable synthesis of length-angle and the generalized Pythagoras Theorem;
  \item the outer product, $\mathbf{a} \times \mathbf{b}$, which encodes concisely the multi-linearity of the oriented area and the oriented volume.
  \end{enumerate}
  In summary, $(V, +)$ together with the above three kinds of multiplication constitutes a systematic, complete algebraization of the basic geometric structure of the space,
  in which the basic theorems and formulas of quantitative solid geometry have been transformed into powerful {\it distributive laws} of their respective multiplications (i.e. multi-linearity).
  In short, the vector algebra provides a simple, wholesome algebraic system which accomplishes the algebraization as well as linearization of the basic foundation of geometric invariant theory in excellence.
\end{enumerate}
\begin{remarks}
\begin{enumerate}[(i)]
\item  Of course, the geometric invariants of the space can \textit{not} be algebraized entirely.
  For example, the totality of solid angle (i.e. the area of the unit sphere) is equal to $4\pi$, the monumental contribution of Archimedes,
  is a transcendental invariant which can only be understood via integration.
  
\item $(T,+)$ is a commutative invariant subgroup of $G(V)$, thus having the adjoint $G(V)$-action reduces to an orthogonal action of $O(3)\cong \slfrac{G(V)}{T}$, while the above three kinds of products are $SO(3)$-invariant and bilinear.
\end{enumerate}
\end{remarks}

\subsection{Basic formulas of spherical trigonometry} \label{subsec:basictrig}

The study of spherical trigonometry has a long history, at least dating back to antiquity, due to its importance in quantitative astronomy and in solid geometry.
The proof of the fundamental area formula of spheres (cf. \S 3.2.1) by Archimedes is a monumental achievement of Greek geometry, a glorious milestone in the civilization of rational mind.

\subsubsection{Basic properties and basic theorems of spherical geometry}

A given spherical surface, $S^2(O, R)$, is reflectionally symmetric with respect to those planes, $\Pi \supset \{O\}$, containing its center $O$.
Thus, intrinsically speaking, it is reflectionally symmetric with respect to those great circles $\Pi \cap S^2(O, R)$.
Therefore, the geometry of such a spherical surface has the same kind of reflection symmetries as that of a Euclidean plane, and hence it also has the same kind of congruence conditions of triangles such as SAS, ASA, SSS etc. and also the same kind of ``isosceles triangle theorem'' together with many of its consequences.

Note that two spheres of equal radius are translationally congruent to each other, while two concentric spheres are just homothetically different by a magnification.
Thus, in the study of spherical geometry, it suffices to study the normalized model of the unit sphere $S^2(O,1)$.
First of all, the most important theorem of spherical geometry is the fundamental theorem of Archimedes, namely 

\begin{namedformula}[Archimedes Theorem]
The total area of the unit sphere is equal to $4\pi$.
\end{namedformula}

\begin{corollary}
The area of a spherical triangle $\sigma(ABC)$ is equal to the excess of angle, namely
\begin{equation}
\nm{\sigma(ABC)} = A + B + C - \pi
\end{equation}
which is a kind of localization of the above theorem.
\label{co:aaa}
\end{corollary}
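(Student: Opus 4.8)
The plan is to deduce the area formula directly from Archimedes' Theorem by the classical lune decomposition, organized symmetrically so that the overlap bookkeeping becomes transparent.

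First I would establish the area of a \emph{lune} (the region bounded by two great semicircles meeting at a pair of antipodal points). By the same reflectional homogeneity of $S^2(O,1)$ recorded in \S\ref{subsec:basictrig}, the area of such a lune is proportional to its dihedral angle $\alpha$ at the poles; since the full sweep $\alpha = 2\pi$ recovers the whole sphere of area $4\pi$ (Archimedes' Theorem), a lune of angle $\alpha$ has area $\tfrac{\alpha}{2\pi}\cdot 4\pi = 2\alpha$ (equivalently, $\alpha=\pi$ gives a hemisphere of area $2\pi$). Next, given the triangle $\sabc$, I would extend its three sides to three full great circles. Assuming the triangle is nondegenerate (its three vertices do not lie on one common great circle), these are three distinct great circles in general position, and they partition the unit sphere into eight spherical triangles, paired off by the antipodal map into four congruent antipodal pairs; in particular $\sabc$ has an antipodal partner $\sigma(A'B'C')$ with $\nm{\sigma(A'B'C')}=\nm{\sabc}$.

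The heart of the argument is a covering count. The two great circles through the vertex $A$ (the extensions of the sides $AB$ and $AC$) bound a pair of antipodal lunes of angle $A$; one of them, $L_A$, contains $\sabc$, and the opposite one, $L_A'$, contains $\sigma(A'B'C')$. Constructing the analogous pairs $\{L_B,L_B'\}$ and $\{L_C,L_C'\}$ at the other two vertices, I obtain six lunes whose areas sum to $2(2A+2B+2C)=4(A+B+C)$. I would then check, via the eight-triangle decomposition, that the third great circle crosses each lune and so splits it into exactly two of the eight triangles; thus the six lunes occupy $12$ triangle-slots. Since $\sabc$ lies in the three lunes $L_A,L_B,L_C$ and $\sigma(A'B'C')$ lies in $L_A',L_B',L_C'$, these two triangles account for six slots, leaving exactly one slot for each of the remaining six triangles. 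Hence the six lunes cover $S^2(O,1)$ with $\sabc$ and its antipode triple-counted and every other point counted once, so the summed area also equals $4\pi + 4\nm{\sabc}$.

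Comparing the two evaluations of the total lune area gives $4(A+B+C) = 4\pi + 4\nm{\sabc}$, which rearranges to the claimed identity:
\[
\nm{\sabc} = A + B + C - \pi.
\]
The main obstacle, and the only place requiring care, is the covering count: one must verify rigorously that, in the three-great-circle arrangement, $\sabc$ and its antipode are each covered exactly three times while every other region is covered exactly once. I would settle this combinatorially through the twelve-slots-versus-eight-triangles tally above, which \emph{forces} the multiplicities, rather than by attempting to track each of the eight regions individually by hand.
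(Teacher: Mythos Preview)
Your proof is correct and is exactly the classical lune argument that the paper is gesturing toward by calling the result ``a kind of localization'' of Archimedes' Theorem; the paper itself does not spell out a proof beyond that remark, so your lune decomposition with the covering-count bookkeeping is precisely the intended derivation.
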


As usual in spherical trigonometry, the angles (resp. side-lengths) of $\sigma(ABC)$ will simply be denoted by $\{A, B, C\}$ (resp. $\{a, b, c\}$).
Let $\mathbf{a} = \overrightarrow{OA}$, $\mathbf{b} = \overrightarrow{OB}$, $\mathbf{c} = \overrightarrow{OC}$. 
Then $\mathbf{a}\cdot\mathbf{b} = \cos c$, etc. Set
\begin{equation} 
\mathbf{b}^{\prime} = \mathbf{b} - (\mathbf{b}\cdot\mathbf{a})\mathbf{a},\ 
\mathbf{c}^{\prime} = \mathbf{c} - (\mathbf{c}\cdot\mathbf{a})\mathbf{a}
\end{equation}
Then, one has
\begin{equation} 
\begin{aligned}
\nm{\mathbf{b}^{\prime}} &= \sin c, \nm{\mathbf{c}^{\prime}} = \sin b \\
D &\equiv \mathbf{a}\cdot(\mathbf{b}\times\mathbf{c}) = \det (\mathbf{a}, \mathbf{b}, \mathbf{c}) = \det (\mathbf{a}, \mathbf{b}^{\prime}, \mathbf{c}^{\prime}) \\
&= \nm{\mathbf{b}^{\prime}} \nm{\mathbf{c}^{\prime}} \sin A 
= \sin c\sin b\sin A \\
\mathbf{b}^{\prime}\cdot\mathbf{c}^{\prime} &= \nm{\mathbf{b}^{\prime}} \nm{\mathbf{c}^{\prime}}\cos A
= \sin c\sin b\cos A \\
\verteq \\
(\mathbf{b} - (\mathbf{b}\cdot\mathbf{a})\mathbf{a}) \cdot (&\mathbf{c} - (\mathbf{c}\cdot\mathbf{a})\mathbf{a}) \\
&= \dotp{b}{c} - (\dotp{b}{a})(\dotp{a}{c}) - (\dotp{c}{a})(\dotp{b}{a}) + (\dotp{b}{a})(\dotp{c}{a})(\dotp{a}{a}) \\
&= \dotp{b}{c} - (\dotp{b}{a})(\dotp{a}{c}) = \cos a-\cos c\cos b
\end{aligned}
\end{equation}
 
Therefore, one has very simple proofs of both the spherical sine law and the spherical cosine law as a straightforward application of vector algebra, namely:

\begin{namedformula}[\textbf{Spherical sine law}] \label{eq:spheresinelaw}
\begin{equation}
\frac{\sin A}{\sin a} 
= \frac{\sin B}{\sin b} 
= \frac{\sin C}{\sin c}
= \frac{D}{\sin a\sin b\sin c}
\end{equation}
\end{namedformula}

\begin{namedformula}[\textbf{Spherical cosine law}] \label{eq:spherecosinelaw}
\begin{equation}
\sin b \sin c \cos A = \cos a - \cos b \cos c, \quad \mathrm{etc}.
\end{equation}
\end{namedformula}

Note that Corollary~\ref{co:aaa} is a rather beautiful area formula of AAA-type.
It would be useful to derive area formulas of SSS-type and SAS-type, similar to the Heron's formula and $\nm{\Delta} = \frac{1}{2}ab\sin C$ in the case of Euclidean triangles,
making use of the above two laws to express $\{\sin A, \cos A, \mathrm{etc}\}$ in terms of $\{\sin a, \cos a, \mathrm{etc}\}$.
This idea naturally leads to the following:

\begin{namedformula}[\textbf{Area formulas of SSS-type and SAS-type}]
\begin{equation}
\tan\frac{\nm{\sigma}}{2} = \frac{D}{u} = \frac{\sin C}{\cot \frac{a}{2} \cot \frac{b}{2} + \cos C}
\end{equation}
where $u=1+\cos a+\cos b+\cos c$.
\end{namedformula}

\begin{proof}
Direct substitution of 
\begin{equation}
\sin A = \frac{D}{\sin b \sin c}, \quad \cos A = \frac{\cos a - \cos b \cos c}{\sin b \sin c}, \quad \mathrm{etc}
\end{equation}
into the well-known formula of 
\begin{equation}
e^{i\nm{\sigma}} = -1 \cdot e^{iA} \cdot e^{iB} \cdot e^{iC}
\label{eqn:exponential}
\end{equation}
and algebraic simplification will show that
\begin{equation}
\sin \nm{\sigma} = \frac{2uD}{u^2 + D^2}, \quad \cos \nm{\sigma} = \frac{u^2 - D^2}{u^2 + D^2}
\label{eqn:sincossigma}
\end{equation}
Therefore, 
\begin{equation}
\begin{aligned}
\tan \frac{\nm{\sigma}}{2} &= \frac{\sin\nm{\sigma}}{1+\cos\nm{\sigma}} = \frac{D}{u} \\
&= \frac{\sin a \sin b \sin C}{(1+\cos a)(1+\cos b)+\sin a \sin b \cos C} \\
&= \frac{\sin C}{\cot\frac{a}{2}\cot\frac{b}{2}+\cos C}.
\end{aligned}
\label{eqn:tansigma}
\end{equation}
\end{proof}
\begin{remark}
  Area is the most fundamental geometric invariant of spherical triangles.
  Thus, the three types of area formulas will play their central roles in the entire geometric invariant theory of spherical trigonometry.
\end{remark}

\subsubsection{Basic geometric invariants of spherical triangles and basic formulas of spherical trigonometry}
First of all, spherical triangles are an important class of basic geometric objects which have quite a few basic geometric invariants, for example side-lengths, angles, area, circumradius, inradius, determinant, etc.
Moreover, in the study of various kinds of problems of solid geometry, such as the sphere packing problem that we treat in this paper, the key geometric invariants that will naturally emerge are often expressible in terms of those basic geometric invariants of spherical triangles, such as the locally averaged density of sphere packings that we discussed in \S 2.
In fact, the important and useful part of the geometric invariant theory of spherical triangles lies in the intricate system  of relations among their rich family of invariants.

\noindent
{\bf Notations and basic geometric invariants of spherical triangles:}

To a given spherical triangle $\sigma(ABC)$, the Euclidean triangle $\Delta ABC$, the 1-isosceles tetrahedron $\tau(\sigma,1)$ with $\{O, A, B, C\}$ as the vertices and the portion of the solid angle cone $\Gamma(\sigma)$ bounded by the tangent planes at $A, B, C$, namely
\begin{equation}
T(\sigma) := \{X \in \Gamma(\sigma), 
\overrightarrow{OX}\cdot\overrightarrow{OA} \leq 1,
\overrightarrow{OX}\cdot\overrightarrow{OB} \leq 1,
\overrightarrow{OX}\cdot\overrightarrow{OC} \leq 1\}
\end{equation}
are a triple of geometric objects in the space canonically associated with it.
Therefore, their geometric invariants should also be considered as that of $\sigma$ itself.

\begin{figure}
  \begin{center}
    \includegraphics[width=3.5in]{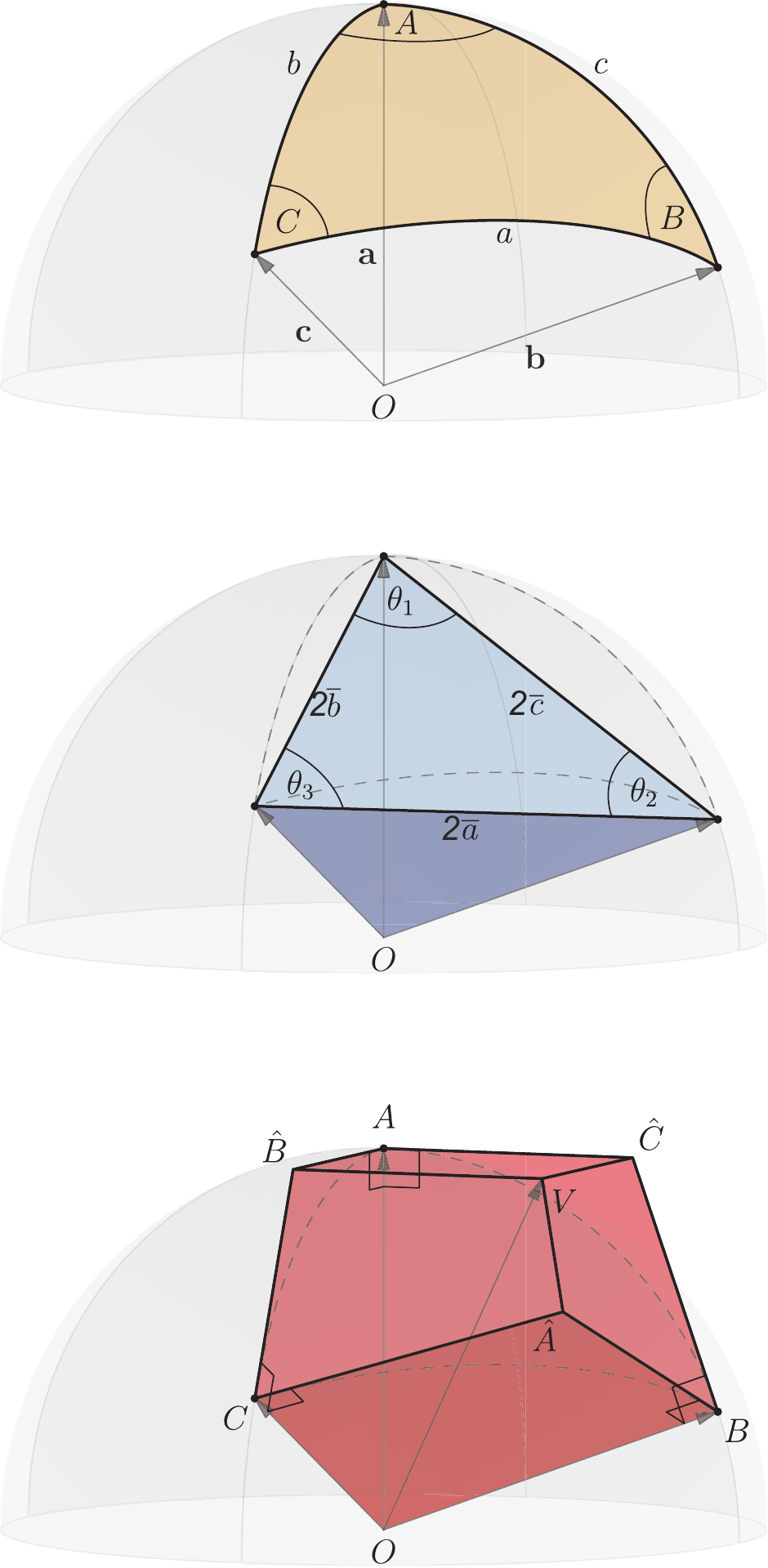}
    \caption{\textit{Top}: The spherical triangle $\sigma(ABC)$ formed from unit vectors $\mathbf{a}, \mathbf{b}, \mathbf{c}$ with spherical angles $A, B, C$ and edge lengths $a, b, c$ (also angles between unit vectors).
      \textit{Middle}: The Euclidean triangle $\triangle ABC$ with angles $\theta_1, \theta_2, \theta_3$ and edge lengths $2\overline{a}, 2\overline{b}, 2\overline{c}$ is a base of the 1-isosceles tetrahedron $\tau(\sigma, 1)$ with $O$ as the vertex.
      \textit{Bottom}: The solid $T(\sigma)$ formed by the solid angle cone bounded by the tangent planes at $A, B, C$.}
    \label{fig:canonical}
  \end{center}
\end{figure}

As indicated in Figure \ref{fig:canonical}, $V = T_A \cap T_B \cap T_C$ and 
\begin{equation}
\overrightarrow{OV} = \frac{1}{D}(\crossp{a}{b} + \crossp{b}{c} + \crossp{c}{a}), 
\quad D = \mathbf{a} \cdot (\crossp{b}{c}) > 0
\end{equation} 
and it passes through the circumcenters $M$ (resp. $\overline{M}$) of $\sigma(ABC)$ (resp. $\Delta ABC$). Thus the circumcentric subdivisions of $\sigma(ABC)$ (resp. $\Delta ABC$) corresponds to each other
under radial projection. 

\noindent
{\bf Notations}: We shall use the following system of notations:
\begin{enumerate}[(i)] 
\item The circumradius of $\sabc$ (resp. $\tabc$) will be denoted by $R$ (resp. $\overline{R}$),
while their inradius will be denoted by $\scr$ (resp. $\overline{\scr}$); the \textit{half}-sidelengths of $\tabc$
will be denoted by $\overline{a}, \overline{b}, \overline{c}$ and the central angles at $M$
(resp. $\overline{M}$) will be denoted by $\{\theta_1, \theta_2, \theta_3\}$ which is in fact equal
to the inner angles of $\tabc$.
\item Set $\{d_1, d_2, d_3\}$ to be the oriented distances between $M$ and its three sides, and
$\{\lambda_1, \lambda_2, \lambda_3\}$ to be the oriented side-angle of the circumcentric subdivision.
\item Set $\{\sigma_A, \sigma_B, \sigma_C\}$ to be the spherical triangles of $\tau(\sigma,1)$
(i.e. solid angles at $\{A, B, C\}$), $\nm{\sigma}$ (resp. $\nm{\Delta}$) to be the areas of $\sigma$ (resp. $\Delta$)
and 
\begin{equation}
\begin{aligned}
\nu(\sigma) &= \nm{\sigma} + \nm{\sigma_A} + \nm{\sigma_B} + \nm{\sigma_C} \\
\rho(\sigma) &= \frac{\frac{1}{3}\nu(\sigma)}{8\vol\tau(\sigma,1)} = \frac{1}{4D}\nu(\sigma)
\end{aligned}
\end{equation}
\end{enumerate}

\noindent
{\bf Spherical trigonometric formulas}:

The intricate system of formulas relating various kinds of geometric invariants of spherical triangles
constitutes a set of important, useful techniques of solid geometry. The following is just a concise
summary of those often useful ones. All of them can be deduced by means of the basic formulas of
\S (3.2.1) or sometimes by a direct application of vector algebra. Mostly, it is their clean-cut
statements and usefulness that's important, interesting and sometimes quite novel.

\begin{enumerate}[(1)]
\item Let us begin with the special case of right-angle spherical triangles (i.e. $C = \frac{\pi}{2}$).
Their trigonometric formulas become particularly simple and hence much easier to use. However, via the
canonical circumcentric subdivision of a general one, such extremely simple formulas can be used to
study that of the general spherical trigonometry.
\begin{enumerate}[(i)]
\item \begin{equation}
\begin{aligned}[t]
\sin C&=1 \quad \mathrm{and} \quad \cos C=0 \\
\Rightarrow \sin A &= \frac{\sin a}{\sin c}, \quad \sin B = \frac{\sin b}{\sin c}, \quad \cos c = \cos a\cdot\cos b
\end{aligned}
\label{eq:basici}
\end{equation}

and moreover,

\item \begin{equation}
\begin{aligned}[t]
\cos A &= \frac{\tan b}{\tan c}, \quad \cos B = \frac{\tan a}{\tan c}, \quad \tan A = \frac{\tan a}{\sin b}, \quad \tan B = \frac{\tan b}{\sin a} \\
\tan A \cdot \tan B &= \sec c
\end{aligned}
\label{eq:basicii}
\end{equation}

and the following special form of area formulas

\item \begin{equation}
\begin{aligned}[t]
\tan \frac{\nm{\sigma}}{2} &= \tan\frac{a}{2}\tan\frac{b}{2}, \quad \sin\nm{\sigma}=\frac{\sin a \sin b}{1 + \cos c} \\
\cos\nm{\sigma} &= \frac{\cos a + \cos b}{1 + \cos c}
\end{aligned}
\label{eq:basiciii}
\end{equation}
\end{enumerate}

\item Trigonometric formulas of geometric invariants of the circumcentric subdivision of $\sabc$ (resp. $\tabc$). 
\begin{enumerate}[(i)]

\item \begin{equation}\begin{aligned}[t]
D &= 6 \vol \tau(\sigma,1) = 2 \nm{\Delta}\cos R, \quad \overline{O\overline{M}} = \cos R \\
\overline{a} &= \sin \frac{a}{2}, \mathrm{etc}. \quad \overline{R} = \sin R, \quad \nm{\Delta} = \frac{2\overline{a}\overline{b}\overline{c}}{\overline{R}} \\
\Rightarrow D &= 4 \sin \frac{a}{2} \sin \frac{b}{2} \sin \frac{c}{2} \cot R, \quad \tan R = \frac{4\overline{a}\overline{b}\overline{c}}{D} \\
\tan^2 R &= \frac{2}{D^2} (1-\cos a)(1-\cos b)(1-\cos c) \\
\end{aligned}\label{eq:tanR}\end{equation}

\item The SSS (resp. AAA and SAS) data of $\sigma_A$, etc. are given as follows, namely
\begin{equation}\begin{aligned}[t]
& \{\frac{1}{2}(\pi - b), \frac{1}{2}(\pi - c), \theta_1\} & \mathrm{for}\ \sigma_A, \mathrm{etc}. \\
\mathrm{(resp.)} \quad & \{A, \frac{\pi}{2} - d_2, \frac{\pi}{2} - d_3\} & \mathrm{for}\ \sigma_A, \mathrm{etc}. \\
& \{\frac{1}{2}(\pi - b), \frac{1}{2}(\pi - c), A\} & \mathrm{for}\ \sigma_A, \mathrm{etc}.
\end{aligned}\end{equation}
and moreover, 
\begin{equation}\begin{aligned}[t]
    \tan d_i &= \cos \theta_i \tan R \\
    \cos \theta_1 &= \frac{\overline{b}^2 + \overline{c}^2 - \overline{a}^2}{2\overline{b}\overline{c}} \\
2 \lambda_1 &= B + C - A = \nm{\sigma} + \pi - 2A, \\
\tan \lambda_1 &= \frac{1}{D} (1 + \cos a - \cos b - \cos c), \mathrm{etc}.
\end{aligned}
\label{eq:tand}\end{equation}

\item \begin{equation}\begin{aligned}
\nu(\sigma) &= \pi + 2 \nm{\sigma} - 2(d_1 + d_2 + d_3), \\
\mathrm{and}\ w(\sigma) :&= \vol\ T(\sigma) = \frac{1}{6}\sin\nm{\sigma}\{\frac{8}{u} - \tan^2 R\} \\
\end{aligned}
\label{eq:volT}
\end{equation}
in the case that $\sigma$ contains its circumcenter. 
\end{enumerate}

Considering the sum of distances between $M$ and the three sides in the first equation of~(\ref{eq:volT}), one has:
\begin{align}
  & \mathrm{(i):} \quad & \tan\left(\sum d_i\right) &= \frac{\sum \tan d_i - \prod \tan d_i}{1 - \displaystyle\sum_{i<j} \tan d_i \tan d_j}
  = \frac{\tan R\sum \cos \theta_i - \tan^3 R\prod \cos \theta_i}{1 - \tan^2 R\displaystyle\sum_{i<j} \cos \theta_i \cos \theta_j} \nonumber \\
  & \mathrm{(ii):} \quad & \sum \cos \theta_i &= \frac{1}{2\overline{a}\overline{b}\overline{c}}\sum\overline{c}(\overline{a}^2 + \overline{b}^2 - \overline{c}^2)
  = \frac{1}{2\prod\overline{a}}\left(\sum\overline{a}^2\overline{b}-\sum\overline{a}^3\right) \nonumber \\
  & & \tan R \sum \cos \theta_i &= \frac{2}{D} \left\{ \sum\overline{a}^2\overline{b}-\sum\overline{a}^3 \right\} \nonumber \\
  & \mathrm{(iii):} \quad & \prod \cos \theta_i &= \frac{1}{8\prod\overline{a}^2} \prod (\overline{a}^2 + \overline{b}^2 - \overline{c}^2)  \\
  & & \tan^3 R \prod \cos \theta_i &= \frac{8}{D^3} \prod \overline{a} \prod \left( \overline{a}^2 + \overline{b}^2 - \overline{c}^2 \right) \nonumber \\
  & & & = \frac{8 \prod\overline{a}}{D^3} \left\{ \sum \overline{a}^4 \overline{b}^2 - \sum \overline{a}^6 - 2 \prod \overline{a}^2 \right\} \nonumber \\
  & \mathrm{(iv):} \quad & \displaystyle \sum_{i<j} \cos \theta_i \cos \theta_j &= \frac{1}{4 \prod \overline{a}^2} \sum \overline{a} \overline{c} \left( \overline{b}^4 - \overline{a}^4 - \overline{c}^4 + 2 \overline{a}^2 \overline{c}^2 \right) \nonumber \\
  & & \tan^2 R \displaystyle \sum_{i<j} \cos \theta_i \cos \theta_j &= \frac{4}{D^2} \left\{ 2 \sum \overline{a}^3 \overline{b}^3 + \sum \overline{a}^4 \overline{b} \overline{c} - \sum \overline{a}^5 \overline{b} \right\} \nonumber 
\end{align}

Substituting (ii), (iii), (iv) into the last equation of (i), one gets
\begin{equation}
  \tan\left(\sum d_i\right) = \frac{1}{D}\cdot \frac{2D^2 \left\{ \sum \overline{a}^2 \overline{b} - \sum \overline{a}^3 \right\} - 8 \prod\overline{a}\left\{ \sum \overline{a}^4 \overline{b}^2 - \sum \overline{a}^6 - 2 \prod \overline{a}^2 \right\}}{D^2 - 4 \left\{ 2 \sum \overline{a}^3 \overline{b}^3 + \sum \overline{a}^4 \overline{b} \overline{c} - \sum \overline{a}^5 \overline{b} \right\}}
\end{equation}

The second equation in~(\ref{eq:volT}) can be proved by the following vector algebra computations, namely:
As indicated in Figure \ref{fig:canonical}, $T(\sigma)$ is the non-overlapping union of a triple of cones
\footnote{In our terminology, a cone can have any flat base; it doesn't need to be circular.} 
with $V$ as their common vertex and with $\Box(OA\hat{C}B)$,
$\Box(OB\hat{A}C)$, $\Box(OC\hat{B}A)$ as their respective bases whose area together
with direction can be represented in terms of vector algebra by:
\begin{equation}
\frac{\mathbf{a}\times\mathbf{b}}{1+\mathbf{a}\cdot\mathbf{b}},\ 
\frac{\mathbf{b}\times\mathbf{c}}{1+\mathbf{b}\cdot\mathbf{c}},\ 
\mathrm{and}\ \frac{\mathbf{c}\times\mathbf{a}}{1+\mathbf{c}\cdot\mathbf{a}} 
\end{equation}
respectively, while $\overrightarrow{OV} = \frac{1}{D}(\mathbf{a}\times\mathbf{b} + \mathbf{b}\times\mathbf{c} + \mathbf{c}\times\mathbf{a})$. Therefore
\begin{equation}\begin{aligned}
\vol T(\sigma) &= \frac{1}{3}\overrightarrow{OV} \cdot \{
\frac{\mathbf{a}\times\mathbf{b}}{1+\mathbf{a}\cdot\mathbf{b}} + 
\frac{\mathbf{b}\times\mathbf{c}}{1+\mathbf{b}\cdot\mathbf{c}} + 
\frac{\mathbf{c}\times\mathbf{a}}{1+\mathbf{c}\cdot\mathbf{a}} \} \\
&= \frac{1}{6}\sin\nm{\sigma}\cdot\{\frac{8}{u} - \tan^2 R\}
\end{aligned}\end{equation}
while the last step is a matter of vector algebraic computations and simplifications.

\item \textit{Half angle formulas and incentric subdivision}:

Set $s = \frac{1}{2}(a + b + c)$. Then, by cosine law,
\begin{equation}
\begin{aligned}
\sin b \sin c \cos^2 \frac{A}{2} &= \frac{1}{2}(\cos a - \cos (b+c)), \\
\sin b \sin c \sin^2 \frac{A}{2} &= \frac{1}{2}(\cos (b-c) - \cos a), \\
\end{aligned}
\end{equation}
Therefore, one has
\begin{equation}
\left\{
\begin{aligned}
\cos\frac{A}{2} &=\sqrt{\frac{\sin s \sin (s-a)}{\sin b \sin c}} \\
\sin\frac{A}{2} &=\sqrt{\frac{\sin (s-b) \sin (s-c)}{\sin b \sin c}} \\
\tan\frac{A}{2} &=\sqrt{\frac{\sin (s-b) \sin (s-c)}{\sin s \sin (s-a)}} \\
\end{aligned}
\right.
\qquad \text{etc.}
\end{equation}
and via the geometry of incentric subdivision,
\begin{equation}\begin{aligned}
\tan \scr &= \sin (s-a) \tan \frac{A}{2}
&= \sqrt{\frac{\sin(s-a)\sin(s-b)\sin(s-c)}{\sin s}}
\end{aligned}\end{equation}
\end{enumerate}
\textit{Remark}: Roughly speaking, there are two types of invariants of spherical
triangles, namely, those partial individual ones such as lengths, angles, $\theta_i$,
and $\lambda_i$ etc. and those wholesome and symmetric ones such as area
$\nm{\sigma}$, $u$, $D$, $R$, $\scr$ etc. The simplest and also the most basic 
invariants of individual type are $\{a, b, c\}$ or 
$\{\overline{a}, \overline{b}, \overline{c}\}$ or 
$\{\cos a, \cos b, \cos c\}$, while $\nm{\sigma}$, $u$, $D$, and $\tan R$ naturally
emerge as those most important wholesome invariants.
\begin{example}
  Let $\sigma_\theta$ be the $\slfrac{\pi}{3}$-isosceles triangle with $\theta$ as its top angle, $\alpha_0 \le \theta \le \pi - \alpha_0$, (cf. Figure~\ref{fig:lune}).
  Then
  \begin{equation}
    \begin{aligned}
      D &= \frac{3}{4}\sin\theta, \quad u=2+\frac{3}{4}\cos \theta+\frac{1}{4} = \frac{3}{4}(3+\cos\theta) \\
      |\sigma_\theta| &= 2\arctan\frac{\sin\theta}{3+\cos\theta}, \quad \tan^2 R = \frac{1}{3}\sec^2\frac{\theta}{2}=\frac{2}{3(1+\cos\theta)}\\
      \nu(\sigma_\theta) &= \pi + 2|\sigma_\theta| - 4\arctan\left(\frac{1}{2}\tan\frac{\theta}{2}\right)-2\arctan\left(\frac{3\cos\theta+1}{4\sqrt{3}\cos\frac{\theta}{2}}\right)\\
      \rho(\tau(\sigma_\theta,2)) &= \frac{1}{3\sin\theta}\nu(\sigma_\theta)\\
      w(\sigma_\theta) &= \vol(T(\sigma_\theta))=\frac{1}{6}\sin|\sigma_\theta|\left\{\frac{8}{u}-\tan^2R\right\}=\frac{1}{9}\tan^2\frac{\theta}{2}\frac{13+15\cos\theta}{5+3\cos\theta}
    \end{aligned} \label{eq:sigisos}
  \end{equation}
\end{example}

\begin{figure}
  \begin{center}
    \begin{tikzpicture}
      \node[anchor=south west,inner sep=0] (image) at (0,0) {\includegraphics[width=3.5in]{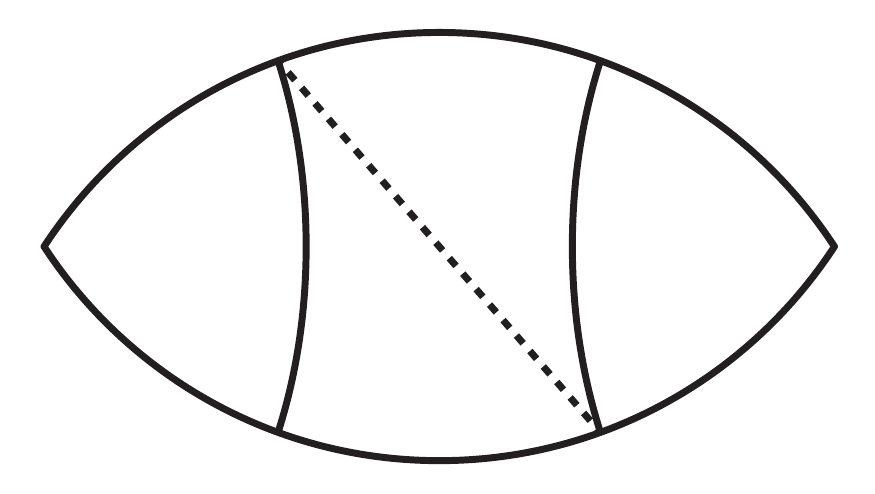}};
      \begin{scope}[x={(image.south east)},y={(image.north west)}]
        \node at (0,0.5) {$N$};
        \node at (0.1,0.5) {$\theta$};
        \node at (0.9,0.5) {$\theta$};
        \node at (1,0.5) {$S$};
        \node at (0.25,0.6) {$\sigma_\theta$};
        \node at (0.75,0.6) {$\sigma_\theta$};
        \node at (0.45,0.3) {$\tilde{\sigma}_\theta$};
        \node at (0.55,0.7) {$\tilde{\sigma}_\theta$};
      \end{scope}
    \end{tikzpicture}
    \caption{\label{fig:lune}}
  \end{center}
\end{figure}

\begin{example} \label{eg3.2}
  Let $\widetilde{\sigma_\theta}$, $\alpha_0 \le \theta \le \frac{\pi}{2}$, be half of the spherical rectangle as indicated in Figure~\ref{fig:lune} (i.e. $\Box(\frac{\pi}{3}, b),\ \cos b = \frac{3}{4}\cos\theta + \frac{1}{4}$). Then
  \begin{align}
      D &= \frac{3}{4}\sin \theta, \quad u = 2(\cos b + \frac{1}{2}) = \frac{3}{2}(1+\cos\theta) \nonumber \\
      |\widetilde{\sigma_\theta}| &= 2\arctan\frac{D}{u}=2\arctan\left(\frac{1}{2}\tan\frac{\theta}{2}\right) \nonumber \\
      d_1 &= \frac{\theta}{2}, \quad \cos R = \frac{\sqrt{3}}{2}\cos\frac{\theta}{2} = \cos d_2\cos\frac{b}{2} \nonumber \\
      \cos d_2 &=\sqrt{\frac{3(1+\cos\theta)}{5+3\cos\theta}}, \quad \tan d_2=\frac{1}{\sqrt{3}\cos\frac{\theta}{2}} \nonumber \\
      \tan^2 R &= \sec^2 R-1 = \frac{5-3\cos\theta}{3(1+\cos\theta)} \nonumber \\
      \rho(\tau(\widetilde{\sigma_\theta},2)) &= \frac{1}{3\sin\theta}\left\{\pi + 4 \arctan \left( \frac{1}{2}\tan\frac{\theta}{2}\right) - \theta - 2 \arccos \sqrt{\frac{3+3\cos\theta}{5+3\cos\theta}}\right\} \label{eq:sigtisos} \\ 
      \vol(T(\widetilde{\sigma_\theta})) &= \frac{1}{6}\sin|\widetilde{\sigma_\theta}|\left\{\frac{8}{u}-\tan^2R\right\}=\frac{2}{9}\tan\frac{\theta}{2}\frac{11+3\cos\theta}{5+3\cos\theta} \nonumber \\
      H&=\sec R - 2\cos R = \frac{2}{\sqrt{3}\cos\frac{\theta}{2}}-\sqrt{3}\cos\frac{\theta}{2}=\frac{1-3\cos\theta}{\sqrt{6(1+\cos\theta)}} \nonumber \\
      \vol(\mathrm{truncated\ tip}) &= \frac{1}{3}H^3\cot^2R\left\{\frac{1}{\sqrt{3}\sin\frac{\theta}{2}}+\sqrt{3}\sin\frac{\theta}{2}\right\} = \frac{1}{36}\frac{(1-3\cos\theta)^3}{\sin\theta} \nonumber \\
      w(\widetilde{\sigma_\theta})&=\vol(T(\widetilde{\sigma_\theta}))-\vol(\mathrm{truncated\ tip})=\frac{2}{9}\tan\frac{\theta}{2}\frac{11+3\cos\theta}{5+3\cos\theta}-\frac{1}{36}\frac{(1-3\cos\theta)^3}{\sin\theta} \nonumber 
  \end{align}
\end{example}

\begin{figure}
  \begin{center}
    \includegraphics[width=3.5in]{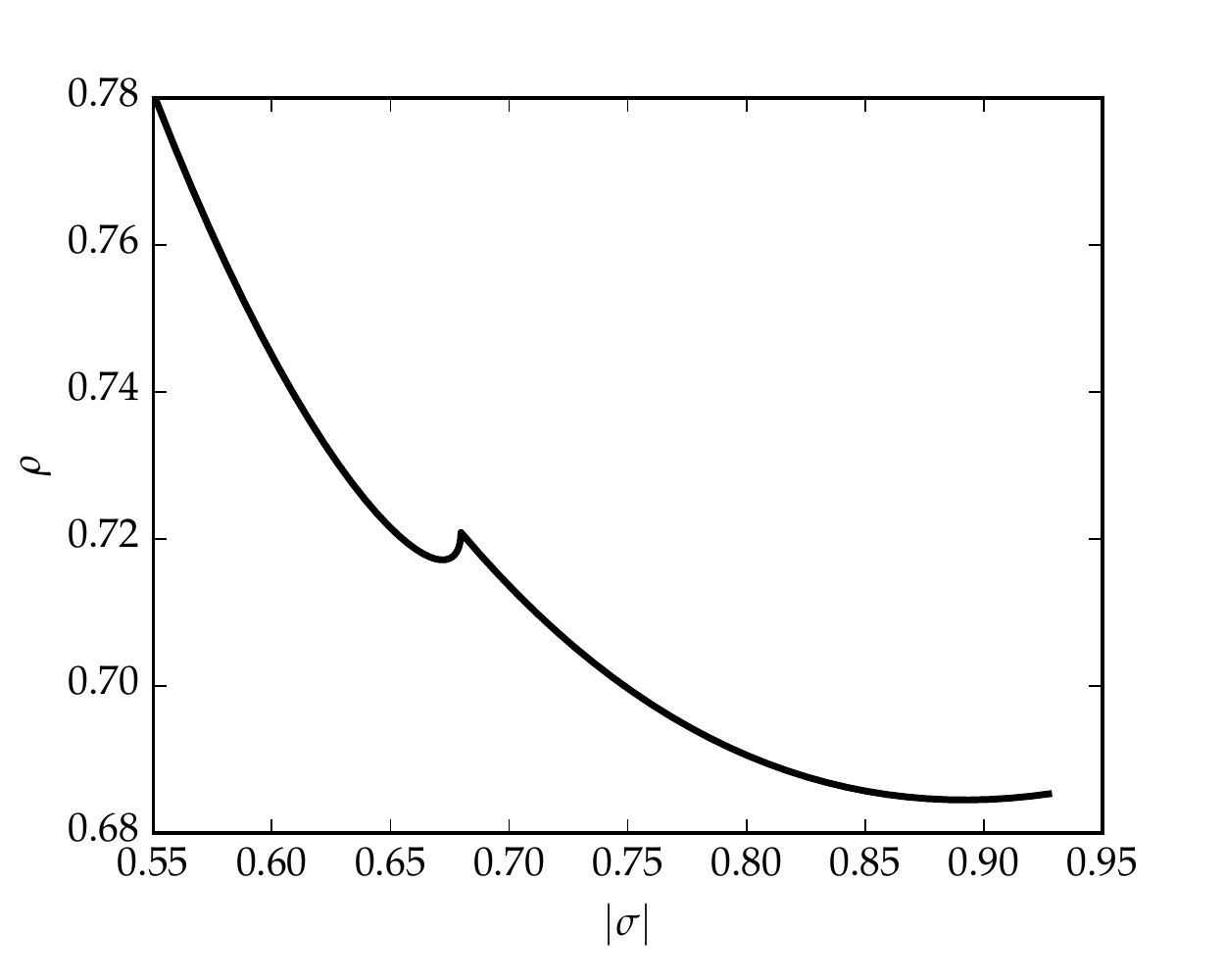}
    \caption{The combined graph of $\rho(\tau(\sigma_\theta, 2))$ (resp. $\rho(\tau(\widetilde{\sigma_\theta},2))$) as functions of $|\sigma_\theta|$ (resp. $|\widetilde{\sigma_\theta}|$)}
    \label{fig:rhosigma}
  \end{center}
\end{figure}

\subsubsection{Geometric invariants of spherical quadrilaterals}

Geometrically, a spherical quadrilateral $\sigma(ABCD)$ can be subdivided into a pair of spherical triangles by one of its diagonals. Therefore its geometric invariants can always be expressed in terms of that of its pair of triangles, thus expressible in terms of the basic invariants of such a pair of triangles with a common edge, in particular, the cosine of the other diagonal. Algebraically, let $\{\mathbf{a,b,c,d}\}$ be the quadruple of unit vectors of the vertices of a given spherical quadrilateral $\sigma(ABCD)$. The sextuple of cross inner products (i.e. the cosines of side-lengths and diagonal lengths) of course consists of a complete set of congruence invariants but with one functional relation, thus making any quintuple subset already consisting of a complete set of congruence invariants. In fact, this is exactly the fundamental result on the relations among $n$ unit vectors for $n \geq 4$. Anyhow, it will be a useful tool in the analysis of spherical configurations to have a kind of simple algebraic formula to express any one of the above sextuple of cross inner products in terms of the other quintuple.

\noindent
{\bf A simple method and an advantageous relation for basic invariants of spherical quadrilaterals}

Suppose $\mathbf{c} \cdot \mathbf{d}$ is the one that we would like to compute in terms of the other quintuple of cross inner product of $\{\mathbf{a,b,c,d}\}$. Then $\sigma(ABC)$ and $\sigma(ABD)$ are a pair of spherical triangles with $\overline{AB}$ as their common edge, while their orientations may be the same or opposite to each other. Set $D_1$ and $D_2$ to be their determinants. Then, obviously

\begin{equation} \label{eq:d1d2}
D_1 \cdot D_2 = \pm \sqrt{D_1^2 \cdot D_2^2}
\end{equation}

\noindent
while

\begin{align*}
D_1 \cdot D_2 &=
\begin{vmatrix}
1 & \mathbf{b}\cdot\mathbf{a} & \mathbf{c}\cdot\mathbf{a} \\ 
\mathbf{a}\cdot\mathbf{b} & 1 & \mathbf{c}\cdot \mathbf{b} \\ 
\mathbf{a}\cdot\mathbf{d} & \mathbf{b}\cdot\mathbf{d} & \mathbf{c}\cdot\mathbf{d} 
\end{vmatrix}
\end{align*}

\begin{align}\label{eq:dmatrix}
D_1^2 &=
\begin{vmatrix}
1 & \mathbf{b}\cdot\mathbf{a} & \mathbf{c}\cdot\mathbf{a} \\ 
\mathbf{a}\cdot\mathbf{b} & 1 & \mathbf{c}\cdot \mathbf{b} \\ 
\mathbf{a}\cdot\mathbf{c} & \mathbf{b}\cdot\mathbf{c} & 1
\end{vmatrix}
\end{align}

\begin{align*}
D_2^2 &=
\begin{vmatrix}
1 & \mathbf{b}\cdot\mathbf{a} & \mathbf{d}\cdot\mathbf{a} \\ 
\mathbf{a}\cdot\mathbf{b} & 1 & \mathbf{d}\cdot \mathbf{b} \\ 
\mathbf{a}\cdot\mathbf{d} & \mathbf{b}\cdot\mathbf{d} & 1 
\end{vmatrix}
\end{align*}

Therefore, it is quite simple to use the above equations to solve $\mathbf{c}\cdot\mathbf{d}$ in terms of the others.

\subsection{Area estimates and area preserving deformations} \label{subsec:areadeform}

\subsubsection{Some corollaries of the area formulas}

In the study of spherical geometry, the area is the most important invariant, while the Archimedes Theorem and the area formulas
(i.e. A.A.A., S.S.S. and S.A.S.) of triangles are the fundamental theorems and powerful tools. In this subsection, we shall derive
some useful corollaries of the area formulas.
\begin{corollary}
For $\sigma(ABC)$ with given and fixed $\{a, b\}$,
\begin{equation}
\nm{\sigma} \leq 2 \arctan \left\{ \frac{(1-\cos a)(1-\cos b)}{2(\cos a + \cos b)} \right\}^{\frac{1}{2}}
\end{equation}
and equality holds when and only when $c=2R$.
\end{corollary}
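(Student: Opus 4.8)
The plan is to fix $\{a,b\}$ and regard $\nm{\sigma}$ as a function of the remaining degree of freedom, which one may take to be either the angle $C$ or, equivalently, the cosine of the third side $c$ via the spherical cosine law $\cos c = \cos a\cos b + \sin a\sin b\cos C$. Using the SAS area formula from \S\ref{subsec:basictrig}, namely
\begin{equation}
\tan\frac{\nm{\sigma}}{2} = \frac{\sin C}{\cot\frac{a}{2}\cot\frac{b}{2}+\cos C},
\end{equation}
I would first observe that since $\tan$ is increasing on the relevant range, maximizing $\nm{\sigma}$ is the same as maximizing $f(C) \colonequals \sin C/\bigl(k+\cos C\bigr)$ where $k = \cot\frac{a}{2}\cot\frac{b}{2}$ is a fixed constant exceeding $1$. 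A one-variable calculus computation gives $f'(C) = \bigl(k\cos C + 1\bigr)/\bigl(k+\cos C\bigr)^2$, so the unique interior critical point is at $\cos C = -1/k$, and it is a maximum. At that point $\sin C = \sqrt{1-1/k^2}$, so $\tan\frac{\nm{\sigma}}{2} = \sqrt{1-1/k^2}/\bigl(k - 1/k\bigr) = \sqrt{k^2-1}/\bigl(k^2-1\bigr) = 1/\sqrt{k^2-1}$. It then remains to translate $k^2-1 = \cot^2\frac{a}{2}\cot^2\frac{b}{2} - 1$ into the stated expression and to check that the critical $C$ is exactly the condition $c = 2R$.

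For the algebraic translation I would use $\cot^2\frac{a}{2} = (1+\cos a)/(1-\cos a)$, etc., so that
\begin{equation}
k^2 - 1 = \frac{(1+\cos a)(1+\cos b) - (1-\cos a)(1-\cos b)}{(1-\cos a)(1-\cos b)} = \frac{2(\cos a + \cos b)}{(1-\cos a)(1-\cos b)},
\end{equation}
hence $\tan\frac{\nm{\sigma}}{2} = \{(1-\cos a)(1-\cos b)/(2(\cos a+\cos b))\}^{1/2}$, which is precisely the claimed bound after applying $\nm{\sigma} = 2\arctan(\cdots)$. One should note here that the hypothesis implicitly requires $\cos a + \cos b > 0$ for the bound to be finite and for the critical point $\cos C = -1/k$ to lie in $(-1,1)$; when $\cos a + \cos b \le 0$ the supremum is $\pi - $ (something) or the configuration degenerates, and I would either assume $\cos a+\cos b>0$ or remark on this boundary case.

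Finally, for the equality characterization: the maximizing triangle has $\cos C = -1/k = -\tan\frac{a}{2}\tan\frac{b}{2}$. I would verify this is equivalent to $c = 2R$ by using the circumradius formulas collected in~(\ref{eq:tanR}), in particular $\tan^2 R = \frac{2}{D^2}(1-\cos a)(1-\cos b)(1-\cos c)$ together with $D = \sin a\sin b\sin C$; substituting $\cos c = \cos a\cos b - \sin a\sin b\cdot\tan\frac{a}{2}\tan\frac{b}{2}$ and simplifying should yield $\sin^2 R = \frac{1}{4}(1-\cos c)\cdot 2 = \sin^2\frac{c}{2}$, i.e. $\sin R = \sin\frac{c}{2}$, which is the statement $2R = c$ (the side $c$ subtends a diameter of the circumscribed circle, so the opposite vertex sees it at a right angle — consistent with $C$ being obtuse). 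The main obstacle I anticipate is not conceptual but bookkeeping: carrying the half-angle identities through the $\tan^2 R$ formula cleanly enough to land exactly on $\sin R = \sin\frac{c}{2}$, and making sure the sign/range conditions ($k>1$, $\cos a + \cos b > 0$, $C \in (\pi/2,\pi)$ at the optimum) are all stated so that the "when and only when" is genuinely an iff.
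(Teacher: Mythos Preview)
Your proposal is correct. The paper takes a different route: it rewrites the SAS area formula as a quadratic in $t=\tan\frac{C}{2}$, namely $(k-1)t^{2}-2\cot\frac{\nm{\sigma}}{2}\,t+(k+1)=0$, and reads off the inequality from the nonnegativity of the discriminant, $\cot^{2}\frac{\nm{\sigma}}{2}\ge k^{2}-1$. The equality case then comes for free from the geometry of the two roots: they are $\tan\frac{C_1}{2}$ and $\tan\frac{C_2}{2}$ for the opposite angles of the centrosymmetric $(a,b,a,b)$-quadrilateral, and they coincide exactly when the diagonals meet at the common circumcenter, i.e.\ when $c=2R$. Your calculus optimization in $C$ is equally valid and arguably more elementary; the paper's discriminant argument buys the equality characterization without a separate circumradius computation, whereas your route makes the value $\cos C=-1/k$ and the condition $1+\cos c=\cos a+\cos b$ explicit, from which $\sin^{2}\frac{a}{2}+\sin^{2}\frac{b}{2}=\sin^{2}\frac{c}{2}$ (equivalently $\theta_{3}=\pi/2$, hence $d_{3}=0$ and $c=2R$) drops out directly. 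One small clarification: your parenthetical about the opposite vertex seeing $c$ ``at a right angle'' refers to the Euclidean chordal angle $\theta_{3}$, not the spherical angle $C$ (which is obtuse at the optimum); keeping that distinction explicit will make the ``when and only when'' clean.
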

\begin{proof}
Set $t = \tan \frac{C}{2}$ and $k = \cot\frac{a}{2}\cot\frac{b}{2}$. Then,
\begin{equation}
  (k-1)t^2 - 2 \cot \frac{\nm{\sigma}}{2} t + (k+1) = 0
  \label{eq:kquad}
\end{equation}
and the above inequality follows directly from the realness of $t$. Moreover, the pair of roots of the above quadratic equation actually
correspond to the pair of opposite angles $C_1, C_2$ of the centrosymmetric quadrilateral with side lengths $\{a, b, a, b\}$, namely,
the pair of roots $t_i = \tan\frac{C_i}{2}$. Thus, the equality holds when and only when $t_1 = t_2$ and the intersection point
of the pair of diagonals is actually the circumcenter.
\end{proof}
\begin{remark}
It follows from the S.A.S. formula of $\tan \frac{\nm{\sigma}}{2}$ that
\begin{equation}
\begin{aligned}
& \sin \frac{\nm{\sigma}}{2}(k+\cos C) = \cos\frac{\nm{\sigma}}{2} \sin C \\
\Rightarrow & \sin \frac{\nm{\sigma}}{2} \cdot k = \sin(C - \frac{\nm{\sigma}}{2}) \\
\Rightarrow & \ C = \arcsin(k \cdot \sin\frac{\nm{\sigma}}{2}) + \frac{\nm{\sigma}}{2}
\label{eq:cofk}
\end{aligned}
\end{equation}
\end{remark}
\begin{corollary} \label{cor2}
Consider $\nm{\sigma}$ as a function of $\{a, b, C\}$ (i.e. the S.A.S. data of $\sigma(ABC)$). (resp. $\{a, b, 1+\cos c\}$). Then
\begin{equation}\begin{aligned}
\frac{\partial \nm{\sigma}}{\partial C} &= \frac{1+\cos c-\cos a-\cos b}{1+\cos c} \\
\frac{\partial \nm{\sigma}}{\partial x} &= \frac{\cos a + \cos b - x}{xD}
\end{aligned}\end{equation}
where $x = 1 + \cos c$.
\end{corollary}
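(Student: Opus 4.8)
The plan is to prove the first identity by differentiating the S.A.S. area formula $\tan\frac{\nm{\sigma}}{2}=\dfrac{\sin C}{\cot\frac a2\cot\frac b2+\cos C}$ established above, holding $a,b$ fixed, and then to deduce the second identity from the first by a one-line change of variables via the cosine law.

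\emph{Step 1: differentiate in $C$.} Keep $a,b$ fixed, set $k=\cot\frac a2\cot\frac b2$ and $f(C)=\dfrac{\sin C}{k+\cos C}$, so $\tan\frac{\nm{\sigma}}{2}=f(C)$. Differentiating gives $\tfrac12\bigl(1+\tan^2\tfrac{\nm{\sigma}}{2}\bigr)\dfrac{\partial\nm{\sigma}}{\partial C}=f'(C)$, and since $1+\tan^2\tfrac{\nm{\sigma}}{2}=1+f(C)^2$ we get $\dfrac{\partial\nm{\sigma}}{\partial C}=\dfrac{2f'(C)}{1+f(C)^2}$. A short computation yields $f'(C)=\dfrac{1+k\cos C}{(k+\cos C)^2}$ and $1+f(C)^2=\dfrac{k^2+2k\cos C+1}{(k+\cos C)^2}$, hence
\[
\frac{\partial\nm{\sigma}}{\partial C}=\frac{2(1+k\cos C)}{k^2+2k\cos C+1}.
\]

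\emph{Step 2: rewrite in side data.} Use the half-angle identity $\cot\frac a2=\dfrac{1+\cos a}{\sin a}$ to get $k=\dfrac{(1+\cos a)(1+\cos b)}{\sin a\sin b}$, and the cosine law in the form $\cos C=\dfrac{\cos c-\cos a\cos b}{\sin a\sin b}$. Multiplying and using $\sin^2a\sin^2b=(1-\cos a)(1+\cos a)(1-\cos b)(1+\cos b)$ gives $k\cos C=\dfrac{\cos c-\cos a\cos b}{(1-\cos a)(1-\cos b)}$. Substituting this into the numerator and denominator of the last display, both reduce over the common factor $(1-\cos a)(1-\cos b)$: the numerator becomes $\dfrac{1-\cos a-\cos b+\cos c}{(1-\cos a)(1-\cos b)}$ and $k^2+2k\cos C+1$ becomes $\dfrac{2(1+\cos c)}{(1-\cos a)(1-\cos b)}$. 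Dividing yields $\dfrac{\partial\nm{\sigma}}{\partial C}=\dfrac{1+\cos c-\cos a-\cos b}{1+\cos c}$, the first assertion.

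\emph{Step 3: change of variable to $x=1+\cos c$.} With $a,b$ still fixed, the cosine law reads $x-1=\cos a\cos b+\sin a\sin b\cos C$, so $x$ is a smooth strictly monotone function of $C$ with $\dfrac{dx}{dC}=-\sin a\sin b\sin C=-D$. Since $\nm{\sigma}$ depends on $(a,b,x)$ only through $(a,b,C)$, the chain rule gives
\[
\frac{\partial\nm{\sigma}}{\partial x}=\frac{\partial\nm{\sigma}}{\partial C}\cdot\frac{dC}{dx}=-\frac1D\cdot\frac{1+\cos c-\cos a-\cos b}{1+\cos c}=\frac{\cos a+\cos b-x}{xD},
\]
as claimed. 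The only non-formal ingredient is the pair of algebraic collapses in Step 2, which are routine provided one is careful that both expressions genuinely carry the factor $(1-\cos a)(1-\cos b)$; the remaining work is bookkeeping — keeping $a,b$ fixed throughout and noting that $C\mapsto x$ is a legitimate reparametrization so that the chain rule applies in Step 3.
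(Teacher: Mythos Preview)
Your proof is correct. The algebra in Step~2 checks out: with $k=\cot\frac a2\cot\frac b2$ one indeed gets $k^2=\dfrac{(1+\cos a)(1+\cos b)}{(1-\cos a)(1-\cos b)}$ and $k\cos C=\dfrac{\cos c-\cos a\cos b}{(1-\cos a)(1-\cos b)}$, so numerator and denominator collapse exactly as you claim.

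The paper's proof is the mirror image of yours. It starts from the S.S.S. form $\nm{\sigma}=2\arctan(D/u)$, writes $u$ and $D^2$ as explicit polynomials in $x=1+\cos c$ with $\cos a,\cos b$ held constant, and differentiates in $x$ first to obtain $\dfrac{\partial\nm{\sigma}}{\partial x}=\dfrac{\cos a+\cos b-x}{xD}$; it then uses the same relation $\dfrac{dx}{dC}=-D$ (from differentiating the cosine law) to pass to $\dfrac{\partial\nm{\sigma}}{\partial C}$. You instead start from the S.A.S. form $\tan\tfrac{\nm{\sigma}}{2}=\dfrac{\sin C}{k+\cos C}$, differentiate in $C$ first, and run the chain rule the other way. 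Each route is natural for the variable it attacks first: yours avoids manipulating $D$ and $u$ as functions of $x$, at the price of the half-angle rewriting in Step~2; the paper's avoids that rewriting, at the price of computing $uD'-D$ and simplifying via $u^2+D^2=2(1+\cos a)(1+\cos b)x$. Both hinge on the same link $dx/dC=-D$.
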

\begin{proof}
Set $x = 1 + \cos c$ and fixed $a, b$ (i.e. regarding $c_1 = \cos a$ and $c_2 = \cos b$ as constants). Then
\begin{equation}\begin{aligned}
u &= c_1 + c_2 + x, \quad D^2 = -(c_1 + c_2)^2 + 2(c_1 c_2 + 1) x - x^2 \\
\nm{\sigma} &= 2 \arctan \frac{D}{u}
\end{aligned}\end{equation}
Therefore, by differentiation w.r.t $x$, one has 
\begin{equation}
\frac{d\nm{\sigma}}{dx} = \frac{uD^{\prime}-D}{\frac{1}{2}(u^2+D^2)} = \frac{uDD^{\prime}-D^2}{(1+c_1)(1+c_2)xD} = \frac{c_1+c_2-x}{xD}
\end{equation}
while the differentiation of the cosine law gives that
\begin{equation}
-\sin a \sin b \sin C \frac{dC}{dx} = 1 \Rightarrow \frac{dx}{dC} = -D
\end{equation}
thus having
\begin{equation}
\frac{d\nm{\sigma}}{dC} = \frac{d\nm{\sigma}}{dx}\cdot\frac{dx}{dC} = \frac{x - c_1 - c_2}{x}.
\end{equation}
\end{proof}

\begin{corollary} \label{cor3}
The areas of spherical quadrilaterals with $\{\ell_i ; i=1,2,3,4\}$ as given side-lengths have a unique maximum at the cocircular one.
Set $c_i = \cos \ell_i$ and $x=1+\cos d, d=\overline{AC}$ being the cutting diagonal. Then that of the cocircular one is given by
\begin{equation}
x_0 = \frac{(c_1+c_2)\sqrt{(1-c_3)(1-c_4)}+(c_3+c_4)\sqrt{(1-c_1)(1-c_2)}}{\sqrt{(1-c_1)(1-c_2)} + \sqrt{(1-c_3)(1-c_4)}}
\end{equation}
\end{corollary}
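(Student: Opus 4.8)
The plan is to cut the quadrilateral by the diagonal $\overline{AC}$ and reduce to a one-variable maximization. Label the sides $\ell_1=\overline{AB}$, $\ell_2=\overline{BC}$, $\ell_3=\overline{CD}$, $\ell_4=\overline{DA}$, put $d=\overline{AC}$ and $x=1+\cos d$. Cutting along $\overline{AC}$ produces the triangles $\sigma_1=\sigma(ABC)$ (fixed sides $\ell_1,\ell_2$, third side $d$) and $\sigma_2=\sigma(ACD)$ (fixed sides $\ell_3,\ell_4$, third side $d$); by spherical S.S.S.\ congruence each is rigid once $d$ is chosen, and the two are glued along $\overline{AC}$ with $B$ and $D$ on opposite sides, so the shape of $\sigma(ABCD)$ — and hence $f(x):=|\sigma_1|+|\sigma_2|$ — depends on $x$ alone, over the interval where the two triangle determinants $D_1,D_2$ are positive. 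Applying Corollary~\ref{cor2} to each piece (with $\{a,b\}=\{\ell_1,\ell_2\}$ resp.\ $\{\ell_3,\ell_4\}$ fixed and $c=d$),
\begin{equation*}
  f'(x)=\frac1x\left(\frac{c_1+c_2-x}{D_1}+\frac{c_3+c_4-x}{D_2}\right),
\end{equation*}
so $f'(x)=0$ is the equation $\dfrac{x-c_1-c_2}{D_1}+\dfrac{x-c_3-c_4}{D_2}=0$.

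The crux is to read this critical equation geometrically. By the circumcentric-subdivision formulas~\eqref{eq:tand}, $\dfrac{x-c_1-c_2}{D_1}=\dfrac1{D_1}\bigl(1+\cos d-\cos\ell_1-\cos\ell_2\bigr)=\tan\lambda^{(1)}$, where $\lambda^{(1)}$ is the side-angle of the circumcentric subdivision of $\sigma_1$ at the edge $\overline{AC}$, and likewise $\dfrac{x-c_3-c_4}{D_2}=\tan\lambda^{(2)}$ for $\sigma_2$; thus the critical equation reads $\tan\lambda^{(1)}+\tan\lambda^{(2)}=0$. In the circumcentric subdivision of $\sigma_1$, the isosceles sub-triangle on $\overline{AC}$, split by the altitude from the circumcenter $M_1$, is a right spherical triangle, so $\tan\bigl(\operatorname{dist}(M_1,\overline{AC})\bigr)=\sin\tfrac d2\cdot\tan\lambda^{(1)}$ for the oriented distance (positive toward $B$), and similarly for $\sigma_2$ and its circumcenter $M_2$ (positive toward $D$). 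Hence the critical equation says exactly that these two oriented distances are opposite; since $B$ and $D$ lie on opposite sides of $\overline{AC}$ and both circumcenters lie on the perpendicular-bisector great circle of $\overline{AC}$, this forces $M_1=M_2$, i.e.\ $ABCD$ is cocircular — and conversely a cocircular quadrilateral is a critical point, with $R_1=R_2$ there. Since the four prescribed sides in the given cyclic order determine a unique cocircular quadrilateral (its circumradius being the one, unique by the strict monotonicity of the total subtended arc in the radius, for which the four arcs cut off by the sides sum to $2\pi$), $f$ has exactly one critical point; and inspecting $f'$ at the two ends of the $x$-interval — where one of $D_1,D_2$ vanishes and, for the moderate side-lengths relevant here (each $\ell_i+\ell_j<\pi$), $f'$ runs to $+\infty$ on the left and to $-\infty$ on the right — shows this critical point is the maximum, no boundary maximum occurring.

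It remains to evaluate $x_0$. At the critical point $R_1=R_2$; substituting into the circumradius formula $\tan R=4\,\overline a\,\overline b\,\overline c/D$ from~\eqref{eq:tanR} (with $\overline a=\sin\tfrac a2$, etc.) for the triangles of sides $(\ell_1,\ell_2,d)$ and $(\ell_3,\ell_4,d)$, and using $\sin\tfrac{\ell_i}2\sin\tfrac{\ell_j}2=\tfrac12\sqrt{(1-c_i)(1-c_j)}$, gives
\begin{equation*}
  \frac{D_1}{\sqrt{(1-c_1)(1-c_2)}}=\frac{D_2}{\sqrt{(1-c_3)(1-c_4)}}.
\end{equation*}
Feeding this into the critical equation $\dfrac{x-c_1-c_2}{D_1}+\dfrac{x-c_3-c_4}{D_2}=0$ cancels the determinants and leaves the \emph{linear} equation
\begin{equation*}
  \frac{x-c_1-c_2}{\sqrt{(1-c_1)(1-c_2)}}+\frac{x-c_3-c_4}{\sqrt{(1-c_3)(1-c_4)}}=0,
\end{equation*}
whose unique solution is the asserted $x_0$.

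I expect the genuine obstacle to be this geometric reading of $f'(x)=0$: identifying $(x-c_1-c_2)/D_1$ with the circumcentric side-angle tangent through~\eqref{eq:tand}, and, above all, tracking the orientations so that the equation becomes the true cocircularity condition $M_1=M_2$ — the weaker conclusion $R_1=R_2$ alone does not put the four vertices on one circle. A secondary point is the uniqueness and interiority of the cocircular configuration, which is what lets one conclude that the single critical point is the maximum; the differentiation via Corollary~\ref{cor2} and the closing linear solve are routine.
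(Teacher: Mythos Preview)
Your proof is correct, and the overall shape --- reduce to one variable via the diagonal, differentiate using Corollary~\ref{cor2}, identify the critical point with cocircularity, then solve for $x_0$ --- matches the paper's. The two arguments diverge at the interesting step of showing that $f'(x)=0$ is equivalent to cocircularity, and again at the computation of $x_0$.

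For the critical-point step, you work intrinsically on the sphere: recognizing $(x-c_1-c_2)/D_1$ as the side-angle tangent $\tan\lambda^{(1)}$ via~\eqref{eq:tand}, converting to signed perpendicular distances from the two circumcenters $M_1,M_2$ to $\overline{AC}$, and concluding $M_1=M_2$. The paper instead uses ambient vector algebra: setting $L_1=T_A\cap T_C$ (intersection of tangent planes) and $V_1=L_1\cap T_B$, $V_2=L_1\cap T_D$, it computes directly that $\overrightarrow{V_1V_2}=A'(x)\,(\mathbf a\times\mathbf c)$, so $A'(x)=0$ iff $V_1=V_2$, i.e.\ the four tangent planes meet at a single point --- which is exactly cocircularity. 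For the value of $x_0$, you exploit the geometric fact $R_1=R_2$ together with $\tan R=4\,\overline a\,\overline b\,\overline c/D$ to obtain $D_1:D_2=\sqrt{(1-c_1)(1-c_2)}:\sqrt{(1-c_3)(1-c_4)}$; the paper reaches the same ratio by the bare algebraic identity $(c_1+c_2-x)^2+D_1^2=2x(1-c_1)(1-c_2)$, a consequence of the formula $D^2=-(c_1+c_2)^2+2(c_1c_2+1)x-x^2$ already used in the proof of Corollary~\ref{cor2}. Your route is more conceptual and stays on the sphere throughout; the paper's tangent-plane computation is slicker in that it sidesteps entirely the orientation bookkeeping you correctly flagged as the delicate point.
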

\begin{proof}
Set $\sigma_1$ (resp. $\sigma_2$) to be the triangles with $\{\ell_1, \ell_2, d\}$ (resp. $\{\ell_3, \ell_4, d\}$) as their side-lengths
and $D_1, u_1$ (resp. $D_2, u_2$) to be that of $\sigma_1, \sigma_2$. 

Then the area, $A(x)$, of such quadrilaterals are given by:
\begin{equation}
A(x) = 2(\arctan\frac{D_1}{u_1} + \arctan\frac{D_2}{u_2})
\end{equation}
and by Corollary~\ref{cor2}
\begin{equation}
A^{\prime}(x) = \frac{c_1+c_2-x}{xD_1} + \frac{c_3+c_4-x}{xD_2}
\end{equation}
On the other hand, set $T_A, T_B, T_C$ and $T_D$ to be the tangent planes at vertices $\{A, B, C, D\}$, $L_1 = T_A \cap T_C$
and $V_1 = L_1 \cap T_B$, $V_2 = L_1 \cap T_D$. Then, it is not difficult to show that
\begin{equation}\begin{aligned}
\overrightarrow{OV_1} &= \frac{1}{1+\dotp{a}{c}}(\mathbf{a}+\mathbf{c}) + k_1(\crossp{a}{c}), \quad \overrightarrow{OV_1} \cdot \mathbf{b} = 1 \\
\overrightarrow{OV_2} &= \frac{1}{1+\dotp{a}{c}}(\mathbf{a}+\mathbf{c}) + k_2(\crossp{a}{c}), \quad \overrightarrow{OV_2} \cdot \mathbf{d} = 1
\end{aligned}\end{equation}
thus having
\begin{equation}\begin{aligned}
k_1 &= \frac{c_1+c_2-x}{xD_1}, \quad k_2=\frac{x-c_3-c_4}{xD_2} \\
\overrightarrow{V_1V_2} &= (k_1 - k_2)(\crossp{a}{c}) = A^{\prime}(x)(\crossp{a}{c})
\end{aligned}\end{equation}
Therefore $A^{\prime}(x) = 0$ when and only when $V_1 = V_2$, $\{A, B, C, D\}$ cocircular. Set $x_0$ to be the unique solution of $A^{\prime}(x_0) = 0$. Then
\begin{equation}\begin{aligned}
& \frac{c_1+c_2-x_0}{x_0D_1} = \frac{x_0-c_3-c_4}{x_0D_2} \\
& \rightarrow \frac{c_1+c_2-x_0}{x_0-c_3-c_4} = \frac{D_1}{D_2} (:= \lambda) \\
& \lambda^2 = \frac{(c_1+c_2-x_0)^2+D_1^2}{(c_3+c_4-x_0)^2+D_2^2} = \frac{2x_0(1-c_1)(1-c_2)}{2x_0(1-c_3)(1-c_4)} \\
& \lambda = \frac{\sqrt{(1-c_1)(1-c_2)}}{\sqrt{(1-c_3)(1-c_4)}}
\end{aligned}\end{equation}
\end{proof}
Recall that, in the case of plane geometry, the well known S.S.S. area formula of triangles has a beautiful generalization to that of cocircular
quadrilaterals, namely 
\begin{equation}
A(\ell_1 ... \ell_4) = \prod(\ess - \ell_i)^{\nicefrac{1}{2}}, \quad \ess = \frac{1}{2} \sum \ell_i
\end{equation}
Therefore, it is interesting to seek a version of the above formula in the realm of absolute geometry. Thus
\begin{corollary}
Let $A(\ell_1 ... \ell_4)$ be the cocircular spherical quadrilateral with $\{\ell_i \}$ as its side-lengths. Set
\begin{equation}
\begin{aligned}
\ess_i &= \sin \frac{\ell_i}{2} \\
S &= -4\prod \ess_i \sum \ess_i^2 - 4 \sum \ess_i^2 \ess_j^2 \ess_k^2 + 2 \sum \ess_i^2 \ess_j^2 + 8 \prod \ess_i
\end{aligned}
\end{equation}
Then
\begin{equation}
A(\ell_1 ...\ell_4) = 2 \arctan \left\{ \frac{\sqrt{S}}{2(1-\frac{1}{2}\sum \ess_i^2 - \prod \ess_i)} \right\}
\end{equation}
\end{corollary}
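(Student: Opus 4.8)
The plan is to reduce the statement to a (substantial but essentially mechanical) trigonometric identity via Corollary~\ref{cor3}. That corollary already supplies the cocircular quadrilateral with the prescribed side-lengths and, crucially, gives its cutting diagonal $d=\overline{AC}$ in closed form through $x_0=1+\cos d$. Cutting along $d$ produces two triangles $\sigma_1,\sigma_2$ with $\{\ell_1,\ell_2,d\}$, resp. $\{\ell_3,\ell_4,d\}$, as side-lengths, so $A(\ell_1\ldots\ell_4)=|\sigma_1|+|\sigma_2|$; writing $c_i=\cos\ell_i$ and $u_1=c_1+c_2+x_0$, $u_2=c_3+c_4+x_0$, the S.A.S./S.S.S. area formula gives $\tan\tfrac{|\sigma_k|}{2}=D_k/u_k$. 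Thus everything is now algebra in $\{c_i\}$ and $x_0$.

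First I would combine the two half-areas by the tangent addition law,
\begin{equation}
\tan\frac{A}{2}=\frac{D_1u_2+D_2u_1}{u_1u_2-D_1D_2},
\end{equation}
which holds in every quadrant (the passage to $2\arctan\{\cdots\}$ in the statement being understood with the usual branch caveat: once the circumcenter leaves the quadrilateral the denominator turns negative and $A$ exceeds $\pi$). Then I would substitute $x_0$ and exploit the identity extracted inside the proof of Corollary~\ref{cor3}, namely $(c_1+c_2-x_0)^2+D_1^2=2x_0(1-c_1)(1-c_2)$ together with $c_1+c_2-x_0=\tfrac{p}{p+q}\bigl((c_1+c_2)-(c_3+c_4)\bigr)$, where $p=\sqrt{(1-c_1)(1-c_2)}$ and $q=\sqrt{(1-c_3)(1-c_4)}$. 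This factors $D_1=p\sqrt M$ and $D_2=q\sqrt M$ with the common factor $M=2x_0-\bigl((c_1+c_2)-(c_3+c_4)\bigr)^2/(p+q)^2$, and—using $(p+q)x_0=(c_1+c_2)q+(c_3+c_4)p$—both numerator and denominator of the last display pick up the common factor $p(c_3+c_4)+q(c_1+c_2)$. Cancelling it leaves the compact form
\begin{equation}
\tan\frac{A}{2}=\frac{(p+q)\sqrt M}{(c_1+c_2+c_3+c_4)-pq}.
\end{equation}

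It remains to rewrite this in the symmetric $\ess$-variables. Since $1-c_i=2\ess_i^2$, one has $c_1+c_2+c_3+c_4=4-2\sum\ess_i^2$ and $pq=\sqrt{\prod(1-c_i)}=4\prod\ess_i$, so the denominator becomes $4\bigl(1-\tfrac12\sum\ess_i^2-\prod\ess_i\bigr)$, which accounts for the factor $2$ in the statement once the numerator is seen to be $2\sqrt S$; indeed $(p+q)^2M=2(p+q)\bigl((c_1+c_2)q+(c_3+c_4)p\bigr)-\bigl((c_1+c_2)-(c_3+c_4)\bigr)^2$, and the task is to expand this polynomial in $c_i=1-2\ess_i^2$ and collect it into the elementary symmetric functions $\sum\ess_i^2$, $\sum\ess_i^2\ess_j^2$, $\sum\ess_i^2\ess_j^2\ess_k^2$, $\prod\ess_i$ so as to reproduce $4S$. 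This last expansion is the one genuinely delicate point: the cross-terms $\ess_i\ess_j$ carried by $p$ and $q$ recombine into fully symmetric quantities only after invoking $(\ess_1\ess_2+\ess_3\ess_4)^2=\ess_1^2\ess_2^2+\ess_3^2\ess_4^2+2\prod\ess_i$ and the Newton relation $\sum\ess_i^4=(\sum\ess_i^2)^2-2\sum_{i<j}\ess_i^2\ess_j^2$, and one must track the numerical coefficients carefully (a prudent cross-check is the regular case $\ell_1=\cdots=\ell_4=\ell$, where $\sigma_1\cong\sigma_2$, $d=2R$ with $\cos^2R=\cos\ell$, and the closed form must reduce to $A=4\arctan\bigl(\ess^2/\sqrt{1-2\ess^2}\bigr)$, $\ess=\sin\tfrac{\ell}{2}$). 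A more conceptual but not less computational alternative would start from the circumscribing small circle: each side $\ell_i$ subtends a central angle $2\psi_i$ with $\sin\tfrac{\ell_i}{2}=\sin R\sin\psi_i$ and $\sum\psi_i=\pi$, which makes the symmetry of the answer in the $\ess_i$ transparent and reduces matters to eliminating $R$—but producing the clean algebraic closed form still requires essentially the same manipulations.
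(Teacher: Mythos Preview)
Your proposal is correct and follows essentially the same route as the paper: invoke Corollary~\ref{cor3} for the diagonal $x_0$, combine the two triangle areas via $\tan\tfrac{A}{2}=(D_1u_2+D_2u_1)/(u_1u_2-D_1D_2)$, exploit the relation $D_1/D_2=\lambda=p/q$ established there to simplify, and then pass to the $\ess_i$ variables. The paper compresses the algebra into the single assertion $(1+\lambda)D_2=D_1+D_2=2\sqrt{S}$ and the denominator identity, calling it ``an interesting computation of trigonometric algebra,'' whereas you spell out the factorization $D_k=p\sqrt{M}$, $q\sqrt{M}$ and the cancellation mechanism explicitly; but the underlying argument is the same.
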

\begin{proof}
By Corollary~\ref{cor3},
\begin{equation}
A(\ell_1 ...\ell_4) = 2 \arctan \left( \frac{u_2D_1 + u_1D_2}{u_1u_2 - D_1D_2} \right)
\end{equation}
in which $x_0$ is given by the specific formula of Corollary~\ref{cor3}. It is an interesting computation of trigonometric algebra that
\begin{equation}\begin{aligned}
\frac{u_2D_1 + u_1D_2}{u_1u_2 - D_1D_2} &= \frac{(1+\lambda)D_2}{4(1-\frac{1}{2}\sum\ess_i^2-\prod\ess_i)} \\
(1+\lambda)D_2 &= D_1 + D_2 = 2\sqrt{S}
\end{aligned}\end{equation}
\end{proof}
\begin{corollary}
Let $\sigma$ (resp. $\rho$) be a spherical triangle (resp. cocircular polygon) containing its circumcenter with side-lengths at
least equal to $a$. Then its area $\sigma$ (resp. $\rho$) is at least equal to that of an equilateral one, namely
\begin{equation}
\nm{\sigma} \geq \nm{\sigma_a} \quad \left(\mathrm{resp.} \nm{\rho} \geq \nm{\rho_a} \right).
\end{equation}
\end{corollary}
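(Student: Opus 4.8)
The plan is to deform an arbitrary admissible $\sigma$ down to the equilateral $\sigma_a$ through a short chain of moves, each of which keeps every side--length $\ge a$, keeps the circumcenter inside, and does not increase the area. The driving tool is Corollary~\ref{cor2}: varying one side (say the one of length $c$, with $a,b$ fixed) and setting $x=1+\cos c$, one has $\partial\nm{\sigma}/\partial x=(\cos a+\cos b-x)/(xD)$, and since $\cos a+\cos b-1-\cos c=2(\overline c^{\,2}-\overline a^{\,2}-\overline b^{\,2})=-4\,\overline a\,\overline b\cos\theta_3$ (with $\theta_3$ the angle of $\Delta ABC$ opposite the varied side), the sign of that derivative is just the sign of $-\cos\theta_3$. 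Recalling $\tan d_i=\cos\theta_i\tan R$, ``$\sigma$ contains its circumcenter'' means exactly ``$\cos\theta_i\ge 0$ for all $i$''; hence shortening any side whose opposite planar angle is weakly acute never increases the area, and in particular shortening a \emph{longest} side never increases it.

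First I would sort $a_1\le a_2\le a_3$ and shorten $a_3$ down to $a_2$ with $a_1,a_2$ held fixed. The crux of this step is the elementary observation that all three planar angles stay $\le\pi/2$ throughout: writing $\overline a_1\le\overline a_2\le\overline t$ with $\overline t$ descending from $\overline a_3$ to $\overline a_2$, the cosine of the angle opposite $\overline t$ is $(\overline a_1^{\,2}+\overline a_2^{\,2}-\overline t^{\,2})/(2\overline a_1\overline a_2)$, which is $\ge 0$ at the start (circumcenter condition, $a_3$ a longest side) and only grows as $\overline t\downarrow$; the angles opposite $\overline a_1$ and $\overline a_2$ have numerators $\overline a_2^{\,2}+\overline t^{\,2}-\overline a_1^{\,2}$ and $\overline a_1^{\,2}+\overline t^{\,2}-\overline a_2^{\,2}$, both $\ge 0$ since $\overline t\ge\overline a_2\ge\overline a_1$. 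So the deformation stays admissible, and by the derivative formula the area strictly decreases unless $a_3=a_2$ already. From $(a_1,a_2,a_2)$ I would then shorten the two equal sides simultaneously down to $a_1$: for the intermediate triangle $(a_1,t,t)$ the apex planar angle has cosine $1-\overline a_1^{\,2}/(2\overline t^{\,2})\ge 0$ and each base planar angle has cosine $\overline a_1/(2\overline t)>0$, so admissibility persists, while shortening either $t$--side alone has derivative--numerator $2(\overline t^{\,2}-\overline a_1^{\,2}-\overline t^{\,2})=-2\overline a_1^{\,2}<0$, so the area strictly decreases unless $a_2=a_1$.

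We then sit at the equilateral triangle $(a_1,a_1,a_1)$, which still contains its circumcenter (its planar triangle being equilateral), and shrinking it from side $a_1$ to side $a$ decreases the area, because the equilateral spherical triangle of side $\ell$ has angle $\arccos(\cos\ell/(1+\cos\ell))$ and hence area strictly increasing in $\ell\in(0,2\pi/3)$ --- and $a_1<2\pi/3$, being the shortest side of an honest triangle. Chaining the three inequalities gives $\nm{\sigma}\ge\nm{\sigma_a}$, and tracking when each sub--step is non--strict shows equality forces $a_1=a_2=a_3=a$, i.e. $\sigma=\sigma_a$.

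For the cocircular polygon $\rho$ I would pass to circumradius/central--angle coordinates: let $R$ be the circumradius and $\alpha_1,\dots,\alpha_n$ (with $\sum\alpha_i=2\pi$, each $\alpha_i\in(0,\pi)$ since the circumcenter is interior) the central angles, so $\sin(\ell_i/2)=\sin R\sin(\alpha_i/2)$ and, decomposing $\rho$ into its $n$ radial isosceles triangles, $\nm{\rho}=(2-n)\pi+2\sum_i g(\alpha_i)$ with $g(\alpha)=\arctan(\sec R\cot(\alpha/2))$ the base angle. The side condition $\ell_i\ge a$ reads $\alpha_i\ge\alpha_0(R)$ for a threshold $\alpha_0(R)$ decreasing in $R$, and a short computation ($g'(\alpha)=-\tfrac12\sec R/(\sec^2R-\sin^2(\alpha/2)\tan^2R)$ is decreasing) shows $g$ is concave; so for fixed $R$ the minimum of $\nm{\rho}$ over $\{\sum\alpha_i=2\pi,\ \alpha_i\ge\alpha_0(R),\ \alpha_i<\pi\}$ sits at a vertex of that polytope, with as many $\alpha_i$ as possible pinned at $\alpha_0(R)$, and what remains is a one--parameter minimization in $R$ whose answer is the regular $n$-gon of side exactly $a$ (i.e. $\alpha_0(R)=2\pi/n$). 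I expect this last part --- the one--variable minimization over $R$, and the bookkeeping of the several simplex vertices that occur as $\alpha_0(R)$ ranges over its feasible interval --- to be the real obstacle; the triangle case is where the essential idea lives (Corollary~\ref{cor2} plus the persistence of acuteness under shortening a longest side), and it is essentially self--contained.
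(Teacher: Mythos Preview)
The paper states this corollary without proof; it is simply listed among the consequences of the area formulas, immediately before the $(k,\delta)$-discussion of \S\ref{subsubsec:areapreserve}. So there is nothing to compare your argument against except the paper's ambient toolkit.

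For the triangle case your argument is correct and is exactly the kind of application of Corollary~\ref{cor2} the paper invites. The identity $\cos a+\cos b-1-\cos c=-4\,\overline a\,\overline b\,\cos\theta_3$ (with $\theta_3$ the angle of $\triangle ABC$ opposite the side being varied) is precisely what ties the sign of $\partial\nm{\sigma}/\partial x$ to the circumcenter condition via the paper's $\tan d_i=\cos\theta_i\tan R$, and your three-step descent $(a_1,a_2,a_3)\to(a_1,a_2,a_2)\to(a_1,a_1,a_1)\to(a,a,a)$ is carefully policed: at each stage you verify that every planar angle stays weakly acute (so admissibility persists and the relevant derivative keeps the correct sign) and that no side drops below $a$. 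Nothing is missing here.

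For the cocircular polygon your framework is sound --- the base-angle formula $\tan\beta=\sec R\cot(\alpha/2)$ and the concavity of $g$ both check out --- but, as you yourself flag, the argument is not finished. Concavity pushes the minimum of $\sum g(\alpha_i)$ to an extreme point of the simplex slice, which for $R>R_n$ looks like ``all but one $\alpha_i$ pinned at $\alpha_0(R)$'' (or, once $\alpha_0(R)$ is small enough, some $\alpha_i$ pinned near $\pi$), and you still owe a comparison of those extreme-point areas against $\nm{\rho_a}$ uniformly over the admissible range of $R$. That is a genuine piece of work you have not supplied. Since the paper never actually invokes the polygon half of this corollary in the sequel --- the entire $(k,\delta)$-analysis and the proof of Theorem~I run through triangles --- your complete treatment of the triangle case is what matters for what follows; the polygon statement is recorded but left unproved in the paper as well.
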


\subsubsection[Hi1]{Area-preserving deformation and ($k, \delta$)-representation of the $|\sigma|$-level surface}
\label{subsubsec:areapreserve}
For fixed $\{C, k\}$ (or equivalently $\{|\sigma|, k\}$) the family of such triangles with side-lengths ordered as $a_1 \leq  a_2 \leq a_3$ and parametrized by $k \equiv \cot \frac{a_1}{2} \cot \frac{a_2}{2}, \delta \equiv \cot \frac{a_1}{2} - \cot \frac{a_2}{2} \ge 0$ constitutes a basic type of area-preserving deformations, characterized by the property of also fixing its largest angle $C$.
Geometrically, the congruence classes of spherical triangles with a given area $|\sigma|$ constitutes a 2-dimensional subset of the \textit{moduli-space of congruence classes}, which will be referred to as a \textit{$|\sigma|$-level surface}.
Note that the area $|\sigma|$ is naturally the most important, basic geometric invariant, while \textit{area-wise estimates} of various kinds of geometric invariants such as $\rho(\tau(\sigma, 2))$, $\vol(T(\sigma))$ etc., and the geometry of various kinds of area-preserving deformations naturally constitutes a useful system of basic techniques of solid geometry.
Moreover, it will be advantageous to provide a suitable organization of simple kinds of area-preserving deformations such as the above one \textit{fixing an angle} and the Lexell's deformations fixing a side length (cf. Example 2.1.3, p 59 \cite{hsiang}).

\noindent
\textbf{The $(k, \delta)$-representation of $|\sigma|$-level surface}

Note that $\{|\sigma|, k, \delta\}$ already constitutes a complete set of congruence invariants for the family of spherical triangles containing their circumcenters and with edge-lengths of at least $\slfrac{\pi}{3}$.
For the purpose of this paper, it suffices to consider the range of $|\sigma|$ up to 0.97.

For a given value of $|\sigma| \in [2 \arctan \slfrac{\sqrt{2}}{5}, 0.97 ]$, it is convenient to parametrize the $|\sigma|$-level surface by $(k, \delta)$, thus representing it as a domain in the $(k, \delta)$-plane.
It is natural to subdivide into two cases, namely:
\begin{itemize}
\item \textbf{Case 1}: $|\sigma| \leq 2 \arcsin \frac{1}{3} = \frac{1}{2}\Box_{\slfrac{\pi}{3}}$,
\item \textbf{Case 2}: $|\sigma| > 2 \arcsin \frac{1}{3}$ and at most equal to 0.97
\end{itemize}

\noindent
\textbf{Case 1:} $|\sigma| \in [2 \arctan \slfrac{\sqrt{2}}{5}, 2 \arcsin \frac{1}{3}]$

Note that the special case of $|\sigma| = 2\arctan \slfrac{\sqrt{2}}{5}$ only consists of a single point (i.e. $(3,0)$).
Thus, we shall assume that $|\sigma| > 2\arctan \slfrac{\sqrt{2}}{5}$ and at most equal to $2 \arcsin \frac{1}{3} = \frac{1}{2}\Box_{\slfrac{\pi}{3}}$.
For such a given $\nm{\sigma}$, there exists a unique $\slfrac{\pi}{3}$-isosceles (resp. equilateral) triangle with their areas equal to $\nm{\sigma}$, and moreover, a continuous family of isosceles triangles of area $|\sigma|$ linking them in between, namely, with $\{(k, 0), 3 \ge k \ge k_0\}$ as their $(k, \delta)$-coordinates where
\begin{equation}
  k_0 = \frac{1}{1-2\cos\frac{1}{3}(\pi+|\sigma|)}
\end{equation}

Now, beginning with such an isosceles triangle, say denoted by $\overline{\sigma}(a, C)$, with $a = 2 \arctan \slfrac{1}{\sqrt{k}}$ and $C$ given by~(\ref{eq:cofk}), one has the area-preserving deformation keeping the $(k, C)$ fixed, while increasing $\delta$ up until either its shortest side-length already reaches the lower-bound of $\slfrac{\pi}{3}$, or it becomes an isosceles triangle with $C$ as its base angles.
Therefore, the domain of $(k, \delta)$ representation of such a $|\sigma|$-level surface is as indicated in Figure 6-(i), where $(k_1, \delta_1)$ represents the unique $c_1$-isosceles triangle with the given area $|\sigma|$ and $\slfrac{\pi}{3}$-base, namely
\begin{equation}
  \begin{aligned}
    c_1 &= \arccos \left\{ \frac{\cos\frac{|\sigma|}{2} - \frac{\sqrt{3}}{2}}{\frac{2}{\sqrt{3}} - \cos\frac{|\sigma|}{2}} \right\}
    \quad (\hbox{i.e.} \ \cos\frac{|\sigma|}{2} = \frac{\frac{\sqrt{3}}{2} + \frac{2}{\sqrt{3}}\cos c_1}{1+\cos c_1}) \\
    k_1 &= \sqrt{3} \cot \frac{c_1}{2} = \sqrt{3}(7-4\sqrt{3}\cos\frac{|\sigma|}{2})^{-\frac{1}{2}}, \quad \delta_1=\sqrt{3}-\frac{k_1}{\sqrt{3}}
  \end{aligned}
\end{equation}

\noindent
\textbf{Case 2:} $2 \arcsin \frac{1}{3} < |\sigma| \le 0.97$

Set $\hat{a}$ to be the side-length of the spherical square with its half area $|\sigma| > \frac{1}{2}\Box_{\slfrac{\pi}{3}}$, namely
\begin{equation}
  \tan\frac{|\sigma|}{2} = \frac{1-\cos\hat{a}}{2\sqrt{\cos\hat{a}}}, \quad \hat{k} = \cot^2\frac{\hat{a}}{2} = \csc\frac{|\sigma|}{2}
\end{equation}

Therefore, the only difference between Case 2 and Case 1 is that $k$ is bounded above by $\hat{k} < 3$.
Thus, the domain of $(k, \delta)$ representation for Case 2 is as indicated in Figure 6-(ii).

\begin{figure}
  \begin{center}
    \begin{tikzpicture}
      \node[anchor=south west,inner sep=0] (image) at (0,0) {\includegraphics[width=6in]{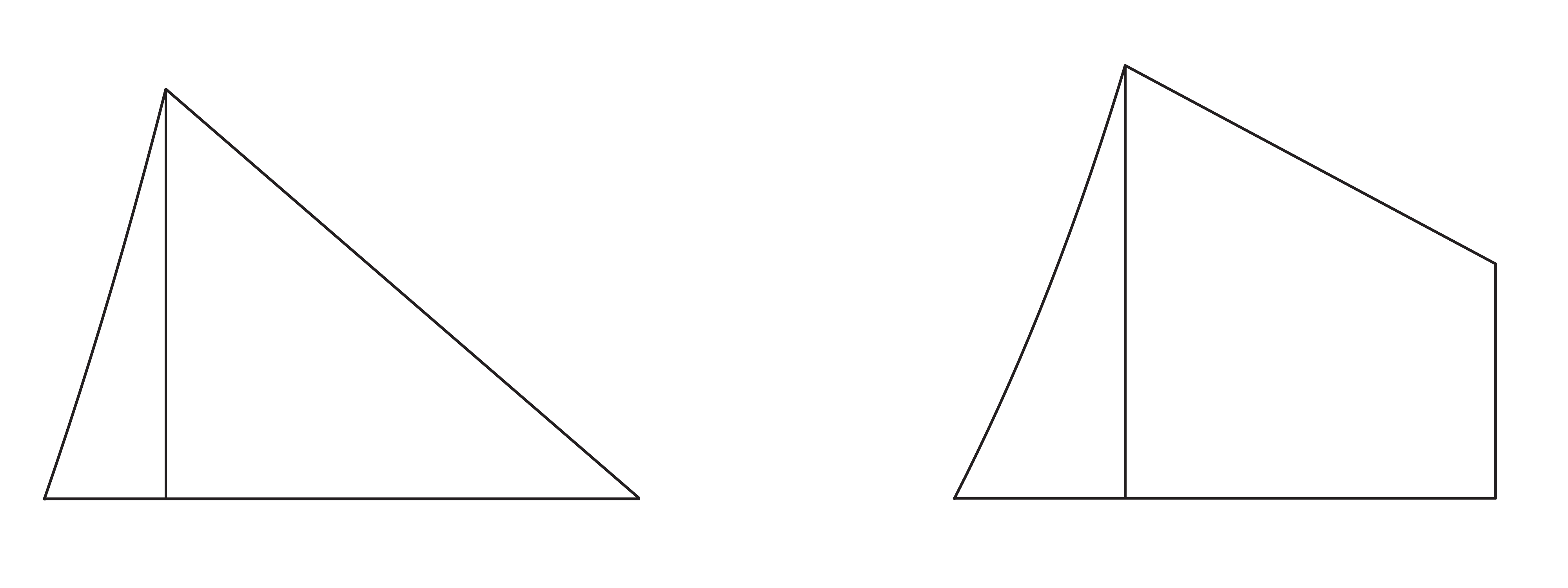}};
      \begin{scope}[x={(image.south east)},y={(image.north west)}]
        \node at (0.01,-0.05) {(i)};
        \node at (0.01,0.10) {$k_0 = 2.41$};
        \node at (0.11,0.10) {$k_1$};
        \node at (0.20,-0.05) {$|\sigma| = \frac{1}{2}\Box_{\slfrac{\pi}{3}} \approx 0.68$};
        \node at (0.41,0.10) {$3$};
        \node at (0.13,0.90) {$(k_1,\delta_1) = (2.53, 0.27)$};
        \node at (0.61,-0.05) {(ii)};
        \node at (0.61,0.10) {$k_0 = 2.03$};
        \node at (0.73,0.10) {$k_1$};
        \node at (0.80,-0.05) {$|\sigma| = 0.80$};
        \node at (0.95,0.10) {$\hat{k}$};
        \node at (1.00,0.70) {$(\hat{k},\hat{\delta}) = (2.57, 0.25)$};
        \node at (0.74,0.95) {$(k_1,\delta_1) = (2.20, 0.46)$};
      \end{scope}
    \end{tikzpicture}
    \caption{\label{fig:bounds}}
  \end{center}
\end{figure}

Note that the boundary of the above $(k,\delta)$-domain consists of the following segments, namely
\begin{enumerate}[(i)]
\item \textit{The horizontal segment:} $\left\{ (k,0), k_0 \le k \le \hat{k} \right\}$ representing those $a$-isosceles $\overline{\sigma}(a, C)$ with $a = 2 \arctan \frac{1}{\sqrt{k}}$ and $C$ given by (\ref{eq:cofk}),
\item \textit{The slant segment:} $\left\{ (k,\sqrt{3} - \frac{k}{\sqrt{3}}), k_1 \le k \le \hat{k} \right\}$ representing those $\sigma$ with $\slfrac{\pi}{3}$ as their shortest side-length, the deformation along it is exactly the Lexell's deformation fixing the $\slfrac{\pi}{3}$ side (cf.~\S\ref{lexell})
\item \textit{The curved segment:} Representing those isosceles with their base angles larger than their top angles.
\item \textit{The vertical segment:} In the case of $|\sigma| > \frac{1}{2}\Box_{\frac{\pi}{3}}$ and $\hat{k} < 3$, one has an additional vertical segment: $\left\{ (\hat{k}, \delta), 0 \le \delta \le \sqrt{3} - \slfrac{\hat{k}}{\sqrt{3}} \right\}$, representing those $\sigma$ with $C = \frac{\pi}{2} + \frac{|\sigma|}{2}$, each of them is the half of a spherical rectangle with area $2|\sigma|$ and side-lengths of at least $\slfrac{\pi}{3}$, while the corner point $(\hat{k},\hat{\delta})$ is exactly the $\tilde{\sigma}_{\theta}$ with $|\tilde{\sigma}_{\theta}| = |\sigma|$. (cf. Example~\ref{eg3.2})
\end{enumerate}

\subsubsection{Basic geometric invariants of isosceles spherical triangles}
Let $\sigma_\circ$ be an $a$-isosceles spherical triangle with given area $|\sigma_\circ|$.
Set $k = \cot^2\frac{a}{2}$, $c$ to be the length of its base, and $h$ to be its height. Then
\begin{equation}
  \begin{aligned}
    & k = \cot^2\frac{a}{2}=\frac{1+\cos a}{1-\cos a}, \quad \cos a = \frac{k-1}{k+1}, \quad \cos h \cdot \cos \frac{c}{2} = \cos a \\
    & \cos \frac{|\sigma_\circ|}{2} = \frac{\cos\frac{c}{2}+\cos h}{1+\cos a} = \frac{(k+1)\cos^2\frac{c}{2} + (k-1)}{2k \cos \frac{c}{2}} \\
    & (k+1)\cos^2\frac{c}{2} - 2k\cos\frac{|\sigma_\circ|}{2}\cos\frac{c}{2} + (k-1) = 0
  \end{aligned}
\end{equation}
thus enabling us to solve $\cos\frac{c}{2}$ as a root of the above quadratic equation, a simple function of $\cos \frac{|\sigma_\circ|}{2}$ and $k$.

Next let us compute those wholesome basic geometric invariants of such a $\sigma_\circ$, namely, $\{u_\circ, D_\circ, \tan R_\circ, \nu(\sigma_\circ)\ \mathrm{and}\ w(\sigma_\circ)\}$ as follows:

\begin{equation}
  \begin{aligned}
    u_\circ &= 1 + 2 \cos a + \cos c = 2 \cos a + 2 \cos^2 \frac{c}{2} \\
    &= 2 \left\{ \frac{k-1}{k+1} + \cos^2\frac{c}{2} \right\} = \frac{2}{k+1}\left\{ (k-1) + (k+1)\cos^2\frac{c}{2} \right\} \\
    &= \frac{4k}{k+1} \cos\frac{|\sigma_\circ|}{2} \cos\frac{c}{2} \\
    D_\circ &= \tan\frac{|\sigma_\circ|}{2} u_\circ = \frac{4k}{k+1} \sin \frac{|\sigma_\circ|}{2} \cos\frac{c}{2} \\
    \tan R_\circ &= \frac{4}{D_\circ}\sin^2\frac{a}{2}\sin\frac{c}{2} = \frac{4}{D_\circ}\frac{1}{k+1}\sin\frac{c}{2} \\
    &= \frac{1}{k}\csc\frac{|\sigma_\circ|}{2}\tan\frac{c}{2} \\
    w(\sigma_\circ) &= \frac{1}{6}\sin|\sigma_\circ|\left\{\frac{8}{u_\circ}-\tan^2R_\circ\right\} \\
    &= \frac{1}{6}\sin|\sigma_\circ|\left\{ \frac{2(k+1)}{k}\sec\frac{|\sigma_\circ|}{2}\sec\frac{c}{2} - \frac{1}{k^2} \csc^2 \frac{|\sigma_\circ|}{2} \tan^2\frac{c}{2} \right\} \\
    &= \frac{\sec\frac{c}{2}}{3k^2}\left\{ 2k(k+1)\sin\frac{|\sigma_\circ|}{2} - \cot\frac{|\sigma_\circ|}{2} \tan\frac{c}{2}\sin\frac{c}{2} \right\} \\
    \tan \left( \sum d_i \right) &= \frac{\tan R_\circ \displaystyle\sum \cos \theta_i - \tan^3 R_\circ \displaystyle\prod \cos \theta_i}{1 - \tan^2 R_\circ \displaystyle\sum_{i<j} \cos \theta_i \theta_j} \\
    &= \frac{1}{D_\circ} \left\{ \frac{2D_\circ^2(2\overline{a}^2\overline{c} + 2\overline{a}\overline{c}^2 - \overline{c}^3) - 8\overline{a}^2\overline{c}(2\overline{c}^4\overline{a}^2-\overline{c}^6)}{D_\circ^2-4(4\overline{a}^3\overline{c}^3+\overline{a}^2\overline{c}^4-2\overline{a}\overline{c}^5)} \right\}
  \end{aligned} 
\end{equation}
where $D_\circ = \frac{4k}{k+1}\sin\frac{|\sigma_\circ|}{2}\cos\frac{c}{2}$, $\overline{a} \equiv \sin\frac{a}{2} = \frac{1}{\sqrt{k+1}}$ and $\overline{c} = \sin\frac{c}{2}$.

\subsubsection[Hi2]{Geometric analysis of the $\delta$-deformation}
\label{sss:deltadeform}

For a given and fixed pair of $\{C, k\}$ (resp. $(|\sigma|, k)$), set $\sigma_\circ(a, C)$ to be the unique isosceles triangle, namely
\begin{equation}
  k = \cot^2\frac{a}{2} = \frac{1+\cos a}{1-\cos a}, \quad \cos a = \frac{k-1}{k+1}
\end{equation}
Let $\sigma(a_1, a_2; C)$ be the triangle with $\{a_1, a_2; C\}$ as its S.A.S. data, $\cot\frac{a_1}{2}\cot\frac{a_2}{2} = k$ and $\delta = \cot\frac{a_1}{2} - \cot\frac{a_2}{2} > 0$. We shall call it the $\delta$-\textit{deformation} of $\sigma_\circ(a, C)$. Set
\begin{equation}
  \eta = \frac{\delta^2}{(k+1)^2} = \tan^2\left(\frac{a_2-a_1}{2}\right).
\end{equation}


We shall proceed to compute those basic geometric invariants of $\sigma(a_1, a_2; C)$ in terms of $\eta$ and that of $\sigma_\circ(a, C)$ which we derived in the previous part.
\begin{equation}
  \begin{aligned}
    \sin^2\frac{a_1}{2} \sin^2\frac{a_2}{2} &= \frac{1}{(1+\cot^2\frac{a_1}{2})(1+\cot^2\frac{a_2}{2})} = \frac{1}{(k+1)^2+\delta^2}\\
    &= \frac{1}{(k+1)^2(1+\eta)} = \sin^4\frac{a}{2}(1+\eta)^{-1} \\
    \sin\frac{a_1}{2}\sin\frac{a_2}{2} &= \sin^2\frac{a}{2}(1+\eta)^{-\slfrac{1}{2}} \\
    D &= \sin a_1 \sin a_2 \sin C = 4k \sin C \sin^2 \frac{a_1}{2} \sin^2 \frac{a_2}{2} = D_\circ(1+\eta)^{-1} \\
    u &= \frac{D}{D_\circ}u_\circ = u_\circ(1+\eta)^{-1} \\
    \sin^2\frac{a_1}{2} + \sin^2\frac{a_2}{2} &= \frac{2(k+1)+\delta^2}{(k+1)^2+\delta^2} = \frac{2}{k+1} (1+\frac{k+1}{2}\eta)(1+\eta)^{-1} \\
    &= 2\sin^2\frac{a}{2}(1+\frac{k+1}{2}\eta)(1+\eta)^{-1}\\
    \cos a_1 + \cos a_2 &= 2-2(\sin^2\frac{a_1}{2} + \sin^2\frac{a_2}{2}) = 2\frac{k-1}{k+1}(1+\eta)^{-1}\\
    &= 2\cos a (1+\eta)^{-1} \\
    1+\cos a_3 &= u-(\cos a_1 + \cos a_2) = (u_\circ-2\cos a)(1+\eta)^{-1} \\
    &= (1+\cos c)(1+\eta)^{-1}\\
    \sin^2\frac{a_3}{2} &= 1-\cos^2\frac{a_3}{2} = 1-\frac{\cos^2\frac{c}{2}}{1+\eta} = \frac{\sin^2\frac{c}{2}+\eta}{1+\eta} \\
    &= \sin^2\frac{c}{2}(1+\csc^2\frac{c}{2}\eta)(1+\eta)^{-1} \\
    \sin\frac{a_1}{2} + \sin\frac{a_2}{2} &= \sqrt{2}\sin\frac{a}{2}(1+\eta)^{-\slfrac{1}{2}} \left[(1+\eta)^{\slfrac{1}{2}}+1+\frac{k+1}{2}\eta\right]^{\slfrac{1}{2}} \\
    \sin\frac{a_3}{2} &= \sin\frac{c}{2}\left[ (1+\eta)^{-1}(1+\csc^2\frac{c}{2}\eta)\right]^{\slfrac{1}{2}}
  \end{aligned}
  \label{eq:deltasin}
\end{equation}

\begin{remark}
  In our shorthand notation $\overline a_i = \sin \frac{a_i}{2},\ i \in \{1, 2, 3\}$, these relations are summarized as:
  \begin{equation}
    \begin{aligned}
      \overline a_1^2 \overline a_2^2 &= \overline a^4 (1+\eta)^{-1} \\
      \overline a_1 \overline a_2 &= \overline a^2 (1+\eta)^{-\slfrac{1}{2}} \\
      \overline a_1^2 + \overline a_2^2 &= 2 \overline a^2 (1+\frac{k+1}{2}\eta)(1+\eta)^{-1} \\
      \overline a_3^2 &= \overline c^2(1+(\overline c)^{-2})(1+\eta)^{-1} \\
      \overline a_1 + \overline a_2 &= \sqrt{2}\overline a(1+\eta)^{-\slfrac{1}{2}} \left[(1+\eta)^{\slfrac{1}{2}}+1+\frac{k+1}{2}\eta\right]^{\slfrac{1}{2}} \\
      \overline a_3 &= \overline c\left[ (1+\eta)^{-1}(1+(\overline c)^{-2}\eta)\right]^{\slfrac{1}{2}}
    \end{aligned}
  \end{equation}
\end{remark}



\begin{equation}
  \begin{aligned}
    \tan R &= \frac{4}{D}\overline a_1 \overline a_2 \overline a_3 = \frac{4}{D_\circ} \overline a^2 \overline c^2 (1+(\overline c)^{-2}\eta) \\
    &= \tan R_\circ(1+(\overline c)^{-2}\eta)(1+\eta)^{-\slfrac{1}{2}}\\
  \end{aligned}
\end{equation}

The expression for $\rho(\sigma)$ requires expressions for $d_i$.  We first derive an expression for $\tan d_3$:

\begin{equation}
  \begin{aligned}
    \tan \lambda_3 &= \frac{1+\cos a_3 - \cos a_1 - \cos a_2}{D} = \frac{1+\cos c-2\cos a}{D_\circ} \\
    &= \tan \lambda_3^\circ \\
    \tan d_3 &= \tan \lambda_3 \sin \frac{a_3}{2} = \tan  \lambda_3^\circ \sin \frac{c}{2} \left[ \frac{1+\csc^2\frac{c}{2}\eta}{1+\eta} \right]^{\slfrac{1}{2}} \\
    &= \tan d_3^\circ \left[ \frac{1+(\overline c)^{-2}\eta}{1+\eta} \right]^{\slfrac{1}{2}}
  \end{aligned}
\end{equation}



Finally, let us proceed to compute $\tan(d_1+d_2)$ as follows:
\begin{equation}
  \begin{aligned}
    \tan(d_1+d_2) &= \frac{\tan R(\cos \theta_1 + \cos \theta_2)}{1-\tan^2R\cos\theta_1\cos\theta_2} \\
    \tan R &= \frac{4}{D}\overline a_1 \overline a_2 \overline a_3, \quad \cos \theta_1 + \cos \theta_2 = \frac{\overline a_1 + \overline a_2}{2 \overline a_1 \overline a_2 \overline a_3} \left\{ \overline a_3^2 - (\overline a_1 - \overline a_2)^2 \right\} \\
    \cos \theta_1 \cdot \cos \theta_2 &= \frac{1}{4 \overline a_3 \cdot \prod \overline a_i} \left\{ \overline a_3^4 - (\overline a_1 + \overline a_2)^2(\overline a_1 - \overline a_2)^2 \right\}
  \end{aligned}
\end{equation}
Therefore,
\begin{equation}
  \begin{aligned}
    \tan R (\cos \theta_1 + \cos \theta_2) &= \frac{2(\overline a_1 + \overline a_2)}{D}(\overline a_3^2 - (\overline a_1 - \overline a_2)^2) \\
    1-\tan^2 R \cos \theta_1 \cdot \cos \theta_2 &= 1-\frac{4 \overline a_1 \overline a_2}{D^2}(\overline a_3^4 - (\overline a_1 + \overline a_2)^2(\overline a_1 - \overline a_2)^2)
  \end{aligned}
\end{equation}
where
\begin{equation}
  \begin{aligned}
    (\overline a_1 \pm \overline a_2)^2 &= 2 \overline a^2 \cdot (1+\eta)^{-1} \cdot \left[ 1 + \frac{k+1}{2}\eta \pm (1+\eta)^{\frac{1}{2}} \right] \\
    \overline a_3^2 &= \overline c^2(1+\eta)^{-1}\cdot(1+(\overline c)^{-2}\eta) \\
    \overline a_1 \overline a_2 &= \overline a^2(1+\eta)^{-\frac{1}{2}}, \quad D = D_\circ(1+\eta)^{-1}
  \end{aligned}
\end{equation}
Thus, straightforward substitutions will show that
\begin{equation}
  \begin{aligned}
    \tan R(\cos \theta_1 + \cos \theta_2) =& \frac{4\overline a}{\sqrt{2}D}(1+\eta)^{-\frac{1}{2}}\left[ 1+\frac{k+1}{2}\eta+(1+\eta)^{\frac{1}{2}} \right]^{\frac{1}{2}} \\
    & \cdot \left\{ \overline c^2(1+\eta(\overline c)^{-2})-2\overline a^2(1+\frac{k+1}{2}\eta - (1+\eta)^{\frac{1}{2}}) \right\} \\
    1-\tan^2R\cos \theta_1 \cos \theta_2 =& 1-\frac{4}{D_\circ^2} \overline a^2(1+\eta)^{-\frac{1}{2}}\\
    & \cdot \left\{ \overline c^4 (1+(\overline c)^{-2}) - 4 \overline a^4(k\eta+\frac{(k+1)^2}{4}\eta^2) \right\} \\
    =& 1 - \frac{4\overline a^2}{D_\circ^2} \cdot \overline c^4(1+\eta)^{-\frac{1}{2}}\\
    & \cdot \left\{ 1+(2(\overline c)^{-2} - 4k\overline a^4(\overline c)^{-4})\eta - (\overline c)^{-4}((k+1)^2\overline a^4-1)\eta^2 \right\} \\
  \end{aligned}
\end{equation}
Taking a power series expansion in $\eta$, one obtains to first order:
\begin{equation}
  \tan(d_1+d_2) = \tan(2d_\circ)\left\{1 + \left(
  \begin{gathered}
    \frac{D^2(8\overline a^3+8\overline a^2-8\overline a+\overline c^2-3\overline a^2\overline c^2)}{8\overline a^2\overline c^2(4\overline a^2\overline c^4-D^2)} \\
    - \frac{32\overline a^6 - 32\overline a^4 - 8\overline a^3\overline c^2 + 8 \overline a^2\overline c^2 + 8\overline a\overline c^2-\overline a^2\overline c^4-\overline c^4}{2(4\overline a^2\overline c^4-D^2)}
  \end{gathered}
  \right)
  \eta + O(\eta^2)\right\}
\end{equation}

\subsubsection[Hi4]{Lexell's deformation and some specific kinds of area-preserving deformations}
\label{lexell}

\begin{figure}
  \begin{center}
    \begin{tikzpicture}
      \node[anchor=south west,inner sep=0] (image) at (0,0) {\includegraphics[width=3.5in]{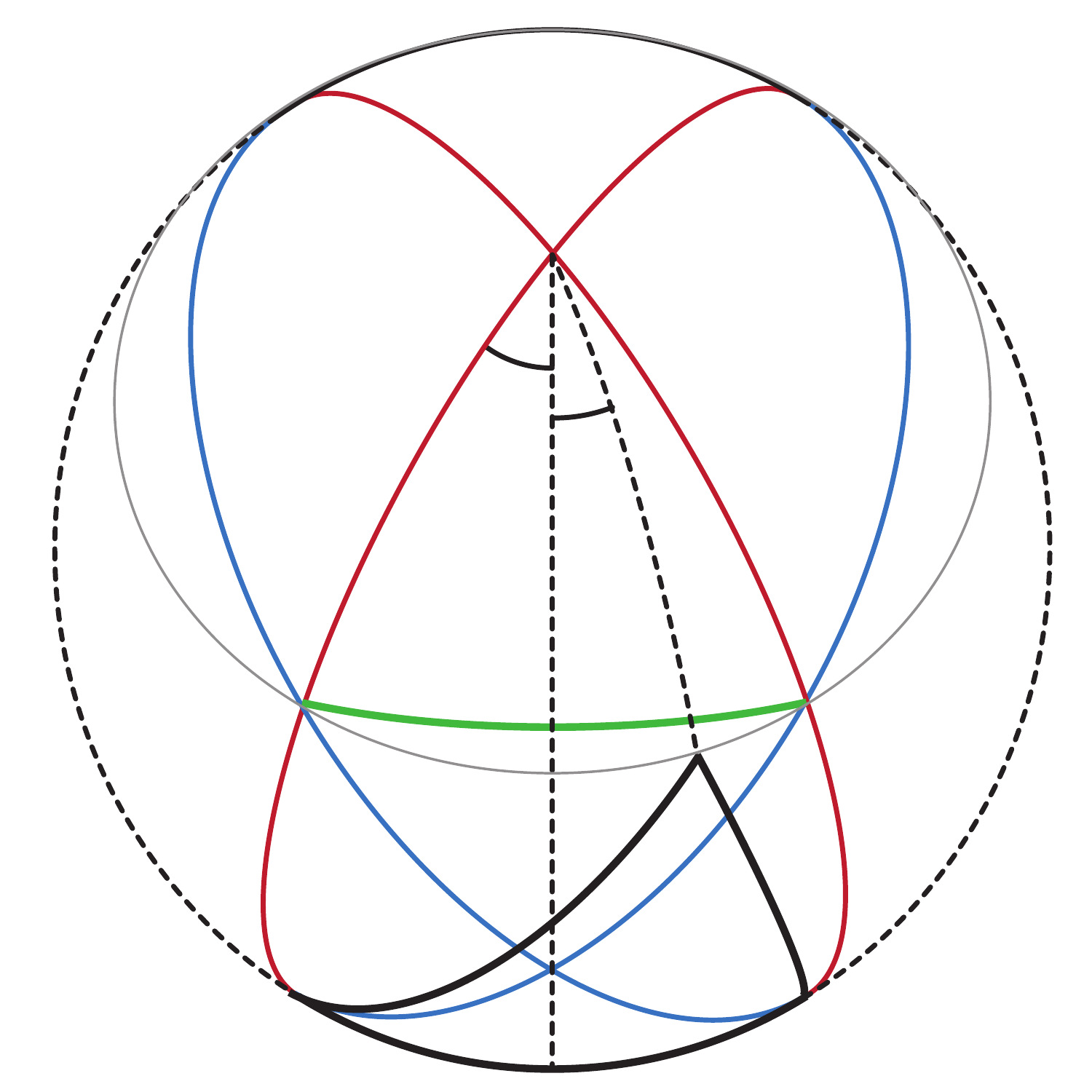}};
      \begin{scope}[x={(image.south east)},y={(image.north west)}]
        \node at (0.25,0.05) {$B$};
        \node at (0.75,0.05) {$C$};
        \node at (0.67,0.32) {$A$};
        \node at (0.23,0.36) {$A_2$};
        \node at (0.77,0.36) {$A_1$};
        \node at (0.55,0.77) {$O$};
        \node at (0.485,0.70) {$\theta$};
        \node at (0.525,0.65) {$\varphi$};
        \node at (0.22,0.21) {$b$};
        \node at (0.80,0.21) {$b$};
        \node at (0.47,0.00) {$a$};
        \node at (0.47,0.36) {$a$};
        \node at (0.42,0.21) {$d$};
        \node at (0.60,0.16) {$d$};
        \node at (0.25,0.62) {$\pi - d$};
        \node at (0.75,0.62) {$\pi - d$};
        \node at (0.50,0.90) {$\frac{1}{2}(\pi - b)$};
        \node at (0.50,0.50) [fill=white, fill opacity=0.7] {$\frac{1}{2}(\pi - b)$};
        \node at (0.50,0.50) {$\frac{1}{2}(\pi - b)$};
        \node at (0.77,0.93) {$B^\prime$};
        \node at (0.25,0.92) {$C^\prime$};
        \draw[arrows=->](0.59,0.90)--(0.63,0.90);
        \draw[arrows=->](0.41,0.90)--(0.37,0.90);
        \draw[arrows=->](0.59,0.50)--(0.66,0.50);
        \draw[arrows=->](0.41,0.50)--(0.35,0.50);
      \end{scope}
    \end{tikzpicture}
    \caption{Lexell's deformation of the spherical triangle $\sigma(ABC)$. The Lexell's circle is shown in dark gray. The top edge of the spherical rectangle $\Box A_1A_2BC$ is shown in green. The great circles passing through $BA_2OB^\prime$ and $CA_1OC^\prime$ form the sides of $\Box A_1A_2BC$ and the radii of the Lexell's circle, and are shown in red. The great circles passing through $BA_1B^\prime$ and $CA_2C^\prime$ form the diagonals of $\Box A_1A_2BC$ and are shown in blue.  $\varphi$ parameterizes the deformation starting from the isosceles case.}
    \label{fig:lexell}
  \end{center}
\end{figure}

\begin{example}{(Lexell's deformations (cf. Example 2.13, p.~59 of \cite{hsiang})).}
  As indicated in Figure~\ref{fig:lexell}, $\Box A_1A_2BC$ is an $(a,b)$-``rectangle'' with 2$|\sigma|$ as its area, $O$ is the center of the Lexell's circle, i.e. passing $A_1$, $A_2$ and $B^{\prime}$, $C^{\prime}$ (the antipode of $B$, $C$). Set its radius to be $r(a, |\sigma|)$ and
  \begin{equation}
    \theta = \frac{1}{2}\angle A_2OA_1,\ \angle BOA=\theta+\varphi,\ \angle AOC=\theta-\varphi
  \end{equation}
  Then
  \begin{equation}
    \begin{aligned}
      \tan^2\frac{|\sigma|}{2} &= \frac{(1-\cos a)(1-\cos b)}{2(\cos a + \cos b)},\ \cos b = \frac{1-\cos a(2 \tan^2\frac{|\sigma|}{2} + 1)}{2\tan^2\frac{|\sigma|}{2} + 1 - \cos a} \\
      \sin r(a, |\sigma|) &= \cos \frac{b}{2},\ \sin \theta = \sin \frac{a}{2} \sec \frac{b}{2} \\
      \cos \overline{AB} &= \cos^2 \frac{b}{2} \cos(\theta + \varphi) - \sin^2\frac{b}{2} = 2 \cos^2 \frac{b}{2}\cos^2\frac{\theta+\varphi}{2}-1 \\
      \cos \overline{AC} &= \cos^2 \frac{b}{2} \cos(\theta - \varphi) - \sin^2\frac{b}{2} = 2 \cos^2 \frac{b}{2}\cos^2\frac{\theta-\varphi}{2}-1 \\
      \cos \frac{1}{2}\overline{AB} &= \cos\frac{b}{2}\cos\frac{\theta+\varphi}{2},\ \cos\frac{1}{2}\overline{AC} = \cos\frac{b}{2}\cos\frac{\theta-\varphi}{2}
    \end{aligned}
  \end{equation}
\end{example}

\begin{example}{(The computation of $\rho(|\sigma|, k, \sqrt{3}-\frac{k}{\sqrt{3}})$).}
  Note that $\rho(|\sigma|, k, \sqrt{3}-\frac{k}{\sqrt{3}}) = \rho(\tau(\sigma,2))$ where the S.A.S. data of such a $\sigma$ is given by $a_1 = \slfrac{\pi}{3}$, $a_2 = 2 \arctan \frac{\sqrt{3}}{k}$ and $C$ given by~(\ref{eq:cofk}).

  Therefore,
  \begin{equation}
    \begin{aligned}
      D &= \frac{\sqrt{3}}{2}\sin a_2 \sin C = \frac{3k}{k^2+3} \sin C \\
      \cos a_3 &= \frac{3k}{k^2+3} \cos C + \frac{1}{2}\frac{k^2-3}{k^2+3} \\
      \tan^2 R &= \frac{1}{D^2}(1-\cos a_2)(1-\cos a_3) = \frac{k^2 + 9 - 6k \cos C}{3k^2\sin^2C} \\
      d_1 &= \arccos{(\frac{2}{\sqrt{3}}\frac{1}{\sqrt{1+\tan^2R}})} \\
      d_2 &= \arccos{(\frac{1}{k}\sqrt{\frac{k^2+3}{1+\tan^2R}})} \\
      d_3 &= \arccos{\left[\frac{2}{(1+\cos a_3)(1+\tan^2 R)}\right]}^{\slfrac{1}{2}} \\
      \rho(|\sigma|, k, \sqrt{3}-\frac{k}{\sqrt{3}}) &= \frac{1}{4D}(\pi + 2 |\sigma| - 2(d_1 + d_2 + d_3))
    \end{aligned}
  \end{equation}
\end{example}

\begin{example}{(The computation of $\rho(|\sigma|, k, \delta^{*})$.}
  
  Note that $\rho(|\sigma|, k, \delta^*) = \rho(\tau(\sigma,2))$ where $\sigma$ are those isosceles triangles with given $|\sigma|$ and the $C$ given by~(\ref{eq:cofk}) as their base angles. Set $c$ (resp. $b,\ \alpha$) to be the equal side-length (resp. base-length and top-angle) of such a $\sigma = \sigma_\circ(c, \alpha)$. Then

  \begin{equation}
    \tan\frac{b}{2} = \tan c \cos C, \quad k = \cot \frac{c}{2} \cot \frac{b}{2} = \frac{\cos c}{1-\cos c}\sec C
  \end{equation}

  Therefore,

  \begin{equation}
    \cos c = \frac{k \cos C}{1 + k \cos C}, \quad \tan \frac{\alpha}{2} = \sec c \cot C = \frac{1+k \cos C}{k \sin C}
  \end{equation}

  and hence
  \begin{equation}
    \begin{aligned}
      D &= \sin^2c\sin\alpha = (1-\cos^2c)\frac{2\tan\frac{\alpha}{2}}{1+\tan^2\frac{\alpha}{2}} \\
      &= \frac{2k\sin C(1+2k\cos C)}{(1+k\cos C)(1+k^2+2k\cos C)} \\
      \tan d_1 &= \tan \frac{\alpha}{2}\sin\frac{c}{2}=\frac{\sqrt{1+k\cos C}}{\sqrt{2}k\sin C} \\
      \tan d &= \sin \frac{b}{2} \tan (C-\frac{\alpha}{2}) = \sin c \sin \frac{\alpha}{2} \frac{\tan C - \tan \frac{\alpha}{2}}{1+\tan C   \tan \frac{\alpha}{2}} 
    \end{aligned}
  \end{equation}

  Thus, for $k_0 \le k \le k_1$

  \begin{equation}
    \rho(|\sigma|, k, \delta^*) = \frac{1}{4D}\left\{ \pi + 2|\sigma| - 4 d_1 - 2 d \right\}
  \end{equation}

  where
  \begin{equation}
    \delta^* = \cot \frac{b}{2} - \cot \frac{c}{2} = \frac{k}{\sqrt{1+2k\cos C}} - \sqrt{1+2k\cos C}
  \end{equation}

\end{example}

\begin{figure}
  \begin{center}
    \includegraphics[width=3.5in]{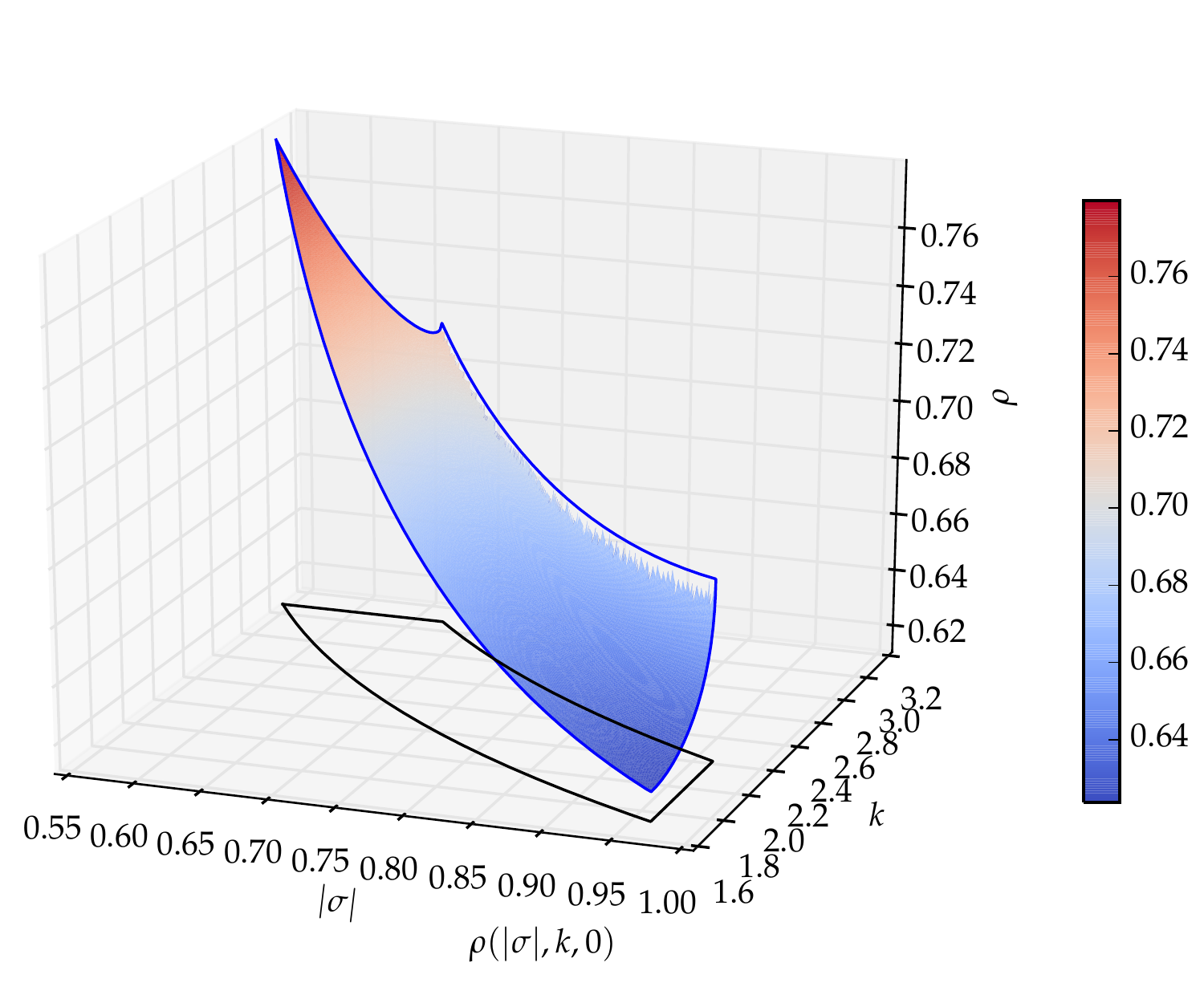}
    \caption{}
    \label{fig:sigmak}
  \end{center}
\end{figure}

\subsubsection[Hi5]{Remarks on the behavior of $\rho(|\sigma|, k, \delta)$}
\begin{enumerate}[(i)]
\item For each given pair of $(|\sigma|, k)$, it follows from the analysis of \S\ref{sss:deltadeform} that $\rho(|\sigma|, k, \delta)$ is an increasing function of $\delta$ with $\rho(|\sigma|, k, 0)$ as its minimum, while the above computation provides an estimate of its maximum (i.e. $\rho(|\sigma|, k, \delta^*)$ with only a rather small increment of $\eta^*$-order. 
\item For each given area $|\sigma|$, $\rho(|\sigma|, k_0, 0)$ is the unique minimum of $\rho(|\sigma|, k, \delta)$. Thus the values of $\rho(|\sigma|, k, \delta)$ with small $(k-k_0)$ only have very small second order increments above the minimum value of $\rho(|\sigma|, k_0, 0)$.
\item See Figure~\ref{fig:kdelta} for the graphs of $\rho(|\sigma|, k, \delta)$ as functions of $(k, \delta)$ with given values of $|\sigma|$.

\begin{figure}
  \begin{center}
    \includegraphics[width=6.5in]{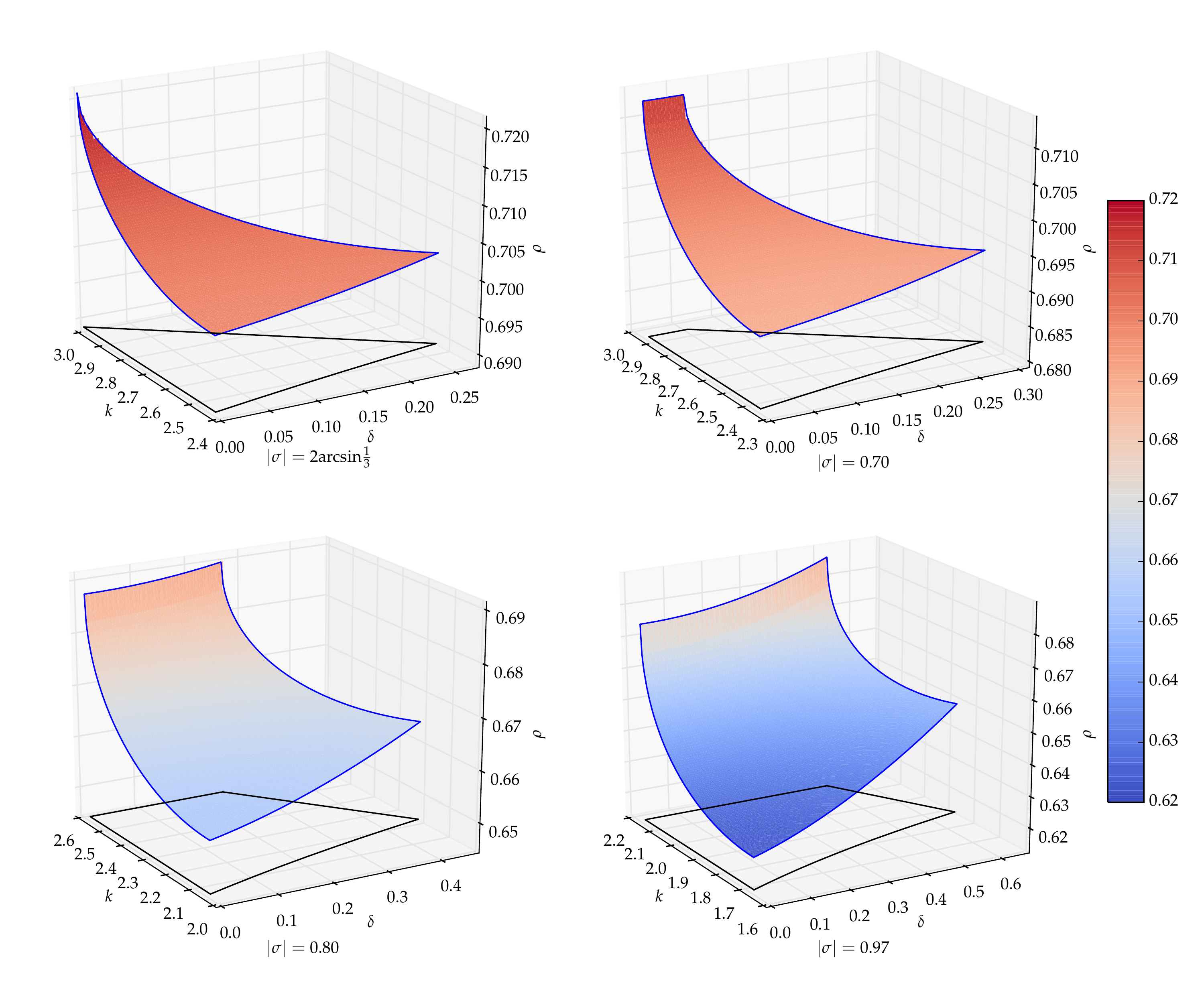}
    \caption{}
    \label{fig:kdelta}
  \end{center}
\end{figure}

\begin{figure}
  \begin{center}
    \includegraphics[width=3.5in]{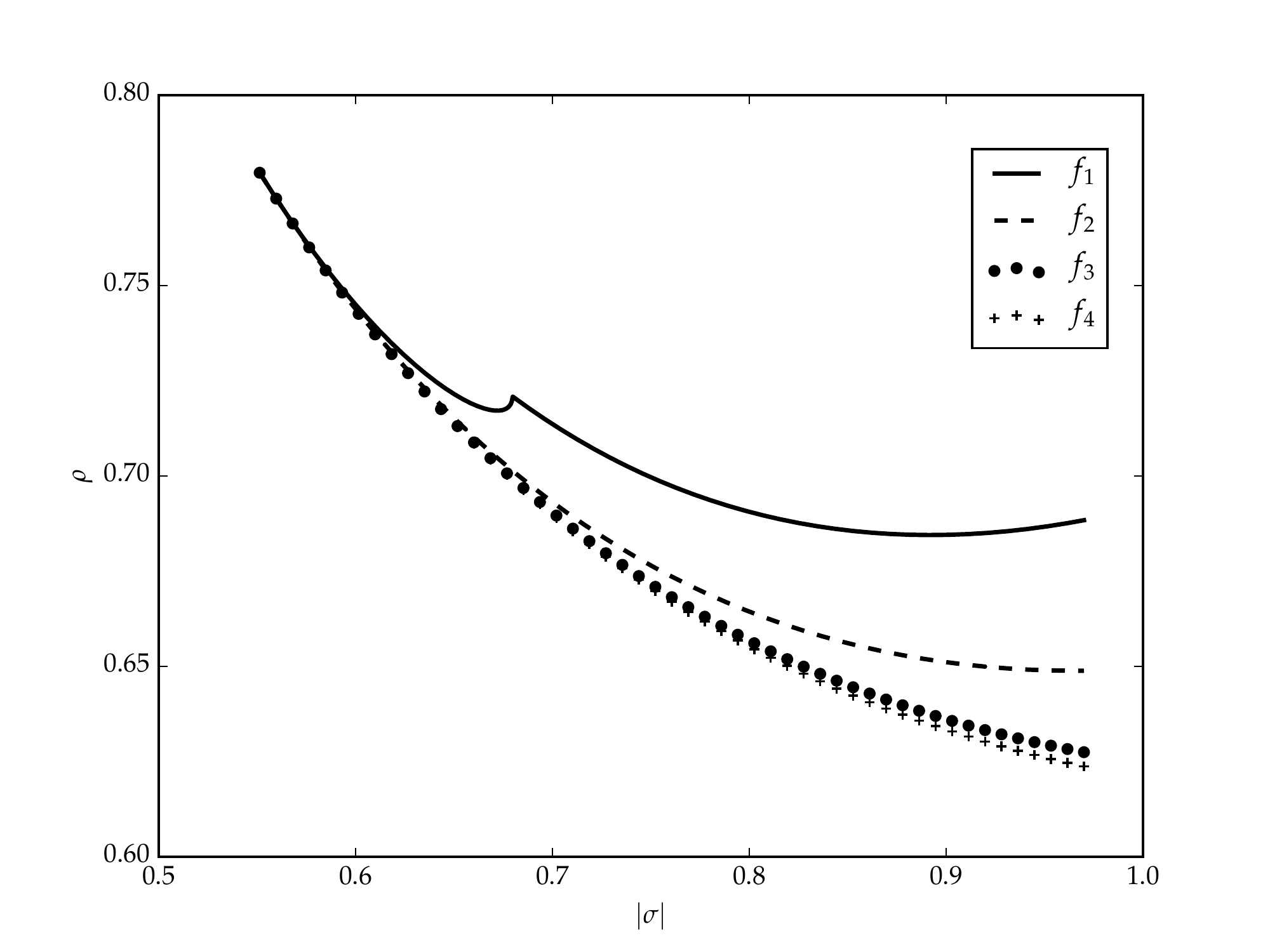}
    \caption{}
    \label{fig:fourcurves}
  \end{center}
\end{figure}

\item In order to present a simplified, over-all picture of the result of this section on the geometric analysis of $\rho(|\sigma|, k, \delta)$, we plot the graph of the following four critical functions of $|\sigma|$ in Figure~\ref{fig:fourcurves}, namely
  \begin{equation}
    \begin{aligned}
      f_1(|\sigma|) \colonequals \rho(|\sigma|, \hat k, \hat \delta),\ & f_2(|\sigma|) \colonequals \rho(|\sigma|, k_1, \delta_1) \\
      f_3(|\sigma|) \colonequals \rho(|\sigma|, k_1, 0),\ & f_4(|\sigma|) \colonequals \rho(|\sigma|, k_0, 0)
    \end{aligned}
  \end{equation}

\end{enumerate}

\section{A concise review of some highlights on the geometry of Type-I spherical configurations}
\label{sec:review}


A spherical configuration with twelve vertices and edge lengths of at least $\slfrac{\pi}{3}$ will be, henceforth, referred to as a Type-I (spherical) configuration.
The moduli space of congruence classes of Type-I configurations constitute a real semi-algebraic set of twenty-one dimension which has quite a few interesting properties such as those theorems of \S 7.1 in \cite{hsiang}.
In this section, we shall review some highlights of such special results on the geometry of Type-I configurations which will play a useful role in the proof of Theorem I.

Geometrically, to each given Type-I configuration $\sconf$, there exists a unique Type-I local packing $\mathcal{L}(\mathcal{S}_0)$ containing twelve touching neighbors with $\Sigma$ as the touching points together with the tightest extension of additional neighbors, if any, thus achieving the highest locally averaged density which shall be defined to be the associated locally averaged density of such a Type-I configuration.
Anyhow, one has a function of locally averaged density $\overline{\rho}(\cdot)$ defined on the moduli space of Type-I configurations, namely
\begin{equation}
\overline{\rho} \quad : \quad \mathcal{M}_I \rightarrow \mathbb{R}_+
\end{equation}
while the proof of Theorem I for the the major and most critical case of Type-I local packings amounts to prove that the above function has the f.c.c. and the h.c.p. configurations as the unique two maximal points with $\overline{\rho}(\cdot) = \spet$.
Thus, it is quite natural that our review should begin with the following geometric characterization of these two outstanding Type-I configurations, namely, the f.c.c. and the h.c.p.

\subsection{On some special features and the geometric characterizations of the f.c.c. and the h.c.p.}

Let $\sconf$ be a given Type-I configuration. Then $\Sigma$ is already $\slfrac{\pi}{3}$-{\it saturated}, or equivalently, the circumradii of those faces of $\sconf$ are all less than $\slfrac{\pi}{3}$.
Therefore the areas of a triangular (resp. quadrilateral) faces of a Type-I configuration are at least equal to
\begin{equation}
  \triangle_{\slfrac{\pi}{3}} = 3 \cos^{-1} \frac{1}{3} - \pi \quad \textrm{(resp. } \square_{\slfrac{\pi}{3}} = 4 \cos^{-1}(-\frac{1}{3})-2\pi\textrm{)}
\end{equation}
while
\begin{equation}
  8 \triangle_{\slfrac{\pi}{3}} + 6 \square_{\slfrac{\pi}{3}} = 4\pi
\end{equation}

Hence, a type-I configuration can have at most six quadrilaterals and the f.c.c. and the h.c.p as indicated in Figure 1 are the \textit{only} two such configurations with six quadrilaterals, say \textit{Type-I configurations of} 6$\square$-\textit{type}.

\subsubsection{}
Note that the local star configurations of the f.c.c. are all of the type of $\left\{ \triangle, \square, \triangle, \square \right\}$ at every point,
while that of the h.c.p. has six local stars of the $\left\{ \triangle, \square, \triangle, \square \right\}$-type and another sextuple of the $\left\{ \triangle, \triangle, \square, \square \right\}$ type.
It is a remarkable fact that the mere occurrence of a local star of $\left\{ \triangle, \square, \triangle, \square \right\}$-type in a Type-I configuration, in fact, already characterizes these two outstanding 6$\square$-type ones, namely

\begin{lemma}
  Suppose that $\sconf$ is a Type-I configuration with a star of $\left\{ \triangle, \square, \triangle, \square \right\}$-type. Then $\sconf$ is either the f.c.c. or the h.c.p.
  \label{lem:sixstar1}
\end{lemma}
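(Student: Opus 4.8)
\emph{Overview.} The plan is to leverage the single $\{\triangle,\square,\triangle,\square\}$-star, via the area accounting of Type-I configurations together with the deformation analysis of \S 3, to force every face of $\sconf$ to be regular, and then to finish by a short combinatorial classification of such configurations. Write $A$ for the vertex carrying the $\{\triangle,\square,\triangle,\square\}$-star, $B_1,B_2,B_3,B_4$ for its neighbours in cyclic order, $AB_1B_2$ and $AB_3B_4$ for the two triangular faces and $AB_2C_1B_3$, $AB_4C_2B_1$ for the two quadrilateral faces. Three facts available above will be used throughout: $(a)$ $\Sigma$ is $\slfrac{\pi}{3}$-saturated, so \emph{all} $\binom{12}{2}$ vertex-pairs, including the diagonals of its quadrilateral faces, are at spherical distance $\ge\slfrac{\pi}{3}$, and every face has circumradius $<\slfrac{\pi}{3}$, hence contains its circumcentre (the four vertices of a quadrilateral face being, in addition, concyclic); $(b)$ by the equality case of the area-monotonicity corollaries of \S 3, a triangular (resp.\ quadrilateral) Type-I face has area at least $\triangle_{\slfrac{\pi}{3}}$ (resp.\ $\square_{\slfrac{\pi}{3}}$), with equality only for the equilateral $\slfrac{\pi}{3}$-triangle (resp.\ the regular $\slfrac{\pi}{3}$-square); $(c)$ by Euler's formula a Type-I configuration with $q$ quadrilateral faces has $20-2q$ triangular faces, and since $\sum_F |F| = 4\pi = 8\triangle_{\slfrac{\pi}{3}} + 6\square_{\slfrac{\pi}{3}}$ its total area defect $\sum_F(|F|-|F|_{\min})$ equals $(6-q)(4\pi-10\arccos\tfrac13)$, which vanishes precisely when $q=6$ and every face is regular.

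\emph{Steps 1--2: forcing all faces to be regular.} First I would analyse the star of $A$ itself: cutting the two quadrilaterals along the diagonals $B_2B_3$, $B_4B_1$ presents the convex spherical quadrilateral $B_1B_2B_3B_4$ as a union of four triangles hinged at the interior vertex $A$, with $|AB_i|\ge\slfrac{\pi}{3}$ and all four sides $\ge\slfrac{\pi}{3}$, while the two outer triangles $B_2C_1B_3$, $B_4C_2B_1$ also have all vertex-pairs $\ge\slfrac{\pi}{3}$, contain their circumcentres, and are pinned to the inner ones by the concyclicity of $\{A,B_2,C_1,B_3\}$ and $\{A,B_4,C_2,B_1\}$. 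Feeding these constraints into the SAS/AAA area formulas and the monotonicity of $|\sigma|$, $\rho(\tau(\sigma,2))$ and the foot-distances $\theta_i$ along the area-preserving $(k,\delta)$-deformations of \S 3, together with the structural facts on Type-I configurations recalled in \S 7.1 of \cite{hsiang}, yields enough relations to show that the star of $A$ is congruent to that of an f.c.c.\ vertex, i.e.\ that $AB_1B_2$, $AB_3B_4$ are equilateral $\slfrac{\pi}{3}$-triangles and $AB_2C_1B_3$, $AB_4C_2B_1$ are regular $\slfrac{\pi}{3}$-squares. I would then propagate: each of $B_1,\dots,B_4$ now already carries two adjacent regular faces (one $\slfrac{\pi}{3}$-triangle, one $\slfrac{\pi}{3}$-square), and since $2\arccos\tfrac13 + 2\arccos(-\tfrac13) = 2\pi$, repeating the local analysis at each such vertex forces its whole star to be regular (of $\{\triangle,\square,\triangle,\square\}$- or $\{\triangle,\triangle,\square,\square\}$-type); iterating over the twelve vertices, every face of $\sconf$ becomes a regular $\slfrac{\pi}{3}$-triangle or $\slfrac{\pi}{3}$-square, so $q=6$ and, by $(c)$, there is no area slack anywhere.

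\emph{Step 3 and the main obstacle.} It then remains to classify the spherical configurations on twelve vertices whose faces are eight equilateral $\slfrac{\pi}{3}$-triangles and six regular $\slfrac{\pi}{3}$-squares; starting from the forced ring of faces around $A$ and closing up the pattern layer by layer, exactly two completions arise, differing by whether the final triangle--square band is attached with a twist, and these are precisely the cuboctahedral (f.c.c.) and the orthobicupolar (h.c.p.) configurations of Figure~\ref{fig:fcchcp}. The hard part is the rigidity asserted in Steps 1--2: a $\{\triangle,\square,\triangle,\square\}$-star is \emph{not} rigid as a stand-alone spherical patch (its four faces may individually stay slightly irregular with all side-lengths $\ge\slfrac{\pi}{3}$ and circumradii $<\slfrac{\pi}{3}$), so the regularity must be extracted from the interplay of the $\slfrac{\pi}{3}$-saturation, the concyclicity of the quadrilateral faces, the convex-polytope structure, and the global area budget $(c)$ — this is exactly where the structure theory of \S 7.1 of \cite{hsiang} and the $(k,\delta)$-analysis of \S 3 do the decisive work of converting ``all relevant side-lengths $\ge\slfrac{\pi}{3}$'' into ``all relevant side-lengths $=\slfrac{\pi}{3}$''.
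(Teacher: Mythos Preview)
Your route is fundamentally different from the paper's and has a genuine gap exactly where you yourself flag it. The paper never argues that the $\{\triangle,\square,\triangle,\square\}$-star is rigid and never uses area-accounting or $(k,\delta)$-deformations here at all. Instead it takes the star at $N$ as given, looks at the \emph{complementary region}
\[
\Omega \;=\; S^2 \setminus \bigcup D^\circ(\,\cdot\,,\slfrac{\pi}{3})
\]
of the seven star-vertices, and asks a pure placement question: in how many ways can a $\slfrac{\pi}{3}$-separated quintuple be put inside $\Omega$? A concrete spherical-trigonometric computation (parametrise a point $R$ along a boundary arc of $\Omega$, build two chains of $\slfrac{\pi}{3}$-isosceles triangles from $R$, and track the top angle $\lambda$ of the resulting isosceles $\sigma(QQ'W)$ as a function of the base angle $\theta_1$) shows that $\lambda\le\arccos\tfrac13$ with equality only at the two endpoint positions, which pins down exactly the f.c.c.\ and h.c.p.\ quintuples. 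This is a finite packing problem in a fixed region, handled with the elementary identities of \S\ref{subsec:basictrig}; no area budget, no propagation.

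Your Step~1, by contrast, needs the star to be forced regular before you can propagate, and that does not follow from the ingredients you list. The area identity in your item~$(c)$ gives zero slack only when $q=6$; a single $\{\triangle,\square,\triangle,\square\}$-star yields merely $q\ge 2$, leaving ample area budget for non-regular faces elsewhere. Concyclicity of each quadrilateral is only a codimension-one condition, and there is a genuine multi-parameter family of concyclic spherical quadrilaterals with all sides and both diagonals $\ge\slfrac{\pi}{3}$ and circumradius $<\slfrac{\pi}{3}$ near the regular $\slfrac{\pi}{3}$-square; so the star is \emph{not} locally rigid from these constraints alone. The $(k,\delta)$-analysis of \S\ref{subsec:areadeform} is about area-wise density estimates of single triangles, not metric rigidity of stars, and your appeal to ``\S 7.1 of \cite{hsiang}'' is effectively circular, since that is the same structure theory as the present lemma. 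What actually kills the non-regular deformations is precisely that $\Omega$ then fails to accommodate five $\slfrac{\pi}{3}$-separated points---this is exactly how the Corollary immediately after the lemma is proved---and your sketch never engages that mechanism. Without an independent argument for the rigidity in Step~1, Steps~2--3 cannot get started.
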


\begin{proof}
  Let $N$ be such a point of $\Sigma$ with the above kind of star structure.  Set $\left\{ E, A, B, W, C, D \right\}$ to be its boundary vertices in cyclic order, while $\left\{ E, W \right\}$ are antipodal.
  Then, the complementary region of $St(N)$, namely
  \begin{equation}
    \Omega(St(N)) \colonequals S^2(1) \setminus \cup \left\{ D^\circ(\cdot, \slfrac{\pi}{3})\right\}
  \end{equation}
  can be represented via the stereographic projection with $N$ as the north pole, as indicated in Figure~\ref{fig:complement},

\begin{figure}[H]
  \begin{center}
    \includegraphics[width=3.5in]{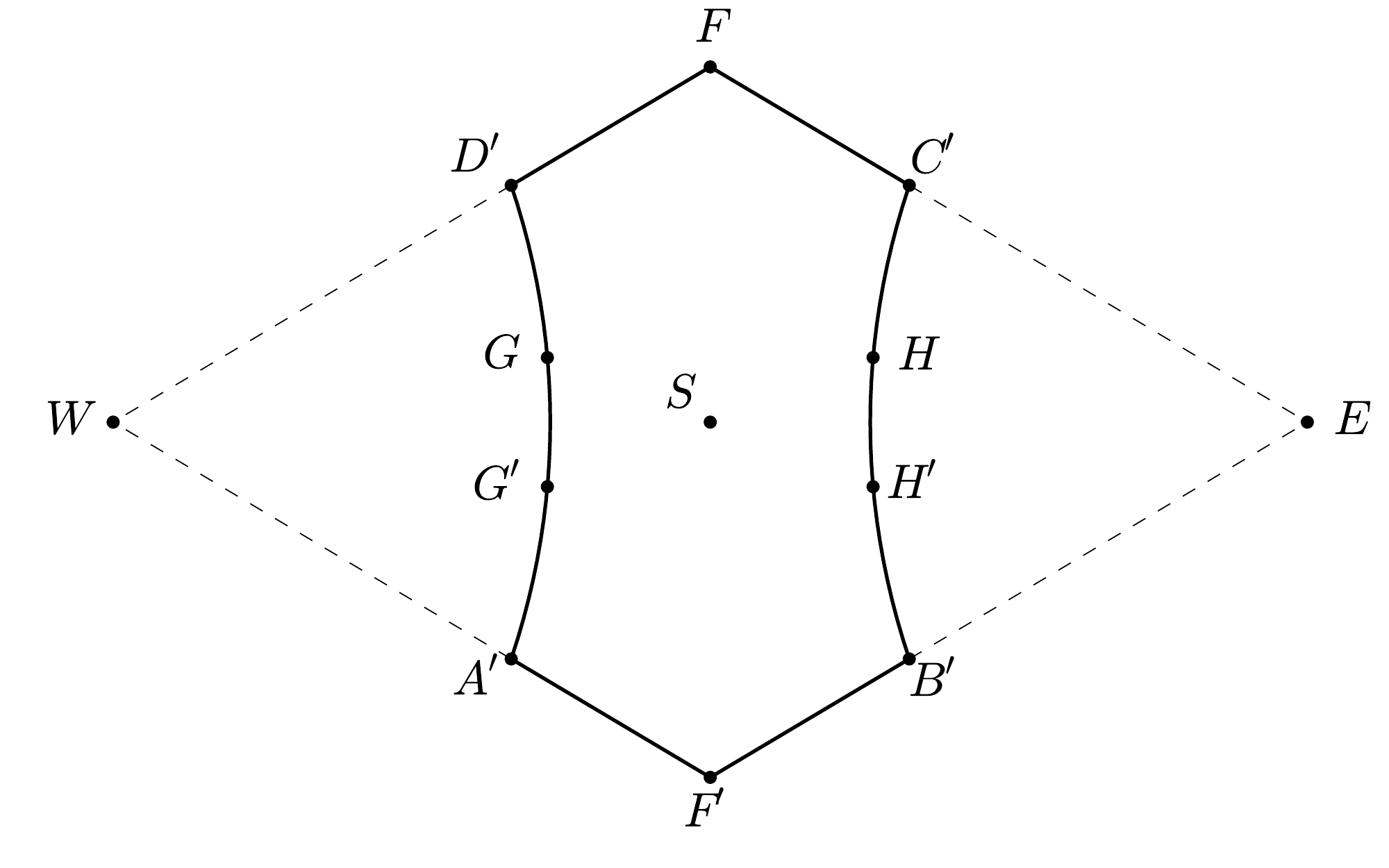}
    \caption{\label{fig:complement}}
  \end{center}
\end{figure}

\noindent
where the symmetric center of $\Omega$ is exactly the south pole and $\left\{ A^{\prime}, B^{\prime}, C^{\prime}, D^{\prime} \right\}$ are respectively the antipodal points of $\left\{ A, B, C, D \right\}$.

We shall use spherical trigonometry to analyze the possibilities of placing a quintuple of $\slfrac{\pi}{3}$-separated points inside of $\Omega$.
First of all, it is easy to check that
\begin{enumerate}[(i)]
\item $\left\{ S, A^{\prime}, B^{\prime}, C^{\prime}, D^{\prime} \right\}$,
\item $\left\{ F, G, H, A^{\prime}, B^{\prime} \right\}$ or $\left\{ F^{\prime}, G^{\prime}, H^{\prime}, D^{\prime}, C^{\prime} \right\}$
\end{enumerate}
are such possibilities, while the addition of (i) (resp. (ii)) extends the star configuration to the f.c.c. (resp. h.c.p.) configuration.
Thus, the proof of Lemma~\ref{lem:sixstar1} amounts to showing that they are the only possibilities.

Let $R$ be a point on the $\slfrac{\pi}{3}$-circular arc centered at $E$ between $H$ and $H^{\prime}$. As indicated in Figure~\ref{fig:crab}, $\left\{ P, Q; P^{\prime}, Q^{\prime} \right\}$ are a quadruple of points along $\partial\Omega$ such that
\begin{equation*}
  \overline{RP}, \overline{PQ} \textrm{ and } \overline{RP^{\prime}}, \overline{P^{\prime} Q^{\prime}}
\end{equation*}
are equal to $\slfrac{\pi}{3}$, namely, the spherical triangles of
\begin{equation*}
  \left\{ E, P, R \right\}, \left\{ P, W, Q \right\} ;\ \left\{ E, P^{\prime}, R \right\}, \left\{ P^{\prime}, W, Q^{\prime} \right\}
\end{equation*}
are all $\slfrac{\pi}{3}$-isosceles.
Set $\left\{ \theta_1, \theta_2;\ \theta_1^{\prime}, \theta_2^{\prime} \right\}$ to be their base angles and 
$\left\{ 2b_1, 2b_2;\ 2b_1^{\prime}, 2b_2^{\prime} \right\}$ to be their base lengths, respectively.
\begin{figure}[H]
  \begin{center}
    \includegraphics[width=3.5in]{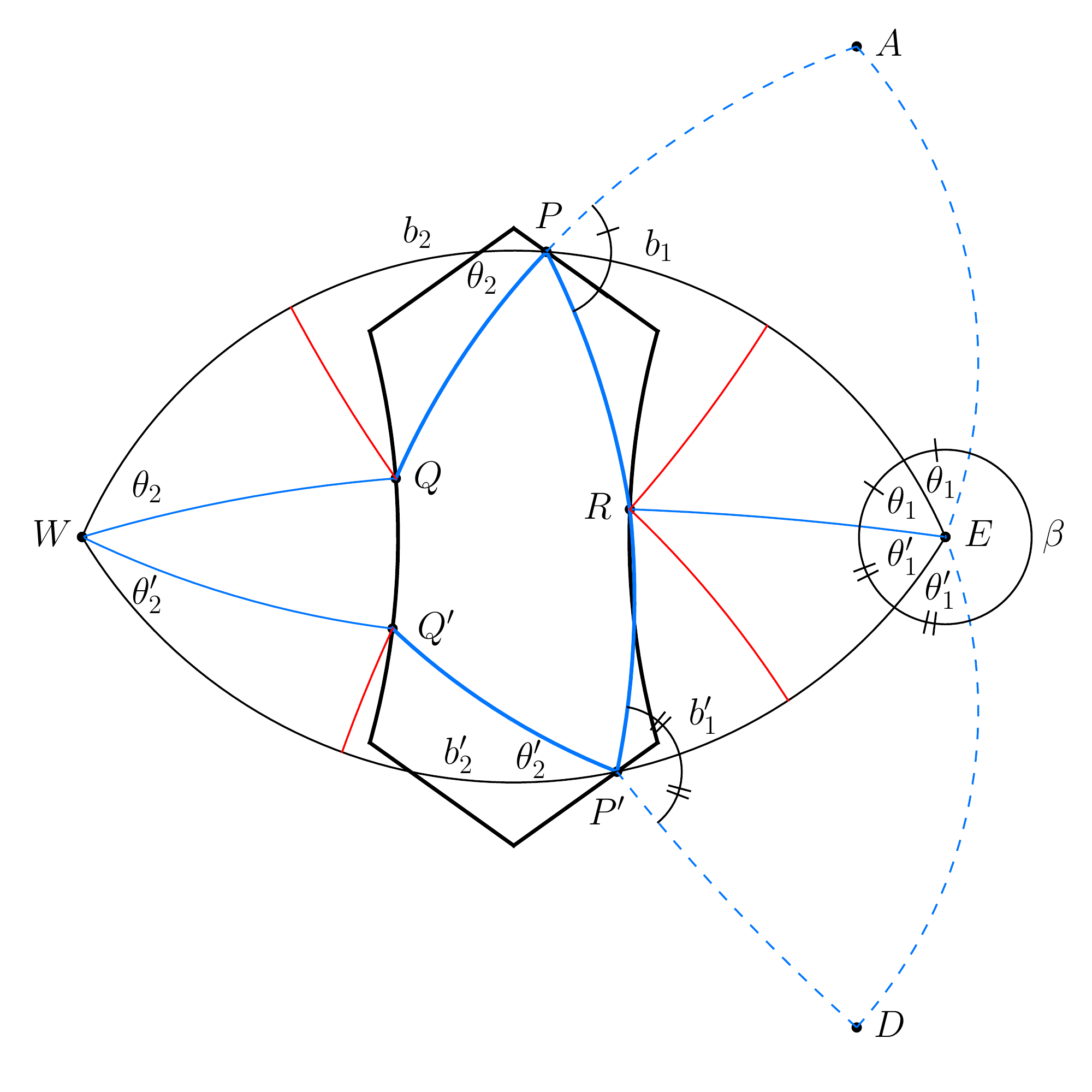}
    \caption{\label{fig:crab}}
  \end{center}
\end{figure}
Then, one has
\begin{equation}
  \begin{split}
    b_1 + b_2 &= b_1^{\prime} + b_2^{\prime} = \slfrac{\pi}{2} \\
    \tan b_1 &= \sqrt{3} \cos \theta_1,\; \tan b_2 = \sqrt{3} \cos \theta_2; \quad \cos \theta_1 \cdot \cos \theta_2 = \frac{1}{3} \\
    \tan b_1^{\prime} &= \sqrt{3} \cos \theta_1^{\prime},\; \tan b_2^{\prime} = \sqrt{3} \cos \theta_2^{\prime}; \quad\cos \theta_1^{\prime} \cdot \cos \theta_2^{\prime} = \frac{1}{3}
  \end{split}
\end{equation}
and moreover,
\begin{equation}
  \sigma(EPA) \cong \sigma(EPR), \quad \sigma(EP^{\prime}D) \cong \sigma(EP^{\prime}R)
\end{equation}
and hence
\begin{equation}
  2 \theta_1 + 2 \theta_1^{\prime} + \cos^{-1}(-\frac{1}{3}) = 2\pi, \quad \theta_1 + \theta_1^{\prime} = \cos^{-1}(-\frac{1}{\sqrt{3}})
\end{equation}

Set $\lambda$ to be the top angle of the isosceles $\sigma(QQ^{\prime}W)$, namely
\begin{equation}
  \lambda = \theta_1 + \theta_1^{\prime} - \theta_2 - \theta_2{^\prime}
\end{equation}
It is quite straightforward to use the above set of equations to compute $\lambda$ as a function of $\theta_1$ and to show that $\lambda = \cos^{-1}(\frac{1}{3})$ when and only when $R = H \textrm{ or } H^{\prime}$, while it is strictly smaller otherwise.
With the above critical analysis at hand, it is quite simple to prove that there are no other possibilities of placing a quintuple of $\slfrac{\pi}{3}$-separated points inside of $\Omega$.
\end{proof}

\begin{corollary}
  Suppose that a Type-I configuration $\sconf$ contains a 6$\triangle$-star whose pair of small triangles are separated by a pair of buckled quadrilaterals.
  Then $\sconf$ must be just such a triangulation of the f.c.c. or the h.c.p.
\end{corollary}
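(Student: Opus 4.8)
The plan is to reduce the assertion to Lemma~\ref{lem:sixstar1} by showing that, once two diagonals are erased, the prescribed local picture is an honest $\{\triangle,\square,\triangle,\square\}$-star. Let $N$ be the vertex carrying the $6\triangle$-star in the triangulation $\mathcal{S}^\prime(\Sigma)$ and list its six triangular cells in cyclic order. By hypothesis two of them --- the ``small'' ones --- are genuine triangular faces of $\sconf$, while the remaining four split into two consecutive pairs, each pair being a buckled quadrilateral, i.e.\ a quadrilateral face of $\sconf$ that has been cut by the diagonal through $N$. Because the two small triangles ``separate'' the two buckled quadrilaterals, the six cells must occur in the cyclic order: small triangle, the two halves of one buckled quadrilateral, small triangle, the two halves of the other buckled quadrilateral. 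First I would confirm that all the edges in sight are genuine edges of $\sconf$: the two cut diagonals are precisely the two interior edges of the buckled quadrilaterals, whereas every other edge issuing from $N$ and every edge of the boundary hexagon is a side of some face of $\sconf$, hence of length at least $\slfrac{\pi}{3}$. Erasing the two cut diagonals then merges each pair of halves back into a quadrilateral face, so $N$ is left with a genuine $\{\triangle,\square,\triangle,\square\}$-star all of whose edges have length at least $\slfrac{\pi}{3}$.

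With this reassembly in hand, Lemma~\ref{lem:sixstar1} applies at $N$ without change: $\sconf$, viewed as an untriangulated configuration, must be the f.c.c.\ or the h.c.p.\ configuration. To close, I would observe that the triangulation we began with simply cuts two of the six quadrilateral faces of that configuration by their $N$-diagonals and triangulates the remaining quadrilateral faces in some fashion; that is, $\mathcal{S}^\prime(\Sigma)$ is exactly one of the triangulations of the f.c.c.\ or the h.c.p., which is the claim.

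The delicate step --- and what I expect to be the main obstacle --- is the bookkeeping of the first paragraph. One must be sure that each ``buckled quadrilateral'' occurring in the $6\triangle$-star is genuinely a quadrilateral face of $\sconf$ split by its $N$-diagonal (so that the interior edge may legitimately be forgotten), and that the two ``small'' triangles are full triangular faces rather than, say, thin slivers cut off from still other quadrilateral faces --- this is precisely what the qualifiers ``small'' and ``separated by a pair of buckled quadrilaterals'' are there to encode. Once this is made explicit the hypothesis of Lemma~\ref{lem:sixstar1} is literally met at $N$: the hexagon of boundary vertices (with its antipodal pair), together with the $\slfrac{\pi}{3}$-bounds on all of its edges, is present, the complement region $\Omega(St(N))$ coincides with the one analysed in the proof of that lemma, and none of the five-point placement argument has to be repeated.
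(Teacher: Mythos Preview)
Your reduction to Lemma~\ref{lem:sixstar1} rests on reading ``buckled quadrilateral'' as a genuine quadrilateral face of $\sconf$ that has merely been cut by its $N$-diagonal, so that erasing the diagonal literally restores a $\{\triangle,\square,\triangle,\square\}$-star in $\sconf$. That is not how the paper uses the term, and it is too narrow for the Corollary's role in Lemma~\ref{lem:nonico}: a buckled quadrilateral here is any adjacent pair of triangles in the $6\triangle$-star forming a quadrilateral shape, possibly two honest triangular faces of $\sconf$ folded along a genuine edge $NV_3$ (resp.\ $NV_6$). In that situation nothing can be erased, and there is no $\{\triangle,\square,\triangle,\square\}$-star in $\sconf$ to hand to Lemma~\ref{lem:sixstar1}. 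Your final paragraph acknowledges this as the ``delicate step,'' but then simply asserts that the qualifiers in the hypothesis encode the face property; they do not.

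The paper's argument avoids rebuilding quadrilateral faces altogether. It looks instead at the four radial edges of the two small triangles. If all four have the minimal length $\pi/3$, then the $6\triangle$-star coincides with a triangulation of the specific $\{\triangle,\square,\triangle,\square\}$-star whose complementary region $\Omega$ was analysed in the proof of Lemma~\ref{lem:sixstar1}, and that analysis forces f.c.c.\ or h.c.p. If even one of the four radial edges exceeds $\pi/3$, the complementary region of the $6\triangle$-star is obtained from the $\Omega$ of Lemma~\ref{lem:sixstar1} by carving away a strip; since the five-point placement in that lemma was already tight, the shrunken region can no longer accommodate a $\pi/3$-separated quintuple, contradicting $|\Sigma|=12$. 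This second branch --- forcing the radial edges down to exactly $\pi/3$ via the complementary-region argument --- is the essential step your proposal is missing, and it is also what makes the Corollary applicable when the buckled quadrilaterals are not faces of $\sconf$.
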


\begin{proof}
  If the quadruple radial edges of the pair of small triangles are all equal to the minimal length of $\slfrac{\pi}{3}$, then such a 6$\triangle$-star must be a triangulation of $\left\{ \triangle, \square, \triangle, \square \right\}$ of Lemma~\ref{lem:sixstar1} and $\sconf$ must be either the f.c.c. or the h.c.p.
  On the other hand, any small amounts of edge-excesses in the four radial edges will make the complementary region of such a 6$\triangle$-star carve away critical strips from that of Lemma~\ref{lem:sixstar1};
  even just a single tiny such edge-excess makes its complementary region incapable of accommodating a quintuple of $\slfrac{\pi}{3}$-separated points, contradicting the assumption of being contained in the Type-I configuration.
\end{proof}

\subsubsection[Hi]{On the deformations of 6$\square$-Type-I configurations (i.e. 6$\boxslash$-Type-I configurations)}
Let $\sconf$ be a small deformation of either the f.c.c. or the h.c.p. within $\mathcal{M}_I$, and $\left\{ \boxslash_i, 1 \le i \le 6 \right\}$ be the small deformation of the sextuple
$\slfrac{\pi}{3}$-squares of the f.c.c. (or h.c.p.).
Set $\epsilon_i$ to be the difference between $\arccos (-\frac{1}{3})$ and the smallest angle of $\boxslash_i$ and $\overline{\epsilon} = \frac{1}{6}\sum_i\epsilon_i$, which will be regarded as a kind of measurement of the size of such a deformation.
Note that the area of $\boxslash_i$ is at least equal to
\begin{equation}
  \square_{\slfrac{\pi}{3}} - 0.7046\epsilon_i^2, \quad (\hbox{mod } \epsilon_i^4)
\end{equation}
Then, it follows from the area estimate that the edge-lengths of its octuple small triangles must be all equal to $\slfrac{\pi}{3}$ modulo $\overline{\epsilon}^2$.
Therefore, up to congruence, the deformation of each small triangle is just a small rotation modulo $\overline{\epsilon}^2$, and moreover, it is easy to see that the sizes of those rotations of the octuple small triangles are, in fact, equal to each other modulo $\overline{\epsilon}^2$.
Note that such a deformation would be impossible for the h.c.p. because it has adjacent pairs of small triangles.
On the other hand, all small deformations of the f.c.c. are of the combinatorial type of icosahedra (cf. Lemma 7.1 of \cite{hsiang} for more precise results on such a $\sconf$).

\subsection[Hi1]{Geometry of non-icosahedron Type-I configurations and $5\boxslash$-Type-I configurations}\label{subsec:geometry}

Let $S^{\prime}(\Sigma)$ be a triangulation of a Type-I configuration $S(\Sigma)$, namely, by adding one of the pair of diagonals of its quadrilateral faces, if any.
Then, by the Euler's formula, $S^{\prime}(\Sigma)$ always has twenty triangles, thirty edges and twelve vertices of degrees 4, 5, or 6.
We shall, henceforth, call those $S^{\prime}(\Sigma)$ with uniform degree 5 at each vertex \textit{Type-I icosahedra}, while the other will be referred to as \textit{non-icosahedral} Type-I configurations.
For example, the f.c.c., the h.c.p. and the 5$\Box$-Type-I configurations of Example~\ref{ex:5sq} all have \textit{both} icosahedral and non-icosahedral triangulations, depending on the way of choosing diagonal cuttings of the $\Box$-faces.
In fact, if $S(\Sigma)$ has at least one quadrilateral face, then one (or both) of its diagonal cuttings will cause $S^{\prime}(\Sigma)$ to become non-icosahedral.
Anyhow, we would like to study the geometric structure of those genuine non-icosahedral Type-I configurations.

Suppose that $S^{\prime}(\Sigma)$ is a given non-icosahedral Type-I configuration which is \textit{not} such a triangulation of the f.c.c. or the h.c.p. Then it must contain a 6$\triangle$-star not of the kind of Corollary of Lemma 1, say of the other kind, namely with the pair of small triangles adjacent to each other.

\subsubsection[Hi2]{On the geometry of complementary regions of those 6$\triangle$-stars of the other kind that are capable of accommodating a quintuple of $\slfrac{\pi}{3}$-separated points}

Let us first take a look at some examples of such 6$\triangle$-stars and their complementary regions:

\begin{example}{6$\triangle$-star of $\{\triangle,\triangle,\Box,\Box\}$:}
  \label{ex:6tri}

  First of all, it is the only 6$\triangle$-star of such kind with a quadruple of $\slfrac{\pi}{3}$-edges (or with a pair of $\slfrac{\pi}{3}$-equilateral triangles).
  Secondly, it occurs as the local stars at a sextuple of points in the h.c.p. configuration, as well as that of some vertices of those 5$\Box$-Type-I configurations with at least a pair of adjacent $\{\alpha_0,\alpha_0\}$ in their angular distributions at the poles, such as $(\alpha_0, \alpha_0, \alpha_0, \alpha_0, 2\pi-4\alpha_0)$.
  Again, set the central point of such a $\{\triangle,\triangle,\Box,\Box\}$-star to be the north pole $N$ and using the stereographic projection to represent the complementary region of $St(N)$, as indicated in Figure~\ref{fig:6star1}.
\end{example}

\begin{figure}[H]
  \begin{center}
    \includegraphics[width=2.5in]{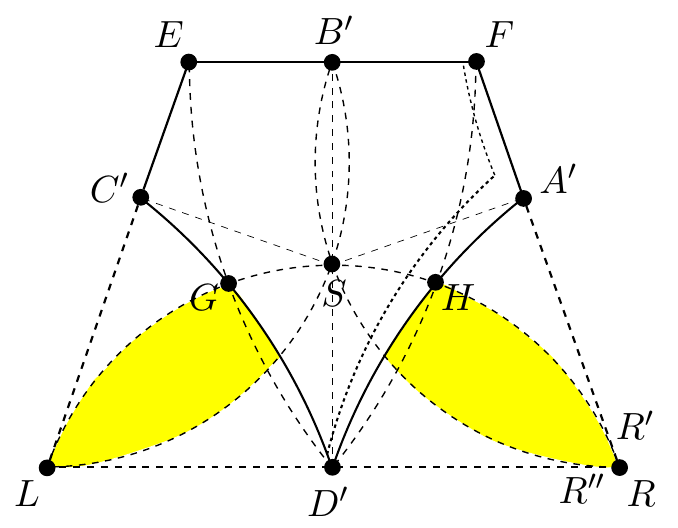}
    \caption{}
\label{fig:6star1}
\end{center}
\end{figure}

\begin{example} \label{ex:5sq}
  Let $S(\Sigma)$ be a 5$\Box$-Type-I with $(\theta_1, \theta_2, ..., \theta_5)$ as the angular distribution at the poles which is the assemblage of quintuple lune clusters $\{L_{\theta_i}\}$ as indicated in Figure~\ref{fig:lune}.
  Then the star at $A_2$ is a 6$\triangle$-star as indicated in Figure~\ref{fig:6star2}, which will be, henceforth, denoted by $St_6(\theta_1, \theta_2)\ (\theta_1, \theta_2 \ge \alpha_0 \textrm{ and } \theta_1 + \theta_2 \le 2\pi-3\alpha_0).$
\end{example}

\begin{figure}[H]
  \begin{center}
    \includegraphics[width=2.5in]{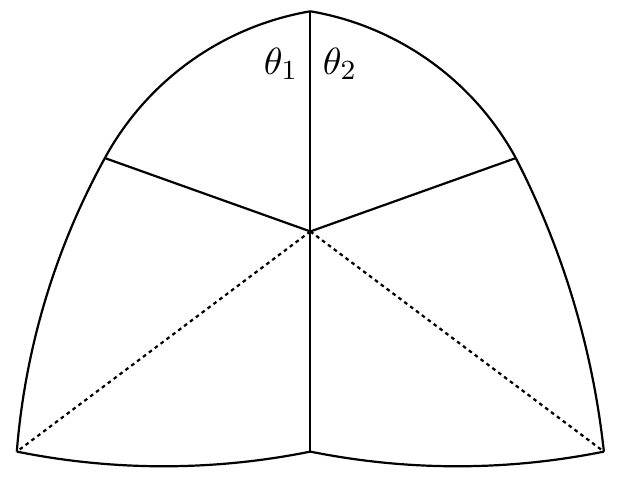}
    \caption{}
\label{fig:6star2}
\end{center}
\end{figure}

In the starting case of $\theta_1 = \theta_2 = \alpha_0$, $St_6(\alpha_0, \alpha_0) = \{ \triangle, \triangle, \Box, \Box \}$, (i.e. the same as that of Example~\ref{ex:6tri} whose complementary region is just the same as $\Omega_{4.1}$ (cf. Figure~\ref{fig:6star1}).
As $\theta_1$ (resp. $\theta_2$) gets slightly larger than $\alpha_0$, it is not difficult to check that the complementary region of $St_6(\theta_1, \theta_2)$ will be a smaller subset of $\Omega_{4.1}$ that has carved away a corresponding narrow strip along the left (resp. right) side of $\partial\Omega_{4.1}$, as indicated by the dotted lines in Figure~\ref{fig:6star1}.
Therefore, the geometric possibilities of arranging a quintuple of $\slfrac{\pi}{3}$-separated points in their complementary regions are just subsets of that of $\Omega_{4.1}$, while such arrangements exactly correspond to the possibilities of extending $St_6(\theta_1, \theta_2)$ to Type-I configurations.
For example, in the case of $\Omega_{4.1}$, the arrangements of $\{A^\prime, B^\prime, C^\prime, S, D'\}$ (resp. $\{E, F, G, H, D'\}$) correspond to the extension of the h.c.p. (resp. the 5$\Box$-Type-I with $(\alpha_0, \alpha_0, \alpha_0, \gamma, \alpha_0)$ as the angular distributions at the poles).
On the other hand, in the case of smallest complementary region of $St_6(\theta_1, \theta_2)$ with $\theta_1 + \theta_2 = 2\pi-3\alpha_0$, it is not difficult to check that such a complementary region only accommodates a unique such a quintuple which corresponds to the extension of 5$\Box$-Type-I with the angular distribution of $(\theta_1, \theta_2, \alpha_0, \alpha_0, \alpha_0)$.
This kind of geometric analysis naturally leads to the proof of the following lemma, namely

\subsubsection{Structure of non-icosahedra Type-I configurations}

\begin{lemma}
  \label{lem:nonico}
  A non-icosahedral Type-I configuration is either just such a triangulation of the f.c.c. (resp. the h.c.p.), or it is a small deformation of the 5$\Box$-Type-I configuration, namely, of the kind of 5$\boxslash$-Type-I.
\end{lemma}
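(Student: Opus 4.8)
The plan is to find, inside any non-icosahedral Type-I configuration that is not already a triangulation of the f.c.c.\ or the h.c.p., a 6$\triangle$-star whose pair of small triangles are \emph{adjacent}, and then to exhaust by spherical trigonometry on the complementary region of that star all the ways its remaining five vertices can sit, showing each such arrangement forces a small deformation of a 5$\square$-Type-I configuration.

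First I would set aside the trivial alternative and assume $\sconfp$ is not a triangulation of the f.c.c.\ or the h.c.p. Because $\sconfp$ is non-icosahedral, the twelve vertex degrees (each $4$, $5$ or $6$ and summing to $60$) are not uniformly $5$, so at least one vertex $N$ has degree $6$; that is, $\sconfp$ contains a 6$\triangle$-star $St(N)$, whose six triangles consist of two small ones together with the four halves of a pair of buckled quadrilaterals. Cyclically around $N$ there are then only two possibilities: the two small triangles are separated by the two buckled quadrilaterals, or the two small triangles are adjacent. In the first case the Corollary of Lemma~\ref{lem:sixstar1} forces $\sconfp$ to be a triangulation of the f.c.c.\ or the h.c.p., contrary to assumption; hence the small triangles of $St(N)$ are adjacent. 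As in Example~\ref{ex:6tri} and the discussion following it, such a star is a small deformation of the star $St_6(\theta_1,\theta_2)$ of a 5$\square$-Type-I configuration, with $\theta_1,\theta_2 \ge \alpha_0$ and $\theta_1+\theta_2 \le 2\pi-3\alpha_0$, degenerating (when $\theta_1=\theta_2=\alpha_0$) to the $\{\triangle,\triangle,\square,\square\}$-star whose complement is $\Omega_{4.1}$ (Figure~\ref{fig:6star1}).

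Next I would note that the five vertices of $\Sigma$ other than $N$ and the six boundary vertices of $St(N)$ form a mutually $\slfrac{\pi}{3}$-separated quintuple that is also $\slfrac{\pi}{3}$-separated from $\partial St(N)$, hence lies inside the complementary region $\Omega\big(St_6(\theta_1,\theta_2)\big)$. Since raising $\theta_1$ (resp.\ $\theta_2$) above $\alpha_0$ merely carves a narrow strip off the left (resp.\ right) side of $\partial\Omega_{4.1}$, every such quintuple also fits inside $\Omega_{4.1}$, so it suffices to enumerate the placements there. Applying to $\Omega_{4.1}$ the same kind of spherical-trigonometric analysis used in the proof of Lemma~\ref{lem:sixstar1} — spanning $\slfrac{\pi}{3}$-chains along $\partial\Omega_{4.1}$ and tracking the base angles, base lengths, and critical top angles of the resulting isosceles triangles — one finds that, up to the symmetry of $\Omega_{4.1}$, the only admissible quintuples are $\{A',B',C',S,D'\}$, whose addition extends $St(N)$ to the h.c.p., and $\{E,F,G,H,D'\}$, whose addition extends it to a 5$\square$-Type-I configuration with angular distribution $(\alpha_0,\alpha_0,\alpha_0,\gamma,\alpha_0)$, $\gamma=2\pi-4\alpha_0$, at the poles.

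Finally, for $\theta_1,\theta_2$ genuinely above $\alpha_0$ the two carved strips eject $S$ (and $A',B',C'$) from $\Omega\big(St_6(\theta_1,\theta_2)\big)$, so the h.c.p.\ quintuple no longer fits and only small deformations of $\{E,F,G,H,D'\}$ survive; these are exactly the extensions of $St_6(\theta_1,\theta_2)$ to 5$\square$-Type-I configurations, i.e.\ to 5$\boxslash$-Type-I, and in the extreme case $\theta_1+\theta_2=2\pi-3\alpha_0$ the complement admits the single such quintuple, giving the 5$\square$-Type-I with angular distribution $(\theta_1,\theta_2,\alpha_0,\alpha_0,\alpha_0)$. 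This would establish that $\sconfp$ is a small deformation of a 5$\square$-Type-I configuration in every case. I expect the real work — and the only delicate point — to be this last enumeration performed uniformly over the two-parameter family $\Omega\big(St_6(\theta_1,\theta_2)\big)$: one must confirm that excising the two boundary strips from $\Omega_{4.1}$ removes precisely the h.c.p.\ placement and nothing else, so that no sporadic quintuple — and hence no exotic non-icosahedron outside the 5$\boxslash$ family — can slip in for intermediate $(\theta_1,\theta_2)$; establishing the very first reduction (existence of such a 6$\triangle$-star, via the area estimates) is a secondary but necessary ingredient.
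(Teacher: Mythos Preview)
Your overall strategy matches the paper's: use the Corollary of Lemma~\ref{lem:sixstar1} to force any 6$\triangle$-star in a non-f.c.c./h.c.p.\ configuration to have its pair of small triangles adjacent, then analyze which $\slfrac{\pi}{3}$-separated quintuples fit in the complementary region. Two points deserve correction.

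First, your enumeration inside $\Omega_{4.1}$ is too restrictive. The arrangement $\{D',E,F,G,H\}$ is not the unique non-h.c.p.\ quintuple; $\Omega_{4.1}$ accommodates a full two-parameter family $\{D',E',G',F',H'\}$ yielding 5$\Box$-Type-I extensions with angular distributions $(\alpha_0,\alpha_0,\theta_3,\theta_4,\theta_5)$ for varying $(\theta_3,\theta_4,\theta_5)$, together with their small perturbations. The paper handles this not by listing quintuples but by fixing a reference quintuple (with distribution $(\alpha_0,\alpha_0,\theta_0,\alpha_0,\theta_0)$, $\theta_0=\pi-\tfrac{3}{2}\alpha_0$) and bounding the latitudinal and longitudinal deviations of any admissible quintuple from it (at most $0.0073$ and $0.065$ respectively). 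Your conclusion survives, since all of these are still 5$\Box$-extensions, but ``the only admissible quintuples are'' is false as stated.

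Second, you parametrize the 6$\triangle$-star solely by $(\theta_1,\theta_2)$, as if it were exactly $St_6(\theta_1,\theta_2)$. A general 6$\triangle$-star with adjacent small triangles carries additional degrees of freedom, namely radial edge-excesses $\epsilon_i=r_i-\slfrac{\pi}{3}$, and the paper's part~(2) is devoted precisely to bounding these (obtaining, for instance, $\epsilon_1+\epsilon_3+(\epsilon_2+\epsilon_5)\le 0.112$) in order to conclude that the star---and hence the configuration---is a \emph{small} deformation of some 5$\Box$-Type-I. Your closing caveat about working ``uniformly over the two-parameter family'' understates the issue: the family is larger than two-dimensional, and the quantitative control of the $\epsilon_i$ is exactly where the word ``small'' in the lemma's conclusion is earned.
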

\begin{proof}
  Let $S^{\prime}(\Sigma)$ be a non-icosahedral Type-I which is \textit{not} a triangulation of either the f.c.c. or the h.c.p.
  By the corollary of Lemma~\ref{lem:sixstar1}, it contains a 6$\triangle$-star which is a small deformation of $\{\triangle, \triangle, \Box, \Box\}$ such as $St_6(\theta_1, \theta_2)$ and their small deformations.

  (1) Among all possibilities of such 6$\triangle$-stars, $St_6(\alpha_0, \alpha_0)$ is the unique one with quadruple radial edges of the minimal length of $\slfrac{\pi}{3}$ which has a larger complementary region (i.e. $\Omega_{4.1}$) than that of the others.
  Therefore, one naturally expects that $\Omega_{4.1}$ will have more room of variations of arrangements of such quintuples.
  Thus, let us first analyze the range of variations of such accommodations inside of $\Omega_{4.1}$.
  Note that $D^\prime$ is a sharp corner point of $\Omega_{4.1}$.
  It is quite simple to show that any such arrangements must include a point in a very small vicinity of $D^\prime$.
  Anyhow, it is advantageous to use the polar coordinates with $D^\prime$ as the pole to analyze the geometry of $\Omega_{4.1}$.
  Then the triples $\{E, G, D^\prime\}$ (resp. $\{F, H, D^\prime\}$) lie on two longitudes with angular separation of $(2\pi-4\alpha_0)=\alpha_0 + 0.12838822$, while the triples $\{G, S, H\}$ (resp. $\{E, B^\prime, F\}$ are situated on the latitudes of distance $\slfrac{\pi}{3}$ (resp. $\slfrac{2\pi}{3}$) to the pole.
  First of all, it is easy to see that the arrangement of $\{D^\prime, E, G, F, H\}$ corresponds to the extension to a 5$\Box$-Type-I with angular distribution of $(\alpha_0, \alpha_0, \alpha_0, \gamma, \alpha_0)$.
  Moreover, $\Omega_{4.1}$ can also accommodate similar kinds of such quintuples $\{D^\prime, E^\prime, G^\prime, F^\prime, H^\prime\}$ which will produce the extensions to 5$\Box$-Type-I with angular distribution of $(\alpha_0, \alpha_0, \theta_3, \theta_4, \theta_5)$.
  In particular, we shall denote the the quintuple corresponding to that of $((\alpha_0, \alpha_0, \theta_0, \alpha_0, \theta_0))$, $\theta_0 = \pi - \frac{3}{2}\alpha_0$, by $\{D^\prime, E_0, G_0, F_0, H_0\}$.
  Just for the sake of simplicity of presentation, we shall regard the others as deformations of such a specific 5$\Box$-Type-I configuration, and then proceed to estimate the limitations on the sizes of such deformations.
  Set $\lambda_0 = 2\pi-5\alpha_0=0.12838822$, and $b_0$ to be the base length of the $\slfrac{\pi}{3}$-isosceles with $\lambda_0$ as its base angle, i.e. $b_0 = \slfrac{2\pi}{3}-0.007172022$.
  Let $\{D^*, E^*, G^*, F^*, H^*\}$ be any quintuple of $\slfrac{\pi}{3}$-separated points inside of $\Omega_{4.1}$.
  Then the geometric confinement of $\Omega_{4.1}$ implies that the angles of $\angle E^*D^*G^*$ (resp. $\angle F^* D^* H^*$) are at most equal to 0.1286, and hence the distances between $\{D^*, E^*\}$ (resp, $\{D^*, F^*\}$) are at least equal to $\slfrac{2\pi}{3}-0.0073$.
  Therefore, one has the following limitations on the sizes of variations at $\{D^\prime, E_0, G_0, F_0, H_0\}$, namely, the latitudinal differences at each point are at most equal to 0.0073 and the longitudinal differences at $\{E_0, G_0, F_0, H_0\}$ are at most equal to 0.065.

  (2) Now, let us proceed to study the geometric structures of those 6$\triangle$-stars whose complementary regions can still accommodate a quintuple of $\slfrac{\pi}{3}$-separated points.
  It is not difficult to see that such a 6$\triangle$-star will be a small deformation of Example~\ref{ex:5sq}, namely, having a pair of small adjacent triangles and a pair of $\boxslash$ sharing a short edge, such as those small deformations of Example~\ref{ex:5sq}.
  As indicated in Figure~\ref{fig:6star2}, we shall denote the boundary vertices of such as 6$\triangle$-star by $\{A_i, 1 \le i \le 6\}$, in cyclic order, and their corresponding edge lengths by $\{r_i, 1 \le i \le 6\}$, while $\{r_1, r_2, r_3, r_5\}$ are those short edge-lengths (i.e. at most only slightly longer than $\slfrac{\pi}{3}$).
  Let us first study the crucial case that the sextuple of boundary edges are all of the minimal length of $\slfrac{\pi}{3}$.
  For example, in case that $r_2 = r_5 = \slfrac{\pi}{3}$, such 6$\triangle$-stars are actually small deformations of $St_6(\theta_1, \theta_2)$ of Example~\ref{ex:5sq}. Set 
$$ \epsilon_i = r_i - \slfrac{\pi}{3}, i = 1, 2, 3, \textrm{ and } 5 $$
to be the small edge-excesses.
  Then, the same kind of geometric analysis of the complementary region as the above will show that
  \begin{enumerate}[(i)]
  \item in case $\epsilon_1 = \epsilon_3 = 0$, $\epsilon_2 + \epsilon_5 \le 0.0073$
  \item in case $\epsilon_2 = \epsilon_5 = 0$, $\epsilon_1 + \epsilon_3 \le 0.112$
  \item in general, $\epsilon_1 + \epsilon_3 + (\epsilon_2 + \epsilon_5) \le 0.112$
  \end{enumerate}
  Therefore, it follows readily that $\mathcal{S}^\prime(\Sigma)$ is necessarily a rather small deformation of a 5$\Box$-Type-I configuration.
\end{proof}

\subsubsection[Hi1]{A remarkable characterization of 5$\boxslash$-Type-I}

Just for the sake of simplicity of terminology in this paper, those Type-I configurations having quintuple of $\boxslash$'s and a pair of small 5$\triangle$-stars will be, henceforth, simply referred to as 5$\boxslash$-Type-I's. Geometrically, they are those \textit{small deformations} or \textit{close deformations} of the 5$\Box$-Type-I, meaning that those somewhat further deformations but still quite close to those 5$\Box$-Type-I. It is easy to check that such 5$\boxslash$-Type-I always have some 5$\triangle$-stars which are small or close deformations of $St_5(\theta,\theta^\prime)$.

The following lemma provides a remarkable characterization of 5$\boxslash$-Type-I, namely

\begin{lemma} \label{lem:fivestarchar}
Let $\sconfp$ be a Type-I configuration which contains a small deformation of $St_5(\theta,\theta^\prime)$. Then $\sconfp$ must be a 5$\boxslash$-Type-I.
\end{lemma}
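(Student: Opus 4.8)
\emph{Proof proposal.} The plan is to run the same argument that established Lemma~\ref{lem:sixstar1} and Lemma~\ref{lem:nonico}: pin the distinguished star to a pole, describe the part of the sphere left uncovered by the $\slfrac{\pi}{3}$-disks about its vertices, and show by spherical trigonometry that this complementary region is so constrained that the remaining vertices of $\Sigma$ have essentially no freedom. First I would let $V$ be the centre of the given small deformation of $St_5(\theta,\theta^\prime)$, place $V$ at the north pole, and label its five neighbours $A_1,\dots,A_5$ in cyclic order so that the two $\sigma$-type and the three $\tilde\sigma$-type triangles of Figure~\ref{fig:star}-(i) occur in the prescribed pattern; write $N^\ast$ for the vertex shared by the two $\sigma$-triangles. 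The six vertices $\{V,A_1,\dots,A_5\}$ are half of $\Sigma$, so the remaining six vertices must be a sextuple of pairwise $\slfrac{\pi}{3}$-separated points lying in
\begin{equation}
  \Omega \colonequals S^2(1)\setminus\bigcup\bigl\{\,D^\circ(P,\slfrac{\pi}{3}) \;:\; P\in\{V,A_1,\dots,A_5\}\,\bigr\},
\end{equation}
each also $\slfrac{\pi}{3}$-separated from every $A_i$. Since the star is only a small deformation of $St_5(\theta,\theta^\prime)$, the region $\Omega$ agrees, up to a deformation whose size is controlled by that of the star, with the complementary region $\Omega_0(\theta,\theta^\prime)$ of the exact $St_5(\theta,\theta^\prime)$, and I would represent $\Omega_0$ by stereographic projection from $V$ exactly as in Figures~\ref{fig:6star1} and~\ref{fig:6star2}.

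The core of the proof is a rigidity statement for $\Omega_0(\theta,\theta^\prime)$: for every admissible pair $(\theta,\theta^\prime)$ in the range occurring in $5\square$-Type-I configurations, the region $\Omega_0(\theta,\theta^\prime)$ accommodates a sextuple of $\slfrac{\pi}{3}$-separated points only in the single way dictated by the lune decomposition --- namely the sextuple consisting of the pole $S$ opposite $N^\ast$ together with the five equatorial vertices of the ambient $5\square$-Type-I that are not neighbours of $V$ --- the only admissible variations being of the small, explicitly bounded size quantified in the proof of Lemma~\ref{lem:nonico}. I would prove this the way Lemma~\ref{lem:sixstar1} was proved: locate the sharp corner point(s) of $\partial\Omega_0$ (the analogues of the point $D^\prime$), show that any admissible sextuple is forced to put a vertex in a tiny neighbourhood of each such corner, and then propagate the $\slfrac{\pi}{3}$-constraints around $\partial\Omega_0$ --- using the $\slfrac{\pi}{3}$-isosceles relations $\tan b=\sqrt{3}\cos\vartheta$ and $\cos\vartheta_1\cos\vartheta_2=\tfrac13$ together with the monotonicity of the resulting ``gap angle'' in the governing parameter --- to locate the rest and to bound the residual freedom. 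This is the step I expect to be the main obstacle: $(\theta,\theta^\prime)$ sweeps a two-parameter family, so $\Omega_0$ changes shape and both the corner analysis and the monotonicity estimate must be carried out uniformly in $(\theta,\theta^\prime)$; passing from the special values treated in \S\ref{subsec:geometry} to the full two-parameter family is precisely the additional work.

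Granting the rigidity statement, the conclusion follows quickly. Because $\Omega$ differs from $\Omega_0(\theta,\theta^\prime)$ only by carving off strips of width controlled by the size of the deformation, every sextuple of $\slfrac{\pi}{3}$-separated points admissible in $\Omega$ is a small (or close) deformation of the unique sextuple admissible in $\Omega_0$; hence all twelve vertices of $\sconfp$ form a small (or close) deformation of a $5\square$-Type-I configuration, and the five deformed $\slfrac{\pi}{3}$-squares of that configuration survive as a quintuple of $\boxslash$'s together with a pair of small $5\triangle$-stars --- which is exactly what it means for $\sconfp$ to be a $5\boxslash$-Type-I. It remains to rule out that $\sconfp$ is instead a triangulation of the f.c.c.\ or the h.c.p.; this is excluded by Lemma~\ref{lem:sixstar1} and its corollary together with the discussion of deformations of $6\square$-Type-I configurations in \S\ref{sec:review}, since a genuinely lopsided deformed $St_5(\theta,\theta^\prime)$ with its adjacent $\boxslash$-pair is incompatible with the balanced $6\boxslash$-stars of a small deformation of the f.c.c.\ and with the adjacent-small-triangle structure of the h.c.p.\ (the borderline case in which $(\theta,\theta^\prime)$ is near $(\alpha_0,\alpha_0)$, where $St_5$ degenerates to the balanced f.c.c./h.c.p.\ star, is either vacuous or already of $5\boxslash$-type and needs no separate treatment).
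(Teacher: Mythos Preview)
Your proposal is correct and follows essentially the same route as the paper: place the centre of the deformed $St_5(\theta,\theta')$ at a pole, analyse the complementary region via stereographic projection, and show that the remaining six vertices of $\Sigma$ are forced (up to small deformation) into the pattern of a $5\Box$-Type-I.

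Two minor organisational differences are worth noting. First, the paper opens by invoking Lemma~\ref{lem:nonico} to reduce without loss of generality to the case that $\sconfp$ is already an icosahedron; this makes your final paragraph (ruling out the f.c.c./h.c.p.\ possibility) unnecessary. Second, rather than treating the six missing vertices as an unstructured sextuple, the paper exploits the icosahedral combinatorics to recognise that they must form the \emph{opposite $5\triangle$-star} of the given one (one centre plus five boundary vertices); it then shows directly that in the tightest extension this opposite star is $St_5(\alpha_0,\,2\pi-\alpha_0-\theta-\theta')$, so the whole configuration is the $5\Box$-Type-I with angular distribution $\{\theta,\theta',\alpha_0,\tilde\theta,\alpha_0\}$ at the poles, and all other extensions are small deformations of this. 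The corner argument is packaged as a ``replacement subset'' technique (intersecting $\Omega$ with the open $\slfrac{\pi}{3}$-disks about the corner vertices) rather than the $\cos\vartheta_1\cos\vartheta_2=\tfrac13$ monotonicity from Lemma~\ref{lem:sixstar1}, and the two-parameter uniformity you flag as the main obstacle is handled by first treating $St_5(\alpha_0,\alpha_0)$ (whose $\Omega$ is the largest) and then observing that all other $(\theta,\theta')$ give strictly smaller complementary regions. These are tactical streamlinings of the same strategy rather than a different proof.
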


\begin{proof}
We may assume without loss of generality of the proof that $\sconfp$ is an icosahedron (cf. Lemma \ref{lem:nonico}). Conceptually, this can also be regarded as the other case of Lemma \ref{lem:nonico}; and technically, the basic method of the proof here is also quite similar to that of Lemma \ref{lem:nonico}, i.e. the analysis of the geometry of accommodating sextuple of $\pots$-separated points inside the complementary region of such a small deformation of $St_5(\theta,\theta^\prime)$. 
%
%

(1) Therefore, one naturally begins the proof by analyzing the simplest but also most crucial case of $St_5(\alpha_0,\alpha_0)$. Again, set the center of such a star to be the north pole, and use the stereographic projection to represent the complementary region of such a star. It is, as indicated in Figure~\ref{fig:6star1}, that of $\Omega_{4,1}$ with an additional $\pots$-sector $\triangle D^\prime R A^\prime$. Now, in the geometric setting of Type-I icosahedra extensions, we are studying the problem of analyzing the possibilities of its \textit{opposite} 5$\triangle$-star (i.e. those with the sextuple vertices lying inside of $\Omega(S(\cdot))$. In this particular case of $St_5(\alpha_0,\alpha_0)$, the \textit{tightest extension}, which makes the opposite star the largest, turns out to be $St_5(\alpha_0,\gamma)$, namely, with $H$ as its center and $\{D^\prime,R,F,E,G\}$ as its boundary vertices, while the others are just its small deformations. The key-step to prove the second assertion is to show that any opposite star must also include a pair of vertices in the vicinity of $D^\prime$ and $R$ respectively, using the following simple technique of \textit{replacement subset of R in $\Omega(St(\cdot))$}, namely set it to be

\begin{equation}
S(R,\Omega )= \Omega \cap D^o ( \pots , D^{\prime}) \cap D^o( \pots , A^{\prime}) \cap D^o ( \pots ,R)
\end{equation}

\noindent
where $D^o(\pots , D^{\prime})$ and $D^o(\pots , A^{\prime})$ are the open $\pots$-disc centered at $D^{\prime}$ and $A^{\prime}$ respectively. Then we may assume that the $\pots$-separated sextuple in $\Omega$ includes $R$ itself or includes a point outside of it, say as indicated in Figure~\ref{fig:6star1} by $R^{\prime}$ or $R^{\prime \prime}$.

Next, it is straightforward to check that, in the case of $St_5(\theta,\theta^{\prime})$, $\theta + \theta^{\prime} \leq 2\pi - 3\alpha_0$, the opposite star of one of its tight extensions is $St_5(\alpha_0, 2\pi-\alpha_0-\theta - \theta^{\prime})$, while the others are just its small deformations.

(2) In summary, the tight extensions of $St_5(\theta,\theta^{\prime})$ is a 5$\Box$-Type-I with the angular distribution of $\{\theta, \theta^{\prime}, \alpha_0, \tilde{\theta}, \alpha_0\}$ at the poles, in particular, it has an opposite pair of 5$\triangle$-stars with no radial edge-excesses and with antipodal centers, and furthermore, with perfect longitudinal alignment. Therefore, in the case of a small deformation of $St_5(\theta,\theta^{\prime})$, its \textit{tight extension} is a small deformation of the 5$\Box$-Type-I with $\{\theta, \theta^{\prime}, \alpha_0, \tilde{\theta}, \alpha_0\}$ as the angular distribution, and hence, it has a pair of opposite 5$\triangle$-star with very small amount of radial edge-excesses, almost antipodal centers and small amount of angular \textit{non-alignments}, in short it is a 5$\boxslash$-Type-I.  This proves that ${\cal S}'(\Sigma)$ is also a $5\boxslash$-Type-I.  
\end{proof}

\subsection{On the geometry of Type-I icosahedra}
Recall that a Type-I configuration $\mathcal{S}^{\prime}(\Sigma)$ whose twelve stars are all of 5$\triangle$-type will be called a Type-I icosahedron.
Let us first review some generalities on the geometry of Type-I icosahedra.
\subsubsection{Some generalities} \label{subsubsec:somegeneralities}
(i) Lemma~\ref{lem:nonico} shows that Type-I non-icosahedra are, necessarily, rather small deformations of 5$\Box$-Type-I. Therefore, the great majority of Type-I configurations are icosahedra, which also include the f.c.c., the h.c.p. and those 5$\Box$-Type-I (i.e. with icosahedral triangulations.)

(ii) Let $\mathcal{S}^{\prime}(\Sigma)$ be a given Type-I icosahedron and $\{St(A_i), 1 \le i \le 12\}$ be its twelve 5$\triangle$-stars, $|St(\cdot)|$ be the area of $St(\cdot)$.
Then $$ \displaystyle\sum_{i=1}^{12}|St(A_i)| = 12\pi$$ because every triangle $\sigma_j$ of $\mathcal{S}^{\prime}(\Sigma)$ belongs to the triple of $St(\cdot)$ at its vertices, thus having $$ \displaystyle\sum_{i=1}^{12}|St(A_i)| = 3 \cdot \displaystyle\sum_{j=1}^{20} |\sigma_j| = 3 \cdot 4 \pi = 12 \pi$$ namely, the averaged value of the twelve areas of the stars of a Type-I icosahedron is equal to $\pi$.
However, the area distribution of Type-I icosahedra can be quite different for different kinds of icosahedra.
For example, both the f.c.c. and the h.c.p. icosahedra have uniform area distributions of $\pi$ for their twelve stars;
those Type-I icosahedra in the close vicinity of the f.c.c. also have almost uniform area distributions with areas close to $\pi$ for their twelve stars.
However, those 5$\Box$-Type-I icosahedra always have a pair of stars with close to the minimal area of 5$\triangle$-star (i.e. at the poles) and ten stars of areas at least equal to $\pi$.

(iii) Complementary regions of 5$\triangle$-stars and the opposite 5$\triangle$-star of a given 5$\triangle$-star in $\mathcal{S}^{\prime}(\Sigma)$:
Let $\mathcal{S}^{\prime}(\Sigma)$ be a given Type-I icosahedron and $St(A_i)$ be one of its 5$\triangle$-stars.
Then, the complementary region of $St(A_i)$ can be defined just the same way as in the case of 6$\triangle$-stars and will again be denoted by $\Omega(St(A_i))$.
For a given Type-I icosahedron $\mathcal{S}^{\prime}(\Sigma)$, to every 5$\triangle$-star, say $St(A_i)$, there is a unique opposite star, say $St(A'_{i})$, whose vertices are situated inside of $\Omega(St(A_i))$ which will be, henceforth, referred to as the \textit{opposite star} of $St(A_i)$.
For example, the pair of small 5$\triangle$-stars of a 5$\Box$-Type-I (resp. 5$\boxslash$-Type-I) icosahedra are opposite stars of each other.
\begin{example}
  Set the label of vertices of a given 5$\Box$-Type-I icosahedron to be $\{A_i, A_i^{\prime}; 0 \le i \le 5\}$ with $\{A_0, A_0^{\prime}\}$ at the poles and $\{A_i, A_i^{\prime}; 1 \le i \le 5\}$ having the same longitudes.
  Then $St(A_i)$ and $St(A^{\prime}_{i+2})$ are opposite stars of each other.
  Set $\{\theta_i\}$ to be the angular distribution of $St(A_0)$.
  Then $St(A_i)$ consists of a pair of small triangles and a triple of half rectangles, namely $\slfrac{\pi}{3}$-isosceles with top angles of $\theta_{i-1}$ and $\theta_i$, 2$\tilde{\sigma}_{\theta_{i-1}}$ and another $\tilde{\sigma}_{\theta_{i}}$.
  Therefore, their areas are at least equal to $\pi$ and equal to $\pi$ when and only when $\theta_{i-1} = \theta_i = \alpha_0$.
  We shall denote such a 5$\triangle$-star by $St_5(\theta_1, \theta_2)$.
  Then, the complementary region of $St_5(\theta_1, \theta_2)$ is given by that of $St_6(\theta_1, \theta_2)$ with an additional circular sector.
  For example, the complementary region of $St_5(\alpha_0, \alpha_0)$ is as indicated in Figure~\ref{fig:6star1}.
\end{example}
(iv) Small 5$\triangle$-stars and their complementary regions:
Let $\epsilon_i = (r_i - \slfrac{\pi}{3})$ be the radial edge lengths in excess of $\slfrac{\pi}{3}$ of a given 5$\triangle$-star, the total amount of such excesses will be, henceforth, referred to as the \textit{total radial edge-excess} of the given 5$\triangle$-star and will be denoted by $\epsilon(St(\cdot))$.
Just for the sake of definitiveness in presentation, we introduce the following quantitative definition of small 5$\triangle$-stars, namely
\begin{definition}
  A 5$\triangle$-star is defined to be a small 5$\triangle$-star if its total radial edge-excess is at most equal to 0.104.
\end{definition}
\begin{example}
  Among all 5$\triangle$-stars of a given total radial edge-excess, say $\overline{\epsilon} = \slfrac{1}{5}\epsilon \le 0.0208$, the following specific one will have the minimal area, as well as the maximum averaged density, namely, its radial edge-excess is concentrated in one and its boundary edge-excess is also concentrated in one of its $\slfrac{\pi}{3}$-isosceles.
\end{example}

\subsection{Geometry of Type-I icosahedra with rather lopsided area distributions of stars}

Let $\{ |St(A_i)|, A_i \in \Sigma \}$ be the area distribution in a given Type-I icosahedron $\mathcal{S}(\Sigma)$. Note that the averaged value of such an area distribution is always equal to $\pi$, while its individual values can be as small as ($\pi - 0.345$) and as large as ($\pi$ + 0.822). Geometrically, 5$\triangle$-stars with rather small total amount of radial edge-excesses, say denoted by $\sum \epsilon_i$, are those stars with areas quite close to ($\pi - 0.345$). In this subsection, we shall call those 5$\triangle$-stars with $\sum \epsilon_i$ at most equal to 0.104 \textit{small stars}, and on the other hand, call those with their areas exceeding ($\pi$ + 0.6) stars with rather large areas; while Type-I icosahedra with both small stars and stars with rather large areas will be referred to as Type-I icosahedra with rather lopsided area distributions. Let us begin with some specific examples of such icosahedra.

\subsubsection{Tight Extensions of small stars}

Let $St(N)$ be a small star centered at the north pole $N$. Set $\{ A_i, 1 \leq i \leq 5 \}$ (resp. $\{r_i$, $\{\theta_i\}$ and $\{b_i\}$) to  be its boundary vertices (resp. radial edge-lengths, central angles and boundary edge-lengths) and $\{V_i\}$ to be the vertices of its \textit{complementary region} $\Omega\left(St(N)\right)$. Set $\{\epsilon_i = r_i -\slfrac{\pi}{3}\}$ to be the \textit{radial edge-excesses}. Note that

\begin{equation}
\sum \epsilon_i \leq 2 \sin^{-1} \left( \slfrac{\sqrt{3}}{2}\sin(2\alpha_0)\right) - \slfrac{\pi}{3} = 0.10398539 \sim 0.104
\end{equation}

\noindent
will ensure that $\{V_i\}$ are always $\slfrac{\pi}{3}$\textit{-separated}, and hence, one may extend such a $St(N)$ to Type-I icosahedra simply by adding an opposite star with $\{V_i\}$ as the boundary vertices, while its center can vary in the vicinity of the south pole. Anyhow, we shall call such extensions \textit{tight extensions} of $St(N)$. It is easy to see that such extensions achieve the \textit{maximal area} of the \textit{opposite star} of $St(N)$ which will be henceforth referred as the tight extensions of the small star $St(N)$ whose geometric structures are uniquely determined, modulo the positions of their centers, and in particular, the geometric structure of the subconfiguration of the union of the sextuplet of stars centered at $\{N; \> A_i, \> 1 \leq i \leq 5\}$.

Geometrically, the set of $\{r_i, \theta_i{;} \> 1 \leq i \leq 5 \}$(resp. $\{r_i, b_i{;} \> 1 \leq i \leq 5 \}$) already constitutes a complete set of geometric invariants of $St(N)$, namely, that of the SAS (resp. SSS) type, while following quintuples of lengths, namely
\begin{equation*}
d_j = \overline{NV_i} \quad \hbox{and} \quad \overline{V_{i-1}V_i} = \tilde{b_i}
\end{equation*}
are the collection of lengths besides those $\slfrac{\pi}{3}$ edges of such subconfigurations of its tight extensions. For example, the additional ten spherical triangles of such a subconfiguration consists of quintuples of $\slfrac{\pi}{3}$-isosceles with $\{ b_i\}$(resp. $\{ \tilde{b}_i\}$) as their base-lengths, while the weighted average of $\{ \tilde{\rho} \left( St(N) \right) , \> \tilde{\rho} \left( St(A_i) \right) , \> 1 \leq i \leq 5 \}$ is also equal to the further weighted average of that of the above quintuples of $\slfrac{\pi}{3}$-isosceles (i.e. with $\{b_i\}$ and $\{ \tilde{b}_i \}$ as their base-lengths) and $\tilde{\rho} \left( St(N)\right)$ with weights $\{2,1,3\}$. The first step of analyzing geometric invariants of such a tight extension is, of course, to express $\{ \cos b_i, \> \cos \tilde{b}_i, \cos d_i\}$ in terms of the SAS data of $\{r_i, \theta_i\}$, which has the additional advantage of $\Sigma \theta_i = 2\pi$. Anyhow, it is quite straightforward to first compute $\cos b_i$ by the cosine law and then use the determinant formula for quadrilateral relations (cf section 3.2.3) to compute $\cos d_i$ and $\cos \tilde{b}_i$. We only include here the following rather specific simple examples.

\begin{example}
In the special ``starting" case of 5$\triangle$-stars with uniform $\slfrac{\pi}{3}$-radial edges and with $S$ as the center of its opposite star, the above computations become much simpler than otherwise, namely
\end{example}

\begin{equation}
\left\{
\begin{array}{ll}
\cos b_ i & = \frac{3}{4}\cos\theta_i + \frac{1}{4},\\
\tan\frac{d_i}{2} & = \sqrt{3}\cos\frac{\theta_i}{2}, \> \cos d_i  = \frac{1-\tan^2\frac{d_i}{2}}{1+\tan^2\frac{d_i}{2}},\\
\cos\tilde{b}_i & = \sin d_i \sin d_{i+1}\cos\frac{\theta_i + \theta_{i+1}}{2} + \cos d_i \cos d_{i+1}, \> \sin d_i  = \frac{2\tan\frac{d_i}{2}}{1+\tan^2\frac{d_i}{2}}
\end{array}
\right.
\end{equation}

Based upon the above basic invariants of SSS type for triangles of $\mathcal{S}^{\prime}(\Sigma)$ of the tight extension of such a special 5$\triangle$-star, it is straightforward to compute other geometric invariants of such an $\mathcal{S}^{\prime}(\Sigma)$, in particular the $\bar{\rho}$($\cdot$) as an explicit function of the angular distribution $\{ \theta_i \}$. Furthermore, it is not difficult to show that such a function of $\{ \theta_i \}$ will be minimal in the case of uniform angular distribution and maximal in the case of most lopsided angular distributions.

\begin{example} Let $St(N)$ be a small star whose radial (resp. boundary) edge-excesses are concentrated in a single radial edge(resp. in the base of an $\slfrac{\pi}{3}$-isosceles) such as the St$_5^o$($\theta$) indicated in Figure~\ref{fig:5star2}-(ii) with $\alpha_0 \leq \theta \leq \gamma$ or the other assemblage of the same quintuple of triangles.

\begin{figure}[H]
  \begin{center}
    \begin{tikzpicture}
      \node[anchor=south west,inner sep=0] (image) at (0,0) {\includegraphics[width=6in]{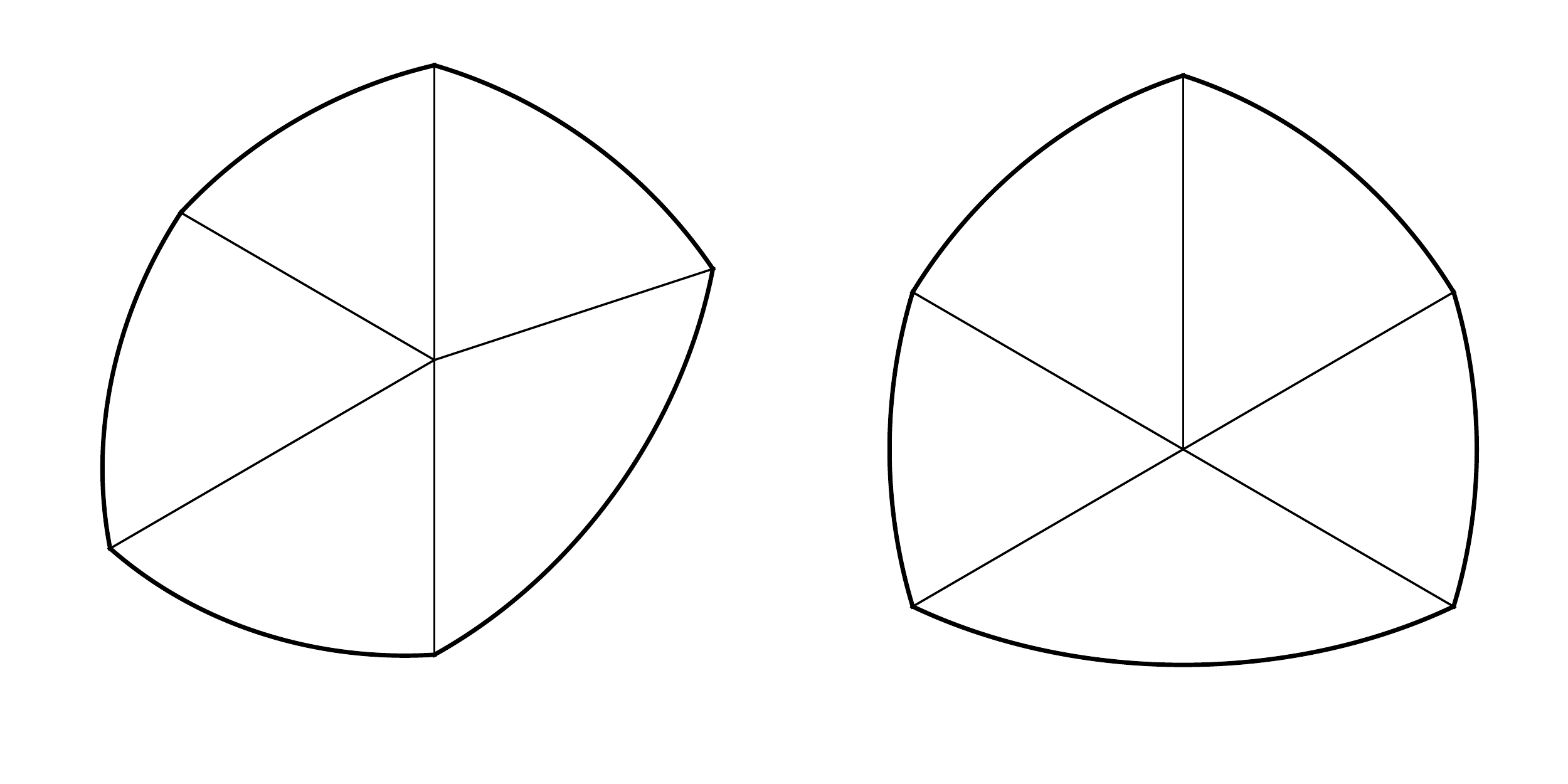}};
      \begin{scope}[x={(image.south east)},y={(image.north west)}]
        \node at (0.28,0.00) {(i)};
        \node at (0.00,0.75) {$St_5(\theta_1,\theta_2):$};
        \node at (0.25,0.65) {$\sigma_{\theta_2}$};
        \node at (0.32,0.65) {$\sigma_{\theta_1}$};
        \node at (0.32,0.48) {$\tilde{\sigma}_{\theta_1}$};
        \node at (0.24,0.44) {$\tilde{\sigma}_{\theta_2}$};
        \node at (0.21,0.54) {$\tilde{\sigma}_{\theta_2}$};
        \node at (0.76,0.00) {(ii)};
        \node at (0.55,0.75) {$St_5^\circ(\theta):$};
        \node at (0.72,0.54) {$\varphi$};
        \node at (0.79,0.54) {$\varphi$};
        \node at (0.70,0.72) {$\sigma_{\theta}$};
        \node at (0.81,0.72) {$\sigma_{\theta}$};
        \node at (0.88,0.63) {$\theta$};
        \node at (0.70,0.42) {$\alpha_0$};
        \node at (0.81,0.42) {$\alpha_0$};
        \node at (0.63,0.40) {$\sigma_{\alpha_0}$};
        \node at (0.88,0.40) {$\sigma_{\alpha_0}$};
        \node at (0.75,0.35) {$\overline{\theta}$};
        \node at (0.75,0.22) {$\sigma_{\overline{\theta}}$};
      \end{scope}
    \end{tikzpicture}
    \caption{\label{fig:5star2}}
  \end{center}
\end{figure}

Note that there are only small differences between the tight extension of $St_5^\circ(\theta)$, $\alpha_0 \leq \theta \leq \gamma$ and that of the other, while their $\bar{\rho}(\cdot)$ are almost the same. Therefore, we shall only exhibit that of the former and compute its $\bar{\rho}(\cdot)$ as a function of $\theta$ in the following. Set $r_i$ to be the only radial edge with excess. Then, one has $\{ r_i, \theta_i \}$ of $St(N)$ given as follows

\begin{equation}
\begin{array}{l}
\cos r_1 = \frac{1}{4} (3 \cos\theta + 1)\\
\theta_1 = \theta_5= \tan^{-1} 2 \cot \frac{\theta}{2}\\
\theta_2 = \theta_4=\alpha_0, \> \theta_3=2\pi - 2\alpha_0 - 2 \tan^{-1} 2 \cot \frac{\theta}{2}\\
\end{array}
\end{equation}
and moreover,

\begin{equation}
\begin{array}{l}
\cos b_3 = \frac{1}{4}(3\cos \theta_3 +1), \> b_i = \frac{\pi}{3} \> \hbox{for} \> i \neq 3\\
\cos d_1 = \cos d_5 = \frac{1}{4} \left(3 \cos(\theta + \alpha_0) +1 \right)\\
\cos d_2 = \cos d_4 = - \frac{1}{3}\\
(1 + \cos d_3)(1 + \cos b_3) = 1, \> \hbox{or} \> \cos d_3 = \frac{-\cos b_3}{1+ \cos b_3}
\end{array}
\end{equation}

Set $\tilde{\theta}_i$ to be the top angle of the $\slfrac{\pi}{3}$-isosceles with $\tilde{b}_i$ as the base-length. Then,

\begin{equation}
\begin{array}{l}
\tilde{\theta}_1 = 2\pi - 2\alpha_0 - 2\theta_1\\
\tilde{\theta}_2 = \tilde{\theta}_5 = 2\pi - 3\alpha_0 -\theta\\
\tilde{\theta}_3 = \tilde{\theta}_4 = 2\pi - 2\alpha_0 -2\tan^{-1} 2 \cot \frac{\theta_3}{2}
\end{array}
\end{equation}

while

\begin{equation}
\cos \tilde{b}_i = \frac{1}{4}(3 \cos \tilde{\theta}_i +1), \> 1 \leq i \leq 5  
\end{equation}

Now, with the above set of basic geometric invariants of the tight extension of $St_5^\circ(\theta)$, $\alpha_0 \leq \theta \leq \gamma$, at hand, and assuming $S$ to be the center of its opposite star, it is straightforward to compute its $\bar{\rho}(\cdot)$ as a function of $\theta$, thus having computer graphic to such a function as indicated in Figure~\ref{fig:5starplot}. 

\label{ex:tightextension}
\end{example}

\begin{figure}[H]
  \begin{center}
    \includegraphics[width=6in]{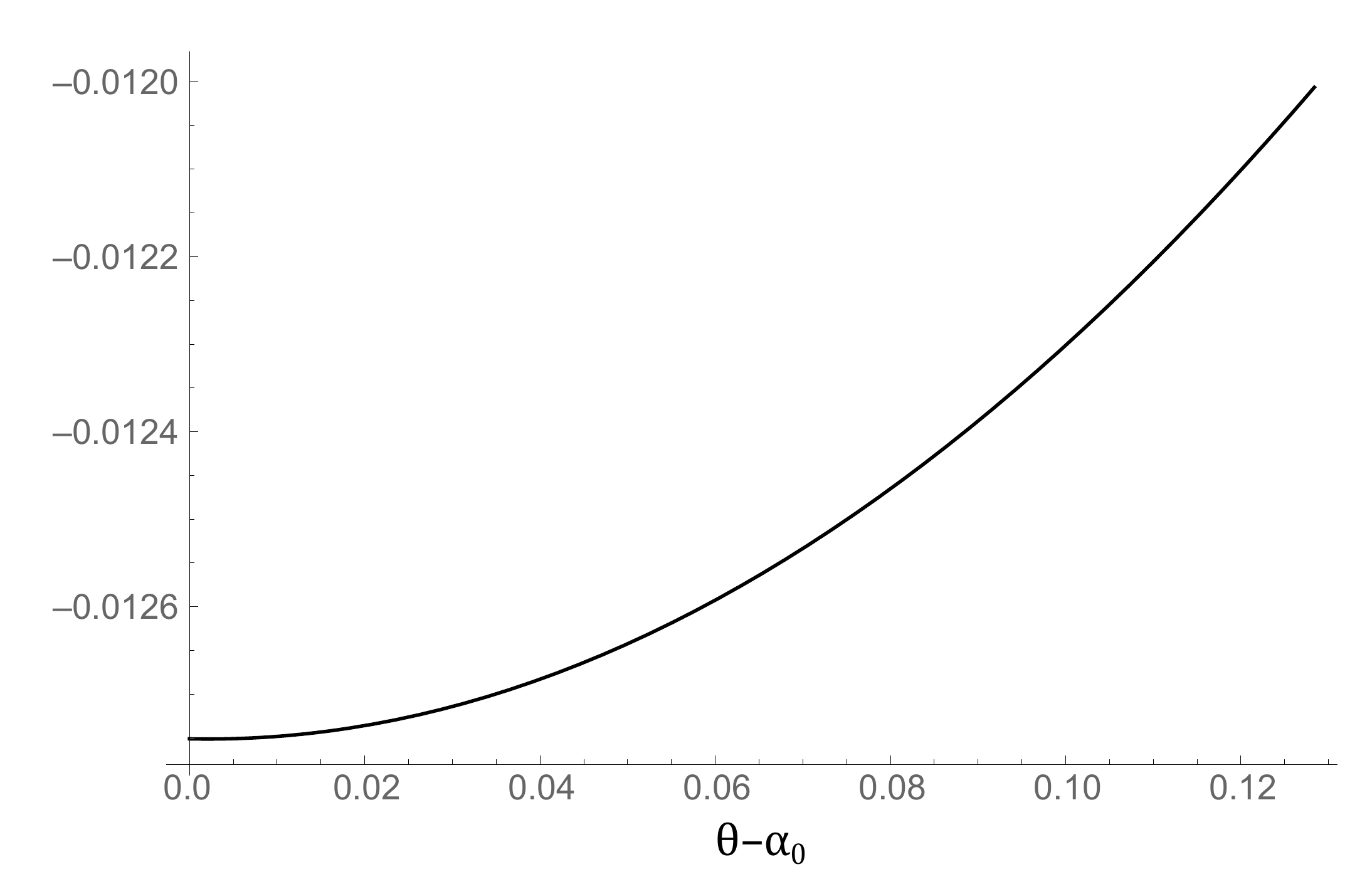}
    \caption{\label{fig:5starplot} Graph of $\bar{\rho}(\cdot)-\frac{\pi}{\sqrt{18}}$ of Example~\ref{ex:tightextension}.}
  \end{center}
\end{figure}

\begin{example}{\it Another specific type of Type-I icosahedra with an opposite pair of small stars and accommodating a large star with higher $\tilde{\rho}(\cdot)$}
  \label{ex:smallstar}

Let $\sconf$ be an icosahedron containing a star with minimal area, say $St(N)$, (i.e. with uniform $\slfrac{\pi}{3}$-radial edges and the central angular distribution of quadruple $\alpha_0$ and $\gamma$), while its opposite star, say $St(N')$, is a small star consisting a pair of $\slfrac{\pi}{3}$-equilateral quadrilaterals and another $\slfrac{\pi}{3}$-isosceles and moreover, the triple of its vertices ``opposite'' to the base edge of its $\slfrac{\pi}{3}$-isosceles are situated on the boundary of the complementary region of $St(N)$. As indicated in Figure~\ref{fig:smallstar}, the geometry of such a Type-I icosahedron can be advantageously analyzed in the setting of the kind of spherical cartesian representation with the equator corresponding to $N$ as the $x$-axis, better exhibiting the geometry of the collar region connecting the pair of small stars, i.e. the complementary of $St(N)\cup St(N')$: 
\begin{figure}[H]
  \begin{center}
    \includegraphics[width=6in]{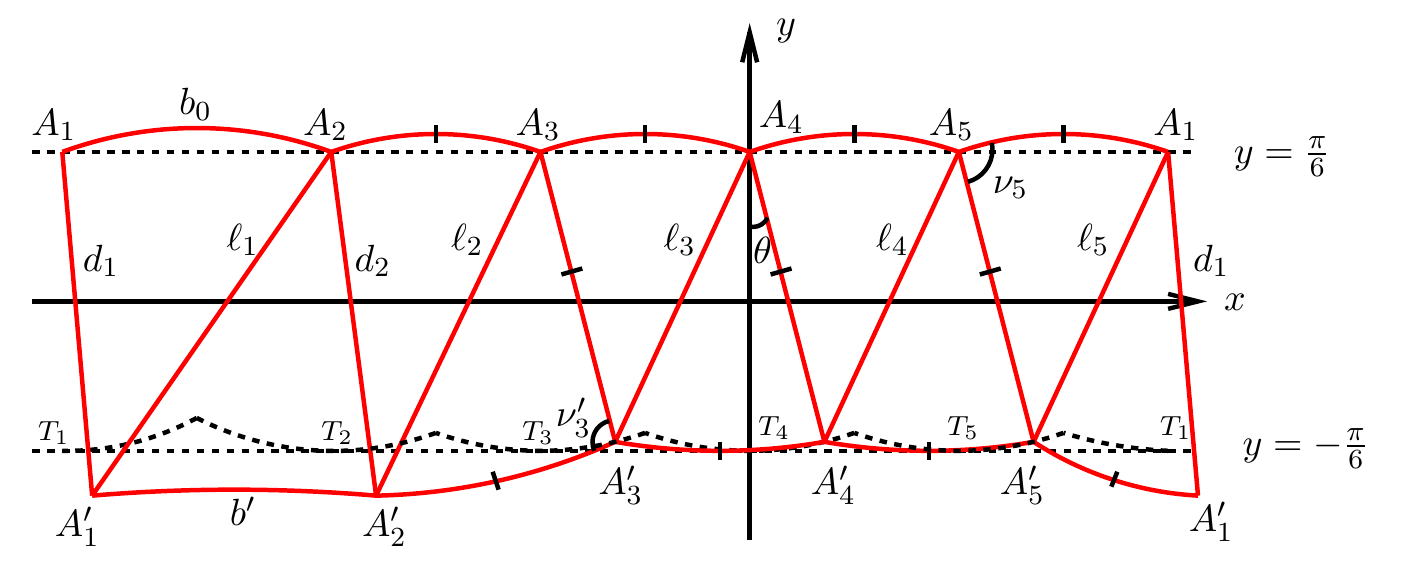}
    \caption{\label{fig:smallstar} ($\partial\Omega$ is represented by those $\frac{\pi}{3}$-circular arcs with $T_i$ as the touching points with $y=-\frac{\pi}{6}$.
$\partial St(N)$ (resp. $\partial St(N')$) are represented by those solid segments above (resp. below) the $x$-axis.)}
  \end{center}
\end{figure}
As indicated in Figure~\ref{fig:smallstar}, all those unmarked segments of the collar region of $\sconf$ are of length $\slfrac{\pi}{3}$, which consists of quintuple of $\{\boxslash_i\}$ with $\{\ell_i,\>1\leq i\leq 5\}$ as their cutting diagonals respectively. In particular $\boxslash_3$ and $\boxslash_4$ are $\slfrac{\pi}{3}$-equilaterals, whose geometry is determined by just one of their angles. Note that the outer angles of the collar region at $\{A_3,A_4,A_5\}$ are all equal to $2\alpha_0$ while that of $A_1$ and $A_2$ are given by
\begin{equation}
\alpha_0+2\arctan(2\cot\frac{\gamma}{2}).
\end{equation}
Set $\angle T_4A_4A_4'=\theta$. Then the angles of $\boxslash_3$ (resp. $\boxslash_4$) at $A_4$ are given by $\beta_0+\theta$ (resp. $\beta_0-\theta$), $\beta_0=\pi-\alpha_0$. Therefore the angles of $\boxslash_3$ (resp. $\boxslash_4$) at $A_4'$ are given by
\begin{equation}
\left\{\begin{array}{l}  
2\arctan(2\cot\frac{\pi+\theta-\alpha_0}{2}),\\ 
2\arctan(2\cot\frac{\pi-\theta-\alpha_0}{2})\end{array}\right.
\end{equation}
Therefore, the outer angle at $A_4'$ is given by
\begin{equation}
f(\theta)=2\left\{\pi-\arctan(2\cot\frac{\pi+\theta-\alpha_0}{2})-\arctan(2\cot\frac{\pi-\theta-\alpha_0}{2})\right\}
\end{equation}
for the range $0\leq \theta\leq 2\alpha_0-\beta_0$. It is easy to check that $f(\theta)$ is a monotonically increasing function with $f(0)=2\alpha_0$, which is just the $(\lambda_1+\lambda_2)$ of the pair of angles of $St(N')$ at $A_4'$.

Note that the geometry of such an $\sconf$ is completely determined by the chosen of $\{\theta,\lambda_1,\lambda_2\}$, while $\lambda_1+\lambda_2=f(\theta)$, and hence, by the chosen parameter of $0\leq\theta\leq 2\alpha_0-\beta_0$ and $\alpha_0\leq \lambda_1\leq f(\theta)-\alpha_0$. Therefore, from now on, we shall specify such an Type-I icosahedron as ${\cal S}(\Sigma(\theta,\lambda_1))$ and proceed to compute those edge lengths of the collar region other than the $\slfrac{\pi}{3}$-ones.
\begin{itemize}
\item[(i)] First of all, one has
\begin{equation}
\left\{\begin{array}{rcl}
b_0&=& 2\arcsin(\frac{\sqrt{3}}{2}\sin\frac{\gamma}{2}),\\
b'&=& 2\arcsin\left\{\frac{\sqrt{3}}{2}\sin\left(\pi-\arctan(2\cot\frac{\lambda_1}{2})-\arctan(2\cot\frac{\lambda_2}{2})\right)\right\},\\
\ell_4&=& 2\arcsin\frac{\sqrt{3}}{2}\sin\frac{1}{2}(\pi-\alpha_0-\theta),\\
\ell_3&=&2 \arctan\sqrt{3}\cos\frac{1}{2}(\pi-\alpha_0+\theta)
\end{array}\right.
\end{equation}
And the outer angles of the collar region at $\{A_1',A_2',A_3',A_5'\}$, say denoted by $\mu_i'$ ($i\neq 4$), are simply given as follows:
\begin{equation}
\begin{array}{rcl}
\mu_1'&=& \lambda_1+\arctan(2\cot\frac{\alpha_5'}{2})\\
\mu_2'&=& \lambda_2+\arctan(2\cot\frac{\alpha_5'}{2})
\end{array}
\end{equation}
where
\begin{equation}
\frac{\alpha_5'}{2}=\pi-\arctan(2\cot\frac{\lambda_1}{2})-\arctan(2\cot\frac{\lambda_2}{2})
\end{equation}
and
\begin{equation}
\mu_3'=2\arctan(2\cot\frac{\lambda_2}{2}),\quad \mu_5'=2\arctan(2\cot\frac{\lambda_1}{2}).
\end{equation}

\item[(ii)] Next let us compute $\ell_2$ (resp. $\ell_5$) which are respectively the base lengths of the $\slfrac{\pi}{3}$-isosceles $\triangle A_3A_2'A_3'$ and $\triangle A_5'A_1A_5$ whose top angles are given by
\begin{equation}
\begin{array}{ll} & \nu_3'=2\pi-\mu_3'-(\pi-\alpha_0+\theta)=\pi+\alpha_0-\mu_3'-\theta \\
\hbox{\big(resp. }& \nu_5=2\pi-2\alpha_0-2\arctan(2\cot\frac{\pi-\alpha_0-\theta}{2}) \big)
\end{array}
\end{equation}
\item[(iii)] Finally, it is straightforward to compute the remaining edge lengths of $\boxslash_1$, namely, $\{d_2,d_1\hbox{ and }\ell_1\}$ by applying cosine law to $\triangle A_2A_3A_2'$, $\triangle A_1A_5'A_1'$ and $\triangle A_1'A_1A_2$.
\end{itemize}
\end{example}

With such explicit formulas of the geometric structure of ${\cal S}(\Sigma(\theta,\lambda_1))$ at hand, it is straightforward to use computer to provide numerical computations of the following two averaged densities, namely
\begin{equation}
\begin{array}{rcl}
\overline{\rho}(\theta,\lambda_1)&=& \overline{\rho}({\cal S}(\Sigma(\theta,\lambda_1))) \\
\tilde{\rho}(\theta,\lambda_1)&=& \tilde{\rho}(St(A_2)\subset {\cal S}(\Sigma(\theta,\lambda_1)))
\end{array}
\end{equation}
as well as $A(\theta,\lambda_1)=|St(A_2)|$, thus obtaining their graphs and that of the weighted ratio between $(\slfrac{\pi}{\sqrt{18}}-\tilde{\rho}(\theta,\lambda_1))$ and $(A(\theta,\lambda_1)-\pi)$ with weights given by $\tilde{w}(\cdot)$, exhibited in Figure~\ref{fig:smallstarplot}.

\begin{figure}[H]
  \begin{center}
    \includegraphics[width=6.5in]{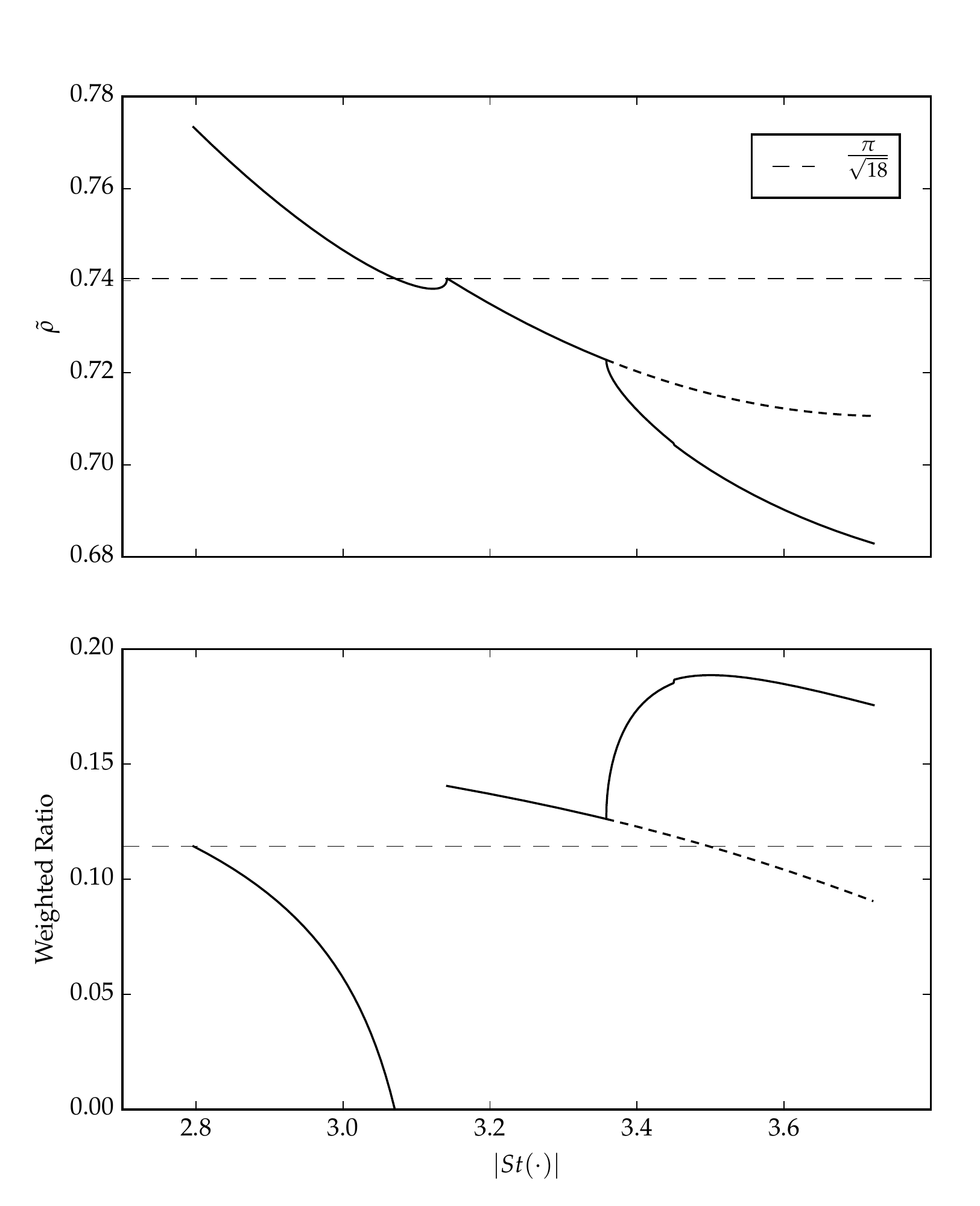}
    \caption{\label{fig:smallstarplot} Graph of $\tilde{\rho}(\cdot)$ and weighted ratio.}
  \end{center}
\end{figure}

\section{Techniques of collective area-wise estimates for clusters of triangles}
\label{sec:techniques}

In this section, we shall develop some pertinent techniques of \textit{collective areawise estimates} for the weighted averages of $\{ \rho (\sigma_j), \> \sigma_j \> \epsilon \> \sconfp\}$ over suitable clusters of triangles, such as $\boxslash$-pairs, lune clusters or star clusters, which will provide the kind of powerful analytical tools to achieve a clean cut proof of Theorem I for the major case of Type-I local packings jointly with the geometric insights of  \S \ref{sec:review}. 

Let $\sconf$ be a Type-I configuration and $\mathcal{L}$ be the tightest Type-I local packing with $\Sigma$ as the touching points of the twelve touching neighbors. Then, the locally averaged density $\bar{\rho}(\mathcal{L})$ is defined to be the specific weighted average of the triangular invariants of $\{\rho(\sigma_j), \> w(\sigma_j) {;} \sigma_j  \in \sconfp \}$, while the proof of Theorem I for the most important case of Type-I local packings amount to proof that $\bar{\rho}(\cdot){:} \> \mathcal{M}_I \rightarrow \mathbb{R}^+$ has the f.c.c. and the h.c.p. as the unique two maximum points with $\bar{\rho}(\cdot)=\slfrac{\pi}{\sqrt{18}}$. 

Geometrically, the f.c.c. and the h.c.p. are the two outstanding \textit{singular points} and those 5$\Box$-Type-I's constitutes an outstanding \textit{singular subvariety} in the moduli space $\mathcal{M}_I$, while the h.c.p. is an isolated point, the f.c.c. is a cusp point with 1-dimensional tangent cone and with a substantial neighborhood consisting of 6$\boxslash$-Type-I's; and moreover the 5$\Box$-Type-I's have a quite extensive neighborhood consisting of 5$\boxslash$-Type-I's. From the point of view of optimal estimation of $\bar{\rho}(\cdot)$, the 6$\boxslash$-Type-I's and the 5$\boxslash$-Type-I's are outstanding because, as it turns out, their $\bar{\rho}(\cdot)$ are in fact higher than that of the others. 

\subsection[Hi4]{Area-wise estimates for the proof of Theorem I in the subsets of 6$\boxslash$-Type-I's and 5$\boxslash$-Type-I's}
\label{subsec:proofthrm1}

(1) {\it The Estimate of $\bar{\rho}$($\cdot$) for 6$\boxslash$-Type-I's} 
\medskip

A 6$\boxslash$-Type-I configuration consists of octuple of small triangles and sextuple of $\boxslash$, it follows readily from the ($k, \delta$)-analysis of \S \ref{subsubsec:areapreserve} that the weighted average of $\rho$($\sigma_j$) for the two halves of a $\boxslash$ is always substantially less than that of $\rho$($\Box$) (by a linear factor of the angular decrement of $C$), while that of the small triangles are, of course, less than $\rho$($\sigma_{\alpha_0}$). Therefore, $\bar{\rho}$($\cdot$) of a 6$\boxslash$-Type-I's is always less than $\slfrac{\pi}{\sqrt{18}}$. in fact, by a decrement of linear proportion to the deformation. Thus, the f.c.c. is a {\it local maximum of cusp-type} of the $\bar{\rho}$($\cdot$)-function.  

\medskip

\noindent(2) {\it The Estimate of $\bar{\rho}$($\cdot$) for 5$\boxslash$-Type-I's}
\medskip

First of all, let us provide an optimal estimate of the restriction of $\bar{\rho}$($\cdot$) to the subvariety of 5$\Box$-Type-I's , which is a function of the angular distributions $\{\theta_i\}$  at the poles. A 5$\Box$-Type-I $\sconf$ is the assemblage of quintuple of lune clusters $\{ L_{\theta_i}\}$, $L_{\theta_i}=\{2\sigma_{\theta_i}, 2\tilde{\sigma}_{\theta_i}\}$. Set

\begin{equation} \label{eq:5.1}
\tilde{\rho}(L_\theta):=\frac{w(\sigma_\theta)\rho(\sigma_\theta) + w(\tilde{\sigma}_\theta)\rho(\tilde{\sigma}_\theta)}{w(\sigma_\theta) + w(\tilde{\sigma}_\theta)}, \quad \alpha_0 \leq \theta \leq \gamma
\end{equation}

Then it is straightforward to check that $\tilde{\rho}(L_\theta), \> \alpha_0 \leq \theta \leq \gamma$, is a convex function of $\theta = \slfrac{1}{2}$ Area($L_\theta$). Therefore $\bar{\rho}\left(\sconf\right)$ is \textit{minimal} in the case of uniform angular distribution (i.e. $\theta_i = \slfrac{2\pi}{5}$) on the one hand and on the other hand it is \textit{maximal} in the case of the most lopsided angular distribution, namely, with quadruple of $\alpha_0$ and $\gamma$, where $\hat{\rho}(\cdot)$ is equal to $\slfrac{\pi}{\sqrt{18}}- 0.000578464$.

Next, let us consider the upper bound estimate of $\rhobarred$ for those small or even just close deformations of 5$\Box$-Type-I's, namely, those 5$\boxslash$-Type-I's, which are, again, assemblages of \textit{lune clusters}, each of them consisting of a $\boxslash$ and a pair of small triangles, say denoted by $\{ \tilde{L}_i \}$. Set 

\begin{equation} \label{eq:5.2}
\tilde{\theta}_i = \slfrac{1}{2} \> \mathrm{area} \> \mathrm{of} \>  \tilde{L}_i, \> \sum \tilde{\theta}_i = 2\pi
\end{equation}

Then, a direct application of the ($k, \delta$)-estimates to the pairs of small triangles and $\boxslash$ will again provide the following upper bound estimate $\tilde{\rho}(\tilde{L}_i)$, namely 

\begin{equation}\label{eq:5.3} 
\left\{
\begin{array}{lr}
\tilde{\rho}(\tilde{L}_i) \leq \tilde{\rho}({L}_{\alpha_0}) & \mathrm{if} \> \tilde{\theta}_i \leq \alpha_0\\
\tilde{\rho}(\tilde{L}_i) \leq\tilde{\rho}({L}_{\tilde{\theta}_i}) & \mathrm{if } \> \tilde{\theta}_i > \alpha_0\
\end{array}
\right.
\end{equation}

\noindent and equality holds only when $\tilde{L}_i = L_{\alpha_0}$ (resp. $L_{\tilde{\theta}_i}$). Therefore, the upper bound estimate of $\rhobarred$ for 5$\Box$-Type-I's will also be the upper bound estimate of $\rhobarred$ for 5$\boxslash$-Type-I's.

\subsection[Hi4]{Area-wise estimates for 5$\triangle$-star clusters of Type-I icosahedra}
\label{subsec:5estimates}

Note that, except some of those small deformations of 5$\Box$-Type-I, all the others are Type-I icosahedra. Therefore, the remaining cases of the proof of Theorem I for Type-I local packings are that of Type-I icosahedra, whose $\rhobarred$ can be expressed as the following weighted average of $\{\tilde{\rho}\left(St(\cdot)\right), \> A_i \in \Sigma\}$. 
\begin{equation}
  \slfrac{\displaystyle\sum_{j=1}^{20}w^j\rho(\tau(\sigma_j))}{\displaystyle\sum_{j=1}^{20}w^j} =
  \slfrac{\displaystyle\sum_{i=1}^{12}\tilde{w}_i\tilde{\rho}(St(A_i))}{\displaystyle\sum_{i=1}^{12}\tilde{w}_i}
\end{equation}
where
\begin{equation}
  \tilde{\rho}(St(A_i)) = \displaystyle\sum_{\sigma_j \in St(A_i)}\slfrac{w^j\rho(\tau(\sigma_j))}{\tilde{w}_i}, \quad
  \tilde{w}_i = \displaystyle\sum_{\sigma_j \in St(A_i)}w^j
\end{equation}
Therefore, one naturally expects that collective \textit{area-wise} estimates of $\rhostar$ for 5$\triangle$-stars of Type-I icosahedra will be a powerful technique for such a proof. Anyhow, this naturally leads to the discovery of Lemma 4 and Lemma 4$'$. This will be the major topic of this section.

\subsubsection{Extremal Triangles and extremal stars}
\label{subsubsec:extremals}

In the study of the problem of collective areawise estimates for stars in a Type-I icosahedra, it is natural to embark our journey by the exploration of possible candidates of ``\textit{extremal stars}", achieving the optimal $\tilde{\rho}(\cdot)$ for such stars with a given total area $|St(\cdot)|$. Recall that the clean-cut organization of area preserving deformations of spherical triangles and the ($k, \> \delta$)-analysis of \S \ref{subsubsec:areapreserve} not only show that the \textit{area-wise maximalities} of $\rho(\sigma)$ (resp. $w(\sigma)$) for those extremal shapes (i.e. $\sigma_\theta$ for areas $\leq 2 \sin^{-1}\slfrac{1}{3}$ and $\tilde{\sigma}_\theta$ for areas $\geq 2\sin^{-1}\slfrac{1}{3}$ up to 0.92), but also provide effective estimates for others in terms of ($k, \> \delta$)-shape invariants. Anyhow, this individual estimates naturally leads to the construction (or rather, discovery) of the following examples, namely 

\begin{example}
Both $|St_5^o (\theta)|$ and $\tilde{\rho}\left(St_5^o (\theta)\right)$, as indicated in Figure~\ref{fig:5star2}, are given by the following explicit formulas as functions of $\alpha_0 \leq \theta \leq \pi - \alpha_0$, namely

\begin{equation} \label{eq:5.4}
\begin{array}{lcl}
|St_5^o (\theta)| & = & 2|\sigma_{\alpha_0}| + 2|\sigma_\theta| + |\sigma_{\bar{\theta}}|, \> \bar{\theta} = 2\left(\pi - \alpha_0 - \tan^{-1}(2\cot \frac{\theta}{2})\right)\\
& = & 6\alpha_0 - 2\pi +4\tan^{-1}\frac{\sin\theta}{3 + \cos\theta}+2\tan^{-1}\frac{\sin(\hat{\theta})}{3+\cos(\bar{\theta})}\\
\tilde{\rho}\left(St_5^o (\theta)\right) & = & \frac{2\alpha_0 - \frac{2}{3}\pi + 2w(\sigma_\theta)\rho\left(\tau(\sigma_\theta,2)\right)+w(\sigma_{\bar{\theta}})\rho\left(\tau(\sigma_{\bar{\theta}},2)\right)}{\frac{\sqrt{2}}{3}+2w(\sigma_\theta) + w(\sigma_{\bar{\theta}})}
\end{array}
\end{equation}

\noindent where $w(\sigma_\theta), \>\rho\left(\tau(\sigma_\theta,2)\right)$ (resp.  $w(\sigma_{\bar{\theta}}), \> \rho\left(\tau(\sigma_{\bar{\theta}},2)\right)$) are given by (\ref{eq:sigisos}) with explicit functions of $\theta$ (resp. $\bar{\theta}$). 
\end{example}

\begin{example}
Both $|St_5(\alpha_0,\theta)|$ and $\tilde{\rho}\left(St_5(\alpha_0, \theta)\right)$ are given by the following explicit formulas as functions of $\alpha_0 \leq \theta \leq \slfrac{\pi}{2}$, namely

\begin{equation} \label{eq:5.5}
\begin{array}{lcl}
|St_5(\alpha_0,\theta)| & = & |\sigma_{\alpha_0}| + |\tilde{\sigma}_{\alpha_0}| + |\sigma_{\theta}| + 2|\tilde{\sigma}_\theta|\\
& = & \alpha + 2\theta - |\sigma_\theta| = \alpha_0 + 2\theta - 2\tan^{-1}\frac{\sin \theta}{3+\cos\theta}\\
\tilde{\rho}\left(St_5(\alpha_0, \theta)\right) & = & \frac{\frac{1}{3}\alpha_0 + w(\sigma_\theta)\rho\left(\tau(\sigma_\theta,2)\right) + 2w(\tilde{\sigma}_\theta)\rho\left(\tau(\tilde{\sigma}_\theta,2)\right)}{\frac{\sqrt{2}}{6}+\frac{\sqrt{8}}{9}+w(\sigma_\theta) + 2w(\tilde{\sigma}_\theta)}
\end{array}
\end{equation}

\noindent where $w(\sigma_\theta), \> \rho\left(\tau(\sigma_\theta,2)\right)$ (resp. $w(\tilde{\sigma}_\theta), \> \rho\left(\tau (\tilde{\sigma}_\theta,2)\right)$) are given by (\ref{eq:sigisos}) (resp. (\ref{eq:sigtisos})).

\end{example}

\begin{lemma}
  Let $St(\cdot)$ be a 5$\triangle$-star with edge lengths of at least $\slfrac{\pi}{3}$, containing their circumcenters and $|St(\cdot)| \le \pi$ (resp. $> \pi$ and at most equal to $\pi + 0.21672$).
  Set $St_5^{\circ}(\theta)$ (resp. $St_5(\alpha_0,\theta)$) to be the one as indicated in Figure~\ref{fig:5star2}-(ii) (resp. (i)) with the same area of $St(\cdot)$.
  Then
  
  \begin{equation} \label{eq:rtless}
    \rhostar \le \tilde{\rho}(St_5^{\circ}(\theta)) \quad \textrm{(resp. } \tilde{\rho}(St_5(\alpha_0,\theta)) \textrm{)}
  \end{equation}
  and equality holds when and only when $St(\cdot)$ consists of the same collection of quintuple triangles as that of $St_5^{\circ}(\theta)$ (resp. $St_5(\alpha_0,\theta)$).
  \label{lem:fivestar}
\end{lemma}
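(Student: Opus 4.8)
The plan is to reduce the collective estimate for an arbitrary $5\triangle$-star to a sequence of \emph{single-triangle} area-preserving adjustments, using the $(k,\delta)$-analysis of \S\ref{subsubsec:areapreserve} and the monotonicity properties of $\rho(\tau(\sigma,2))$ and $w(\sigma)$ recorded in \S\ref{sss:deltadeform}. Write the five triangles of $St(\cdot)$ as $\{\sigma_1,\dots,\sigma_5\}$ with a common vertex $A$ and radial edges $r_1,\dots,r_5 \ge \slfrac{\pi}{3}$, and recall $\tilde\rho(St(\cdot)) = \sum_j w(\sigma_j)\rho(\tau(\sigma_j,2)) / \sum_j w(\sigma_j)$. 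The target configurations $St_5^\circ(\theta)$ and $St_5(\alpha_0,\theta)$ are, by Examples above, the ones whose radial edge-excess and boundary edge-excess are each \emph{concentrated} in a single edge (respectively in a single base); by the discussion of extremal stars in \S\ref{subsubsec:extremals}, these are exactly the candidates that maximize $\tilde\rho(\cdot)$ for a fixed area $|St(\cdot)| = \sum_j|\sigma_j|$. So the lemma asserts that any spreading-out of excess across several edges strictly decreases $\tilde\rho$.

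The key steps, in order, are as follows. First I would establish the \textbf{single-triangle area-wise maximum}: for a spherical triangle of fixed area $|\sigma|$ with edges $\ge\slfrac{\pi}{3}$ and containing its circumcenter, both $\rho(\tau(\sigma,2))$ and the auxiliary weight behave monotonically along the $\delta$-deformation, so on each $|\sigma|$-level surface the maximum of $\rho(\tau(\sigma,2))$ is attained at a boundary point of the $(k,\delta)$-domain of Figure~\ref{fig:bounds} — for $|\sigma|\le \frac12\square_{\slfrac{\pi}{3}}$ at $\sigma_\theta$ (the $\slfrac{\pi}{3}$-isosceles), and for larger areas at $\tilde\sigma_\theta$; this is precisely the content flagged in the Remark after Figure~\ref{fig:fourcurves}, combining Case~1 and Case~2 of \S\ref{subsubsec:areapreserve}. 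Second, I would set up a \textbf{two-triangle exchange}: given two triangles of the star sharing the vertex $A$, transfer a small amount of area from the larger-area one to the smaller-area one while holding $\sum_j |\sigma_j|$ fixed; using the derivative formulas of Corollary~\ref{cor2} for $\partial|\sigma|/\partial C$ together with the first-order expansions of $\rho$ in $\eta$ from \S\ref{sss:deltadeform}, show that the weighted average $\tilde\rho$ increases under the move that makes the area distribution \emph{more lopsided} (one triangle pushed to $\sigma_{\alpha_0}$, the other absorbing the area). Third, iterating this exchange drives all but at most one of the radial-edge excesses to zero and all but at most one of the boundary excesses to zero, landing in exactly the two-parameter families $St_5^\circ(\theta)$ (area $\le \pi$ regime) and $St_5(\alpha_0,\theta)$ (area between $\pi$ and $\pi+0.21672$ regime); the threshold $\pi + 0.21672$ is where the extremal family switches type, dictated by which of $\sigma_\theta$, $\tilde\sigma_\theta$ is area-wise extremal and by the constraint that the opposite star of \S\ref{subsubsec:somegeneralities}(iii) remain realizable. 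Finally, I would verify the equality clause: the exchange move is \emph{strictly} density-increasing unless the two triangles are already at the extremal shapes and one of them is already $\sigma_{\alpha_0}$, so equality in~\eqref{eq:rtless} forces $St(\cdot)$ to be the named quintuple up to congruence.

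The main obstacle I anticipate is \textbf{Step~2}: showing the two-triangle exchange is monotone in the correct direction is not a single-variable convexity statement but requires controlling the sign of a combination of $\partial\rho/\partial(\text{area})$ and $\partial w/\partial(\text{area})$ evaluated along the exchange path, and these are the messy rational-trigonometric derivatives appearing at the end of \S\ref{sss:deltadeform} (the $O(\eta^2)$ expansion of $\tan(d_1+d_2)$, the formulas for $w(\sigma_\circ)$ and $w(\widetilde{\sigma_\theta})$). In particular one must rule out the possibility that near the boundary of the admissible $(k,\delta)$-domain — where a radial edge hits $\slfrac{\pi}{3}$ exactly — the exchange could reverse sign; this is where the hypothesis ``edge lengths at least $\slfrac{\pi}{3}$'' and ``containing their circumcenters'' is used crucially, since it confines the deformation to the region where $f_1,\dots,f_4$ of Figure~\ref{fig:fourcurves} stay ordered. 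I expect the convexity of $\tilde\rho(L_\theta)$ in $\theta$ already proved in~\eqref{eq:5.1}--\eqref{eq:5.3} for lune clusters to serve as the template, and the five-triangle case to follow by applying that $\boxslash$-pair / isosceles-pair convexity repeatedly to adjacent pairs within the star, with the area accounting done via $\sum_j|\sigma_j| = |St(\cdot)|$ held fixed throughout.
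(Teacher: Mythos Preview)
Your outline differs from the paper's in a way that matters. The paper's engine is not a repeated two-triangle exchange; it is the auxiliary functional
\[
\mu\{s_i\}=\sum_i \hat w(s_i)f_1(s_i)\Big/\sum_i \hat w(s_i),
\]
together with the algebraic identity
\[
\bigl(\mu(St(\cdot))-\tilde\rho(St(\cdot))\bigr)\sum_i w(\sigma_i)=\sum_i\Bigl\{w(\sigma_i)\bigl(f_1(s_i)-\rho(\tau(\sigma_i,2))\bigr)+(f_1(s_i)-\mu)\bigl(\hat w(s_i)-w(\sigma_i)\bigr)\Bigr\},
\]
whose summands are shown (Sublemma) to be individually non-negative. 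This disposes of every star with $\mu\le \hat\rho(|St(\cdot)|)$ in one stroke, with no deformation at all. Only the residual Case~B ($\mu$ larger than the extremal value) is handled by deformations, and there the paper argues by the \emph{number of nonzero radial edge-excesses}, using center-shifting to cut that number down and a single Lexell move in the one-excess subcase.

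Your Step~2 has a real gap. First, the description is internally inconsistent: ``transfer area from the larger to the smaller'' makes the distribution \emph{less} lopsided, the opposite of what you then claim to want. Second, even with the direction fixed, the monotonicity you need is not a convexity statement: $f_1(|\sigma|)$ has a peak at $\tfrac12\square_{\pi/3}$ (Figure~\ref{fig:rhosigma}), so pushing one area down and another up can move $\tilde\rho$ either way depending on which side of the peak the two triangles sit; the lune-cluster convexity of \eqref{eq:5.1} works only because $L_\theta$ pairs a $\sigma_\theta$ with a $\tilde\sigma_\theta$ in a fixed $1{:}1$ ratio, and does not transfer to an arbitrary adjacent pair in a star. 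Third, your Step~3 is a non-sequitur: a Lexell-type area exchange moves a \emph{boundary} vertex and leaves the radial edges $r_i$ untouched, so iterating it cannot ``drive all but one radial edge-excess to zero''; the paper achieves that concentration by \emph{center-shifting}, an entirely different area-preserving move that you do not invoke. Without the $\mu$-identity to clear Case~A, and without separating center-shifting from Lexell moves in Case~B, your scheme as written does not close.
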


\begin{remarks}
\begin{enumerate}[(i)]
\item It is not difficult to check that $\{St_5(\alpha_0, \theta)\}$ are realizable as stars of Type-I icosahedra only for $\alpha_0 \leq \theta \leq \gamma$.
\item Lemma~\ref{lem:fivestar} proves that those realizable ones of Examples 5.1 and 5.2 are indeed extremal 5$\triangle$-stars.
\item Thus, straightforward parametric graphing based upon the above set of formulas with $\theta$ as the parameter produces the graphs of $\tilde{\rho}\left(St_5^o (\theta)\right)$ (resp. $\tilde{\rho}\left(St_5(\alpha_0,\theta)\right)$ as functions of $|St_5^o(\theta)|$ (resp. $|St_5(\alpha_0,\theta)|$), as indicated by the two branches of graphs in Figure~\ref{fig:rhostar}, matching at the distinguish peak point of ($\pi, \> \slfrac{\pi}{\sqrt{18}}$). One of the main topic of this section will be the proof of this important lemma.
\end{enumerate}
\end{remarks}

\begin{figure}
  \begin{center}
    \includegraphics[width=3.5in]{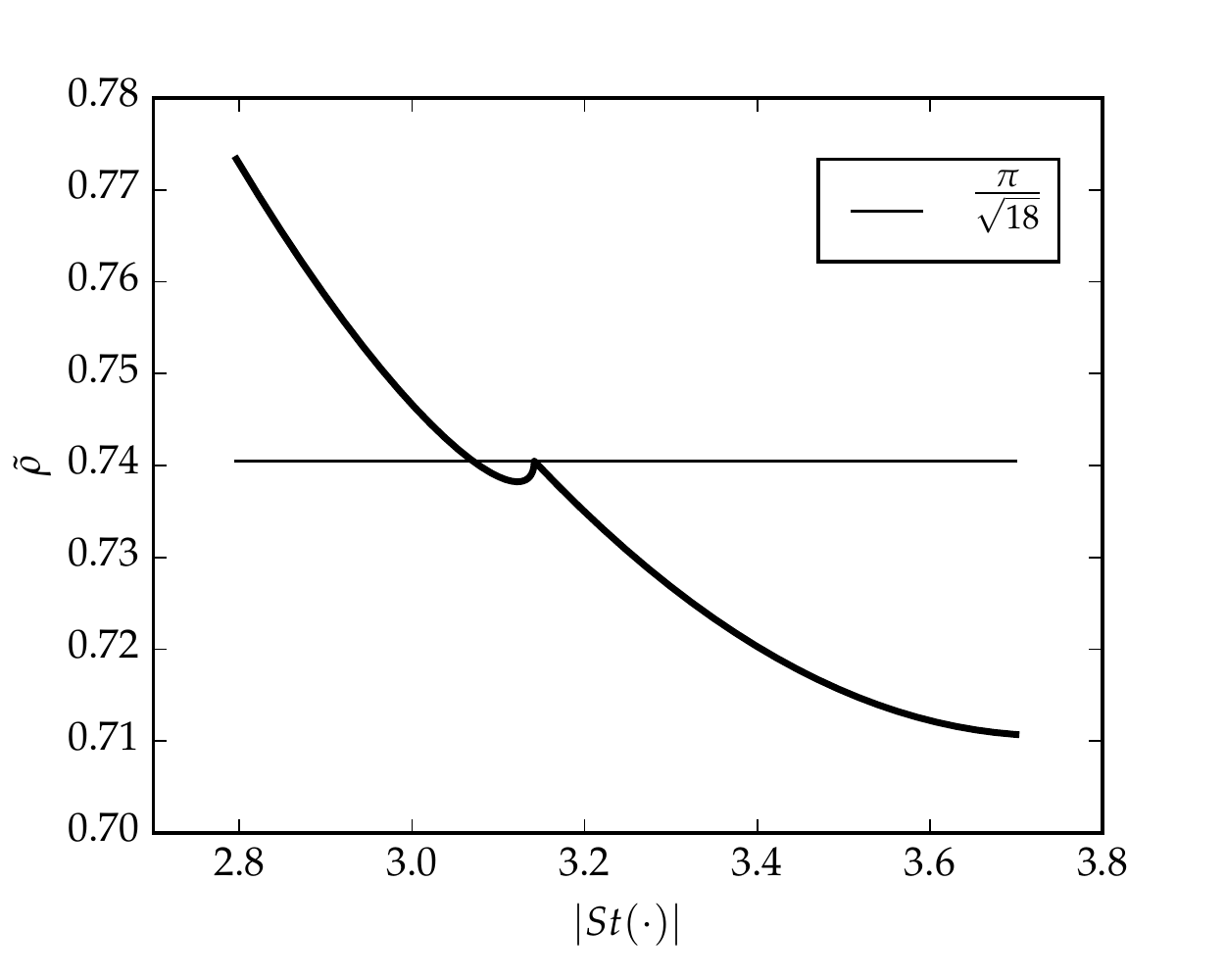}
    \caption{The graph of the area-wise collective estimate of $\tilde{\rho}(St(\cdot))$ as a function of $|St(\cdot)|$.}
    \label{fig:rhostar}
  \end{center}
\end{figure}

\subsubsection[Hi5]{Generalities on the geometry of such 5$\triangle$-stars and some basic geometric ideas}
\label{subsubsec:generalities}

Let $St(\cdot)$ be such a 5$\triangle$-star, $\{r_i\}$ (resp. $\{\theta_i\}, \{b_i\}$), $1\leq i \leq5$, be its quintuples of radial edges (resp. central angles, boundary edges) in cyclic orders. Thus $\{r_i, r_{i+1}, \theta_i\}$ already constitutes the S.A.S. congruence invariants of $\sigma_i$, and hence $\{r_i\}$ together with $\{\theta_i\}$ already constitute a complete set of congruence invariants for such a 5$\triangle$-star, while others such as $\{b_i, |\sigma_i|, u_i, D_i$ and $R_i$ etc. $\}$ can all be computed in terms of $\{r_i\}$ (resp. $\{b_i\}$ and $\{|\sigma_i|\}$) in excess of $\slfrac{\pi}{3}$ (resp. $\slfrac{\pi}{3}$ and $\triangle_{\slfrac{\pi}{3}}$), the set of \textit{radial edge-excesses} (resp. \textit{boundary edge-excesses and area excesses}) of such a $St(\cdot)$, while the correlations between the above triple sets of excesses as well as their manifestations both on densities, weights and their weighted average $\rhostar$ will be the crucial geometric understandings for the proof of Lemma~\ref{lem:fivestar}.

Roughly and intuitively speaking, one naturally expects that the \textit{collective area-wise optimality} of $\rhostar$ should be the result of suitable combination of \textit{individual area-wise optima} (i.e. $\sigma_\theta$ and $\tilde{\sigma}_\theta$). Set $f_1(|\sigma|)$ to be the function of area which records the maximal density for each given $|\sigma|$. As indicated in Figure~\ref{fig:rhosigma}, its graph has a prominent peak at $\frac{1}{2}\Box_{\slfrac{\pi}{3}}$ with the value of $\frac{1}{\sqrt{6}}(3\pi -6\alpha_0)$, which subdivides the function into two branches of convex functions, namely, that of $\sigma_\theta$ and $\tilde{\sigma}_\theta$.

Next let us analyze some of the outstanding geometric features of $St_5^o(\theta)$ (resp. $St_5(\alpha_0,\theta)$), henceforth referred as \textit{extremal stars}. First of all, each of them is an assemblage of those \textit{extremal triangles} which makes the specific weighted average of $f_1(\cdot)$ with respect to its area distribution as high as possible. For example, in the special but also the most critical case of $|St(\cdot)| = \pi$ (i.e. equal to the average value of twelve stars), the area distribution of $St_5^o(\pi - \alpha_0) \simeq St_5(\alpha_0, \alpha_0)$ consists of a pair of $\triangle_{\slfrac{\pi}{3}}$ and a triple of $\frac{1}{2}\Box_{\slfrac{\pi}{3}}$ with $\slfrac{\pi}{\sqrt{18}}$ as their $\mu\{s_i\}$ (cf. \S \ref{subsec:5tstar}).  Anyhow, this makes the proof of Lemma~\ref{lem:fivestar} for the special case of $|St(\cdot)|=\pi$ rather simple. 

Moreover, it is not difficult to see that $St_5^o(\theta)$ (resp. $St_5(\alpha_0, \theta)$) have the most lopsided distributions both in their radial edge-excesses and their boundary edge-excesses. For example, both of them are concentrate in one for $St_5^o(\theta)$.

\medskip

\noindent {\bf Some simple area preserving deformations of 5$\triangle$-stars}

We mention here two kinds of simple area preserving deformations of star configurations, which will be helpful for area-wise estimation of geometric invariants of stars in a way quite similar to that of single triangle in area-wise estimates of individual spherical triangle. 

\begin{enumerate}[(i)]
\item For certain star configurations, the center-vertex still has some kind of degree of freedom to shift. Then, the center shifting will, of course, leave the total area of such a star unchanged, while it is quite straightforward to compute the gradient vector of given geometric invariants such as $\tilde{\rho}(\cdot)$ and $\tilde{w}(\cdot)$ of this kind of area preserving deformations.

\item \textit{Lexell's deformation and a simple kind of area preserving deformation of star configurations}

As indicated in Figure~\ref{fig:lexelldeform}, let $St(\cdot)$ be such a 5$\triangle$-star with $b_1, \> b_2$ and $r_2$ longer than $\slfrac{\pi}{3}$. Then one may use the Lexell's deformation of $\sigma(A_1 A_2 A_3)$ fixing $\overline{A_1A_3}$ to deform it to another 5$\triangle$-star preserving the total area, up until one of $\{b_1^{\prime}, b_2^\prime \}$ becomes $\slfrac{\pi}{3}$. 
\end{enumerate}

Using the parameter representation of \S \ref{lexell} for Lexell deformations, it is quite straightforward to apply Taylor's approximation up to second order for $\tilde{\rho}\left(St(\cdot)\right)$ or other kind geometric invariants of star configurations to determine their minima (resp. maxima) for the intended range. Anyhow, such simple area preserving deformations often provide useful reduction for area-wise estimates of geometric invariants of stars such as the proof of Lemma~\ref{lem:fivestar}. 

\begin{figure}[H]
\begin{center}
    \includegraphics[width=2in]{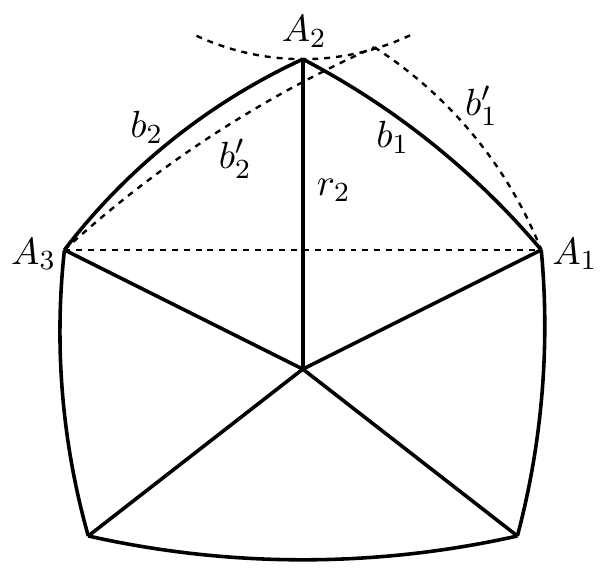}
\caption{}
\label{fig:lexelldeform}
\end{center}
\end{figure}

\subsubsection{A kind of measurement on the lopsidedness of area-distributions of a 5$\triangle$-star} \label{subsec:5tstar}

Just for the purpose of facilitating the proof of Lemma~\ref{lem:fivestar}, we shall introduce the following \textit{measurement} and \textit{terminology}:

Set $\hat{w}(\cdot)$ to be the function of area which records the weight of those extremal triangles as a function of their areas, namely

\begin{equation} \label{eq:omegahead}
\hat{w} = \left\{
\begin{array}{l}
w(\sigma_\theta), \> \> |\sigma_\theta| = s \leq \frac{1}{2} \Box_{\slfrac{\pi}{3}}\\
w(\tilde{\sigma}_\theta), \> \> |\tilde{\sigma}_\theta| = s > \frac{1}{2} \Box_{\slfrac{\pi}{3}}
\end{array}
\right.
\end{equation}

Let $s_i = |\sigma_i|, \> 1 \leq i \leq 5$, be the area distribution of a given $St(\cdot)$. Set

\begin{equation} \label{eq:5mu}
\mu\{s_i\} := \sum_{i=1}^{5} \hat{w}(s_i)f_1(s_i)\bigg/ \sum_{i=1}^{5}\hat{w}(s_i)
\end{equation}

\noindent which will be regarded as the \textit{quantitative measurement} of the lopsidedness of $\{s_i\}$.

\begin{remarks}
\begin{enumerate}[(i)]
\item in case that the quintuple of $\{\sigma_i\}$ of $St(\cdot)$ are all of $\sigma_\theta$ or $\tilde{\sigma}_\theta$ type, $\mu\{s_i\} = \tilde{\rho}\left(St(\cdot)\right)$.

\item Note that $\sigma_\theta$ (resp. $\tilde{\sigma}_\theta$) not only have the maximal density (i.e. $\rho\left(\tau(\sigma,2)\right)$) among such triangles with the same area, but they also have the maximal $w(\sigma)$.

\item The proof of Lemma~\ref{lem:fivestar} will be divided in two cases according to $\mu\left(St(\cdot)\right)$ is at most equal to that of $St_5^o(\theta)$ (resp. $St_5(\alpha_0, \theta)$) or otherwise, namely

\hspace{1cm} \textbf{Case A:} $\mu\left( \stard \right) \leq \tilde{\rho}\left( St_5^o (\theta)\right)$ (resp. $\tilde{\rho}\left(St_5(\alpha_0, \theta)\right)$)

\hspace{1cm} \textbf{Case B:} $\mu\left( \stard \right) > \tilde{\rho}\left( St_5^o (\theta)\right)$ (resp. $\tilde{\rho}\left(St_5(\alpha_0, \theta)\right)$)

\end{enumerate}
\end{remarks}

\noindent
\textbf{On the Correlation between} $\mu\left( \stard \right)$ \textbf{and} $\tilde{\rho}(\stard)$

\noindent Let $\stard$ be such a given 5$\triangle$ -star with $\{\sigma_i, 1 \leq i \leq 5\}$ as its quintuple of triangles, $\{s_i = |\sigma_i|\}$ be its area distribution and $St_5^o(\theta)$ (resp. $St_5(\alpha_0, \theta)$) be the extremal 5$\triangle$-star with the same area of $\stard$, namely $|St_5^o(\theta)|$ (resp. $|St_5(\alpha_0, \theta)|$) equal to $|\stard| \leq \pi$ (resp. $> \pi$). Set $\mu\left(\stard\right) = \mu\{s_i\}$ and regard it as a kind of measurement of the lopsidedness of the area distribution $\{|\sigma_i|\}$ of $\stard$. Note that $\mu\left(St_5^o(\theta)\right)$ (resp. $\mu\left(St_5(\alpha_0, \theta)\right)$) is exactly equal to $\tilde{\rho}\left(St_5^o(\theta)\right)$ (resp. $\tilde{\rho}\left(St_5(\alpha_0, \theta)\right)$), and 

\begin{equation}\label{eq:leqf1}
\rho\left(\tau(\sigma_i, 2)\right) \leq f_1(s_i), \quad w(\sigma_i) \leq \hat{w}(s_i)
\end{equation} 

\noindent while equalities hold when and only when $\sigma_i$ is extremal, i.e. either $\sigma_\theta$ or $\tilde{\sigma}_\theta$  for a certain $\theta$ depending on whether $s_i$ is at most equal to $\frac{1}{2}\Box_{\slfrac{\pi}{3}}$ or exceeding $\frac{1}{2}\Box_{\slfrac{\pi}{3}}$.

One has the following remarkable correlation between $\mu\left( \stard \right)$ and $\tilde{\rho}\left( \stard \right)$, namely

\begin{equation}\label{eq:remarkcorr}
\left(\mu\left(\stard\right) - \tilde{\rho}\left(\stard\right)\right) \sum_{i=1}^{5}w(\sigma_i) = \sum_{i=1}^{5}\{w(\sigma_i)\left(f_1(s_i)-\rho\left(\tau(\sigma_i,2)\right)\right) + \left(f_1(s_i)-\mu\right) \left(\hat{w}(s_i)-w(\sigma_i)\right)\}
\end{equation}

\begin{proof}
By (\ref{eq:omegahead}) and (\ref{eq:5mu}), one has 

\begin{flalign*}
\mu \sumf w(\sigma_i) & = \mu \sumf \hat{w}(s_i) - \mu \sumf (\hat{w}(s_i)-w(\sigma_i))&&
\end{flalign*}

\vspace{-1cm}

\begin{flalign}\label{eq:proofp2}
&= \sumf \hat{w}(s_i)f_1(s_i) - \mu \sumf (\hat{w}(s_i)-w(\sigma_i))&&
\end{flalign}

\vspace{-1cm}

\begin{flalign*}
&= \sumf w(\sigma_i)\rho(\tau(\sigma_i,2))+\sumf\{w(\sigma_i)(f_1(s_i)-\rho(\tau(\sigma_i,2)))+(f_1(s_i)-\mu)(\hat{w}(s_i)-w(\sigma_i))\}&&
\end{flalign*}

\vspace{-1cm}

\begin{flalign}\label{eq:proofp4}
&=\tilde{\rho}(\stard)\sumf w(\sigma_i)+ \hbox{RHS  of} \> (\ref{eq:remarkcorr}) && 
\end{flalign}

\noindent and hence, (\ref{eq:remarkcorr}) follows from (\ref{eq:proofp4}). 
\end{proof}

\noindent \textbf{Sublemma:} \textit{Each summand of the RHS of (\ref{eq:remarkcorr}) is always non-negative and it is equal to zero when and only when $\sigma_i$ is extremal.} 

\begin{proof}
By (\ref{eq:omegahead}), if $\sigma_i$ is extremal, then the $i$-th summand of the RHS of (\ref{eq:remarkcorr}) is equal to zero because both of its two terms are equal to zero. On the other hand, if $\sigma_i$ is non-extremal, then both ($f_1(s_i)-\rho(\tau(\sigma_i,2))$) and ($\hat{w}(s_i)-w(\sigma_i)$) are positive. Thus, the $i$-th summand will be obviously positive in the case that ($f_1(s_i)-\mu$) is non-negative. On the other hand, in the case that ($f_1(s_i)-\mu$) is negative, $w(\sigma_i)$ is always many times larger than $|f_1(s_i)-\mu|$. Hence, it is not difficult to show that the $i$-th summand is, again, positive. 
\end{proof}

\begin{corollary}
\textit{If $\mu(\stard)$ is at most equal to that of the extremal star with the same area, then $\tilde{\rho}(\stard)$ is at most equal to that of the extremal star, and it is equal to when and only when $\stard$ consists of the same collection of triangles as that the extremal star (cf. Examples \ref{eq:5.1} and \ref{eq:5.2}).}
\end{corollary}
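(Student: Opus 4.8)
The plan is to read the corollary off the correlation identity~(\ref{eq:remarkcorr}) together with the Sublemma, and then to pin down the equality case by a short convexity argument for $\mu$. Throughout, ``the extremal star'' means $St_5^\circ(\theta)$ when $|\stard|\le\pi$ and $St_5(\alpha_0,\theta)$ when $|\stard|>\pi$, with $\theta$ determined by $|\stard|$.

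First I would apply~(\ref{eq:remarkcorr}), which expresses $\left(\mu(\stard)-\tilde\rho(\stard)\right)\sumf w(\sigma_i)$ as the sum over $i$ of the terms $w(\sigma_i)\left(f_1(s_i)-\rho(\tau(\sigma_i,2))\right)+\left(f_1(s_i)-\mu\right)\left(\hat w(s_i)-w(\sigma_i)\right)$. By the Sublemma each such term is non-negative, so $\tilde\rho(\stard)\le\mu(\stard)$, with equality exactly when every $\sigma_i$ is extremal. Since for an extremal star all five triangles are of $\sigma_\theta$ or $\tilde\sigma_\theta$ type, one has $\mu=\tilde\rho$ there; hence the hypothesis $\mu(\stard)\le\tilde\rho(St_5^\circ(\theta))$ (resp. $\tilde\rho(St_5(\alpha_0,\theta))$) yields at once $\tilde\rho(\stard)\le\tilde\rho$ of the extremal star of the same area, which is the asserted inequality.

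For the equality statement, assume $\tilde\rho(\stard)$ equals $\tilde\rho$ of the extremal star. Then both inequalities in the chain $\tilde\rho(\stard)\le\mu(\stard)\le\tilde\rho(\text{extremal})$ must be equalities. The first equality, via the Sublemma, forces each $\sigma_i$ to be extremal, i.e. a $\pots$-isosceles $\sigma_{\theta_i}$ if $s_i\le\tfrac12\square_{\pots}$ and a half-rectangle $\tilde\sigma_{\theta_i}$ if $s_i>\tfrac12\square_{\pots}$. The second equality says $\mu\{s_i\}$ attains the value realized by the most lopsided admissible area distribution of the same total area $|\stard|$, where admissibility means $s_i\ge\triangle_{\pots}$, equivalently $\theta_i\ge\alpha_0$. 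Because $f_1$ is built from two convex branches joined at the peak $\bigl(\tfrac12\square_{\pots},\tfrac{1}{\sqrt6}(3\pi-6\alpha_0)\bigr)$ (cf. Figure~\ref{fig:rhosigma}) and the weights $\hat w$ are monotone along each branch, transporting area toward the extremes strictly increases $\mu\{s_i\}$ unless the distribution is already the lopsided one; hence $\{s_i\}$ must coincide with the area distribution of $St_5^\circ(\theta)$ (resp. $St_5(\alpha_0,\theta)$). Combining this with the previous step, $\stard$ is an assemblage of exactly the same quintuple of triangles as the extremal star, which is the claimed characterization.

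The step I expect to be the main obstacle is the uniqueness claim just invoked: that among all area distributions $\{s_i\}$ with $\sumf s_i=|\stard|$ and $s_i\ge\triangle_{\pots}$ for each $i$, the functional $\mu\{s_i\}$ has a \emph{strict} maximum precisely at the lopsided distribution underlying $St_5^\circ(\theta)$ (resp. $St_5(\alpha_0,\theta)$). This requires careful transport-of-area bookkeeping that exploits the two-branch convex shape of $f_1$ together with the magnitudes of the weights $\hat w$, with particular attention to the junction $\tfrac12\square_{\pots}$ of the two branches and to the boundary constraints $\theta_i=\alpha_0$; one must check that any non-extremal distribution admits an area transfer, respecting admissibility, that raises $\mu$. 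Everything else reduces cleanly to the already-established identity~(\ref{eq:remarkcorr}) and Sublemma.
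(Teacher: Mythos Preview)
Your proposal is correct and follows essentially the same route as the paper. In fact the paper gives no separate proof of this Corollary: it is stated as an immediate consequence of the correlation identity~(\ref{eq:remarkcorr}) and the Sublemma, and the equality characterization is relegated to Remark~(i) immediately following, which simply asserts that ``it is quite simple to check that the assemblages of extremal triangles of those extremal stars are the \emph{only} such assemblages of extremal triangles with the highest $\tilde\rho(\stard)$ for the same area.'' Your write-up unpacks this implicit argument explicitly and, if anything, is more scrupulous than the paper in flagging the uniqueness step for $\mu$ as the part requiring a real check.
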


\begin{remarks}
\begin{enumerate}[(i)]
\item Suppose that $\mu(\stard)$ is larger than that of the extremal star with the same area. Then $\stard$ must contain at least one non-extremal triangle, because it is quite simple to check that the assemblages of extremal triangles of those extremal stars are the \textit{only} such assemblages of extremal triangles with the \textit{highest} $\tilde{\rho}(\stard)$ for the same area.

\item The above corollary already reduces the proof of Lemma~\ref{lem:fivestar} to the case that $\mu(\stard)$ exceeds that of the extremal star with the same area, which amounts to provide a lower bound estimate of the RHS of (\ref{eq:remarkcorr}) exceeding
\begin{equation*}
(\mu(\stard)-\tilde{\rho}(\hbox{extremal  star with equal area}))\sumf w(\sigma_i))
\end{equation*}
\end{enumerate}
\end{remarks}

\subsubsection{The proof of Lemma \ref{lem:fivestar}}

\begin{proof}
In this subsection, we shall proceed to provide a proof of Lemma~\ref{lem:fivestar}, based upon the $(k,\delta)$-analysis and the optimal estimates of $\rho(\sigma)$(resp. $w(\sigma)$) of \S \ref{subsec:areadeform}, together with the geometric insights of \S \ref{subsubsec:generalities} and \S \ref{subsec:5tstar}. For example, the sublemma of \S \ref{subsec:5tstar} shows that $\mu\{s_i\}$ provides a simple and advantageous upper bound of $\tilde{\rho}(\stard)$, which will be optimal when and only when the quintuple $\{\sigma_i\}$ of $\stard$ are all extremal ones. In particular, such a simple upper bound solely in terms of area distribution $\{s_i\}$ already proves Lemma~\ref{lem:fivestar} for the case $\mu\{s_i\}$ are at most equal to that of the \textit{extremal star} with the same total area $A=|\stard|$. Just for this subsection, we shall denote an extremal star by $St^*(A)$ and set 
\begin{equation}
\hat{\rho}(A)=\mu(St^*(A))=\tilde{\rho}(St^*(A)), \> A=|St^*(A)|
\end{equation}

Note that, for a given value of the total area $A$, the area distribution $\{s_i^\ast\}$ of the extremal star of total area $A$ is already rather lopsided (i.e. with rather high $\mu\{s_i^\ast\}$ among that of stars with the same $A$). Here, the very particular shapes of the graphs of $f_1(s)$ (resp. $\hat{w}(s)$) will, of course, play an important role in the proof of Lemma~\ref{lem:fivestar}, especially, the peak of $\Gamma(f_1)$ in the vicinity of $s_o = 2\sin^{-1}\frac{1}{3}$. Mainly for this reason we shall subdivide the proof of Lemma~\ref{lem:fivestar} into the following cases, namely

\textbf{Case 1:} $A \geq \pi - 0.04$ and \textbf{Case 2:} $A < \pi - 0.04$

\noindent
\textbf{The Proof of Case 1:} ($\pi - 0.04 \leq A \leq \pi + 0.21672)$)

First of all, in the range of $A$ lying between ($\pi - 0.04$) and ($\pi + 0.21672$), the area distribution of $St^*(A)$ have a pair of $\{s_i\}$ very close to the minimum together with a triple of $\{s_i\}$ rather close to $s_o = 2\sin^{-1}\frac{1}{3}$. Due to the special features of $f_1(s)$ and $\hat{w}(s_i)$, it is not difficult to see that $\mu(St^*(A))$ is almost as large as possible for $\mu(\stard)$ with the same $A$, and moreover, any other $\stard$ of total area $A$ with $\mu(\stard)$ at least equal to $\hat{\rho}(A)$ (or even slightly smaller than $\hat{\rho}(A)$) must be consisting of such a pair and a triple of $\{s_i\}$, such as $\{\stfive, \> \alpha_0 < \theta_1, \theta_2 < \gamma\}$. Furthermore, the $(k,\delta)$-analysis of $\rho(\sigma)$ shows that those $\sigma$ with $\rho(\sigma)$ only slightly smaller than $f_1(|\sigma|)$ can only be a quite small deformation of the extremal triangle with the same area.

Now, suppose that $\stard$ is such a 5$\triangle$-star of Type-I icosahedron with $\tilde{\rho}(\stard) \geq \hat{\rho}(|\stard|)$. Then, it follows from (\ref{eq:remarkcorr}) that the triple of $\{\sigma_i\}$ with $s_i=|\sigma_i|$ in the vicinity of $s_o$ must be all just small deformations of extremal triangles of the same area. Therefore $\stard$ must be a small deformation of $St_5(\theta_1,\theta_2)$, or the other way of assemblage in the case of $A\leq \pi$, and hence $\stard$ must be, in fact, equal to $St^*(A)$ (i.e. $St(\alpha_0, \theta)$ of total area $A$).

This proves Case 1.

\noindent
\textbf{The Proof of Case 2:} ($A < \pi -0.04$)

For 5$\triangle$-stars with $A=|\stard|$ less than $(\pi - 0.04)$, the extremal stars $St^*(A)$ are $St_5^o(\theta)$ or its other way of assemblage; their radial (resp. boundary) edge-excesses are concentrated in a single radial (resp. boundary) edge, while its quintuple of triangles consists of a pair of $\sigma_{\alpha_0}$, a pair of $\sigma_\theta$ and another $\sigma_{\tilde{\theta}}$, 
\begin{equation}
\alpha_0 \leq \theta <\arccos(-\frac{1}{3}) - \varepsilon_o,\quad  \tilde{\theta} = 2\left(\pi-\alpha_0-\tan^{-1}\left( 2\cot\frac{\theta}{2}\right)\right)
\end{equation}

Note that $\tilde{\theta}=\gamma$ as $ \theta = \alpha_0$ and are always larger than $\theta$, while their difference ($\tilde{\theta}-\theta$) is steadily decreasing from ($\gamma-\alpha_0$) and becoming equal to zero at $\theta=\arccos(-\frac{1}{3})$. Anyhow, let us start with the proof of the seemingly rather special but actually more critical subcase that the radial edge excesses are concentrated in a single radial edge.

\noindent (1) \textit{The proof of the subcase 1 of only one radial edge excesses, i.e. $\stard$ with quadruple $\slfrac{\pi}{3}$-radial edges.}

Set $r_5$ to be the radial edge of such a $\stard$ which may be longer than $\slfrac{\pi}{3}$. First of all, the very special beginning case of $r_5=\slfrac{\pi}{3}$ follows easily from the convexity of $\rho(\sigma_\theta)$, $\alpha \leq \theta \leq \gamma$, as a function of $|\sigma_\theta|$. Therefore, we shall from now on assume that $r_5 > \slfrac{\pi}{3}$ and $A=|\stard|>A_o=|St_5^o(\alpha_0)|$. Let $St_5^o(\theta)$ be the  extremal star $St^*(A)$ with total area $A$. Then it is easy to show that 
\begin{equation*}
\theta_1 + \theta_2 + \theta_3 \leq 2\alpha_0 + \tilde{\theta}
\end{equation*}
and equality holds when and only when $\{\theta_1, \theta_2, \theta_3\}$ contains two $\alpha_0$, namely, $\stard$ is itself extremal. Again, by the convexity of $\rho(\sigma_\theta)$ as a function of $s=|\sigma_\theta|$, it is easy to see that , for a given value of $\sum_{i=1}^3 \theta_i$ less than $(2\alpha_0+\tilde{\theta})$ , the more critical geometrical situations are those $\stard$ with $\{\theta_i, 1 \leq i \leq 3\}$ containing a pair of $\alpha_0$; and moreover, by the application of the Lexell's deformations of such stars, starting at the one with $b_4=b_5$ up until that one of $\{b_4,b_5\}$ become $\slfrac{\pi}{3}$, it is straightforward to check that such a Lexell's deformation is density increasing, thus reducing to the study of the following specific family of stars, namely, with quadruple of extremal triangles including a pair of $\sigma_{\alpha_0}$ and one $\slfrac{\pi}{3}$-isosceles with the only larger radial edge as its base, say $\sigma_4$ as indicated in Figure~\ref{fig:5stardeform}.

\begin{figure}
\begin{center}
    \includegraphics[width=2in]{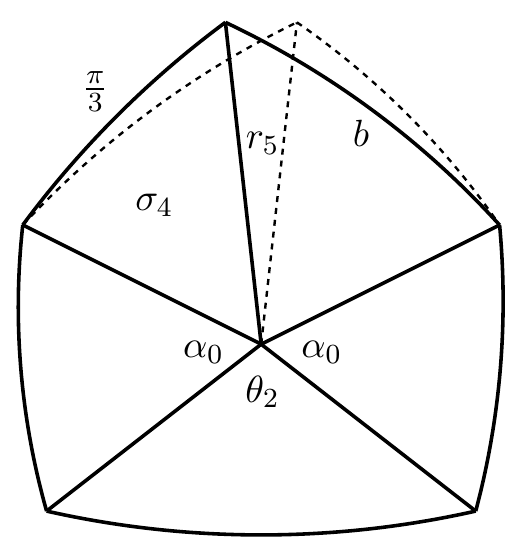}
\caption{}
\label{fig:5stardeform}
\end{center}
\end{figure}

Geometrically, such a $\stard$ is a specific kind of area preserving deformation of $St_5^o(\theta)$ with $\sigma_5$ to be the only non-extremal which makes $b_5$ longer and $\theta_2$ smaller than $\tilde{\theta}$. These kind of stars are completely determined by $r_5 $ and $\theta_2$ whose area and $\tilde{\rho}(\cdot)$ can be easily computed by explicit formula in terms of $r_5$ and $\theta_2$, say denoted by $A(r^\prime,\theta^\prime)$ and $\tilde{\rho}(r^\prime, \theta^\prime)$. Therefore, straightforward computation by the methods of implicit differentiation will show that the initial value of $\tilde{\rho}$, i.e. $\hat{\rho}(A)$, is the maximal value of $\tilde{\rho}(\cdot)$ along the above kind of area preserving deformations. This proves Lemma~\ref{lem:fivestar} for such a subcase. 

\noindent (2) \textit{The proof of the subcase 2 of two non-zero radial edge-excesses:}

Let $\stard$ be such a star with $\mu\{\cdot\}$ exceeding $\hat{\rho}(|\stard|)$. Then, one of the non-zero radial excesses must be very small. For otherwise, it is easy to show that $\mu\{\cdot\}$ is at most equal to $\hat{\rho}(A)$. Therefore, such a star can be regarded as a very small deformation of that of subcase 1, and moreover, it is straightforward to check that such small deformations are density decreasing, thus proving $\tilde{\rho}(\stard)$ less than $\hat{\rho}(A)$. 

\noindent (3) \textit{The proof of remaining cases:}

We may assume without loss of generality of the proof that such a $\stard$ has a triple of non-zero radial edge-excesses. For otherwise, we may simply replace it by a density increasing deformation of shifting its center. And moreover, the smallest non-zero radial edge-excesses must be very small. For otherwise, $\mu(\cdot)$ will be at most $\hat{\rho}(A)$. Therefore, such a star is, again, just a very small density decreasing deformation of a star of subcase 2, and hence $\tilde{\rho}(\cdot)<\hat{\rho}(A)$.

This completes the proof of Lemma~\ref{lem:fivestar}.
\end{proof}

\subsubsection{Extension of Lemmas~\ref{lem:fivestarchar} and \ref{lem:fivestar} to $5\triangle$-stars of areas exceeding $(\pi+0.21672)$: Lemma~\ref{lem:fivestar}$'$}

\begin{itemize}
\item[(1)] Note that both Lemma~\ref{lem:fivestarchar} and Lemma~\ref{lem:fivestar} essentially only cover the case of $5\triangle$-stars in Type-I icosahedra with areas at most equal to $(\pi+0.21672)$. In this subsection, we shall proceed to study their extensions beyond that upper limit. The main result here will be stated as Lemma~\ref{lem:fivestar}$'$. Recall that, for the range of areas lying between $\pi$ and $(\pi+0.21672)$, Lemma~\ref{lem:fivestar} proves that the {\it optimal stars} (i.e. the ones with optimal $\tilde{\rho}(\cdot)$ for such a given area $|St(\cdot)|$) are uniquely realized by $St_5(\alpha_0,\beta)$, $\alpha_0\leq \beta\leq \gamma$; while Lemma~\ref{lem:fivestarchar} proves that those Type-I configurations containing small deformations of such extremal stars are necessarily of $5\boxslash$-type, namely, small deformations of $5\square$-Type-I. In particular, the $5\square$-Type-I with angular distribution of quadruple $\alpha_0$ and $\gamma$ is, in fact, the {\it unique} one containing $St_5(\alpha_0,\gamma)$. Actually, if one {\it ignores the realizability condition}, it is not difficult to show that $St_5(\alpha_0,\beta)$, $\beta>\gamma$, will also be the unique extremal star for areas considerably larger than $(\pi+0.21672)$. However, the problem here is that they are {\it no longer realizable} as stars of Type-I configurations. Therefore, the geometric problem here is a kind of {\it tight combination} of {\it optimality} and {\it realizability} and what we are seeking should be a kind of joint generalization of Lemmas~\ref{lem:fivestarchar} and \ref{lem:fivestar} for the case of areas larger than $(\pi+0.21672)$.

\item[(2)] Let us begin our study in the case of areas only slightly larger than that of $St_5(\alpha_0,\gamma)$, say up to $(\pi+0.25)$. Intuitively, one expects that such stars (i.e. realizable and with almost optimal $\tilde{\rho}(\cdot)$ for such a given area) should have quite similar geometric structures as that of $St_5(\alpha_0,\gamma)$, namely, with a pair of small almost extremal triangles and a triple of small deformations of half rectangles. Moreover, we may assume without loss of generality that such a star has at least a pair of $\slfrac{\pi}{3}$-radial edges, for otherwise, one may simply reduce to such a star by a {\it density increasing} deformation of {\it center-shifting}. Therefore, such a star should have an adjacent pair of small triangles, each with at least one $\slfrac{\pi}{3}$-radial edge, together with a triple of small deformations of half rectangles, or more precisely, one small deformation of half rectangle and another $\boxslash$, in order to minimizing their $(k,\delta)$-decrements to achieve higher $\tilde{\rho}(\cdot)$ among those realizable ones. Note that the {\it realizability} condition of such a star can be conveniently {\it reformulated} as follows, namely, there exists a star including the pair of small triangles, such as $St(N)$ in Example~\ref{ex:smallstar}, whose {\it complementary region} accommodates another star {\it adjacent} to the one we constructed. Anyhow, this is the basic geometric idea that naturally leads to the construction of Example~\ref{ex:smallstar} as well as the formulation of the following lemma, namely
\end{itemize}

\noindent
{\bf Lemma~\ref{lem:fivestar}$'$}: {\it Let $\sconf$ be a Type-I icosahedron containing a star with $\tilde{\rho}(\cdot)$ at least almost as high as the optimal one of the family of Example~\ref{ex:smallstar} with the same area. Then $\sconf$ must be a small deformation of some of Example~\ref{ex:smallstar} containing a star of the same area with optimal $\tilde{\rho}(\cdot)$.}

\begin{proof}
(1) First of all, we may assume without loss of generality for the purpose of proof, that such a star in $\sconf$ contains at least a pair of $\slfrac{\pi}{3}$-radial edges, simply by a reduction of density increasing center shifting. Thus, we shall first divide the proof of this lemma into two major cases, namely
\medskip

{\bf Case I}: Such a star contains an adjacent pair of $\slfrac{\pi}{3}$-radial edges.
\medskip

{\bf Case II}: Such a star only contains a separate pair of $\slfrac{\pi}{3}$-radial edges.
\medskip

\noindent
We shall denote the pair of small triangles of such a star by $\sigma_1$ and $\sigma_2$, and in the case of I, $\sigma_2$ is the one with both of its radial edges equal to $\slfrac{\pi}{3}$ (or the smaller one if such a star containing a triple of $\slfrac{\pi}{3}$-radial edges), while the boundary edges of $\{\sigma_1,\sigma_2\}$ will be denoted by $\{b_1,b_2\}$.
\medskip

Moreover, we shall further subdivide Case~I into the following subcases, namely
\medskip

{\bf Case I$_0$}: $\{\sigma_1,\sigma_2\}$ are the same as that of Example~\ref{ex:smallstar}, namely $b_1=b_2=\slfrac{\pi}{3}$ and $\sigma_1=\sigma_\gamma$, $\sigma_2=\sigma_{\alpha_0}$,
\medskip

{\bf Case I$_1$}: $b_1=b_2=\slfrac{\pi}{3}$ but $\sigma_1$ is not $\sigma_\gamma$,
\medskip

{\bf Case I$_2$}: At least one of $\{b_1,b_2\}$ is longer than $\slfrac{\pi}{3}$.
\medskip

\noindent
(2) {\bf The proof of Case I$_0$}:
\medskip

As it turns out, the proof of this very special subcase is the most critical and revealing on the geometric insight of how the areawise optimality and realizability interplay with each other. Suppose that $\sconf$ is such a Type-I icosahedron and $St(A_2)$ is the larger star with $\tilde{\rho}(\cdot)$ at least almost as high as that of the family of Example~\ref{ex:smallstar} with the same area.
\begin{itemize}
\item[(i)] Let us first analyze the geometric situation that the star containing $\{\sigma_1,\sigma_2\}$, say again denoted by $St(N)$, is the same as that of Example~\ref{ex:smallstar}, namely, with uniform $\slfrac{\pi}{3}$-radial edges and the most lopsided angular distribution. Then the opposite star of $St(N)$ in $\sconf$, say again denoted by $St(N')$, will, of course, be a star whose vertices, say again denoted by $\{A_i',\>1\leq i\leq 5\}$, are inside of its complementary region, as indicated in Figure~\ref{fig:smallstar} by the region below those $\slfrac{\pi}{3}$-circular arcs tangent to $y=-\frac{\pi}{6}$ at $\{T_i\}$, which can be  represented via the stereographic projection as indicated in Figure~\ref{fig:largepentagon}.

\begin{figure} [H]
\begin{center}
    \includegraphics[width=3in]{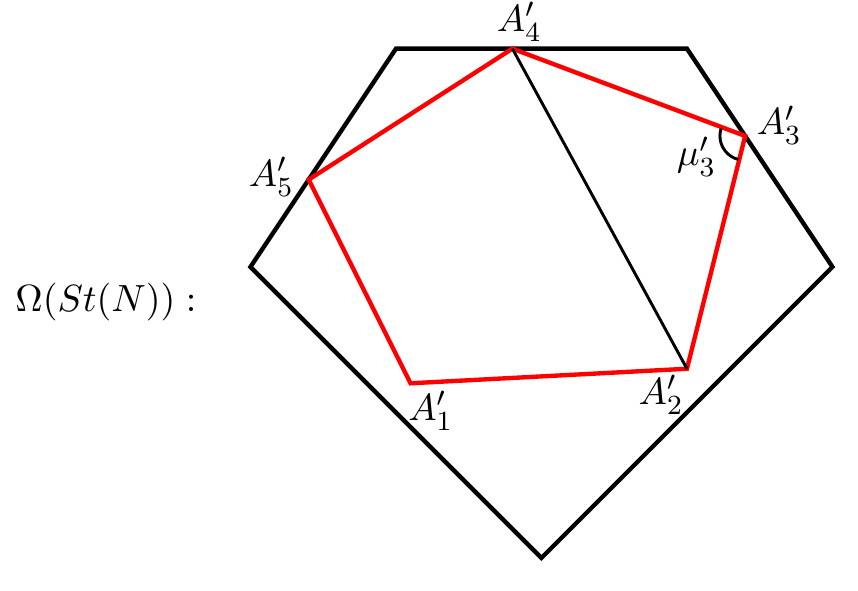}
\caption{\label{fig:largepentagon}}
\end{center}
\end{figure}

Note that the geometry of $St(A_2)$ in $\sconf$ is already determined by the positions of $A_1'$ and $A_2'$, while that of $St(N')$ inside of $\Omega(St(N))$ mainly providing an example of realization, as far as $\tilde{\rho}(\cdot)$ and $|St(\cdot)|$ is concerned. Therefore, one may choose some special kind of $St(N')$ just for the purpose of proving Case~I$_0$. In particular, it is not difficult to show that there exists such a $St(N')$ with $\{A_3',A_4'\}$ situated on $\partial \Omega$ and $\overline{A_2'A_3'}=\overline{A_3'A_4'}=\slfrac{\pi}{3}$. Set $\theta$ to be the parameter determined by $A_4'$ as in Figure~\ref{fig:largepentagon} (cf. Example~\ref{ex:smallstar}). For the purpose of proving this lemma, we shall study the following special kind of extension problem, namely, for a given position of $A_2'$, inside of $\Omega$ in the vicinity of $T_2$, what are those extensions of an $St(N')$ with rather large $|St(A_2)|$ and of areawise higher $\tilde{\rho}(\cdot)$.

Set $\mu_3'$ to be the top angle of the $\slfrac{\pi}{3}$-isosceles $\sigma(A_2'A_3'A_4')$ and $\frac{1}{2}\lambda_2$ to be its base angle, namely
\begin{equation}
\lambda_2=2\arctan(2\cot\frac{\mu_3'}{2})
\end{equation}
If $\lambda_2$ is at most equal to $(f(\theta)-\alpha_0)$, then there exist one of Example~\ref{ex:smallstar} with $A_2'$, $A_3'$, $A_4'$ as its vertices, while such an $St(N')$ must have its $A_1'$ outside of that of the one of Example~\ref{ex:smallstar}, if it is not coincide with its vertex. Moreover, we may assume without loss of generality for the proof that $\overline{A_5'A_1'}=\slfrac{\pi}{3}$. For otherwise, we may perform an density increasing Lexell's deformation to $St(A_2)$ to reduce $St(N')$ to such a $St(N')$, namely, $St(N')$ and one of Example~\ref{ex:smallstar} have identical $\{A_i',\>2\leq i\leq 5\}$, while the only difference is that its $\mu_5'$ is slightly larger than that of the comparing one of Example~\ref{ex:smallstar}. Therefore, it is quite straightforward to check that its $\tilde{\rho}(\cdot)$ is, in fact, smaller than that of $St(A_2)$ in some of Example~\ref{ex:smallstar} with the same area.

\item[(ii)] Next, let us consider the other possibility that $\lambda_2$ exceeds $(f(\theta)-\alpha_0)$. Then the kind of tight extensions of Example~\ref{ex:smallstar} with $\{A_2',A_3',A_4'\}$ as their vertices are no longer available. Set $\Gamma$ to be the $\slfrac{\pi}{3}$-circle passing through $\{A_2',A_4'\}$ and $A_5'$ to be the point on $\Gamma\cap \partial \Omega$ in the vicinity of $T_5$. It is easy to check that $\overline{A_4'A_5'}>\slfrac{\pi}{3}$ and we may assume that $N'$ is actually the center of $\Gamma$. Therefore it is again straightforward to check that $\tilde{\rho}(\cdot)$ of $St(A_2)$ is again smaller than that one of Example~\ref{ex:smallstar} with the same area. This proves the subcase of Case~I$_0$ that $St(N)$ in $\sconf$ has uniform $\slfrac{\pi}{3}$-radial edges.

\item[(iii)] Now, let us consider the remaining subcase of Case~I$_0$ that $St(N)$ contains some radial edges longer than $\slfrac{\pi}{3}$. First of all, such radial edge-excesses, even just a very small total amount will already make the complementary region $\Omega$ considerably smaller than that of the previous subcase, thus making some of those {\it critical} extensions of $St(N')$ with the same $\{A_2',A_1'\}$ no longer available. Therefore, it is not difficult to follow the same kind of geometric analysis to provide the kind of areawise estimates of $\tilde{\rho}(St(A_2))$ as that of the previous subcases.

\end{itemize}
\medskip

\noindent
(3) {\bf The proof of Case~I$_1$}

This case is naturally consisting of two subcases according to the angle of $\sigma_1$ at $N$ is less than $\gamma$ or larger than $\gamma$.

\begin{itemize}
\item[(i)] {\bf Subcase 1}: $\sigma_1=\sigma_\theta$, $\theta<\gamma$:

Let us first consider the geometric analysis for the situation of $\sconf$ that $St(N)$ again has no radial edge-excesses. Then, the same kind geometric construction similar to that of Example~\ref{ex:smallstar}, in particular to the specific case that the central angles of $St(N)$ consisting of $\{\theta_1,\alpha_0,\pi-\alpha_0-\frac{1}{2}\theta_1,\pi-\alpha_0-\frac{1}{2}\theta_1,\alpha_0\}$, say denoted by Example~$\tilde{\hbox{\ref{ex:smallstar}}}$.

Note that one of the critical fact for such an $\sconf$ is that $\overline{A_1A_2}$ in $St(A_2)$ is shorter than that of Case~I$_0$ (i.e. $r_0=2\arcsin(\sin\frac{\gamma}{2})$). Therefore, $\tilde{\rho}(\cdot)$ of $St(A_2)$ in such $\sconf$ of Example~$\tilde{\hbox{\ref{ex:smallstar}}}$ will always be lower than that of Example~\ref{ex:smallstar} with the same area. Anyhow, the same kind of comparison areawise estimate of $\tilde{\rho}(\cdot)$ will always provide a smaller upper bound on $\tilde{\rho}(\cdot)$.

\item[(ii)] {\bf Subcase 2}: $\sigma_1=\sigma_\theta$, $\theta>\gamma$:

In this subcase, the $St(N)$ in such an $\sconf$ is necessarily to have some radial edge-excesses depending on the size of $(\theta-\gamma)$. Therefore, on the one hand, the radial edge $\overline{A_2A_1}$ in $St(A_2)$ is now longer than $r_0=2\arcsin(\sin\frac{\gamma}{2})$, which is advantageous for producing larger $St(\cdot)$, but on the other hand the minimal amount of total radial edge-excesses of $St(N)$ makes the size of its complementary region substantially smaller than that of the minimal star in Case~I$_0$, which is rather disadvantageous for producing larger $St(\cdot)$ with higher density. Anyhow, the same kind of geometric analysis as that of Case~I$_0$ will show that the disadvantage far outweighing the advantage. Therefore, essentially the same kind of comparison upper bound areawise estimate will again show that $\tilde{\rho}(\cdot)$ of $St(A_2)$ in this subcase will always be lower than that of the Case~I$_0$.

\end{itemize}

\noindent
(4) {\bf The proofs of Case~I$_2$ as well as Case~II}

Let us first discuss the geometric situation of Case~I$_2$.  Note that $\{b_1,b_2\}$ are the build in radial edges of $St(N)$. Therefore in this case, $St(N)$ always has some radial edge-excesses. On the other case of Case~II, $St(N)$ has an adjacent pair of $\slfrac{\pi}{3}$-boundary edges. Thus, unless in the very special geometric situation of $\sigma_1=\sigma_2=\sigma_{\alpha_0}$ which has already been included in Case~I$_1$, the $St(N)$ of such an $\sconf$ of Case~II always has both the disadvantage of having radial edge-excesses and $\overline{A_2A_1}=\frac{\pi}{3}=\overline{A_2A_3}$. Anyhow, it is quite clear that the same kind of proof for the previous cases will also provide simple proofs of Case~I$_2$ and Case~II.

This completes the proof of Lemma~\ref{lem:fivestar}$'$.

\end{proof}

\begin{remarks}
\begin{enumerate}[(i)]
\item For the purpose of proving Theorem~I for Type-I local packings, Lemma~\ref{lem:fivestar}$'$ already provides an easy to use upper bound areawise estimate for $\tilde{\rho}(\cdot)$ of $5\triangle$-stars of areas up to $(\pi+0.6)$, namely, that of those $\tilde{\rho}(\cdot)$ of $St(A_2)$ of Examples~\ref{ex:smallstar}.
\item The proof of Lemma~\ref{lem:fivestar}$'$ also shows that, for those realizable $5\triangle$-stars with areas exceeding $(\pi+0.6)$, they can {\it not} be the assemblages of a pair of almost extremal small triangles and a triple of small deformations of half rectangles, because such $5\triangle$-stars with areas exceeding $(\pi+0.6)$ are no longer realizable. Therefore, it follows from the $(k,\delta)$-estimates that $\tilde{\rho}(\cdot)$ of those realizable $5\triangle$-stars with areas larger than $(\pi+0.6)$ must be considerably lower than that of $St(A_2)$ of Examples~\ref{ex:smallstar}. In particular, it is not difficult to show that the following areawise slope of $\tilde{\rho}(\cdot)$, namely
\begin{equation}
\slfrac{(\tilde{\rho}(\cdot)-{\pi}/{\sqrt{18}})}{(\hbox{total area} -\pi)}
\end{equation}
must be smaller than that of $St(A_2)$ in Examples~\ref{ex:smallstar} with maximal area.

\end{enumerate}

\end{remarks}

\section{The proof of Theorem I for the case of Type-I local packings}
\label{sec:proof}

Geometrically speaking, the moduli space of congruence classes of Type-I configurations, say denoted by ${\cal M}_I$, constitutes a 21-dimensional semi-algebraic set on which the specific geometric invariant $\bar{\rho}:{\cal M}_I \rightarrow \mathcal{R}^+$ is defined to be the following weighted average
\begin{equation}
\bar{\rho}(\sconfp) = \slfrac{\sum_{\sigma_j \in \sconfp} w(\sigma_j) \rho(\sigma_j)}{\sum_{\sigma_j \in \sconfp} w(\sigma_j)}
\end{equation}

First of all, the pair of 6$\Box$-ones (i.e. the f.c.c. and the h.c.p.) and the family of 5$\Box$-ones constitute two particular kinds of outstanding examples, namely, a pair of singular points and a 4-dimensional singular subvariety with $\rhobarred$ of the f.c.c. and the h.c.p. equal to $\spet$ while that of the latter are slightly lower than $\spet$. Of course, one of the critical, important steps in the proof of Theorem I for the Type-I case should, naturally, be the establishment of suitable neighborhoods of the 6$\Box$-ones (resp. the 5$\Box$-ones) in ${\cal M}_I$ that they are indeed the unique maxima (resp. a kind of ridge subvariety) in such neighborhoods, which consists of those $6\boxslash$-Type-I's (resp. $5\boxslash$-Type-I's).

Thus, we shall divide the proof of Theorem~I for the case of Type-I local packings into the following three subcases, namely
\begin{labeling}{cases}
\item[\textbf{Case~I$_1$:}] $6\boxslash$-Type-I's.
\item[\textbf{Case~I$_2$:}] $5\boxslash$-Type-I's.
\item[\textbf{Case~I$_3$:}] Others
\end{labeling}
while the proofs of Case~I$_1$ (resp. Case~I$_2$) have already been given in \S\ref{subsec:proofthrm1} (resp. \S\ref{subsec:5estimates}) as a kind of demonstrations of the advantages of collective areawise estimates for $\boxslash$-clusters and lune-clusters. Therefore it suffices to prove the remaining case of Case~I$_3$.

\medskip

\noindent
{\bf The proof of Case~I$_3$}:

Note that all non-icosahedra Type-I's are belonging to Cases~I$_1$ or I$_2$ and hence Case~I$_3$ only consists of Type-I icosahedra. Therefore, one has
\begin{equation}
\overline{\rho}({\cal S}'(\Sigma))=\sum_{A_i\in\Sigma}\tilde{w}(St(A_i))\cdot\tilde{\rho}(St(A_i))\Big/ \sum_{A_i\in \Sigma}\tilde{w}(St(A_i))
\end{equation}
and hence, it suffices to prove that
\begin{equation} \label{eqn:densitysm0}
\sum_{A_i\in\Sigma}\tilde{w}(St(A_i))(\tilde{\rho}(St(A_i))-\pi/\sqrt{18})<0
\end{equation}
by a direct application of Lemmas~\ref{lem:fivestar} and \ref{lem:fivestar}$'$.

For a given Type-I $5\triangle$-star $St(\cdot)$ with $|St(\cdot)|\neq \pi$), set
$$ \tilde{m}(St(\cdot))=\tilde{w}(St(\cdot))\cdot(\tilde{\rho}(St(\cdot))-\pi/\sqrt{18})\Big/(\pi-|St(\cdot)|)
$$
which will be referred to as the {\it weighted slope} of $St(\cdot)$. It follows from Lemma~\ref{lem:fivestar}, Lemma~\ref{lem:fivestar}$'$ and the remarks following Lemma~\ref{lem:fivestar}$'$, one has the following upper bound of $\tilde{m}(\cdot)$, namely

(i) for $|St(\cdot)|<\pi$,
\begin{equation}
\tilde{m}(St(\cdot))\leq \tilde{m}(\hbox{the smallest $5\triangle$-star}):=\hat{m}
\end{equation}

(ii) for $|St(\cdot)|>\pi$,
\begin{equation}
\tilde{m}(St(\cdot))\leq \tilde{m}(St_5(\alpha_0,\gamma))=\check{m}
\end{equation}
and moreover, $\hat{m}<(-\check{m})$. 

Applying the above simple estimates to the LHS of ($\ref{eqn:densitysm0}$), one has the following estimates of its positive terms (resp. its negative terms) as follows, namely
\begin{itemize}
\item[(i)] The sum of positive terms is bounded above by
$$\begin{array}{l}
 \hat{m}\cdot \left\{ \hbox{the sum of $(\pi-|St(\cdot)|)$ of those with $\tilde{\rho}(\cdot)>\pi/\sqrt{18}$}\right\}\\
 \leq \hat{m}\cdot\left\{\hbox{the total area defects of those stars of ${\cal S}'(\Sigma)$}\right\}
\end{array}$$
\item[(ii)] The sum of negative terms is bounded above by
$$\check{m}\cdot \left\{ \hbox{the sum of area excesses of those stars of  ${\cal S}'(\Sigma)$ of areas larger than $\pi$}\right\}$$
\end{itemize}
On the other hand, it is quite obvious that the total area defects is equal to the total area excesses because the average of areas of the twelve stars is always equal to $\pi$. Therefore, the proof of Case~I$_3$ follows simply from the above straightforward estimate, thanks to Lemma~\ref{lem:fivestar} and Lemma~\ref{lem:fivestar}$'$.
\medskip

This completes the proof of Theorem~I for the case of Type-I local packings.
\hfill $\square$

\section{The proof of Theorem I for Case II: non-Type-I local packings}

In this section, we shall proceed to complete the proof of Theorem I for those local packings other than those Type-I local packings, which will be simply referred to as Case II.

\subsection{Some pertinent generalities and a brief overview}

First of all, non-Type-I local packings consist of all those local packings which have at most eleven touching neighbors, thus encompassing great amount of probabilities and varieties. However, what we're are going to prove will be that their locally averaged density, $\rhobarred$, are always less than $\spet$! Of course, such a proof should be quite simple for those such as local packings with neighbors all of them having certain amount of buckling heights. Therefore, the real challenge in the proof of Case II is not just to have a proof, but rather, to achieve a clean-cut simple proof. Anyhow, let us begin here with some pertinent generalities.

\subsubsection{Some generalities on non-Type-I local packings}

(1) {\bf A concept of close neighbors and reduced local packings}

Technically, it is quite clear that almost touching neighbors and touching neighbors will play just the same role in their effect towards the $\rhobarred$ or $w(\cdot)$. Intuitively, the subset of a given local packing $\mathcal{L}$ consisting those neighbors with small buckling heights, $h_i$, will play a major role in the estimation of $\rhobarred$. This motivates the following definition of close neighbors and reduced local packings.

\begin{definition}
A neighbor $S_j \in \mathcal{L}(\cdot)$ with $h_i$ at most equal to 0.07 will be called a \textit{close neighbor}; the sub-local packing of $\mathcal{L}(\cdot)$ consisting of its close neighbors will be henceforth referred to as the \textit{reduced sub-packing of $\mathcal{L}(\cdot)$} and denoted by $\mathcal{L}_0(\cdot)$
\end{definition}

\begin{remarks}
\begin{enumerate}[(i)]

\item Of course, there are local packings  $\lof$ with no close neighbor at all; such local packings will have no reduced sub-packing, but it should be very easy to estimate their $\rhobarred$ to be much less than $\spet$.

\item For the purpose of proving Case II of Theorem I, we may assume without loss of generalities of the proof that $\lof$, actually, have quite a few close neighbors, and we shall regard $\lof$ as an extension of its reduced sub-packing, i.e. $\lof\supseteq \mathcal{L}_0(\cdot)$.

\item A given reduced local packing ${\cal L}_0(\cdot)$ will be called a {\it saturated} reduced local packing if it can {\it not} be extended to another local packing with additional close neighbors. Let ${\cal L}_0(\cdot)$ be a saturated reduced local packing, an extension ${\cal L}(\cdot)\supseteq{\cal L}_0(\cdot)$ will be referred to as a tightest extension of ${\cal L}_0(\cdot)$ if $\bar{\rho}({\cal L}(\cdot))$ already achieves the maximality among all possible extensions of ${\cal L}_0(\cdot)$ and $\bar{\rho}({\cal L}_0(\cdot))$ is defined to be the $\bar{\rho}(\cdot)$ of its tightest extensions.

\end{enumerate}
\end{remarks}

\noindent
(2) {\bf Some pertinent generalities}

\medskip

For a given saturated reduced local packing ${\cal L}_0(\cdot)=\{S_0;\; S_j,\>1\leq j\leq\ell\}$, set
$$ \Sigma({\cal L}_0)=\{A_j,\>1\leq j\leq \ell\}\quad \hbox{and}\quad \{h_j,\>1\leq j\leq \ell\}$$
to be the direction profile of $\{\overrightarrow{OO_j},\>1\leq j\leq \ell\}$ and the set of buckling heights of $\{S_j\}$, namely
$$\overrightarrow{OO_j}=2(1+h_j)\overline{OA_j},\quad 1\leq j\leq \ell.$$
Then, ${\cal S}(\Sigma({\cal L}_0))$ will be referred as the associated spherical configuration of ${\cal L}_0(\cdot)$.

Let ${\cal L}_0(\cdot)$ be a given {\it saturated} reduced local packing and ${\cal L}(\cdot)$ be one of its {\it tightest} extension and $\Sigma_0$ (resp. $\Sigma$) be the directional profiles of ${\cal L}_0(\cdot)$ (resp. ${\cal L}(\cdot)$), ${\cal S}(\Sigma_0)$ (resp. ${\cal S}(\Sigma)$) be their associated spherical configurations. Set $\{V_k\}$ to be that of $\Sigma\setminus\Sigma_0$, if any, and $\{\Gamma_i\}$ to be the connected components of the sub-graph in ${\cal S}(\Sigma)$ spanned by $\{V_k\}$. To each $\Gamma_i$, if any, the union of those stars of $V_k\in\Gamma_i$ will be referred to as the {\it big hole} in ${\cal S}(\Sigma_0)$ associated to $\Gamma_i$, namely
\begin{equation}
B_i={\textstyle \bigcup}\{St(V_k),\>V_k\in\Gamma_i\}={\textstyle\bigcup}\{\sigma_{ij}\}
\end{equation}
while $B_i$ will be referred to as the base of the mountain range with those peaks $V_k$ in $\Gamma_i$, which is, of course, union of triangles in ${\cal S}'(\Sigma_0)$. Just for the convenience of areawise estimation, we shall assign the densities of those triangles, say $\rho(\sigma_{ij})$, $\sigma_{ij}\subseteq B_i$, all equal to the weighted average of that of faces of ${\cal S}'(\Sigma)$ inside of $B_i$. Then, it is not difficult to apply the $(k,\delta)$-analysis to provide areawise estimates for such clusters of triangles of ${\cal S}'(\Sigma_0)$, thus proving a kind of averaged areawise optimal estimate, namely, bounded above by that of the special case in which the buckling heights at those vertices of $\{\sigma_{ij}\}$ are all equal to $0$. For example, in the simplest case of quadrilateral big holes, their averaged densities are bounded above by that of the extremal triangles with half of their area.

\subsubsection{The basic ideas and crucial steps of the proof of Theorem~I for the remaining case of non-Type-I local packings (i.e. Case~II)}
\label{subsubsec:pfcase2}

In fact, the major reason that makes a simple clean-cut proof of Theorem~I for Case~II, at all, possible is the simple idea of using strategy of proof by contradiction, namely, proving the {\it non-existence} of a non-Type-I local packing ${\cal L}(\cdot)$ with $\bar{\rho}\geq \slfrac{\pi}{\sqrt{18}}$. Moreover, the following are two crucial steps along the journey of such a proof by contradiction.

\begin{proposition} \label{prop:case2.1}
Let ${\cal L}(\cdot)$ be a non-Type-I local packing with $\bar{\rho}(\cdot)\geq \slfrac{\pi}{\sqrt{18}}$. Then ${\cal L}(\cdot)$ contains at least twelve close neighbors.
\end{proposition}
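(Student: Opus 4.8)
The plan is to argue by contradiction: suppose $\mathcal{L}(\cdot)$ is a non-Type-I local packing (hence with at most eleven touching neighbors) satisfying $\bar{\rho}(\mathcal{L}) \ge \spet$, yet with at most eleven close neighbors. First I would pass to the reduced sub-packing $\mathcal{L}_0(\cdot)$ of close neighbors (buckling heights $h_j \le 0.07$) and, without loss of generality, replace $\mathcal{L}_0$ by a saturated reduced local packing together with one of its tightest extensions $\mathcal{L}(\cdot) \supseteq \mathcal{L}_0(\cdot)$; by the definition of $\bar{\rho}(\mathcal{L}_0)$ and the hypothesis this extension still has $\bar{\rho}(\cdot) \ge \spet$. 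The key point is that the locally averaged density is the weighted average $\bar{\rho}(\mathcal{L}) = \sum_j w(\sigma_j)\rho(\sigma_j) / \sum_j w(\sigma_j)$ over the spherical triangles of $\mathcal{S}'(\Sigma)$, and one wants to show that with $\le 11$ close neighbors this average is forced strictly below $\spet$.

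The core of the argument is an area-wise estimate applied to the ``big holes'' $B_i$ carved out of $\mathcal{S}(\Sigma_0)$ by the far-away (non-close) vertices of the tightest extension, exactly as set up in the generalities preceding the proposition. Concretely, I would split $\mathcal{S}'(\Sigma_0)$ into (a) those triangles all of whose vertices are close neighbors, to which the $(k,\delta)$-analysis of \S\ref{subsubsec:areapreserve} gives $\rho(\sigma_j) \le f_1(|\sigma_j|)$ with the extremal-triangle bound, and (b) the big-hole regions $B_i = \bigcup\{St(V_k)\}$ on which the assigned density is the weighted average of the density of the faces of $\mathcal{S}'(\Sigma)$ inside $B_i$, bounded above by the density of the extremal triangle of half the big-hole area. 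Since every triangle has area at least $\triangle_{\slfrac{\pi}{3}}$ and $8\triangle_{\slfrac{\pi}{3}} + 6\square_{\slfrac{\pi}{3}} = 4\pi$, a packing with $\ell \le 11$ close neighbors has $\mathcal{S}(\Sigma_0)$ with at most $2\ell - 4 \le 18$ triangular faces; the deficit relative to the icosahedral count of $20$ must be absorbed into big holes, and a big hole containing a vertex has area strictly larger than the area swept by five $\slfrac{\pi}{3}$-triangles surrounding a degree-$5$ vertex, forcing that region's averaged density strictly below $\spet$ by a quantitative margin. Combining the extremal-triangle bound on region (a) — which by the monotonicity/convexity facts of \S\ref{subsec:areadeform} cannot by itself exceed $\spet$ unless all relevant triangles are simultaneously f.c.c./h.c.p. triangles, impossible here since eleven directions cannot realize the twelve-vertex configuration — with the strict loss on region (b) yields $\bar{\rho}(\mathcal{L}) < \spet$, contradicting the hypothesis.

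A supplementary ingredient I would need is that the reduced local packing really does have ``quite a few'' close neighbors, so that the weight $\sum_j w(\sigma_j)$ is not dominated by uncontrolled far-away contributions — but this is exactly the content of remark (i)–(ii) following the definition of close neighbors: if $\mathcal{L}_0(\cdot)$ is empty or tiny, then $\bar{\rho}(\mathcal{L})$ is trivially far below $\spet$ because every $\rho(\sigma_j)$ coming from widely separated directions is small (via the $(k,\delta)$ estimates, the density of a triangle with a long edge-excess drops linearly), so that case is disposed of immediately and we may assume $5 \le \ell \le 11$.

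The main obstacle I anticipate is the bookkeeping of the big-hole areas: one must show, uniformly over all combinatorial types of saturated reduced packings with $\ell \le 11$, that the total area of big holes is bounded below by a definite positive constant and that on each big hole the area-wise extremal bound is strictly below $\spet$ by enough to beat the possible (second-order, by the convexity analysis) surplus coming from any close-neighbor triangles that are mildly non-extremal. In other words, the hard part is the quantitative comparison — turning ``fewer than twelve close neighbors forces a macroscopic hole'' into a numerical inequality $\bar{\rho}(\mathcal{L}) < \spet$ — rather than any conceptual difficulty; the tools for it are precisely the $(k,\delta)$-estimates of \S\ref{sec:techniques} and the area identity $8\triangle_{\slfrac{\pi}{3}} + 6\square_{\slfrac{\pi}{3}} = 4\pi$ from \S\ref{sec:review}.
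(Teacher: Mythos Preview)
Your proposal is headed in the right direction (proof by contradiction, Euler count $2\ell-4$, area-wise triangular estimates), but it is substantially more elaborate than necessary and you have missed the one-line observation that makes the paper's proof almost trivial. The paper does \emph{not} split into close-neighbor triangles versus big-hole regions at all. It simply observes that in the critical case $\ell=11$ the triangulated configuration $\mathcal{S}'(\Sigma_0)$ has $2\cdot 11-4=18$ triangles, so their \emph{average} area is $4\pi/18=\tfrac{2}{9}\pi$, and this number already exceeds $\tfrac{1}{2}\Box_{\pi/3}=2\arcsin\tfrac{1}{3}$, the location of the peak of the area-wise density function $f_1$. Once the average area is past that peak, the triangular area-wise estimate of \S\ref{subsec:areadeform} immediately forces $\bar{\rho}(\cdot)$ considerably below $\spet$; and for $\ell<11$ the average area is even larger, so the margin only improves.

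Thus the ``main obstacle'' you anticipate --- the uniform bookkeeping of big-hole areas over all combinatorial types --- never arises: there is no need to separate out big holes or to compare a possible surplus on region~(a) against a deficit on region~(b). The single scalar inequality $\tfrac{2}{9}\pi>2\arcsin\tfrac{1}{3}$ replaces all of that. Your machinery (which is exactly the big-hole apparatus the paper sets up in the generalities of \S7.1.1(2)) is reserved for later parts of the Case~II argument, not for this proposition; invoking it here is correct in spirit but obscures how short the actual proof is.
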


\begin{proposition} \label{prop:case2.2}
Let ${\cal L}(\cdot)$ be a non-Type-I local packing with $\bar{\rho}(\cdot)\geq \slfrac{\pi}{\sqrt{18}}$. Then ${\cal S}'(\Sigma_0)$ is an icosahedron.
\end{proposition}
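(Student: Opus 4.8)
The plan is a proof by contradiction. Suppose $\mathcal{L}(\cdot)$ is a non-Type-I local packing with $\bar\rho(\cdot)\ge\spet$ whose reduced triangulated configuration $\mathcal{S}'(\Sigma_0)$ is \emph{not} an icosahedron. By Proposition~\ref{prop:case2.1} one has $|\Sigma_0|\ge 12$, and since every close neighbor has $h_j\le 0.07$, any two directions of $\Sigma_0$ are separated by at least $\phi_0 \colonequals \arccos\bigl(1-\tfrac{1}{2(1.07)^2}\bigr)$, a value only a few degrees below $\slfrac{\pi}{3}$; thus $\mathcal{S}(\Sigma_0)$ is a genuine ``near-Type-I'' spherical configuration, all of whose edges are $\ge\phi_0$. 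First I would use an elementary area-and-separation estimate on a single star, together with the hypothesis $\bar\rho(\cdot)\ge\spet$, to exclude vertices of triangulated degree $\le 3$ or $\ge 7$: a degree-$\le 3$ vertex leaves the surrounding D-cell cluster too loose, while for a degree-$\ge 7$ vertex the seven or more triangles of edge-length $\ge\phi_0$, together with the radial-edge excesses they force, cannot coexist with $\bar\rho(\cdot)\ge\spet$. By Euler's formula the negation of ``icosahedron'' then leaves exactly two cases: \textbf{(a)} $|\Sigma_0|\ge 13$, or \textbf{(b)} $|\Sigma_0|=12$ but $\mathcal{S}'(\Sigma_0)$ has a vertex of degree $4$ (and hence one of degree $6$).

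The main tool is the star decomposition $\bar\rho(\mathcal{L})=\sum_{A_i\in\Sigma_0}\tilde w_i\,\tilde\rho(St(A_i))\big/\sum_{A_i\in\Sigma_0}\tilde w_i$, in which the triangles of $\mathcal{S}'(\Sigma_0)$ lying inside the big holes spanned by the non-close neighbors are assigned the averaged density of their big hole; as noted in the generalities above, those big-hole averages admit an areawise upper bound via the $(k,\delta)$-analysis of \S\ref{subsubsec:areapreserve}, and in any case a positive buckling height or a big hole can only lower $\bar\rho$ relative to the un-buckled model. Since each triangle lies in three stars, $\sum_{A_i\in\Sigma_0}|St(A_i)|=12\pi$. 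In case \textbf{(a)} the thirteen-or-more stars average to area $<\pi$, so at least one lies strictly left of the peak $(\pi,\spet)$ of the optimal-area-density graph of Figure~\ref{fig:rhostar}; combining the star estimates of Lemma~\ref{lem:fivestar} and Lemma~\ref{lem:fivestar}$'$ (for degrees $5$ and $6$), the optimal face-density function of Figure~\ref{fig:rhosigma}, and the area identity then forces $\bar\rho(\mathcal{L})<\spet$, contradicting the hypothesis. In case \textbf{(b)} I would first robustify Lemma~\ref{lem:nonico} and, behind it, Lemma~\ref{lem:sixstar1} from the Type-I regime (edges $\ge\slfrac{\pi}{3}$) to the near-Type-I one (edges $\ge\phi_0$): the complementary-region arguments of \S\ref{sec:review} carry enough slack to absorb the few-degree relaxation, and they show that a twelve-vertex configuration with a degree-$4$ triangulation must be a small (or close) deformation of a $5\square$-Type-I configuration, or a triangulation of the f.c.c.\ or h.c.p. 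For each of those the areawise estimates of \S\ref{sec:techniques} (for the deformations of $5\square$- and $6\square$-type configurations) give $\bar\rho<\spet$ already in the un-buckled model, hence \emph{a fortiori} $\bar\rho(\mathcal{L})<\spet$ once buckling and big holes are accounted for --- the desired contradiction.

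The hard part is case \textbf{(b)}, and within it the degree-$4$ anomaly. A single degree-$4$ star can have $\tilde\rho(St(\cdot))$ \emph{exceeding} $\spet$: for instance the ``bad'' triangulation of the f.c.c.\ puts two tetrahedral D-cells (density $\approx 0.7796$) together with only two octahedral halves (density $\approx 0.7209$) into such a star, and the weights favor the denser pieces. One therefore cannot estimate star by star. The remedy is to keep the non-icosahedral deficiency \emph{paired}: a degree-$4$ vertex always comes with a degree-$6$ one, and the two must be estimated jointly, the weighted-slope surplus of the degree-$4$ star being outweighed by the deficit of its degree-$6$ partner once the area shift between them is taken into account --- or, equivalently, one routes all of case \textbf{(b)} through the robustified structural lemmas, so that the entire configuration rather than a single star is pinned down. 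The two remaining technical points, expected to be routine but needing care because the whole argument rests on strict inequalities, are (i) checking that the complementary-region estimates of \S\ref{sec:review} really do have the few-degree slack required by the $\phi_0$-relaxation, and (ii) establishing the areawise upper bound for big-hole densities in that same relaxed range.
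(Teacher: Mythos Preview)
Your overall architecture --- proof by contradiction, route through a robustified Lemma~\ref{lem:nonico}, then finish with the $\boxslash$-pair and lune-cluster estimates --- is the same as the paper's. Two differences are worth flagging.

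First, a cosmetic one: the paper works with the $6\triangle$-star rather than the degree-$4$ vertex. Since for any triangulation with $|\Sigma_0|\ge 12$ the average degree is $6-12/|\Sigma_0|\ge 5$, a non-icosahedral $\mathcal{S}'(\Sigma_0)$ always contains a $6\triangle$-star $St(A_1)$; the paper goes straight to it. Once the $6\triangle$-star is analyzed, the complementary-region argument (the robustified Lemma~\ref{lem:nonico}) \emph{simultaneously} forces $|\Sigma_0|\le 12$ and pins the configuration down to $6\boxslash$- or $5\boxslash$-type, so your separate case \textbf{(a)} never arises. This saves you from having to invoke star-density lemmas for degrees other than $5$, which the paper does not develop.

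Second, and this is the substantive gap: your item (i) --- that the complementary-region estimates of \S\ref{sec:review} carry ``a few degrees of slack'' --- is not routine, and with the uniform bound $\phi_0=\arccos\bigl(1-\tfrac{1}{2(1.07)^2}\bigr)\approx\tfrac{\pi}{3}-0.075$ it likely fails outright: the margins in the proof of Lemma~\ref{lem:nonico} are of order $0.007$ and $0.11$, and a $0.075$ relaxation on every edge swamps the tighter of these. The paper closes this gap by a \emph{bootstrapping} step you are missing: before invoking any complementary-region analysis, it uses the hypothesis $\bar\rho(\cdot)\ge\spet$ to argue that the total buckling height at the seven vertices of $St(A_1)$ must already be very small (at most $0.1$). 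The mechanism is that if the buckling at $St(A_1)$ were larger, then $\tilde\rho(St(A_1))$ would be so low that compensating $5\triangle$-stars with $\tilde\rho$ well above $\spet$ would be needed, and such stars can only sit adjacent to or overlapping $St(A_1)$; chasing this forces $St(A_1)$ to be of $(\triangle,\triangle,\boxslash,\boxslash)$-type and $\mathcal{S}'(\Sigma_0)$ to be $5\boxslash$, already a contradiction. With the buckling at $St(A_1)$ thus bounded, the $6\triangle$-star is a genuinely small perturbation of a Type-I $6\triangle$-star, and now the complementary-region lemmas do apply with their original constants. In short: do not try to robustify Lemmas~\ref{lem:sixstar1} and \ref{lem:nonico} to the coarse $\phi_0$-scale; instead, first squeeze the local buckling using the density hypothesis, and then apply those lemmas essentially as stated.
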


Therefore, we shall begin with the proofs of Proposition~\ref{prop:case2.1} and Proposition~\ref{prop:case2.2} again by the method of proof by contradiction.

\noindent
(1) {\it The proof of Proposition~\ref{prop:case2.1}}

Suppose that ${\cal L}(\cdot)$ is a (non-Type-I) local packing with at most eleven close neighbors. Let us first consider the most critical case that it actually has eleven close neighbors. Then ${\cal S}'(\Sigma_0)$ consists of eighteen triangles, say $\{\sigma_j,\>1\leq j\leq 18\}$. Note that the average of $\{|\sigma_j|,\>1\leq j\leq 18\}$ is always equal to $\frac{2}{9}\pi$ which is already slightly larger than $\frac{1}{2}\square_{\frac{\pi}{3}}=2\arcsin\frac{1}{3}$. Therefore, it is quite straightforward to apply the triangular areawise estimate to show that $\bar{\rho}(\cdot)$ of such an ${\cal L}(\cdot)$ must be considerably smaller than $\slfrac{\pi}{\sqrt{18}}$, contradicting to the assumption that $\bar{\rho}({\cal L}(\cdot))\geq \slfrac{\pi}{\sqrt{18}}$. Moreover, it is quite clear that the same kind of triangular areawise estimate will show that $\bar{\rho}(\cdot)$ must be smaller than $\slfrac{\pi}{\sqrt{18}}$ by a bigger margin if $\#(\Sigma_0)<11$. This completes the proof of Proposition~\ref{prop:case2.1}.
\hfill $\square$

\medskip

\noindent
(2) {\it The proof of Proposition~\ref{prop:case2.2}}

Suppose the contrary that there exists a non-Type-I local packing ${\cal L}(\cdot)$ with at least twelve close neighbors, with $\bar{\rho}(\cdot)\geq \slfrac{\pi}{\sqrt{18}}$ and having at least one $6\triangle$-star, say denoted by $St(A_1)$. Then, the condition of $\bar{\rho}(\cdot)\geq\slfrac{\pi}{\sqrt{18}}$ implies that the total amount of buckling heights of those vertices of $St(A_1)$ must be quite small, say at most $0.1$. For otherwise, there must be some $5\triangle$-stars with $\tilde{\rho}(\cdot)$ considerably higher than $\slfrac{\pi}{\sqrt{18}}$ to counter balance $\tilde{\rho}(St(A_1))$, and moreover, such $5\triangle$-stars can only be either adjacent or overlapping to $St(A_1)$. Therefore, $St(A_1)$ must be of the $(\triangle,\triangle,\boxslash,\boxslash)$-type and ${\cal S}'(\Sigma_0)$ must be  a small deformation of $5\boxslash$-type, and hence, it is quite easy to show that $\bar{\rho}(\cdot)$ is smaller than $\slfrac{\pi}{\sqrt{18}}$ with substantial margin, contradicting to the assumption that $\bar{\rho}(\cdot)\geq\slfrac{\pi}{\sqrt{18}}$.

On the other hand, the complementary region of the $6\triangle$-star $St(A_1)$ with at most $0.1$ total buckling heights is a small deformation of that of a Type-I $6\triangle$-star we have already analyzed in the proof of Lemma~\ref{lem:nonico}. Therefore, the condition of $\bar{\rho}(\cdot)\geq \slfrac{\pi}{\sqrt{18}}$ again implies that ${\cal S}'(\Sigma_0)$ can not have more than twelve close neighbors and moreover, ${\cal S}'(\Sigma_0)$ must be of $6\boxslash$-type or $5\boxslash$-type. Thus, straightforward application of the collective areawise estimates of $\boxslash$-pairs and lune clusters, {\it generalized} to the geometric situation with small amount of buckling heights, will produce an upper bound of $\bar{\rho}(\cdot)$ less than $\slfrac{\pi}{\sqrt{18}}$, again contradicting to the assumption that $\bar{\rho}(\cdot)\geq \slfrac{\pi}{\sqrt{18}}$. This completes the proof of Proposition~\ref{prop:case2.2}. \hfill $\square$

\subsection{The proof of Theorem~I for Case~II}

Thanks to Proposition~\ref{prop:case2.1} and Proposition~\ref{prop:case2.2}, the proof of Theorem~I for Case~II can now be reduced to the proof of the following upper bound, namely
\begin{equation}
\bar{\rho}(\cdot)<\slfrac{\pi}{\sqrt{18}}
\end{equation}
for the remaining case of non-Type-I local packings with icosahedra ${\cal S}'(\Sigma_0)$. Similar to the proof for the case of Type-I icosahedra (cf. \S\ref{sec:proof}), we shall also subdivide the proof of such an estimate into the following subcases, namely
\begin{labeling}{cases}
\item[\textbf{Case~II$_1$:}] with ${\cal S}'(\Sigma_0)$ of $6\boxslash$-type.
\item[\textbf{Case~II$_2$:}] with ${\cal S}'(\Sigma_0)$ of $5\boxslash$-type.
\item[\textbf{Case~II$_3$:}] Others
\end{labeling}
Note that the same upper bound for areawise estimates of $\tilde{\rho}(\cdot)$ clearly still hold for the more general geometric situations of $\boxslash$-pairs (resp. lune clusters) in ${\cal S}'(\Sigma_0)$. Therefore, Case~II$_1$ (resp. Case~II$_2$) can again be readily proved by the same kind of areawise estimates for their $\boxslash$-pairs (resp. lune clusters). 

Finally, let us proceed to prove Case~II$_3$ by means of areawise estimates of $\tilde{\rho}(\cdot)$ for $5\triangle$-star-clusters. It is easy to see that the upper bound estimate of Lemma~\ref{lem:fivestar} still holds without modification for the more general situation of $5\triangle$-star-clusters in ${\cal S}'(\Sigma_0)$. However, for the range of $|St(\cdot)|$ exceeding $(\pi+0.21672)$, Lemma~\ref{lem:fivestar}$'$ certainly needs some kind of proper modification for the more general geometric situation of $5\triangle$-star-clusters of ${\cal S}'(\Sigma_0)$, because the realizability condition as stars in Type-I configuration is considerably stronger than that of ${\cal S}'(\Sigma_0)$, or in other words, for a given total area exceeding $(\pi+0.21672)$, certain kind of $5\triangle$-star of that total area and with their $\tilde{\rho}(\cdot)$ slightly or even considerably higher than that of the optimal ones in Example~\ref{ex:smallstar} can not be extended to Type-I configurations, but still can be extended to ${\cal S}'(\Sigma_0)$. The pertinent remark here is that the $\bar{\rho}(\cdot)$ of such ${\cal S}'(\Sigma_0)$ will always be lower than the $\bar{\rho}(\cdot)$ of those in Example~\ref{ex:smallstar} containing $St(A_2)$ with the same area and optimal $\tilde{\rho}(\cdot)$, whose $\bar{\rho}(\cdot)$ are already lower than $\slfrac{\pi}{\sqrt{18}}$ with a comfortable margin. Therefore, just for the purpose of proving ``$\bar{\rho}(\cdot)<\slfrac{\pi}{\sqrt{18}}$'' for Case~II$_3$, one may assume without loss of generality that the $\tilde{\rho}(\cdot)$ of stars with such large total areas, if any, are bounded above by that we used in the proof of Case~I$_3$, simply because the $\bar{\rho}(\cdot)$ of those configurations with stars of large total areas and with $\tilde{\rho}(\cdot)$ higher than such upper bound are already substantially smaller than $\slfrac{\pi}{\sqrt{18}}$. Hence, Case~II$_3$ can again be proved by the same estimates as that of Case~I$_3$.

This completes the proof of Case~II$_3$. \hfill $\square$

\section{Concluding remarks, retrospects and prospects}

\subsection{Retrospects}

First of all, the sphere packing problem should be, more naturally, regarded as a problem of \textit{geometric understanding on crystal formations of dense type} (cf. Theorem~\ref{leastaction}, \S \ref{sec:majortheorems}), a problem suggested by the nature, on the remarkable geometric feature of such crystals for a majority of monatomic elements, rather than whatever is commonly attributed to be the seeking of a proof of Kepler's conjecture.

In retrospect, both conceptually as well as technically, it is a kind of problem on in depth understanding the underlying profound interplays between the wonderful symmetry of the physical space and the least action principle (i.e. optimality) of crystal formation of dense type as formulated in \cite{hsiang}, roughly speaking, a kind of philosophical belief that the remarkable geometric precision and regularity of such crystals should be the consequence of optimality in packing density. Anyhow, this naturally leads to the proof of Theorem~\ref{leastaction} (cf. \S \ref{sec:majortheorems}). 

Note that one needs firstly to give a \textit{proper definition} of the concept of global density for packing of the second kind and then proceed to study its optimality such as the one we give in (\S \ref{subsubsec:relativedensity}). Such a concept of \textit{global density}, no matter how one chooses to define it will be inevitably rather complicated and very difficult to compute in general. Thus, it is extremely remarkable that the global density in (\S \ref{subsubsec:relativedensity}) actually has a clean-cut geometric structure for its optimality (cf. Theorem~\ref{leastaction}). Here, one will naturally wonder about: What is the underlying geometric reason for such an almost magical reality? And how to prove it?

In retrospect, the local geometric structure of those crystals of dense type actually already reveals the clue of Nature's secret, namely, the whole structure of such immense aggregates are always the repetition of only two kinds of local packings, i.e. the f.c.c. and the h.c.p., and such a reality of nature strongly suggests that \textit{global optimality} of the second kind of packings \textit{should be} the consequence of a kind of \textit{local optimization everywhere}! Thus, the entire journey starting with the proper definition of global density and ending up with the proof of such a least action principle of crystal formation of dense type amounts to find the proper \textit{local invariant} (i.e. the \textit{locally averaged density $\bar{\rho}(\mathcal{L})$}) and the proof of its \textit{optimal estimate} with the f.c.c. and the h.c.p. as the \textit{unique two of optimality}, namely, Theorem~I. This is exactly what we are presenting in this paper in the setting of geometric invariant theory of space. 

In summary and from the overall point of view of solid geometry (the geometry of our physical space), the sphere packing problem is an outstanding natural problem which, on the one hand, tests and challenges our depth of geometric understanding and techniques, and on the other hand, it also inspires and leads to further improvements of geometric understanding. Indeed, it is a wonderful geometric problem of the nature that certainly deserve a clean-cut proof in the classical tradition of solid geometry. I am glad that I have finally succeeded in paying my due respect to this problem in the proper setting of geometric invariant theory of space.

\subsection{Prospects}

Note that the techniques we developed in \S \ref{sec:summary} and \S \ref{sec:review} on geometric invariant theory and spherical geometry are naturally applicable to the study of many other problems in solid geometry, while we are only focusing on their applications towards the proof of Theorem~I. Of course, the broader prospect would naturally be further developing solid geometry in the setting of geometric invariant theory of the space by studying many other natural geometric problems, especially those problems naturally arise in the study of physics, such as the problem of sphere packing and crystal formations in the realm of solid state physics.

However, due to the limited scope of this paper, we shall mention here two specific such problems still in the realm of sphere packings.

\begin{description}
\item[Problem~1:] What should be the optimal shape for large scale sphere packings of the first kind, namely, with container?
\item[Problem~2:] Problem of sphere packings with two sizes, say of radii $1$ and $r<1$ respectively.
\end{description}

\noindent
{\bf Acknowledgements}: This is the final version of a manuscript that has been refined over time with many helpful discussions. In particular I would like to acknowledge the great help and many inspiring discussions with Lee-Ping Wang, which led to important improvement and substantial simplification of the proof of Theorem I. I also would like to thank Eldar Straume, Lars Sydney and Wing-Lung Lee for their generous help and in-depth discussions.

\end{document}